\newtheorem{theo}{Theorem}[section]
\newtheorem{prop}[theo]{Proposition}
\newtheorem{defi}[theo]{Definition}
\newtheorem{lemm}[theo]{Lemma}
\newtheorem{coro}[theo]{Corollary}
\newtheorem{rema}[theo]{Remark}
\newtheorem{conv}[theo]{Convention}
\newcommand{\wh}{\widehat}
\newcommand{\mb}{\mathbb}
\newcommand{\mc}{\mathcal}
\newcommand{\mf}{\mathfrak}
\newcommand{\mbf}{\mathbf}
\newcommand{\ms}{\mathscr}
\newcommand{\ra}{\rightarrow}
\newcommand{\scl}{\mathscr{L}}
\newcommand{\gl}{\mathrm{GL}_{2}}
\newcommand{\sub}{\subseteq}
\newcommand{\Xinf}{\mc{X}_{\infty}}
\newcommand{\Xinfw}{\mc{X}_{\infty,w}}
\newcommand{\Uinfw}{\mc{U}_{\infty,w}}
\newcommand{\glq}{\mathrm{GL}_{2}(\mathbb{Q}_{p})}
\newcommand{\OC}{\mathcal{O}_{C}}
\newcommand{\oo}{\mathcal{O}}
\newcommand{\Zp}{\mathbb{Z}_{p}}
\newcommand{\Qp}{\mathbb{Q}_{p}}
\newcommand{\Pro}{\mathbb{P}^{1}}
\newcommand{\inj}{\hookrightarrow}
\newcommand{\U}{\mc{U}}
\newcommand{\ctens}{\widehat{\otimes}}
\newcommand{\Xw}{\mathcal{X}_{w}}
\newcommand{\Yw}{\mathcal{Y}_{w}}
\newcommand{\vp}{\varpi}
\newcommand{\Xetw}{X_{\textrm{et}}^{w}}
\newcommand{\ohat}{\widehat{\mathcal{O}}}
\newcommand{\ohatp}{\widehat{\mathcal{O}}^{+}}
\newcommand{\Zpn}{\mathbb{Z}_{p}^{n}}
\newcommand{\wt}{\widetilde}
\newcommand{\surj}{\twoheadrightarrow}
\newcommand{\XC}{X(\mathbb{C})}
\newcommand{\X}{\mathcal{X}}
\newcommand{\Dp}{\Delta_{0}(p)}
\newcommand{\Cp}{\mathbb{C}_{p}}
\newcommand{\dg}{\dagger}
\newcommand{\V}{\mathcal{V}}
\DeclareMathOperator{\coker}{coker}
\DeclareMathOperator{\nc}{nc}
\DeclareMathOperator{\Loc}{Loc}
\DeclareMathOperator{\Hdg}{Hdg}
\DeclareMathOperator{\Sym}{Sym}
\DeclareMathOperator{\Hom}{Hom}
\DeclareMathOperator{\End}{End}
\DeclareMathOperator{\Spa}{Spa}
\DeclareMathOperator{\Spec}{Spec}
\DeclareMathOperator{\MaxSpec}{MaxSpec}
\DeclareMathOperator{\Spf}{Spf}
\DeclareMathOperator{\Frob}{Frob}
\DeclareMathOperator{\GL}{GL}
\DeclareMathOperator{\SL}{SL}
\DeclareMathOperator{\Ker}{Ker}
\DeclareMathOperator{\Sp}{Sp}
\DeclareMathOperator{\Lie}{Lie}
\DeclareMathOperator{\HT}{HT}
\DeclareMathOperator{\univ}{univ}
\DeclareMathOperator{\rig}{rig}
\DeclareMathOperator{\pr}{pr}
\DeclareMathOperator{\gen}{gen}
\DeclareMathOperator{\et}{\acute{e}t}
\DeclareMathOperator{\proet}{pro\acute{e}t}
\DeclareMathOperator{\new}{new}
\DeclareMathOperator{\disc}{disc}
\DeclareMathOperator{\sm}{sm}
\DeclareMathOperator{\ad}{ad}
\title{Overconvergent modular forms and perfectoid Shimura curves}
\date {\today}
\author{Przemys\l aw Chojecki, David Hansen and Christian Johansson}
\email{chojecki@maths.ox.ac.uk}
\email{hansen@math.columbia.edu}
\email{johansson@math.ias.edu}
\begin{document}

\begin{abstract}
We give a new construction of overconvergent modular forms of arbitrary weights, defining them in terms of functions on certain affinoid subsets of Scholze's infinite-level modular curve.  These affinoid subsets, and a certain canonical coordinate on them, play a role in our construction which is strongly analogous with the role of the upper half-plane and its coordinate `$z$' in the classical analytic theory of modular forms.  As one application of these ideas, we define and study an overconvergent Eichler-Shimura map in the context of compact Shimura curves over $\mb{Q}$, proving stronger analogues of results of Andreatta-Iovita-Stevens.
\end{abstract}

\maketitle
\tableofcontents

\section{Introduction}

Let $N\geq5$ be an integer, and let $Y_{1}(N)(\mb{C})$ be the
usual analytic modular curve. A holomorphic modular form $f$ of weight
$k$ and level $N$ admits two rather distinct interpretations, which
one might call the \emph{algebraic }and \emph{analytic }points of
view:

\medskip

\textbf{Algebraic: $f$ }is a global section $\omega(f)$ of the line
bundle $\omega^{\otimes k}=\omega_{E/Y_{1}(N)}^{\otimes k}$ on $Y_{1}(N)(\mb{C})$ (extending to $X_{1}(N)(\mb{C})$).
Equivalently, $f$ is a rule which assigns to each \emph{test object
}- an isomorphism class of triples $(S,E,\eta)$ consisting of a complex
analytic space $S$, a (generalized) elliptic curve $\pi:E\to S$
with level $N$ structure, and $\eta$ an $\mathcal{O}_{S}$-generator
of $\omega_{E/S}:=(\mathrm{Lie}E/S)^{\ast}$ - an element $\mathbf{f}(S,E,\eta)\in\mathcal{O}(S)$
such that $\mathbf{f}$ commutes with base change in $S$ and satisfies $\mathbf{f}(S,E,a\eta)=a^{-k}\mathbf{f}(S,E,\eta)$
for any $a\in\mathcal{O}(S)^{\times}$. These two interpretations
are easily seen to be equivalent (for simplicity we are ignoring cusps): for any test object, there is a
canonical map $s:S\to Y_{1}(N)$ realizing $E/S$ as the pullback
of the universal curve over $Y_{1}(N)$, and one has $s^{\ast}\omega(f)=\mathbf{f}(S,E,\eta)\eta^{\otimes k}$. 

\medskip

\textbf{Analytic: }$f$ is a holomorphic function on the upper half-plane
$\mathfrak{h}$ of moderate growth, satisfying the transformation
rule $f(\frac{az+b}{cz+d})=(cz+d)^{k}f(z)$ for all $\gamma\in\Gamma_{1}(N)$.

\medskip

How do we pass between these points of view? The key is that $\mathfrak{h}$
may be identified with the universal cover $\widetilde{Y_{1}(N)}$
of $Y_{1}(N)$, in the category of complex analytic spaces, and the
pullback of the line bundle $\omega$ to $\widetilde{Y_{1}(N)}$ is
canonically trivialized. Precisely, $\widetilde{Y_{1}(N)}$ consists
of pairs $(E,\beta)$ where $E/\mb{C}$ is an elliptic curve and
$\beta=\{\beta_{1},\beta_{2}\}\in H_{1}(E(\mb{C}),\mb{Z})$
is an oriented basis%
\footnote{One also requires that $\beta_{2}$ generate the $\Gamma_{1}(N)$-structure
under $H_{1}(E(\mb{C}),\mb{Z}/N\mb{Z})\cong E(\mb{C})[N]$%
}. This space admits a left action of $\Gamma_{1}(N)$ by $\{\beta_{1},\beta_{2}\}\mapsto\{a\beta_{1}+b\beta_{2},c\beta_{1}+d\beta_{2}\}$.
Let $p:\widetilde{Y_{1}(N)}\to Y_{1}(N)$ be the natural projection.
Defining the period $z(E,\beta)\in\mb{C}$ by $\int_{\beta_{1}}\eta=z(E,\beta)\int_{\beta_{2}}\eta$,
where $\eta \neq0\in H^{0}(E(\mb{C}),\Omega_{E/\mb{C}}^{1})$
is any nonzero holomorphic one-form on $E$, the map \begin{eqnarray*}
\widetilde{Y_{1}(N)} & \overset{\sim}{\to} & \mathfrak{h}\\
(E,\beta) & \mapsto & z(E,\beta)\end{eqnarray*}
is a $\Gamma_{1}(N)$-equivariant isomorphism of Riemann surfaces.
The pullback $p^{\ast}\omega$ is then trivialized by the differential
$\eta_{\mathrm{can}}$ characterized by $\int_{\beta_{2}}\eta_{\mathrm{can}}=1$.
Defining $f(z)$ by $p^{\ast}\omega(f)=f(z)\eta_{\mathrm{can}}^{\otimes k}$,
a calculation shows that $\eta_{\mathrm{can}}(\gamma z)=(cz+d)^{-1}\eta_{\mathrm{can}}(z)$,
from which the transformation law of $f(z)$ follows.

\medskip

In the \emph{p-}adic setting a priori one only has the algebraic definition valid for integral weights. Given the importance of non-integral weights, it seems natural to hope for a direct algebraic definition
of modular forms with non-integral weights. More precisely, given a continuous character $\kappa:\mb{Z}_{p}^{\times}\to L^{\times}$ for some $L/\mb{Q}_{p}$ finite, one would like to define a {}``\emph{p-}adic modular form of weight $\kappa$ and level $N$'' as a rule which assigns to each\emph{ }test object\emph{ } an isomorphism class of triples $(R,E,\eta)$ consisting of a $p$-adically separated and complete ring $R$, a (generalized) elliptic curve $E/\mathrm{Spec}R$
with level $N$ structure, and $\eta$ a generator of $\omega_{E/R}$
- an element $g(R,E,\eta)\in R\otimes_{\mb{Z}_{p}}L$ such that
$g$ commutes with base change in $R$ and satisfies $g(R,E,r\eta)=\kappa(r)^{-1}g(R,E,\eta)$
for any $r\in R^{\times}$. The problem with this
naive definition is that the expression $\kappa(r)$ does not make sense
in general. In fact, the only characters for which this is unambiguously
defined and functorial in $R$ are the power characters $\kappa:r\to r^{k},k\in\mb{Z}$.
However, Andreatta-Iovita-Stevens and Pilloni (\cite{ais,pil}) discovered (independently)
a remarkable fix to this problem, whereby for a given character $\kappa$
one only allows {}``certain elliptic curves" and, more importantly, ``certain differentials''
in the definition of test objects. The admissible elliptic curves
are those whose Hasse invariant has suitably small valuation. The admissible $\eta$'s are
defined using torsion $p$-adic Hodge theory and the theory of the canonical subgroup; they are not permuted by all of $R^{\times}$, but only by a subgroup of
elements $p$-adically close enough to $\mb{Z}_{p}^{\times}$ so that $\kappa(r)$ is defined.

\medskip

Our first goal in this paper is to develop a $p$-adic analogue of the \emph{analytic} picture above.  Of course, the most pressing question here is: what
are the correct analogues of $\wt{Y_{1}(N)}$, $\mf{h}\sub \Pro_{\mb{C}}$
and $z$?  The following theorem gives a partial answer.

\begin{theo}\label{1.1} Given $N$ and $p \nmid N$ as above, let $$\Xinf \sim \varprojlim_{n} X_{K_1(N)K(p^n)}^{\mathrm{ad}}$$be the infinite-level perfectoid (compactified) modular curve of tame level $N$ (\cite{sch3}), with its natural right action of $\gl(\mathbb{Q}_p)$.  There is a natural family of $K_0(p)$-stable open affinoid perfectoid subsets $\Xinfw \sub \Xinf$ parametrized by rationals $w \in \mathbb{Q}_{>0}$, with $\mc{X}_{\infty,w'} \subseteq \mc{X}_{\infty,w}$ for $w \leq w'$, and there is a canonical global section $\mathfrak{z} \in \mc{O}(\mc{X}_{\infty,w})$ (compatible under changing $w$) such that $\gamma^{\ast}\mathfrak{z}=\frac{a\mathfrak{z}+c}{b\mathfrak{z}+d}$ for all $\gamma \in K_0(p)$.  For any $\kappa$ as above and any $w \gg_{\kappa} 0$, the space $$ M_{\kappa,w}(N) :=  \left\{ f \in \mc{O}(\mc{X}_{\infty,w})\otimes_{\mathbb{Q}_p}L \mid \, \kappa(b\mathfrak{z}+d) \cdot \gamma^{\ast} f = f\,\,\forall \gamma \in K_0(p) \right\}$$ is well-defined, and the module $M^{\dagger}_{\kappa}(N) = \lim_{w\to\infty}M_{\kappa,w}(N)$ is canonically isomorphic with the modules of overconvergent modular forms of weight $\kappa$ and tame level $N$ defined by Andreatta-Iovita-Stevens and Pilloni.

\end{theo}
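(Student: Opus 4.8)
The plan is to construct everything concretely on the locus of the infinite-level curve lying over the "anticanonical" part (or a neighborhood of the canonical locus — the precise choice of $w$-indexed family will be dictated by the theory of the canonical subgroup), and then match it with the Andreatta--Iovita--Stevens/Pilloni constructions by exhibiting a common description in terms of the Hodge--Tate period map. First I would recall Scholze's Hodge--Tate period map $\pi_{\mathrm{HT}} \colon \Xinf \to \Pro$, which is equivariant for the $\glq$-action (acting on $\Pro$ through the usual action on $\mathbb{P}(\text{standard rep})$), and carries the natural line bundle $\omega$ on $\Xinf$ to $\mc{O}_{\Pro}(1)$ up to a twist by the trivialized bundle $\ohat(1)$. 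The affinoids $\Xinfw$ will be defined as preimages (intersected with suitable Hasse-invariant loci at finite level) of shrinking $K_0(p)$-stable affinoid neighborhoods of the point $\infty \in \Pro(\Qp)$, equivalently as the loci where the "universal" parameter is $p$-adically close to that point; their perfectoid-ness and affinoid-ness follow from the corresponding properties of the canonical/anticanonical loci at infinite level established in \cite{sch3}, together with the fact that $\pi_{\mathrm{HT}}$ is affinoid over affinoid subsets of $\Pro$. The section $\mf{z}$ is then pulled back from the coordinate on the affine chart $\mathbb{A}^1 \subset \Pro$ around $\infty$; the transformation law $\gamma^\ast \mf{z} = \frac{a\mf{z}+c}{b\mf{z}+d}$ is exactly the formula for the $\glq$-action on this coordinate, so this is immediate once conventions on left/right actions and row/column vectors are pinned down.

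Next I would unwind the definition of $M_{\kappa,w}(N)$. An element $f$ satisfying the stated invariance $\kappa(b\mf{z}+d)\cdot \gamma^\ast f = f$ is, by construction, a section of a $K_0(p)$-equivariant line bundle on $\Xinfw$ obtained by twisting $\mc{O}$ by the cocycle $\gamma \mapsto \kappa(b\mf{z}+d)^{-1}$; descending along $\Xinfw \to \Xw := \Xinfw/K_0(p)$ (a finite-level affinoid in the modular curve with $\Gamma_0(p)$-structure, lying in the appropriate Hasse locus), this line bundle is precisely the weight-$\kappa$ sheaf $\omega^\kappa$ of Andreatta--Iovita--Stevens/Pilloni. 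The crucial point is that the factor $b\mf{z}+d$ is, via $\pi_{\mathrm{HT}}$ and the canonical subgroup, the same as the period used in \emph{loc. cit.} to rigidify $\omega$: concretely, over $\Xinfw$ one has a canonical trivialization of $\omega$ (coming from the fixed basis vector of the Tate module, i.e. from $\mf{z}$ being a coordinate adapted to $\infty$), and $b\mf{z}+d$ records how this trivialization transforms, which matches the transformation of the AIS "fake Hasse invariant"/canonical differential. So I would prove: (i) $\mf{z}$, restricted to $\Xinfw$, differs from the AIS period by a unit that can be absorbed, and (ii) the natural map $\mc{O}(\Xw) \to \mc{O}(\Xinfw)^{K_0(p)}$ is an isomorphism — this follows from $\Xinfw \sim \varprojlim$ of finite-level affinoids with $\mc{O}$ commuting with the inverse limit (again from \cite{sch3}), so taking $K_0(p)$-invariants recovers the finite level.

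Assembling these: for fixed $w \gg_\kappa 0$, $M_{\kappa,w}(N)$ is identified with $H^0(\Xw, \omega^\kappa)$ in the AIS/Pilloni sense, and passing to the colimit over $w \to \infty$ (equivalently shrinking the affinoid neighborhood of $\infty$, equivalently letting the Hasse bound go to $0$) yields $M^\dagger_\kappa(N)$, with the transition maps on both sides being the obvious restrictions. The condition $w \gg_\kappa 0$ is exactly the requirement that $\kappa$ be "$w$-analytic", i.e. that $\kappa(b\mf{z}+d)$ make sense as a function on $\Xinfw$: since $b\mf{z}+d$ takes values in a disc of radius $p^{-w}$ around $1$ (after normalizing), one needs $\kappa$ to extend analytically to that disc, which holds for $w$ large depending on the $p$-adic analyticity radius of $\kappa$. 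I expect the main obstacle to be step (i)--(ii) of the previous paragraph, namely the careful comparison of the coordinate $\mf{z}$ with the integral-$p$-adic-Hodge-theoretic period of Andreatta--Iovita--Stevens: one must check that the canonical subgroup structure used there is seen by $\pi_{\mathrm{HT}}$ in the way claimed, keeping precise track of the $\ohat(1)$-twist and of which vector in the Tate module corresponds to the point $\infty$; getting the normalizations exactly right (so that the transformation factors literally coincide, not merely up to a harmless unit or a sign) is where the real work lies, and is presumably why the theorem is stated with the Hodge--Tate geometry doing the heavy lifting rather than attempting a direct comparison of test-object definitions.
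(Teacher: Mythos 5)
Your route is essentially the paper's: one takes $\mathcal{X}_{\infty,w}=\pi_{\mathrm{HT}}^{-1}(\mathbb{P}^1_w)$ where $\mathbb{P}^1_w$ is the $K_0(p)$-orbit of a small disc about $z=0$ (no auxiliary Hasse-locus intersection is needed, and affinoid perfectoidness is simply that this is a rational subset of the affinoid perfectoid $\pi_{\mathrm{HT}}^{-1}(V_1)$), sets $\mathfrak{z}=\pi_{\mathrm{HT}}^{\ast}z$, descends the $\kappa(b\mathfrak{z}+d)$-twisted invariants to the finite-level affinoid $\mathcal{X}_w$ of Theorem \ref{th1}, and compares with AIS/Pilloni exactly as you sketch, by matching the trivialization $\mathfrak{s}=\mathrm{HT}(\alpha(e_2))$ of $\omega$ with the canonical-subgroup rigidification via the Pilloni torsor (Theorem \ref{pillonicomp}). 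The one caution is that your two hand-waved steps are precisely where the paper works: the identity $\mathcal{O}(\mathcal{X}_w)=\mathcal{O}(\mathcal{X}_{\infty,w})^{K_0(p)}$ is not formal from ``$\mathcal{O}$ commutes with the inverse limit'' (invariants do not pass through the $p$-adic completion for free, since averaging over $K_0(p)/K(p^n)$ loses $p$-integrality) but is proved by pro-\'etale descent and almost vanishing (Lemmas \ref{newinv} and \ref{lemm: inv2}), and your ``absorb a unit'' comparison is implemented by approximating the infinite-level trivialization by finite-level sections, $\mathfrak{s}=t_U^{(n)}s_{U,n}\eta_U$ with $t_U^{(n)}$ close to $1$ (Proposition \ref{prop: approx}), which is what makes $\chi_\kappa$ applicable when transferring to finite level and feeds both the line-bundle property of $\omega^{\dagger}_{\kappa,w}$ and the identification with Pilloni forms.
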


Thus, in our approach, $\Xinfw$ plays the role of $\wt{Y_{1}(N)}$ and the ``fundamental period" $\mf{z}$ defined below plays the role of $z$. Key to this theorem is that $\omega$ is trivialized over any $\Xinfw$ (by one of the ``fake" Hasse invariants constructed in \cite{sch3}), which is similar to the fact that $\omega$ is trivialized over $\mf{h}$. One reason why the latter is true is because the complex period map is not surjective onto $\Pro_{\mb{C}}$. By contrast, the $p$-adic period morphism $\pi_{\HT}\, :\, \Xinf \ra \Pro$ constructed in \cite{sch3} \emph{is} surjective, but becomes non-surjective when restricted to any $\Xinfw$. This is the reason that our description works for overconvergent modular forms but not for classical modular forms. 

\medskip

The second goal of the paper is to apply our ``explicit" point of view to redefine and analyze the overconvergent Eichler-Shimura map of \cite{oes}, which compares overconvergent modular symbols to overconvergent modular forms. Our perspective gives a short and transparent definition of these maps, and (we believe) clarifies the ideas involved.  We also make use of certain new filtrations on overconvergent distribution modules to obtain a more ``global" point of view on the Eichler-Shimura maps: after first defining them in the setting of Coleman families, we glue them into a morphism of coherent sheaves over the whole eigencurve. 

\medskip

We now describe these results in more detail.

\subsection{Perfectoid modular curves and \emph{w}-ordinarity}

Let $Y_{n}$ denote the modular curve over $\Qp$ with
$Y_{n}(S)$ parametrizing elliptic curves $E/S$ with a point $P\in E(S)[N]$
of exact order $N$ together with an isomorphism \[
\alpha_{n}:(\mb{Z}/p^{n}\mb{Z})^{2}\overset{\sim}{\to}E(S)[p^{n}].\]
Let $X_{n}$ denote the usual compactification of $Y_{n}$, and let
$\mc{Y}_{n}\sub \X_{n}$ be the associated adic spaces
over $\Spa(\Qp,\Zp)$. These form compatible
inverse systems $(\X_{n})_{n\geq1}$, $(\mc{Y}_{n})_{n\geq1}$
with compatible actions of $\gl(\Qp)$. Fundamental to all our considerations is Scholze's construction of the infinite level modular curves $\mc{Y}_{\infty}\sim \varprojlim_{n}\mc{Y}_{n}$, $\Xinf \sim \varprojlim_{n} \X_{n}$ and the $\gl(\Qp)$-equivariant Hodge-Tate period map $\pi_{\HT}\, :\, \Xinf \ra \Pro$ (a morphism of adic spaces over $\Qp$).

\medskip

Let us say a few more words about $\pi_{\HT}$. Let $C/\Qp$
be a complete algebraically closed field extension with ring of integers
$\OC$, and let $A$ be an abelian variety over $C$ of
dimension $g$, with dual $A^{\vee}$. Set $\omega_{A}=H^{0}(A,\Omega_{A/C}^{1})\cong\mathrm{Hom}_{C}(\mathrm{Lie}\,A,C)$,
a $C$-vector space of rank $g$. Then we have a natural linear map
$\mathrm{HT}_{A}:T_{p}A\otimes_{\Zp} C\twoheadrightarrow\omega_{A^{\vee}}$,
the \emph{Hodge-Tate map }of $A$, which fits into a short exact sequence
$$ 0\longrightarrow (\mathrm{Lie}\,A)(1)\overset{\mathrm{HT}_{A^{\vee}}^{\vee}(1)}{\longrightarrow}\mathrm{Hom}(T_{p}A^{\vee},C(1))=T_{p}A\otimes_{\Zp} C\overset{\mathrm{HT}_{A}}{\longrightarrow}\omega_{A^{\vee}}\longrightarrow 0 $$
where $-(1)$ denotes a Tate twist. If $E/C$ is an elliptic curve and $\alpha\, :\, \Zp^{2} \overset{\sim}{\ra} T_{p}E$ is a trivialization, then $(E,\alpha)$ defines a point in $\mc{Y}_{\infty}(C,\OC)$ and $\pi_{\mathrm{HT}}$ sends $(E,\alpha)$ to the line $(\alpha\otimes1)^{-1}(\mathrm{Lie}\, E)\sub C^{2}$.

\medskip

Our first key definition is a new gauge for the ordinarity of an abelian variety, defined in terms of the Hodge-Tate map $\mathrm{HT}_A$.  Let $F_A=\mathrm{HT}_{A}(T_{p}A\otimes_{\Zp}\OC)$. This is an $\OC$-lattice inside $\omega_{A^{\vee}}$.

\begin{defi} Let $w\in\mb{Q}_{>0}$.
\begin{enumerate}
\item An abelian
variety $A/C$ is $w$-\textbf{ordinary} if there is a basis
$b_{1},\dots,b_{2g}$ of $T_{p}A$ such that $\mathrm{HT}_{A}(b_{i})\in p^{w}F_A$
for all $1\leq i\leq g$. 
\item For $A/C$ w-ordinary and $0<n<w+1$, the \textbf{pseudocanonical
subgroup $H_{n}$ of level $n$} is defined to be the kernel of the natural
map
$$ A[p^{n}](C) \to F_A \otimes_{\OC}\OC/p^{\mathrm{min}(n,w)}\OC $$
induced by $\mathrm{HT}_{A}$.
\item For $A/C$ w-ordinary, a trivialization $\alpha:\Zp^{2g}\overset{\sim}{\to}T_{p}A$
is \textbf{strict} if $\alpha(e_{1}),\dots,\alpha(e_{g})\,\mathrm{mod}\, p\in A[p](C)$
form a basis for the pseudocanonical subgroup of level one.
\end{enumerate}
\end{defi}

\medskip

Let $u$ and $v$ be the homogeneous coordinate with respect to the standard basis $e_{1} =\left( \begin{smallmatrix}  1 \\ 0  \end{smallmatrix}\right)$ and $e_{2} =\left( \begin{smallmatrix}  0 \\ 1  \end{smallmatrix}\right)$ of $\Qp^{2}$. We define $z=-v/u$, a coordinate function on $\Pro$. For $w\in\mb{Q}_{>0}$ we let $\Pro_{w} \sub \Pro$ be the locus 
$$ \Pro_{w} = \{z \mid  \mathrm{inf}_{a\in p\mathbb{Z}_{p}}|z-a|\leq|p|^{w} \}.$$
For $\gamma \in K_0(p)$ one sees that $\gamma^{\ast}z=\frac{az+c}{bz+d}$ and that $\Pro_{w}$
is a $K_{0}(p)$-stable affinoid, where $K_{0}(p)$ is the usual Iwahori subgroup of $\gl(\Zp)$. Define $\Xinfw = \pi_{\mathrm{HT}}^{-1}(\Pro_{w})$ and
$\mc{Y}_{\infty,w}=\Xinfw \cap \mc{Y}_{\infty}$. These loci are $K_{0}(p)$-stable. Finally, define the \emph{fundamental period} $\mf{z}=\pi_{\mathrm{HT}}^{\ast}z\in\oo_{\Xinf}^{+}(\Xinfw)$.

\begin{theo}\label{1.3} A point $(E,\alpha)\in\mc{Y}_{\infty}(C,\OC)$
is contained in $\Xinfw$ if and only if $E$ is $w$-ordinary
and $\alpha$ is strict. Furthermore, $\Xinfw$ is
the preimage of a canonical affinoid $\Xw \sub \X_{K_{0}(p)}$,
and $\Xinfw$ is affinoid perfectoid. The $(C,\OC)$-points of $\Yw = \Xw \cap \mc{Y}_{K_{0}(p)}$ are the pairs $(E,H)$ where $E$ is $w$-ordinary and $H$ is the pseudocanonical subgroup of level one, and $(\Xw)_{w}$ is a cofinal set of strict neighbourhoods of the ordinary multiplicative locus in $\X_{K_{0}(p)}$.
\end{theo}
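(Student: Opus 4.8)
The plan is to analyze the fibres of $\pi_{\HT}$ over $\Pro_w$ in terms of the linear algebra of the Hodge-Tate filtration, translating the condition $\pi_{\HT}(E,\alpha) \in \Pro_w$ into the statements about $w$-ordinarity and strictness. First I would unwind the definition: for $(E,\alpha) \in \mc{Y}_\infty(C,\OC)$, the point $\pi_{\HT}(E,\alpha)$ is the line $(\alpha\otimes 1)^{-1}(\Lie E) \subseteq C^2$, and by the exact sequence recalled in the text this line is exactly $\ker(\HT_E \circ (\alpha \otimes 1))$ up to the Tate twist, equivalently the image of $\omega_{E^\vee}^\vee$. The condition that a line $\ell \subseteq C^2$ with homogeneous coordinate $z = -v/u$ lies in $\Pro_w$, namely $\inf_{a \in p\Zp}|z-a| \leq |p|^w$, says precisely that $\ell$ is spanned by a vector of the form $e_2 + (\text{small})e_1$ — more precisely that, after scaling, $\ell$ contains a generator whose $e_1$-component is $p^w$-divisible relative to its $e_2$-component, and that $\ell$ is close to one of the lines $\langle a e_1 + e_2\rangle$, $a \in p\Zp$ (equivalently, $\pi_{\HT}$ lands in the ``canonical'' Iwahori-translate of the ordinary point). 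The key computation is then to check that $\HT_E(\alpha(e_i)) \in p^w F_E$ for $i=1$ (i.e. $g=1$ here) is equivalent to this, using that $F_E = \HT_E(T_pE \otimes \OC)$ and that $\HT_E$ kills $\Lie E$: the vanishing of $z$-coordinate modulo $p^w$ forces $\alpha(e_1)$ to map into $p^w F_E$, and conversely. I would do this fibrewise over all $(C,\OC)$ and note that being $K_0(p)$-stable, $\Xinfw$ is cut out by these conditions.

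The statement that the pseudocanonical subgroup of level one is the reduction mod $p$ of $\langle \alpha(e_1)\rangle$ comes out of the same analysis: by definition $H_1 = \ker(E[p](C) \to F_E \otimes \OC/p^{\min(1,w)}\OC)$, and the above shows $\alpha(e_1) \bmod p$ lies in this kernel; a rank/order count (both sides have order $p$, using $w > 0$ so $\min(1,w) > 0$ makes the target a nontrivial quotient) shows equality, which is exactly the strictness of $\alpha$. This handles the first sentence of the theorem.

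Next I would descend to finite level. Since $\Pro_w$ is a $K_0(p)$-stable affinoid and $\pi_{\HT}$ is $\gl(\Qp)$-equivariant, $\Xinfw = \pi_{\HT}^{-1}(\Pro_w)$ is $K_0(p)$-stable; one then checks it descends to a quasicompact open $\Xw \subseteq \X_{K_0(p)}$ along the (pro-étale, $K_0(p)$-)tower $\Xinf \to \X_{K_0(p)}$. Concretely, one shows the defining condition depends only on the pair $(E, H_1)$ — i.e. on the image in $\X_{K_0(p)}$ — which is precisely the content of the second paragraph of linear algebra above: $w$-ordinarity is a condition on $E$ alone, and strictness records exactly the datum of $H_1$. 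That $\Xw$ is affinoid (not merely quasicompact open) I would get by comparing with the classical ``$\Hdg \leq p^{-w}$'' locus: identify $\Xw$, via the theory of the canonical subgroup, with a rational subdomain of $\X_{K_0(p)}$ cut out by a bound on the Hasse invariant, and show the pseudocanonical subgroup coincides with the classical canonical subgroup on overlap — this simultaneously gives the cofinality statement, since the $\{\Hdg \leq p^{-w}\}_w$ are the standard cofinal system of strict neighbourhoods of the ordinary-multiplicative locus. Finally, $\Xinfw$ is affinoid perfectoid because it is $\pi_{\HT}^{-1}$ of an affinoid in $\Pro$ and sits inside Scholze's perfectoid $\Xinf$; more carefully, $\Xinfw \sim \varprojlim_n \Xw^{(n)}$ for the preimages $\Xw^{(n)} \subseteq \X_n$, each affinoid, with the transition maps finite, so the tilted limit is affinoid perfectoid by Scholze's criterion.

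The main obstacle I anticipate is the identification of $\Xw$ with an honest affinoid (rational) subdomain of $\X_{K_0(p)}$ and the comparison of the pseudocanonical subgroup with the classical canonical subgroup: the Hodge-Tate-theoretic definition of $H_n$ is a priori only defined on geometric points in characteristic-zero perfectoid language, so relating it to the integral/torsion $p$-adic Hodge theory definitions of Andreatta-Iovita-Stevens (needed both for the affinoid claim and, later, for Theorem \ref{1.1}) requires care — in particular tracking the exact valuation bounds so that ``$w$-ordinary'' matches ``$\Hdg$ has valuation $\leq$ something explicit in $w$'', and ensuring the two notions of canonical subgroup of level $n$ agree in the range $0 < n < w+1$. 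Everything else is a bookkeeping exercise in the exact sequence for $\HT_E$ and descent along the Iwahori tower.
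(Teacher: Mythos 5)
The first half of your proposal (the pointwise translation of $\pi_{\HT}(E,\alpha)\in\Pro_w$ into $w$-ordinarity plus strictness, and the identification of $\alpha(e_1)\bmod p$ with a generator of the pseudocanonical subgroup) matches the paper, which indeed treats this as essentially immediate from the definitions since $\Pro_w=U_w.K_0(p)$. The genuine gap is in how you propose to get that $\Xw$ is \emph{affinoid} and that the $\Xw$ are cofinal: you want to identify $\Xw$ with a rational subdomain of $\X_{K_0(p)}$ cut out by a Hasse-invariant bound, with the pseudocanonical subgroup matching the canonical subgroup ``on overlap''. No such identification is available, and it is not how the paper argues: $w$-ordinarity is a genuinely different gauge from the Hodge height, and what one can prove are only two-sided \emph{containments} (e.g.\ the locus $\Hdg\leq v$ sits inside $\X_w$ for $w=n-\tfrac{p^n-1}{p-1}v$, while for $p/(p^2-1)<w\leq 1$ one gets $\deg(H_1)\geq (p-1)w/p$, so $H_1$ is canonical), never equality with a Hasse locus. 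The paper's route to affinoidness is different and uses two nontrivial inputs you have not replaced: (i) $\Pro_w$ is a rational subset of the standard affinoid $V_1\subset\Pro$, and since $\pi_{\HT}$ is adic, $\Xinfw$ is a rational subset of the affinoid perfectoid $\mc{V}_1=\pi_{\HT}^{-1}(V_1)$ of Scholze's theorem; this is also the correct (and only) argument for affinoid perfectoidness — your fallback ``inverse limit of affinoids with finite transition maps, so affinoid perfectoid'' is not a valid criterion as stated, and moreover presupposes the affinoidness of $\Xw$ you were trying to prove; (ii) because rational subsets come from finite level, $\Xinfw$ is the preimage of an affinoid $S\subset\X_{K(p^N)}$, and then $\Xw=S/(K_0(p)/K(p^N))$ is affinoid by the theorem on quotients of affinoids by finite group actions (a whole appendix section of the paper is devoted to making this legitimate).

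For cofinality the paper likewise does not match $\Xw$ with Hasse loci; it runs a quantitative degree argument: on $\X_1$ the pseudocanonical subgroup is canonical of degree $\geq (p-1)/p$, the element $\left(\begin{smallmatrix}p&0\\0&1\end{smallmatrix}\right)$ carries $\X_{\infty,w}$ to $\X_{\infty,w+1}$ via an anticanonical quotient, and the resulting recursion $\delta_{n+1}=\tfrac{p-1}{p}+\tfrac{\delta_n}{p}$ shows $\deg(H)\geq 1-p^{-n}$ on $\X_n$, which suffices because the degree loci are cofinal neighbourhoods of the ordinary-multiplicative locus. Your two-sided comparison idea could in principle substitute here, but only if you supply an analogous quantitative estimate valid for all large $w$ (your cited range is confined to $w\leq 1$), which is exactly the content your ``identification'' elides. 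Finally, your descent step (``the condition depends only on $(E,H_1)$, hence it descends'') is a pointwise statement; to produce an open $\Xw$ at finite level with $q^{-1}(\Xw)=\Xinfw$ the paper defines $\Xw:=q(\Xinfw)$, uses that $q$ is pro-\'etale (so images of quasicompact opens are open), that quasicompact opens of quasiseparated adic spaces are determined by their classical points, and an intermediate finite level; some such topological argument is needed and should be made explicit.
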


Taking these infinite-level objects as our basic ingredients, we are able to give a short definition (Definition \ref{mainocdef}) of sheaves of overconvergent modular forms $\omega _{\kappa,w}^\dagger$ on $\mc{X}_w$ whose global sections yield the module $M_{\kappa,w}(N)$ of Theorem \ref{1.1}. This construction also works in families of weights. As a guide to our constructions so far, we offer the following table of analogies:

\medskip

\begin{flushleft}
{\footnotesize
\begin{tabular}{|c|c|}
\hline 
$\mb{C}$ & $\mb{Q}_{p}$\tabularnewline
\hline
\hline 
$Y=Y_1(N)(\mb{C})$, a complex analytic space & $\mathcal{Y}_{w}\subset \mc{Y}_{K_0(p)}$, an adic space\tabularnewline
\hline 
$E/\mb{C}$ an elliptic curve & $E/C$ a $w$-ordinary elliptic curve\tabularnewline
\hline 
$\left(E,\beta\right)$, $\beta=\{\beta_{1},\beta_{2}\}\in H_{1}(E(\mb{C}),\mb{Z})$
an oriented basis & $(E,\alpha),$ $\alpha:\mb{Z}_{p}^{2}\overset{\sim}{\to}T_{p}E$
a strict trivialization\tabularnewline
\hline 
$\left(E,\beta\right)\in\tilde{Y}$, the universal cover
of $Y$ & $x=(E,\alpha)\in\mathcal{Y}_{\infty,w}(C)\subset\mathcal{X}_{\infty,w}(C)$\tabularnewline
\hline 
$\Gamma_{1}(N)\circlearrowright\tilde{Y}\overset{\sim}{\to}\mathfrak{h}\subset\mb{P}_{/\mb{C}}^{1}$ & $\pi_{\mathrm{HT}}:\mathcal{X}_{\infty,w}\twoheadrightarrow\mb{P}_{w}^{1}\subset\mb{P}_{/\mb{Q}_{p}}^{1}$$\circlearrowleft K_{0}(p)$\tabularnewline
\hline 
$\tilde{Y}=\mathfrak{h}$ and $\overline{\mathfrak{h}}$ are contractible & $\mathcal{X}_{\infty,w}$ is affinoid perfectoid\tabularnewline
\hline 
$z$, the coordinate on $\mathfrak{h}$ & $\mathfrak{z}\in\mathcal{O}^{+}(\mathcal{X}_{\infty,w})$, the fundamental
period\tabularnewline
\hline 
$z=z(E,\beta)$ characterized by & $\mathfrak{z}(x)\in C$ characterized by \tabularnewline
$\int_{\beta_{1}}\eta=z\int_{\beta_{2}}\eta$  & $\mathrm{HT}_{E}(\alpha(e_{1}))=\mathfrak{z}(x)\mathrm{HT}_{E}(\alpha(e_{2}))$\tabularnewline
\hline 
$\int_{\beta_{2}}\eta_{\mathrm{can}}=1$ & $\mf{s}=\mathrm{HT}_{E}(\alpha(e_{2}))$\tabularnewline
\hline 
$\eta_{\mathrm{can}}(\gamma z)=\frac{1}{cz+d}\eta_{\mathrm{can}}(z)$ & $\gamma^{\ast}\mf{s}=(b\mathfrak{z}+d)\mf{s}$\tabularnewline
\hline 
$(cz+d)^{k}\in\mathcal{O}(\mathfrak{h})$ & $\kappa(b\mathfrak{z}+d)\in\mathcal{O}(\mathcal{X}_{\infty,w})\otimes_{\mb{Q}_{p}}L$
($w\geq w_{\lambda}$)\tabularnewline
\hline 
$M_{k}(N)=\left\{ f\in\mathcal{O}(\mathfrak{h})\mid f(gz)=(cz+d)^{k}f(z)\right\} $ & $M_{\kappa,w}(N)=\left\{ f\in\mathcal{O}(\mathcal{X}_{\infty,w})\otimes_{\mb{Q}_{p}}L\mid\kappa(b\mathfrak{z}+d)\gamma^{\ast}f=f\right\} $\tabularnewline
\hline 
 & $M_{\kappa}^{\dagger}=\lim_{w\to\infty}M_{\kappa,w}$\tabularnewline
\hline
\end{tabular}
Table 1.1: Analogies.}
\par\end{flushleft}

\bigskip

As we have already mentioned, sheaves of overconvergent modular forms have been constructed previously by \cite{ais} and \cite{pil}, and their constructions (which appear slightly different on the surface) are known to give equivalent notions of overconvergent modular forms. We prove (Theorem \ref{pillonicomp}) that our definition is also equivalent to these previous constructions. As emphasized to us by a referee, we remark that our definition of the sheaf $\omega_{\kappa,w}^{\dg}$ is essentially as a line bundle on $\Xinfw$ with a descent datum to $\Xw$. However pro-\'etale descent of vector bundles is not effective in general (this follows for example from \cite[Example 8.1.6]{kl2}). In our case we show with relative ease that the resulting sheaf is indeed a line bundle.

\subsection{Modular curves vs. Shimura curves}

While we have written this introduction so far in the setting of modular curves, we have chosen to work with compact Shimura curves associated with an indefinite quaternion division algebra $B/ \mb{Q}$ split at (our fixed) $p$ in the body of the paper.  There are two reasons for this.  The first reason is that the local $p$-adic geometry of these Shimura curves is entirely analogous to the local $p$-adic geometry of modular curves, but the global geometry is simpler without the presence of boundary divisors, compactifications, and their attendant complications.  We believe that working in a boundaryless setting helps to clarify the point of view adopted in this paper. 

\medskip

Having said this, many of our ideas extend to the case of modular curves. In particular, all the definitions and results in \S \ref{sec:mfs} have exact analogues for classical modular curves, and Theorems \ref{1.1} and \ref{1.3} are true as stated. The techniques we use for Shimura curves work over the open modular curve, and one may extend over the boundary using ``soft" techniques (one does not need the advanced results of \cite[\S 2]{sch3}).

\medskip

The second reason is that our point of view on the overconvergent Eichler-Shimura map does \emph{not} immediately generalize to modular curves.  While we believe that the underlying philosophy should adapt, there are certain technical aspects of the construction (in particular Proposition \ref{prop:diam}) which seem difficult to adapt. As far as we can tell, this is a purely technical issue with the pro-\'etale site as defined in \cite{sch2}. In work in progress (\cite{dt}), Diao and Tan are developing a logarithmic version of the pro-\'etale site, and we believe that our constructions would work well in that setting.

\subsection{The overconvergent Eichler-Shimura map}

From now on we work with Shimura curves attached to an indefinite quaternion division algebra $B/\Qp$ split at $p$ and use the same notation that we previously used for modular curves.

\medskip

After Coleman's construction of Coleman families and the globalization of these to the Coleman-Mazur eigencurve (\cite{clm,cm}), different constructions of families of finite slope eigenforms and eigencurves for ${\rm GL}_{2/\mb{Q}}$ were given by Stevens (\cite{ste}, completed by Bella\"iche \cite{bel}) and Emerton (\cite{eme}). These constructions give the same eigencurve. Roughly speaking, each approach first constructs a $p$-adic Banach/Fr\'echet space (or many such spaces) of "overconvergent" objects interpolating modular forms/cohomology classes, and then creates a geometric object out of this (these) space(s). All spaces have actions of certain Hecke algebras and the fact that these give the same eigencurves amounts to saying that they contain the same finite slope systems of Hecke eigenvalues. However, not much is known about the direct relation between these spaces. One can rephrase the problem in the following way: each construction of the eigencurve remembers the spaces it came from in the form of a coherent sheaf on it, and one may ask if there are relations between these sheaves.

\medskip

In \cite{oes}, Andreatta, Iovita and Stevens study the relationship between overconvergent modular forms and overconvergent modular symbols. While Coleman's overconvergent modular forms $p$-adically interpolate modular forms, Stevens's overconvergent modular symbols interpolate classical modular symbols, i.e. classes in the singular cohomology groups $H^{1}(Y_{1}(N), \Sym^{k-2}\mc{H}^{1})$, where $\mc{H}^{1}$ is the first relative singular cohomogy of the universal elliptic curve. Classical Eichler-Shimura theory, which one may view as an elaboration of Hodge Theory for these particular varieties and coefficient systems, gives a Hecke-equivariant isomorphism
$$  H^{1}(Y_{1}(N), \Sym^{k-2}\mc{H}^{1})\otimes_{\mb{Z}}\mb{C} \cong M_{k} \oplus S_{k} $$
where $M_{k}$ is the space of weight $k$ and level $N$ modular forms and $S_{k}$ is its subspace of cusp forms. Faltings (\cite{fal}) constructed a $p$-adic Hodge-theoretic analogue of this isomorphism, replacing singular cohomology with \'etale cohomology. This construction was then adapted to the overconvergent context in \cite{oes}.

\medskip

Let us describe these ideas and our work in more detail. We refer to the main body of the paper for exact definitions. Recall (\cite{as,han1}) the \emph{overconvergent distribution modules} $\mbf{D}_{\kappa}^{s}$ where $\kappa$ is a character of $\Zp^{\times}$ as above. It may be interpreted as a local system on $X_{K_{0}(p)}(\mb{C})$. The singular cohomology $H^{1}(X_{K_{0}(p)}(\mb{C}),\mbf{D}_{\kappa}^{s})$ is the space of overconvergent modular symbols. Following \cite{han}, we construct a filtration on the integral distribution module $\mbf{D}_{\kappa}^{s,\circ}$ for which the corresponding topology is profinite. From this one gets a sheaf on the pro-\'etale site of $\X_{K_{0}(p)}$ whose cohomology is isomorphic to $H^{1}(X_{K_{0}(p)}(\mb{C}),\mbf{D}_{\kappa}^{s})$, but also carries a Galois action. To compare this to overconvergent modular forms of weight $\kappa$, one introduces a ``fattened" version $\oo\mbf{D}_{\kappa}^{s}$ of $\mbf{D}_{\kappa}^{s}$ which has the explicit description
$$ V \mapsto (\mbf{D}_{\kappa} \ctens \wh{\oo}_{\X_{K_{0}(p)}}(V_{\infty}))^{K_{0}(p)} $$
for $V\in \X_{K_{0}(p),\proet}$ quasi-compact and quasi-separated (qcqs), where $V_{\infty}:= V \times_{\X_{K_{0}(p)}}\Xinf $ and $\wh{\oo}_{\X_{K_{0}(p)}}$ is the completed structure sheaf on $\X_{K_{0}(p),\proet}$. After restricting to $\Xw$ it turns out to be easy to give a morphism to the completed version $\wh{\omega}_{\kappa,w}^{\dg}$: for $V\in \X_{w,\proet}$ qcqs there is a $K_{0}(p)$-equivariant morphism 
$$ \mbf{D}_{\kappa} \ctens \wh{\oo}_{\Xw}(V_{\infty}) \ra \wh{\oo}_{\Xw}(V_{\infty})\otimes_{\Qp}L $$
given on elementary tensors by
$$ \mu \otimes f \mapsto \mu(\kappa(1+\mf{z}x))f. $$
The formula is heavily inspired by a formula of Stevens for the comparison map between overconvergent distributions and polynomial distributions (whose cohomology computes classical modular symbols) which does not seem to be used a lot in the literature (see the paragraph before Definition \ref{def-int}). One then passes to $K_0(p)$-invariants to obtain the desired morphism of sheaves. This is our analogue of the maps denoted by $\delta_{\kappa}^{\vee}(w)$ in \cite{oes}. It induces a map on cohomology groups over $\Cp$ which gives the desired map of spaces using that  $H^{1}_{\proet}(\X_{w,\Cp},\wh{\omega}_{\kappa,w}^{\dg})\cong H^{0}(\X_{w,\Cp}, \omega_{\kappa,w}^{\dg}\otimes_{\oo_{\Xw}}\Omega^{1}_{\Xw})$. The strategy is the same as in \cite{oes}, except that they work with the so-called Faltings site instead of the pro-\'etale site. It is the presence of infinite level Shimura curves in the pro-\'etale sites of finite level Shimura curves that accounts for the clean explicit formulas we obtain: they provide the correct ``local coordinates" for the problem at hand.

\medskip

We analyze these maps by carrying out the above constructions in families of weights (as in \cite{oes}). To define Galois actions one needs to work with families parametrized by certain affine formal schemes instead of the more commonly used affinoid rigid spaces. Whereas the filtrations defined in \cite{oes} only work when the formal scheme is an open unit disc near the center of weight space, our filtrations are defined over arbitrary $\Spf R$ where $R$ is finite over $\Zp[[X_{1},..,X_{d}]]$ for some $d$. This enables us to glue the morphisms for different families of weights into a morphism of sheaves over the whole eigencurve.

\medskip

Denote by $\mc{C}$ the eigencurve and let $\mc{C}_{\Cp}$ be its base change to $\Cp$. It carries coherent sheaves $\mbf{V}$, resp. $\mc{M}^{\dg}$, coming from overconvergent modular symbols resp. forms. We denote by $\mbf{V}_{\Cp}$, resp. $\mc{M}^{\dg}_{\Cp}$, their base changes to $\Cp$, which may also be viewed as sheaves of $\oo_{\mc{C}}\ctens_{\Qp}\Cp$-modules on $\mc{C}$. In the latter point of view, one may think of $\Cp$ as a (rather primitive) period ring. After gluing, the overconvergent Eichler-Shimura map is a morphism $\mc{ES}\, :\, \mbf{V}_{\Cp} \ra \mc{M}^{\dg}_{\Cp}(-1)$ of sheaves.

\begin{theo}[Theorem \ref{thm:coker}, Theorem \ref{theo:main}]  Let $\mc{C}^{\sm}$ be the smooth locus of $\mc{C}$.
\begin{enumerate}
\item $\mbf{V}$ and $\mc{M}^{\dg}$ are locally free over $\mc{C}^{\sm}$, and the kernel $\mc{K}$ and image $\mc{I}$ of $\mc{ES}$ are locally projective sheaves of $\oo_{\mc{C}^{\sm}}\ctens_{\Qp}\Cp$-modules (or equivalently locally free sheaves on $\mc{C}^{\sm}_{\Cp})$. The support of $\mc{M}^{\dg}_{\Cp}(-1)/\mc{I}$ on $\mc{C}_{\Cp}$ is Zariski closed of dimension $0$.

\item Let $\epsilon_{\mc{C}^{\sm}}$ be the character of $G_{\Qp}$ defined by the composition
$$G_{\Qp} \overset{\epsilon}{\longrightarrow}  \Zp^{\times} \overset{\chi_{\mc{W}}}{\longrightarrow} \oo_{\mc{W}}^{\times} \longrightarrow  (\oo_{\mc{C}^{\sm}}^{\times}\ctens_{\Qp}\Cp)^{\times} $$
where $\epsilon$ is the $p$-adic cyclotomic character of $G_{\Qp}$ and $\chi_{\mc{W}}$ is the universal character of $\Zp^{\times}$. Then the semilinear action of $G_{\Qp}$ on the module $\mc{K}(\epsilon_{\mc{C}^{\sm}}^{-1})$ is trivial.

\item The exact sequence
$$ 0 \ra \mc{K} \ra \mbf{V}_{\Cp} \ra \mc{I} \ra 0 $$
is locally split. Zariski generically, the splitting may be taken to be equivariant with respect to both the Hecke- and $G_{\Qp}$-actions, and such a splitting is unique.

\end{enumerate}
\end{theo}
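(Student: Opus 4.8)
The plan is to prove all three statements on affinoid pieces of $\mc{C}^{\sm}$ carrying Coleman families, where $\mbf{V}$, $\mc{M}^{\dg}$ and $\mc{ES}$ are the family-level objects constructed above, and then glue. Both $\mbf{V}$ and $\mc{M}^{\dg}$ are torsion-free over the reduced curve $\mc{C}$, which is part of the basic structure theory of the eigencurve (cf.\ \cite{bel, cm}); since $\mc{C}^{\sm}$ is regular of dimension one this forces them to be locally free there, and hence so are $\mbf{V}_{\Cp}$ and $\mc{M}^{\dg}_{\Cp}$. The kernel $\mc{K}$ and image $\mc{I}$ of $\mc{ES}$ are then subsheaves of locally free sheaves on the reduced one-dimensional space $\mc{C}^{\sm}_{\Cp}$, hence torsion-free, hence locally free; equivalently, they are locally projective over $\oo_{\mc{C}^{\sm}}\ctens_{\Qp}\Cp$. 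For the cokernel, $\Supp(\mc{M}^{\dg}_{\Cp}(-1)/\mc{I})$ is Zariski closed and its formation commutes with base change, so it suffices to exhibit a point on each component at which $\mc{ES}$ is fibrewise surjective; classical non-critical points are Zariski dense, and at such a point the fibre of $\mc{ES}$ recovers (via Stevens's control theorem and the comparison theorems proved above) the holomorphic projection of classical Eichler--Shimura restricted to the relevant Hecke eigenspace, which is surjective onto the one-dimensional fibre of $\mc{M}^{\dg}$ (in particular $\mc{I}\neq 0$). As the non-smooth locus of $\mc{C}$ is itself finite, $\Supp(\mc{M}^{\dg}_{\Cp}(-1)/\mc{I})$ is zero-dimensional.

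For part (2), I would realise $\mc{K}$ generically as the complementary graded piece, with respect to the filtration on $\mbf{V}_{\Cp}$ arising from the relative Hodge--Tate sequence $0\to(\Lie\,A)(1)\to T_{p}A\otimes_{\Zp}\Cp\to\omega_{A^{\vee}}\to 0$, of the piece onto which $\mc{ES}$ projects. Dualising the generically exact sequence $0\to\mc{K}\to\mbf{V}_{\Cp}\to\mc{M}^{\dg}_{\Cp}(-1)\to 0$ and using the Poincar\'e pairing on $\mbf{V}$ (cup product on the first cohomology of the Shimura curve, valued in a Tate twist of $\oo_{\mc{C}}$), one identifies $\mc{K}$, away from a zero-dimensional locus, with a twist of the dual of $\mc{M}^{\dg}_{\Cp}$; the twist is pinned down by tracking how symmetric powers and Tate twists of the above sequence vary with the weight, and comparing Hodge--Tate weights across the family identifies it with $\epsilon_{\mc{C}^{\sm}}$. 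Since $\mc{M}^{\dg}$ is defined over $\mc{C}/\Qp$, this shows that $\mc{K}(\epsilon_{\mc{C}^{\sm}}^{-1})$ carries the geometric --- hence trivial --- semilinear $G_{\Qp}$-action on the generic locus, and the statement then propagates to all of $\mc{C}^{\sm}$ because $\mc{K}(\epsilon_{\mc{C}^{\sm}}^{-1})$ is locally free.

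For part (3), local splitting is immediate, since over an affinoid $0\to\mc{K}\to\mbf{V}_{\Cp}\to\mc{I}\to 0$ is a short exact sequence of projective modules. For the equivariant splitting, work in a single Hecke-eigencomponent over a small $\Spf R$, where $\mc{I}$ and $\mc{K}$ become invertible. A Hecke- and $G_{\Qp}$-equivariant map $\mc{I}\to\mc{K}$ is then multiplication by a section whose $\sigma$-twist equals a fixed continuous character built from the cyclotomic character (acting on $\mc{I}\subset\mc{M}^{\dg}_{\Cp}(-1)$) and from $\epsilon_{\mc{C}^{\sm}}$ (acting on $\mc{K}$ by part (2)); this character is non-trivial on inertia outside a zero-dimensional locus (where the universal weight takes its exceptional value), so by Tate's theorem the section vanishes there, and the equivariant splitting --- if it exists --- is unique on a Zariski dense open $U$. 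Existence on (a possibly smaller) $U$ follows from the vanishing of the obstruction in an $\Ext^{1}$-group which, in the eigencomponent, reduces to a continuous $G_{\Qp}$-cohomology group with coefficients in a twisted $\Cp$-Banach module; a family (Sen-theoretic) version of Tate's computation gives the vanishing whenever the twisting character is neither trivial nor cyclotomic, which again holds on a Zariski dense open. Alternatively, at each classical non-critical point the fibre of $\mbf{V}_{\Cp}$ is a Hodge--Tate $G_{\Qp}$-representation with distinct weights, hence has a canonical (Galois- and Hecke-) equivariant splitting of its filtration; these exist on a dense set and, by the uniqueness just shown, glue to the desired splitting over $U$.

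The main obstacle is the combination of part (2) and the existence half of part (3): determining the precise cyclotomic twist exhibiting $\mc{K}(\epsilon_{\mc{C}^{\sm}}^{-1})$ as a pullback from $\mc{C}/\Qp$ --- matching the abstract relative Hodge--Tate sequence against the variation of the weight character over a family --- and then upgrading the pointwise classical Eichler--Shimura splittings to one defined over a Zariski dense open, which requires a family version of Tate--Sen vanishing over the formal weight bases $\Spf R$ (not smooth in general) on which the Galois actions live. Everything else --- local freeness from torsion-freeness over a regular curve, splitting of projective modules, and the fibrewise reduction of the cokernel statement to classical Eichler--Shimura --- is comparatively formal.
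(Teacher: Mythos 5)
Your treatment of part (1), of the cokernel statement, and of part (3) is essentially the paper's: local freeness of $\mbf{V}$ and $\mc{M}^{\dg}$ comes from torsion-freeness over the regular one-dimensional $\mc{C}^{\sm}$, the support of $\mc{M}^{\dg}_{\Cp}(-1)/\mc{I}$ is controlled by fibrewise surjectivity of $\mc{ES}$ at the Zariski dense set of non-critical classical points (which in the paper is Proposition \ref{prop:image}, i.e.\ the factorization through Faltings's map), and for (3) the paper likewise realizes the extension class in $H^{1}(G_{\Qp},\Hom(\mc{I},\mc{K}))$, kills it by the determinant of the Sen operator of $\Hom(\mc{I},\mc{K})$ (nonvanishing being checked at points of $\mc{C}^{\nc}$), and gets Hecke-equivariance and uniqueness from the fibrewise distinctness of Hodge--Tate weights after discarding weight $-1$. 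Two slips there are harmless but worth noting: the fibres of $\mc{M}^{\dg}$ are not one-dimensional on $M$-new components with $M\neq N$ (rank $\tau(N/M)$), and correspondingly $\mc{I}$ and $\mc{K}$ do not become invertible on a ``Hecke-eigencomponent'' of the eigencurve in general, so your uniqueness argument should be phrased, as in the paper, via the absence of nonzero $G_{\Qp}$-equivariant maps $\mc{I}\ra\mc{K}$ rather than via a single multiplier section; also, your alternative ``glue the pointwise classical splittings'' route is not by itself rigorous, since analytic variation of those splittings is exactly what the Sen-theoretic argument supplies.

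The genuine gap is in part (2). Your route rests on an eigencurve-level Poincar\'e pairing on $\mbf{V}$ identifying $\mc{K}$, generically, with a twist of the dual of $\mc{M}^{\dg}_{\Cp}$; no such pairing (let alone its perfectness and Hecke/Galois compatibilities in families) is constructed in the paper, and establishing it is a substantial project in its own right. Moreover, ``pinning down the twist by comparing Hodge--Tate weights across the family'' is circular at non-classical points, where there is no naive Hodge--Tate weight: one needs precisely the family Sen formalism you are trying to bypass, or a density-plus-analyticity argument. Finally, the last step --- propagating triviality of the semilinear $G_{\Qp}$-action from the generic locus to all of $\mc{C}^{\sm}$ ``because $\mc{K}(\epsilon_{\mc{C}^{\sm}}^{-1})$ is locally free'' --- is unjustified: triviality means the module is generated by its Galois invariants (i.e.\ descends to $\mc{C}^{\sm}/\Qp$), and this is not a condition that automatically extends across the removed dimension-zero locus. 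The paper avoids all of this by working directly on $\mc{C}^{\sm}$: reduce via Lemma \ref{lemm:qs} and Remark \ref{rema:qsc} to affinoids on which $\mc{C}^{\nc}$ is very Zariski dense and $\mc{K}$ is free, apply the family Sen theory of \cite{se1,se2} to $\mc{K}(\epsilon_{\mc{C}^{\sm}}^{-1})$, observe that its Sen operator vanishes at every point of $\mc{C}^{\nc}$ (classical Hodge--Tate theory) and hence identically by analyticity, and conclude triviality as in \cite[Theorem 6.1(c)]{oes}. If you want to keep a duality-flavoured proof you would first have to construct the pairing in families and still invoke Sen theory to identify the twist, at which point the paper's direct argument is both shorter and stronger.
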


These are stronger analogues of results in \cite{oes}, where the authors prove the analogous results for modular curves in some small (unspecified) open neighbourhood of the set of non-critical classical points. 

\medskip

We believe that our perspective on overconvergent modular forms and the overconvergent Eichler-Shimura map should generalize to higher-dimensional Shimura varieties. In particular, it should be reasonably straightforward to adapt the methods of this paper to (the compact versions of) Hilbert modular varieties. 

\subsection{Notation, conventions and an outline of the paper} 

Throughout this text, we let $p$ denote a fixed prime.

\medskip

For the purposes of this paper, a \emph{small $\Zp$-algebra} is a ring $R$ which is reduced, $p$-torsion-free, and finite as a $\Zp[[X_{1},...,X_{d}]]$-algebra for some (unspecified) $d\geq 0$. Any such $R$ carries a canonical adic profinite topology, and is also complete for its $p$-adic topology. For convenience we will fix a choice of ideal $\mf{a}=\mf{a}_{R}$ defining the profinite topology (a ``canonical" example of such a choice is the Jacobson radical). All constructions made using this choice will be easily verified to not depend upon it.

\medskip
 
We will need various completed tensor product constructions, some of which are non-standard. We will need to take a form of completed tensor product between small $\Zp$-algebras and various Banach spaces or other $\Zp$-modules.  These will always be denoted by an undecorated completed tensor product $\ctens$. We explain our conventions for the unadorned $\ctens$ in Convention \ref{weighttens} and Definitions \ref{defi: ctens} and \ref{mixtens} below. Any adorned $\ctens_{A}$ is a standard one, with respect to the natural topology coming from $A$.

\medskip

We will use Huber's adic spaces as our language for non-archimedean analytic geometry in this paper. In particular, a ``rigid analytic variety" will refer to the associated adic space, and all open subsets and open covers are open subsets resp. open covers of the adic space (i.e. we drop the adjective ``admissible" used in rigid analytic geometry). The pro-\'etale site of \cite{sch2} is key to our constructions; we will freely use notation and terminology from that paper. For perfectoid spaces we use the language of \cite{kl} for simplicity (e.g. we speak of perfectoid spaces over $\Qp$), but any perfectoid space appearing is a perfectoid space is the sense of \cite{perf} (i.e. it lives over a perfectoid field).

\medskip

Let us finish the introduction by briefly outlining the contents of the paper. In \S \ref{sec:mfs} we give our new definitions of sheaves of overconvergent modular forms in families and prove their basic properties, including a comparison with the definitions of \cite{ais,pil}. In \S \ref{sec:3} we recall the basic definitions from the theory of overconvergent modular symbols and define the filtrations mentioned above. We make some technical adjustments when defining slope decompositions. In particular, we do not need the concept of a weak orthonormal basis used in \cite{oes}; all slope decompositions can be defined using standard orthonormal bases and formal operations. In \S \ref{sec:4} we define our overconvergent Eichler-Shimura maps, and \S \ref{sec:5} glues them over the eigencurve and proves the properties stated above.
 
\medskip

The paper concludes with an appendix, collecting various technical results and definitions that are needed in the main text; some of these results may be of independent interest. In \S \ref{sec:6.1} we define our non-standard completed tensor products and prove some basic properties. While we only need this $\ctens$ for small $\Zp$-algebras $R$, it turns out that the ring structure only serves to obfuscate the situation. Accordingly, we define $\ctens$ for a class of $\Zp$-module that we call \emph{profinite flat}. Throughout the text we will need to consider sheaves of rings like $\oo_{X}\ctens R$ and $\wh{\oo}_{X}\ctens R$ on $X$ where $X$ is a rigid space and $R$ is a small $\Zp$-algebra. We prove some technical facts about these sheaves of rings and their modules in \S \ref{sec:6.2}-\ref{sec:6.3}. Finally \S \ref{sec:6.4} discusses quotients of rigid spaces by finite group actions, proving a standard existence result that we were not able to locate in the literature.

\subsection*{Acknowledgements} This collaboration grew out of discussions at the conference in Lyon in June 2013 between P.C. and C.J, and then conversations in Jussieu in October 2013 between P.C. and D.H. We thank the organizers of the Lyon conference for the wonderful gathering. All of the authors benefited from shorter or longer stays at Columbia University and the University of Oxford. D.H. would also like to thank Johan de Jong, Michael Harris and Shrenik Shah for some helpful conversations, and C.J. would like to thank Hansheng Diao for conversations relating to this work and \cite{dt}. The authors would like to thank Judith Ludwig for pointing out a gap in our original proof of Proposition \ref{prop: qspace}, as well as the anonymous referees for their comments and corrections.

\medskip

P.C. was partially funded by EPSRC grant EP/L005190/1. D.H. received funding from the European Research Council under the European Community's Seventh Framework Programme (FP7/2007-2013) / ERC Grant agreement no. 290766 (AAMOT). C. J. was supported by EPSRC Grant EP/J009458/1 and NSF Grants 093207800 and DMS-1128155 during the work on this paper.

\section{Overconvergent modular forms}\label{sec:mfs}

\subsection{Weights and characters}
In section we recall some basic notions about weights. We may define weight space as the functor from complete affinoid $(\Zp,\Zp)$-algebras $(A,A^{+})$ to abelian groups given by
$$ (A,A^{+}) \mapsto \Hom_{cts}(\Zp^{\times},A^{\times}). $$
This functor is representable by $(\Zp[[\Zp^{\times}]],\Zp[[\Zp^{\times}]])$. The proof is well known. The key fact that makes the arguments work in this generality is that $A^{\circ}/A^{\circ\circ}$ is a reduced ring of characteristic $p$, where $A^{\circ \circ}$ is the set of topologically nilpotent elements in $A$. We define $\mf{W}=\Spf(\mathbb{Z}_p[[\mathbb{Z}_p^{\times}]])$ and let  $\mc{W}=\Spf(\mathbb{Z}_p[[\mathbb{Z}_p^{\times}]])^{\mathrm{rig}}$ be the associated rigid analytic weight space, with its universal character $\chi_{\mc{W}}:\mathbb{Z}_p^{\times} \to \mc{O}(\mc{W})^{\times}$. 

\medskip

We embed $\mb{Z}$ into $\mc{W}$ by sending $k$ to the character $\chi _k(z) = z^{k-2}$.

\begin{defi}\label{defi:smallw}
\begin{enumerate}
\item A small weight is a pair $\mc{U}=(R_{\mc{U}},\chi_{\mc{U}})$ where $R_{\mc{U}}$ is a small $\Zp$-algebra and $\chi_{\mc{U}}\, :\, \Zp^{\times} \ra R^{\times}_{\mc{U}}$ is a continuous character such that $\chi_{\mc{U}}(1+p)-1$ is topologically nilpotent in $R_{\mc{U}}$ with respect to the \emph{$p$-adic} topology.

\item An affinoid weight is a pair $\mc{U}=(S_{\mc{U}},\chi_{\mc{U}})$ where $S_{\mc{U}}$ is a reduced Tate algebra over $\Qp$ topologically of finite type and $\chi_{\mc{U}}\, :\, \Zp^{\times} \ra S^{\times}_{\mc{U}}$ is a continuous character.

\item A weight is a pair $\mc{U}=(A_{\mc{U}},\chi_{\mc{U}})$ which is either a small weight or an affinoid weight.

\end{enumerate}
\end{defi}

We shall sometimes abbreviate $A_{\mc{U}}$ by $A$ when $\mc{U}$ is clear from context. In either case, we can make $A_{\U}[\frac{1}{p}]$ into a uniform $\Qp$-Banach algebra by letting $A_{\U}^{\circ}$ be the unit ball and equipping it with the corresponding spectral norm. We will denote this norm by $|\cdot |_{\U}$. Note then that there exists a smallest integer $s \geq 0$ such that $|\chi_{\mc{U}}(1+p)-1|_{\U}<p^{-\frac{1}{p^{s}(p-1)}}$. We denote this $s$ by $s_{\mc{U}}$. When $\mc{U}$ is small, the existence uses that $\chi_{\mc{U}}(1+p)-1$ is \emph{$p$-adically} topologically nilpotent. We will also make the following convention:

\begin{conv}\label{weighttens} Let $\mc{U}$ be a weight and let $V$ be a Banach space over $\Qp$. We define $V \ctens A_{\mc{U}}$ as follows:
\begin{enumerate} 
\item If $\mc{U}$ is small, then $V \ctens R_{\mc{U}}$ is a \emph{mixed completed tensor product} in the sense of the appendix.

\item If $\mc{U}$ is affinoid, then $V \ctens S_{\mc{U}}:=V \ctens_{\Qp} S_{\mc{U}}$.

\end{enumerate}
\end{conv}

\begin{rema} Many (though not all) of the results in this paper involving a choice of some weight $\mc{U}$ make equally good sense whether $\mc{U}$ is small or affinoid.  In our proofs of these results, we typically give either a proof which works uniformly in both cases, or a proof in the case where $\mc{U}$ is small, which is usually more technically demanding.
\end{rema}

\medskip

When $\mc{U}=(R_{\mc{U}},\chi_{\mc{U}})$ is a small weight the universal property of weight space gives us a canonical morphism
$$ \Spf(R_{\mc{U}}) \ra \mf{W}, $$
which induces a morphism
$$ \Spf(R_{\mc{U}})^{\rig} \ra \mc{W}. $$
When $\mc{U}=(S_{\mc{U}},\chi_{\mc{U}})$ is an affinoid weight we get an induced morphism
$$ \Spa(S_{\mc{U}},S_{\mc{U}}^{\circ}) \ra \mc{W}. $$
We make the following definition:

\begin{defi}

\begin{enumerate}
\item A small weight $\U=(R_{\U},\chi_{\U})$ is said to be \emph{open} if $R_{\U}$ is normal and the induced morphism $ \Spf(R_{\U})^{\rig} \ra \mc{W} $ is an open immersion.

\item An affinoid weight $\U=(S_{\U},\chi_{\U})$ is said to be \emph{open} if the induced morphism $  \Spa(S_{\U},S_{\U}^{\circ}) \ra \mc{W} $ is an open immersion.

\item A weight $\U=(A_{\U},\chi_{\U})$ is said to be \emph{open} if it is either a small open weight or an affinoid open weight.

\end{enumerate}
\end{defi}

Note that if $\U$ is a small weight such that $\U^{rig} \ra \mc{W}$ is an open immersion, then the normalization of $\U$ is a small open weight.
\medskip

Let $B$ be any uniform $\mb{Q}_p$-Banach algebra. Let us say that a function $f:\Zp\to B$
is $s$-analytic for some nonnegative integer $s$ if, for any fixed $a\in \Zp$, there is
some $\varphi_{f,a}\in B\left\langle T\right\rangle $ such that $\varphi_{f,a}(x)=f(p^{s}x+a)$
for all $x\in\Zp$. In other words, $f$ can be expanded
in a convergent power series on any ball of radius $p^{-s}$. This
is naturally a Banach space which we denote by $\mc{C}^{s-\mathrm{an}}(\Zp,B)$.

\begin{theo}[Amice]\label{theo: amice} The polynomials $e_{j}^{s}(x)=\left\lfloor p^{-s}j\right\rfloor !\left(\begin{array}{c}
x\\
j\end{array}\right)$ form an orthonormal basis of $\mc{C}^{s-\mathrm{an}}(\Zp,B)$
for any uniform $\Qp$-Banach algebra $B$. Furthermore, we have $e_{j}^{s}(\Zp+p^{s} B^{\circ})\sub B^{\circ}$.
\end{theo}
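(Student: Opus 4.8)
**The plan is to verify the Amice basis statement by reducing the analytic claims to Mahler's classical theorem on the binomial polynomials and then tracking $p$-adic valuations through the normalization factors $\lfloor p^{-s}j\rfloor!$.**

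First I would reduce to the case $B = \Qp$, or rather to the unit ball $\Zp \subset \Qp$: since $B$ is uniform, its unit ball $B^\circ$ is the completion of a filtered colimit of $\Zp$-algebras, and a family $(f_j)$ in a Banach $B$-module is an orthonormal basis as soon as the reduction mod $B^{\circ\circ}$ gives a basis of the associated graded over $B^\circ/B^{\circ\circ}$; equivalently, one checks that every $f \in \mc{C}^{s-\mathrm{an}}(\Zp,B)$ has a unique expansion $f = \sum_j c_j e_j^s$ with $c_j \in B$, $c_j \to 0$, and $|f| = \sup_j |c_j|$. Because the $e_j^s$ have coefficients in $\Zp$ (this is the second assertion, so it must be proved in tandem — see below), the question of orthonormality is insensitive to the coefficient algebra, and it suffices to treat $B$ with value group dense enough, or simply to argue coefficientwise. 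So the heart of the matter is a statement about $\mc{C}^{s-\mathrm{an}}(\Zp,\Qp)$.

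Next I would recall Mahler's theorem: every continuous $f:\Zp \to \Qp$ has a unique uniformly convergent expansion $f(x) = \sum_{j\geq 0} a_j \binom{x}{j}$ with $a_j \to 0$, and $\|f\|_{\sup} = \sup_j |a_j|$. The $s$-analytic functions are exactly those for which the $a_j$ decay fast enough — precisely, writing $f(p^s x + a) = \varphi_{f,a}(x) \in \Qp\langle T\rangle$ for each $a$, one translates the radius-$p^{-s}$ analyticity into the growth condition $|a_j| \cdot |\lfloor p^{-s} j\rfloor!|^{-1} \to 0$, i.e. $|a_j| = o(|\lfloor p^{-s}j\rfloor!|)$, and moreover $\sup_j |a_j| \cdot |\lfloor p^{-s}j\rfloor!|^{-1}$ equals the Banach norm on $\mc{C}^{s-\mathrm{an}}$. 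Setting $c_j = a_j / \lfloor p^{-s}j\rfloor!$ then gives $f = \sum_j c_j e_j^s$ with $c_j \to 0$ and $\|f\|_{s-\mathrm{an}} = \sup_j |c_j|$, which is exactly orthonormality. The identity linking the radius of convergence of a power series expansion of $\binom{x}{j}$-combinations to the $p$-adic size of factorials is the standard input here; the cleanest route is to expand a single monomial $\binom{p^s x + a}{j}$ in the variable $x$ and bound the denominators using $v_p(m!) = (m - s_p(m))/(p-1)$, where $s_p$ is the digit sum.

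For the integrality statement $e_j^s(\Zp + p^s B^\circ) \subseteq B^\circ$, I would argue as follows: it suffices to show $e_j^s(\Zp) \subseteq \Zp$ and that the power series $\varphi_{e_j^s, a}(T) = e_j^s(p^s T + a)$ has coefficients in $\Zp$, since then $e_j^s(a + p^s b) = \varphi_{e_j^s,a}(b) \in B^\circ$ for $b \in B^\circ$. That $e_j^s$ is $\Zp$-valued on $\Zp$ is a consequence of the fact that $\binom{x}{j}$ takes values in $\Zp$ and that $v_p(\lfloor p^{-s}j\rfloor!)$ is small enough — more precisely, a direct count shows $v_p(j!) - v_p(\lfloor p^{-s}j\rfloor!) \geq 0$ always, with the $\lfloor p^{-s}j\rfloor !$ normalization designed exactly so that $\lfloor p^{-s}j\rfloor! \binom{x}{j}$ has integral coefficients as a polynomial and integral values; the power-series-in-$T$ integrality is the same kind of bound applied after the substitution $x \mapsto p^s T + a$, where each factor $p^s T + a - i$ in the numerator of $\binom{p^s T + a}{j}$ either contributes a unit or a $p^s$, and one checks the surviving $p$-powers dominate $v_p(j!) - v_p(\lfloor p^{-s} j\rfloor!)$ term by term.

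**The main obstacle** I anticipate is the bookkeeping in the last step: making the factorial valuation estimate $v_p(j!) - v_p(\lfloor p^{-s}j\rfloor!) \geq \lfloor p^{-s}j\rfloor \cdot(\text{something}) $ match up precisely with the $p$-adic gain from the $p^s$ factors appearing in $\binom{p^s T + a}{j}$ after substitution, so that one gets integrality \emph{and} sees that $\lfloor p^{-s}j\rfloor!$ is the sharp normalizing constant (giving genuine orthonormality rather than merely an orthogonal family of the wrong norm). This is entirely elementary but fiddly; the clean way to organize it is via the digit-sum formula for $v_p$ and a careful split of $j$ into its residue and quotient modulo $p^s$. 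I would not expect any conceptual difficulty beyond that, as the analytic content is entirely encoded in Mahler's theorem together with the standard dictionary between coefficient decay and radius of convergence.
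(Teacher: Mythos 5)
The paper does not actually prove this statement: its ``proof'' is a citation to Colmez (Theorem 1.7.8 of \cite{col}), so the only meaningful comparison is with that classical argument of Amice--Colmez, and your outline does follow the same general route (Mahler's theorem plus factorial valuation estimates). Two of your three ingredients are sound: the expansion of $\binom{p^{s}T+a}{j}$ together with $v_p(j!)-v_p(\lfloor p^{-s}j\rfloor!)=\sum_{k=1}^{s}\lfloor j/p^{k}\rfloor$ does give that $\lfloor p^{-s}j\rfloor!\binom{p^{s}T+a}{j}$ has coefficients in $\Zp$, hence the integrality claim $e_{j}^{s}(\Zp+p^{s}B^{\circ})\subseteq B^{\circ}$, and consequently that $(c_{j})\mapsto\sum_{j}c_{j}e_{j}^{s}$ maps $c_{0}(B)$ into $\mc{C}^{s-\mathrm{an}}(\Zp,B)$ with norm at most $\sup_{j}|c_{j}|$. (Two small repairs: the $e_{j}^{s}$ do \emph{not} have polynomial coefficients in $\Zp$ for $s\geq 1$ --- only their values and local expansions are integral --- and your reduction to $B=\Qp$ via the residue ring $B^{\circ}/B^{\circ\circ}$ is shaky for a general uniform $B$; it is cleaner to run the whole argument with $B$-coefficients, since Mahler's theorem holds verbatim for continuous functions valued in any $\Qp$-Banach space and all the valuation estimates concern the universal elements $e_{j}^{s}$.)

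The genuine gap is the converse inequality, which is the actual content of Amice's theorem and which your toolkit does not reach: you must show that \emph{every} $f\in\mc{C}^{s-\mathrm{an}}(\Zp,B)$ has Mahler coefficients satisfying $|a_{j}|\leq\|f\|\cdot|\lfloor p^{-s}j\rfloor!|$ with $|a_{j}|/|\lfloor p^{-s}j\rfloor!|\to 0$, i.e.\ that the map above is surjective and isometric rather than merely norm-decreasing and injective (an open-mapping argument would only give an equivalence of norms, not orthonormality). Expanding $\binom{p^{s}x+a}{j}$ only controls the images of the $e_{j}^{s}$; it says nothing about how the local Taylor coefficients of an arbitrary $s$-analytic $f$ bound its finite differences $a_{j}=(\Delta^{j}f)(0)$. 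Filling this requires a separate argument, e.g.\ Amice's: decompose $f$ into its restrictions to the cosets $a+p^{s}\Zp$, note that the unit ball of $\mc{C}^{s-\mathrm{an}}$ is topologically generated over $\Zp$ by the functions $\mathbf{1}_{a+p^{s}\Zp}(x)\binom{(x-a)/p^{s}}{m}$ (or $((x-a)/p^{s})^{m}$), and then compute or bound the Mahler coefficients of these specific functions; alternatively one can argue with iterates of the congruence $(1+\Delta)^{p^{s}}\equiv 1+\Delta^{p^{s}}\pmod{p}$, or dually via the Amice transform. As written, the phrase ``the standard dictionary between coefficient decay and radius of convergence'' is precisely the theorem being proved, so the proposal, while correctly structured, leaves its hardest half unproved.
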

\begin{proof}
This is well known, see e.g. \cite[Theorem 1.7.8]{col} and its proof.
\end{proof}

We can now prove that characters extend over a bigger domain.

\begin{prop}\label{prop-extend}
Let $\U=(A_{\U},\chi_{\U})$ be a weight and let $B$ be any uniform $\Qp$-Banach algebra. Then for any $s\in \mb{Q}_{>0}$ such that $s \geq s_{\U}$, $\chi_{\U}$ extends canonically to a character \[ \chi_{\U}: B_{s}^{\times} \ra (A_{\U}^{\circ}\ctens_{\Zp}B^\circ)^{\times} \subset (A_{\U} \ctens B)^{\times},\]where $B_s ^{\times} := \Zp^\times \cdot (1+p^{s+1}B^{\circ}) \sub (B^{\circ})^{\times}$ and $p^{s+1}B^{\circ}$ is shorthand for $\{b\in B^{\circ} \mid |b|\leq p^{-s-1} \}$ (where $|-|$ is the spectral norm on $B$).
\end{prop}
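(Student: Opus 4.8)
The plan is to reduce the statement to a concrete computation with the Amice basis of Theorem \ref{theo: amice}. First I would fix $s \geq s_{\U}$ and observe that any element of $B_s^{\times} = \Zp^{\times}\cdot(1+p^{s+1}B^{\circ})$ may be written as $\zeta\langle u\rangle \cdot (1+p^{s+1}b)$ where $\zeta$ is a Teichm\"uller representative, $\langle u\rangle \in 1+p\Zp$, and $b \in B^{\circ}$; since $\chi_{\U}$ is already defined on $\Zp^{\times}$, it suffices to extend it on the principal units, i.e. to make sense of $\chi_{\U}$ on $1+p^{s+1}B^{\circ}$ (or more naturally on $1+p\Zp \cdot (1+p^{s+1}B^{\circ})$), compatibly. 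Writing $1+p\Zp^{\times}$-part multiplicatively via $(1+p)^{x}$ for $x \in \Zp$, the question becomes: given that $t := \chi_{\U}(1+p) \in A_{\U}^{\times}$ satisfies $|t-1|_{\U} < p^{-1/(p^{s}(p-1))}$ by definition of $s_{\U}$, one wants to define $t^{y}$ for $y \in \Zp + p^{s+1}B^{\circ}$, taking values in $(A_{\U}^{\circ}\ctens_{\Zp}B^{\circ})^{\times}$.

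The key step is then to invoke Amice: the function $y \mapsto t^{y}$ on $\Zp$ is $s$-analytic with values in $A_{\U}$, because $t^{y} = (1+(t-1))^{y} = \sum_{j\geq 0}(t-1)^{j}\binom{y}{j}$ and the bound $|t-1|_{\U} < p^{-1/(p^{s}(p-1))}$ is exactly what is needed for $\sum_j (t-1)^j \lfloor p^{-s}j\rfloor!^{-1} e_j^s(y)$ to converge with coefficients in $A_{\U}^{\circ}$, using $v_p(\lfloor p^{-s}j\rfloor!) \sim j/(p^s(p-1))$. Concretely, expanding $y \mapsto t^y$ in the orthonormal basis $\{e_j^s\}$ of $\mc{C}^{s-\mathrm{an}}(\Zp, A_{\U}[\tfrac1p])$ gives $t^y = \sum_j c_j e_j^s(y)$ with $c_j \in A_{\U}^{\circ}$ and $c_j \to 0$; now \emph{define} $\chi_{\U}(1+p^{s+1}b)$, and more generally $\chi_{\U}$ on $(1+p)^{\Zp}(1+p^{s+1}B^{\circ})$, by substituting the relevant element $y \in \Zp + p^{s+1}B^{\circ}$ into this series, using the last clause of Theorem \ref{theo: amice} that $e_j^s(\Zp + p^s B^{\circ}) \subseteq B^{\circ}$ (a fortiori on $\Zp + p^{s+1}B^{\circ}$) to see the sum converges in $A_{\U}^{\circ}\ctens_{\Zp} B^{\circ}$. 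One then checks this is a continuous homomorphism and lands in the units: multiplicativity $t^{y+y'} = t^y t^{y'}$ holds on $\Zp$ by inspection and extends by continuity/density, and the value is a unit because $|t^y - 1| < 1$ keeps it in $1 + (A_{\U}^{\circ}\ctens_{\Zp}B^{\circ})^{\circ\circ}$. Canonicity (independence of the decomposition $\zeta\langle u\rangle(1+p^{s+1}b)$, and compatibility with the given $\chi_{\U}$ on $\Zp^{\times}$ and among different $s$) follows since any two extensions agree on the dense subset $\Zp^{\times} \subset B_s^{\times}$ and the target is Hausdorff.

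The main obstacle I anticipate is bookkeeping with the non-standard mixed completed tensor product $\ctens_{\Zp}$ when $\mc{U}$ is small: one must verify that substituting an element of $\Zp + p^{s+1}B^{\circ}$ into an $A_{\U}$-valued $s$-analytic function genuinely produces a well-defined element of $A_{\U}^{\circ}\ctens_{\Zp}B^{\circ}$ with respect to the correct topology (the adic profinite topology on $A_{\U}^{\circ} = R_{\U}$ rather than its $p$-adic one), and that the resulting series converges there. This amounts to checking that the coefficients $c_j$, which are $p$-adically small, are also small for the profinite topology — which should follow from the definition of $s_{\U}$ in the small case (where one uses that $\chi_{\U}(1+p)-1$ is $p$-adically topologically nilpotent, so its powers are small in the spectral norm $|\cdot|_{\U}$ which controls the $p$-adic behaviour) combined with the general properties of mixed completed tensor products established in the appendix (Definition \ref{mixtens} and the surrounding results). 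In the affinoid case $\ctens_{\Zp}B^{\circ}$ is the honest $p$-adically completed tensor product of $\Zp$-algebras and this subtlety disappears, so one could also simply run the small case with care and remark that the affinoid case is strictly easier.
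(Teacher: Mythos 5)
Your construction is the same as the paper's: split off the Teichm\"uller part, transport the principal part to an exponent $y=\log\langle b\rangle/\log(1+p)$, expand $\chi_{\U}(1+p)^{y}$ in the Amice basis using $v_{p}(\lfloor p^{-s}j\rfloor !)\leq j/(p^{s}(p-1))$ together with $|\chi_{\U}(1+p)-1|_{\U}<p^{-1/(p^{s}(p-1))}$, and note that the terms lie in $A_{\U}^{\circ}\otimes_{\Zp}B^{\circ}$ and tend to $0$ $p$-adically. Two of your side remarks need adjusting, though neither is fatal. The anticipated obstacle about the profinite topology is not actually there: by Proposition \ref{prop: appflat}, choosing a pseudobasis of $A_{\U}^{\circ}$ over $\Zp$ identifies $A_{\U}^{\circ}\ctens_{\Zp}B^{\circ}$ with a product $\prod_{I}B^{\circ}$, which is $p$-adically complete and separated; so $p$-adic smallness of the terms is all that is needed, and no extra smallness of the coefficients for the profinite topology on $R_{\U}$ has to be checked. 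Also, the exponent $\log\langle b\rangle/\log(1+p)$ lies in $\Zp+p^{s}B^{\circ}$, not $\Zp+p^{s+1}B^{\circ}$ (dividing by $\log(1+p)$ costs a power of $p$); this is harmless because Theorem \ref{theo: amice} gives $e_{j}^{s}(\Zp+p^{s}B^{\circ})\subseteq B^{\circ}$, which is exactly what the convergence argument uses.

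The one step that would fail as written is the appeal to density. For a general uniform Banach algebra $B$, $\Zp^{\times}$ is \emph{not} dense in $B_{s}^{\times}$ and $\Zp$ is not dense in $\Zp+p^{s}B^{\circ}$ (take $B=\Qp\langle T\rangle$: the element $1+p^{s+1}T$ stays at distance $p^{-s-1}$ from every element of $\Zp^{\times}$). Hence neither the multiplicativity of the extension nor its independence of the chosen factorization $\zeta\langle u\rangle(1+p^{s+1}b)$ can be deduced from agreement on $\Zp^{\times}$ by continuity. Both must be verified directly: independence of the factorization is automatic once you define the exponent intrinsically as $\log\langle b\rangle/\log(1+p)$ of the (unique) principal part $\langle b\rangle$ of $b$, using that $\log$ is a homomorphism on $1+p\Zp+p^{s+1}B^{\circ}$; and the character property $t^{y+y'}=t^{y}t^{y'}$ for $y,y'\in\Zp+p^{s}B^{\circ}$ is checked by a direct computation with the series, e.g. via the Vandermonde identity $\binom{y+y'}{j}=\sum_{k}\binom{y}{k}\binom{y'}{j-k}$ together with the convergence estimates already in place---this is what the paper's proof means by ``calculating directly from the definitions'' (its only use of continuity/density is within $\Zp^{\times}$ itself, to see that the series restricts to $\chi_{\U}$ there, which is legitimate). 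With these repairs your argument coincides with the paper's proof.
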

\begin{proof}
Without loss of generality we may assume that $s$ is an integer (e.g by replacing $s$ with $\left \lfloor s \right \rfloor$). We may decompose any $b\in B_{s}^{\times}$ uniquely as $b=\omega(b)\left\langle b \right\rangle$, with $\omega(b)\in \mu_{p-1}$ and $\left\langle b \right\rangle \in 1+p \Zp + p^{s+1}B^{\circ}$. We will show that for any $s\geq s_{\mc{U}}$ and $b\in B_{s}^{\times}$, the individual terms of the series
\begin{eqnarray*}
f(b) & = & \chi_{\U}(\omega(b))\sum_{j=0}^{\infty}\left(\chi_{\U}(1+p)-1\right)^{j}\left(\begin{array}{c}
\frac{\log\left\langle b\right\rangle }{\log(1+p)}\\
j\end{array}\right)\\
 & ``=" & \chi_{\U}(\omega(b))\cdot\chi(1+p)^{\frac{\log\left\langle b\right\rangle }{\log1+p}}\end{eqnarray*}
lie in $A_{\U}^{\circ}\otimes_{\Zp}B^\circ$ and tend to zero $p$-adically, so this series converges to an element of $A_{\U}^{\circ}\ctens_{\Zp}B^\circ$ and \emph{a fortiori} to an element of $A_{\U} \ctens B$.  We claim this series defines a canonical extension of $\chi_{\U}$.
\medskip

\noindent Using the well known formula for the $p$-adic valuation of factorials we see that $v_{p}(\left\lfloor p^{-s}j\right\rfloor !)\leq\frac{j}{p^{s}(p-1)}$.
In particular, writing 
$$ \chi_{\U}(\omega(b))\sum_{j=0}^{\infty}\left(\chi_{\U}(1+p)-1\right)^{j}\left(\begin{array}{c}
x\\
j\end{array}\right)=\chi_{\U}(\omega(b))\sum_{j=0}^{\infty}\frac{\left(\chi_{\U}(1+p)-1\right)^{j}}{\left\lfloor p^{-s}j\right\rfloor !}e_{j}^{s}(x), $$
our assumption on $s$ implies that $\left|\chi_{\U}(1+p)-1\right|_{\U}<p^{-\frac{1}{p^{s}(p-1)}}$, so we see that this series converges to an element of $A_{\U}^{\circ}\ctens_{\Zp}B^\circ$ for any $x \in B$ such that $e^{s}_{j}(x) \in B^{\circ}$ for all $j$. By Theorem \ref{theo: amice} it then suffices to verify that if $b\in B_{s}^{\times}$ then $x=\frac{\log \left\langle b \right\rangle }{\log1+p}\in \Zp + p^{s}B^{\circ}$. But we know that the function $\eta(b)=\frac{\log\left\langle b\right\rangle }{\log1+p}$
defines a homomorphism from $1+p \Zp + p^{s+1}B^{\circ}$ to $\Zp+p^{s}B^{\circ}$, so we are done. The character property follows by calculating directly from the definitions. Finally, to show that this character extends $\chi_{\U}$, note that for $b\in\mu_{p-1}\times(1+p)^{\mb{Z}_{\geq 0}}$,
$f(b)$ becomes a finite sum which equals $\chi_{\U}(b)$ by
the binomial theorem, so $f|_{\Zp^{\times}}=\chi_{\U}$ by
continuity.
\end{proof}

\subsection{Shimura curves}

We write $C$ for an algebraically closed field containing $\mb{Q}_p$, complete with respect to a valuation $v:C\to \mb{R}\cup \{+\infty \}$ with $v(p)=1$ (so $v$ is nontrivial), and we write $\mc{O}_{C}$ for the valuation subring. Fix an embedding $\Cp \sub C$. We fix a compatible set of $p^{n}$-th roots of unity in $\Cp$ and use this choice throughout to ignore Tate twists (over any $C$). We let $B$ denote an indefinite non-split quaternion algebra over $\mb{Q}$, with discriminant $d$ which we assume is \emph{prime to} $p$. We fix a maximal order $\oo_{B}$ of $B$ as well as an isomorphism $\oo_{B} \otimes_{\mb{Z}}\Zp \cong M_{2}(\Zp)$, and write $G$ for the algebraic group over $\mb{Z}$ whose functor of points is
$$ R \mapsto (\oo_{B} \otimes_{\mb{Z}}R)^{\times}, $$
where $R$ is any ring.  We fix once and for all a neat compact open subgroup $K^{p}\sub G(\wh{\mb{Z}}^{p})$ such that  $K^{p}=\prod_{\ell\neq p}K_{\ell}$ for compact open subgroups $K_{\ell}\sub \gl(\mb{Z}_{\ell})$ and (for simplicity) $\det(K^{p})=(\wh{\mb{Z}}^{p})^{\times}$. 

\medskip

Recall (e.g. from \cite[\S1]{buz1}) that a \emph{false elliptic curve} over a $\mb{Z}[\frac{1}{d}]$-scheme $S$ is a pair $(A/S,i)$ where $A$ is an abelian surface over $S$ and $i\, :\, \oo_{B} \inj \End_{S}(A)$ is an injective ring homomorphism. We refer to \cite{buz1} for more information and definitions regarding false elliptic curves, in particular the definition of level structures. Let $X_{\gl(\Zp)}$ be the moduli space of false elliptic curves with $K^{p}$-level structure as a scheme over $\Zp$. We denote by $\mc{X}_{\gl(\Zp)}$ the Tate analytification of its generic fibre, viewed as an adic space over $\Spa(\Qp,\Zp)$. For any compact open subgroup $K_{p}\sub \gl(\Zp)$ we use a subscript $-_{K_{p}}$ to denote the same objects with a $K_{p}$-level structure added. We will mostly use the standard compact open subgroups $K_{0}(p^{n})$ or $K(p^n)$, for $n\geq 1$. Since we will mostly work with the Shimura curves with $K_{0}(p)$-level structure, we make the following convention:

\begin{conv} We define $X:=X_{K_{0}(p)}$, $\mc{X}=\mc{X}_{K_{0}(p)}$, et cetera. A Shimura curve with no level specified has $K_{0}(p)$-level at $p$.
\end{conv}

\medskip

 The following striking theorem of Scholze is key to all constructions in this paper.

\begin{theo}[Scholze] \label{theo: scholze}There exist a perfectoid space $\mc{X}_{\infty}$ over $\Spa(\mb{Q}_p,\mb{Z}_p)$ such that
$$\mc{X}_{\infty} \sim \varprojlim _{n} \mc{X}_{K(p^n)}.$$
It carries an action of $\gl(\mb{Q}_{p})$ and there exists a $\gl(\mb{Q}_{p})$-equivariant morphism 
$$\pi_{\HT}\, :\, \Xinf \ra \mb{P}^1 $$
of adic spaces over $\Spa(\mb{Q}_p,\mb{Z}_p)$. Let $\Pro = V_{1} \cup V_{2}$ denote the standard affinoid cover. Then $\mc{V}_{1}=\pi_{\HT}^{-1}(V_{1})$ and $\mc{V}_{2}=\pi_{\HT}^{-1}(V_{2})$ are both affinoid perfectoid, and there exists an $N$ and affinoid opens $S_{1},S_{2}\sub \mc{X}_{K(p^{N})}$ such that $\mc{V}_{i}$ is the preimage of $S_{i}$. Moreover we have $\omega = \pi_{\HT}^{\ast}\mc{O}(1)$ on $\Xinf$, where $\omega$ is obtained by pulling back the usual $\omega$ (defined below) from any finite level $\mc{X}_{K(p^n)}$.
\end{theo}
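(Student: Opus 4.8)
\textbf{Proof proposal for Theorem \ref{theo: scholze}.} The statement reproduced above is Scholze's theorem (the modular curve version is in \cite{sch3}, the Shimura curve version being entirely parallel), so the ``proof'' here is really a recollection of the construction with the adjustments needed for false elliptic curves; the plan is to indicate how each clause is obtained rather than to reprove the hard perfectoidness input. First I would recall the general machinery of \cite[\S 2]{sch3}: one builds $\mc{X}_{\infty}$ as a perfectoid space with $\mc{X}_{\infty}\sim\varprojlim_{n}\mc{X}_{K(p^{n})}$ by the tilting-equivalence/almost-purity argument applied to the tower of Shimura curves, using that the transition maps are finite \'etale away from the boundary and that the ordinary locus is already handled by Lubin-Tate-type arguments, then glueing in the supersingular locus via the relation to the Lubin-Tate tower (which is perfectoid at infinite level). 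Since $B$ is a division algebra the Shimura curve $X$ is proper, so there is no boundary to worry about -- this is exactly the simplification advertised in the introduction -- and the construction is if anything cleaner than the modular curve case.

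Next I would construct $\pi_{\HT}$. The input is the Hodge-Tate exact sequence $0\to(\Lie A)(1)\to T_{p}A\otimes_{\Zp}\mc{O}_{C}\to\omega_{A^{\vee}}\to 0$ for the universal false elliptic curve; the $\oo_{B}$-action splits $T_pA$ and $\omega_{A}$ into rank-one pieces over $\Zp\cong e M_2(\Zp) e$ (idempotent $e$), and a trivialization $\alpha$ of $T_pA$ together with this splitting produces a well-defined line in $\Cp^2$, i.e. a point of $\Pro$. Functoriality in $C$-points plus the fact that $\mc{X}_{\infty}$ is determined by its $(C,\mc{O}_C)$-points (for $C$ running over complete algebraically closed extensions) upgrades this to a morphism of adic spaces; $\gl(\Qp)$-equivariance is immediate from how the group acts on trivializations and hence on lines. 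The identity $\omega=\pi_{\HT}^{\ast}\mc{O}(1)$ is read off from the construction: the tautological quotient $T_pA\otimes\ohat\surj\omega$ on $\mc{X}_\infty$ is exactly the pullback of the tautological quotient on $\Pro$ defining $\mc{O}(1)$, so their top exterior powers (here already line bundles) agree.

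For the affinoid-perfectoid and ``defined at finite level'' clauses: $V_1,V_2$ is the standard cover of $\Pro$ by the two rational subsets $\{|u|\le|v|\}$, $\{|v|\le|u|\}$. The preimages $\mc{V}_i=\pi_{\HT}^{-1}(V_i)$ are open; one shows they are affinoid perfectoid by exhibiting them as $\varprojlim$ of affinoid opens at finite level, which is where the quasi-compactness statement comes in: because the whole tower is a spectral/spatial diagram and $V_i$ is quasi-compact, its preimage in $\mc{X}_{K(p^N)}$ is open and quasi-compact, hence a finite union of affinoid opens, for $N$ large enough, and (shrinking if necessary, or by the argument in \cite[Lemma III.3.?]{sch3}) can be taken to be a single affinoid $S_i$; then $\mc{V}_i\sim\varprojlim_{n\ge N}(\text{preimage of }S_i\text{ in }\mc{X}_{K(p^n)})$ is a cofiltered limit of affinoids along finite maps, which is affinoid perfectoid by the standard criterion (e.g. \cite[Proposition 2.4.3]{sch3} / the ``$\sim\varprojlim$ of affinoids is affinoid perfectoid'' lemma). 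The main obstacle -- to the extent there is one, given that we are invoking Scholze -- is precisely this finite-level descent of the cover: verifying that $\mc{V}_i$, a priori only a perfectoid open, descends to an honest affinoid at some finite level $S_i\subset\mc{X}_{K(p^N)}$, which requires the spectral-space/approximation formalism for $\sim\varprojlim$ and the properness of $X$ (so that each $\mc{X}_{K(p^n)}$ is proper and its opens are well-behaved). Everything else is bookkeeping with the $\oo_B$-action transplanting the $\gl$ statements of \cite{sch3} to the quaternionic setting.
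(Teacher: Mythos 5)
You never engage with how the paper actually obtains this statement, and the direct route you sketch has real gaps. The paper does not rerun Scholze's construction for the quaternionic tower: it embeds the tower $(\X_{K(p^{n})})_{n}$ into the tower of Siegel threefolds, invokes the argument of \cite[Theorem IV.1.1]{sch3} (the Hodge-type case, via closures inside Siegel space; cf.\ also \cite{cs}) to get $\Xinf$ together with $\pi_{\HT}$ valued in the flag variety $\mc{F}l$ of $\mathrm{GSp}_{4}$, then uses the $M_{2}(\Zp)$-action to see that $\pi_{\HT}$ factors through $\Pro\sub\mc{F}l$, and gets the statements about $V_{1},V_{2}$ by observing that the standard affinoids of $\Pro$ are pulled back from standard affinoids of $\mb{P}^{5}$ along $\Pro\sub\mc{F}l\sub\mb{P}^{5}$ (Pl\"ucker), so that the corresponding clauses of Scholze's theorem transfer directly. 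Your plan — redo the modular-curve argument on the quaternionic tower itself — is legitimate in principle (and your remark that properness removes all boundary issues is correct), but as written it does not reduce to ``bookkeeping with the $\oo_{B}$-action.''

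Three concrete problems. First, your recollection of Scholze's proof is wrong at the decisive step: perfectoidness is not obtained by treating the ordinary locus ``by Lubin--Tate-type arguments'' and then ``glueing in the supersingular locus via the Lubin--Tate tower.'' In \cite{sch3} one proves that the anticanonical $\Gamma_{0}(p^{\infty})$-tower over an $\epsilon$-neighbourhood of the ordinary locus is perfectoid (via the Hasse invariant and closeness to the Frobenius tower), adds full level by almost purity, and then covers all of $\Xinf$ by $\glq$-translates of this locus; a gluing along an ordinary/supersingular decomposition is not an available mechanism and would itself require substantial new arguments. Second, a functorial assignment on $(C,\OC)$-points does not ``upgrade'' to a morphism of adic spaces: to construct $\pi_{\HT}$ one needs the relative Hodge--Tate filtration of the universal false elliptic curve as a map of sheaves over (a cover of) $\Xinf$, or Scholze's extension procedure, and this is one of the genuinely hard points rather than a formality. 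Third, the ``standard criterion'' you invoke — that a space $\sim\varprojlim$ of affinoids along finite maps is affinoid perfectoid — does not exist; whether the completed colimit of the coordinate rings is perfectoid is exactly the content to be proved, and even granting $\Xinf$ perfectoid, a quasi-compact open (or the preimage of a finite-level affinoid) need not be affinoid. In \cite{sch3} the affinoid-perfectoidness of $\mc{V}_{i}$ and its descent to a finite-level affinoid $S_{i}$ come out of the explicit construction (translates of anticanonical neighbourhoods cut out by the fake Hasse invariants), which is precisely the part your argument leaves unproved — and note also that quasi-compactness of $\pi_{\HT}^{-1}(V_{i})$ is not automatic but part of what the construction delivers.
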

 
\medskip

A few remarks are in order. For the definition of $\sim$ we refer to \cite[Definition 2.4.1]{sw}. This theorem is essentially a special case of \cite[Theorem IV.1.1]{sch3} except for the difference in base field and the target of $\pi_{\HT}$; there one obtains a perfectoid space over some algebraically closed complete $C/\Qp$ and $\pi_{\HT}$ takes values in a larger (partial) flag variety. The version here is easily deduced in the same way; we now sketch the argument. The tower $(\X_{K(p^{n})})_{n}$ embeds into the tower of Siegel threefolds (over $\Qp$), and the same argument as in the proof of \cite[Theorem IV.1.1]{sch3} gives the existence of $\Xinf$ and a map $\pi_{\HT}$ which takes values in the partial flag variety $\mc{F}l$ of ${\rm GSp}_{4}$ with respect to the Siegel parabolic. Using the $M_{2}(\Zp)$-action (see below) one sees that it takes values in $\Pro \sub \mc{F}l$. Since the first version of this paper, such results have appeared in the case of general Hodge type Shimura varieties; see \cite[Theorems 2.1.2 and 2.1.3]{cs}. Finally, one easily sees that the standard affinoid opens of $\Pro$ come by pullback from standard affinoid opens of $\mb{P}^{5}$ via the embeddings $\mb{P}^{1} \sub \mc{F}l \sub \mb{P}^{5}$, where $\mc{F}l \sub \mb{P}^{5}$ is the Pl\"ucker embedding.

\medskip

Let us now discuss some standard constructions and define the sheaf $\omega$ mentioned in Theorem \ref{theo: scholze}. For any false elliptic curve $A$ over some $\Zp$-scheme $S$ the $p$-divisible group $A[p^{\infty}]$ carries an action $\oo_{B} \otimes_{\mb{Z}}\Zp\cong M_{2}(\Zp)$. Put $\mc{G}_{A}=eA[p^{\infty}]$, where $e\in M_{2}(\Zp)$ is an idempotent that we will fix throughout the text (take e.g. $\left( \begin{smallmatrix} 1 & 0 \\ 0 & 0 \end{smallmatrix} \right)$). This is a $p$-divisible group over $S$ of height $2$ and we have $A[p^{\infty}] \cong \mc{G}_{A}^{\oplus 2}$ functorially; we will fix this isomorphism. For all purposes $\mc{G}_{A}$ behaves exactly like the $p$-divisible group of an elliptic curve and we may use it to define ordinarity, supersingularity, level structures et cetera. We will often just write $\mc{G}$ instead of $\mc{G}_{A}$ if the false elliptic curve $A$ is clear from the context. The line bundle $\omega$ is the dual of $e(\Lie(A^{\univ}/X))$, where $A^{\univ}$ is the universal false elliptic curve. We will also write $\mc{G}^{\univ}=\mc{G}_{A^{\univ}}$. The same definitions and conventions apply to the adic versions, and to other level structures. 

\medskip

Next, we specify the right action of $\gl(\mb{Q}_{p})$ on $(C,\mc{O}_{C})$-points on both sides of the Hodge-Tate period map $\Xinf \ra \Pro$. First we consider $\Pro$: $g\in \glq$ acts from the left on $C^2$ (viewed as column vectors) and a line $L\sub C^2$ is sent by $g$ to $g^{\vee}(L)$, where $g\mapsto g^{\vee}$ is the involution

$$ g=\left( \begin{matrix} a & b \\ c & d \end{matrix} \right) \mapsto g^{\vee}=\det(g)g^{-1}=\left( \begin{matrix} d & -b \\ -c & a \end{matrix} \right). $$

This defines a right action. A $(C,\OC)$-point of $\Xinf$ consists of a false elliptic curve $A/C$ and an isomorphism $\alpha\, : \, \Zp^2 \ra T_{p}\mc{G}$ (and the $K^p$-level structure which we ignore). Let $g\in \glq$ and fix $n\in \mb{Z}$ such that $p^{n}g\in M_{2}(\Zp)$ but $p^{n-1}g\notin M_{2}(\Zp)$. For $m\in \mb{Z}_{\geq 0}$ sufficiently large the kernel of $p^{n}g^{\vee}$ modulo $p^m$ stabilizes and we denote the corresponding subgroup of $\mc{G}[p^m]$ under $\alpha$ by $H$. We define $(A,\alpha).g$ to be $(A/H^{\oplus 2},\beta)$, where $\beta$ is defined as the composition
$$  \Zp^2 \overset{p^{n}g}{\longrightarrow} \Qp^2 \overset{\alpha}{\longrightarrow} V_{p}\mc{G} \overset{(f^{\vee})^{-1}_{\ast}}{\longrightarrow} V_{p}(\mc{G}/H). $$
Here $H^{\oplus 2}$ is viewed as a subgroup scheme of $A[p^{\infty}]$ via the functorial isomorphism $A[p^{\infty}]\cong \mc{G}^{\oplus 2}$, $V_{p}(-)$ denotes the rational Tate module and $(f^{\vee})_{\ast} :\, V_{p}(\mc{G}/H) \ra V_{p}\mc{G}$ is the map induced from the dual of the natural isogeny $f\, :\, \mc{G} \ra \mc{G}/H$ (note that $\beta$ is isomorphism onto $T_{p}(\mc{G}/H)$). In particular, if $g\in \gl(\mb{Z}_p)$, then $(A,\alpha).g=(A,\alpha\cdot g)$ where $(\alpha \cdot g)(e_1)=a\alpha(e_1)+c\alpha(e_2)$, $(\alpha \cdot g)(e_2)=b\alpha(e_1)+d\alpha(e_2)$. Here and everywhere else in the text $e_1$ and $e_2$ are the standard basis vectors $\left( \begin{smallmatrix} 1 \\ 0 \end{smallmatrix} \right)$ and $\left( \begin{smallmatrix} 0 \\ 1 \end{smallmatrix} \right)$ of $\Zp^2$.

\medskip

\subsection{\emph{w}-ordinary false elliptic curves}

Let $H$ be a finite flat group scheme over $\OC$ killed by $p^n$. We let $\omega_{H}$ denote the dual of $\Lie(H)$. It is a torsion $\OC$-module and hence isomorphic to $\bigoplus_{i} \OC/a_{i}\OC$ for some finite set of $a_{i}\in \OC$. The degree $\deg(H)$ of $H$ is defined to be $\sum_{i}v(a_{i})$. The Hodge-Tate map $\HT_{H}$ is the morphism of \emph{fppf} abelian sheaves $H \ra \omega_{H^{\vee}}$ over $\OC$ defined on points by 
$$ f\in H=(H^{\vee})^{\vee} \mapsto f^{\ast}(dt/t)\in \omega_{H^{\vee}} $$
where we view $f$ as a morphism $f\, :\, H^{\vee} \ra \mu_{p^n}$, $dt/t\in \omega_{\mu_{p^n}}$ is the invariant differential and $-^{\vee}$ denotes the Cartier dual. We will often abuse notation and use $\HT_{H}$ for the map on $\OC$-points, and there one may identify the $\OC$-points of $H$ with the $C$-points of its generic fibre. 

\medskip

Now let $G$ be a $p$-divisible group over $\OC$. Taking the inverse limit over the Hodge-Tate maps for the $G[p^n]$ we obtain a morphism $\HT_{G}\, :\, T_{p}G \ra \omega_{G^{\vee}}$, which we will often linearize by tensoring the source with $\OC$. Taking this morphism for $G^{\vee}$ and dualizing it we obtain a morphism $\Lie(G) \ra T_{p}G \otimes_{\Zp}\OC$. Putting these morphisms together we get a sequence
$$ 0 \ra \Lie(G) \ra T_{p}G \otimes _{\Zp}\OC \ra \omega_{G^{\vee}} \ra 0 $$
which is in fact a complex with cohomology groups killed by $p^{1/(p-1)}$ (\cite[Th\'eor\`eme II.1.1]{fgl}).

\medskip

Let $A/C$ be a false elliptic curve. Then $A$ has good reduction and we will denote its unique model over $\OC$  by $\mc{A}$. We have the Hodge-Tate sequence of $\mc{G}_{\mc{A}}[p^{\infty}]$:
$$ 0 \ra \Lie(\mc{G}_{\mc{A}}) \ra T_{p}\mc{G} \otimes_{\Zp} \OC \ra \omega_{\mc{G}_{\mc{A}}^{\vee}} \ra 0. $$
Here we have dropped the subscript $-_{\mc{A}}$ in the notation of the Tate module for simplicity; this should not cause any confusion. We will write $\HT_{A}$ for $\HT_{\mc{G}_{\mc{A}}[p^\infty]}$. The image and kernel of $\HT_{A}$ are free $\OC$-modules of rank $1$ that we will denote by $F_{A}$ and $F^{1}_{A}$ respectively. Note that $p^{1/(p-1)}\omega_{\mc{G}_{\mc{A}}^{\vee}} \sub F_{A} \sub \omega_{\mc{G}_{\mc{A}}^{\vee}}$.

\medskip

Recall that $e_{1}$ and $e_{2}$ are the standard basis vectors of $\Zp^2$ and let $w$ be a positive rational number.

\begin{defi} Let $A/C$ be a false elliptic curve with model $\mc{A}/\OC$. Let $w\in \mb{Q}_{>0}$.
\begin{enumerate}
\item Let $\alpha$ be a trivialization of $T_{p}\mc{G}$. We say that $\alpha$ is $w$-ordinary if $\HT_{A}(\alpha(e_{1})) \in p^{w}F_{A}$.
\item $A$ is called $w$-ordinary if there is a $w$-ordinary trivialization of $T_{p}\mc{G}$.
\end{enumerate}
\end{defi}

\medskip

Note that if $A$ is $w$-ordinary, then it also $w^{\prime}$-ordinary for all $w^{\prime}<w$. Note also that $A$ is ordinary (in the classical sense) if and only if it is $\infty$-ordinary (i.e. $A$-ordinary for all $w>0$).

\begin{defi}
Let $A/C$ be a $w$-ordinary false elliptic curve and assume that $n\in \mb{Z}_{\geq 1}$ is such that $n<w+1$. Then the kernel of the morphism $\mc{G}[p^n](C) \ra F_{A}/p^{{\rm min}(n,w)}F_{A}$ induced by $\HT_{A}$ is an \'etale subgroup scheme $H_{n}$ of $\mc{G}_{A}[p^n]$ isomorphic to $\mb{Z}/p^{n}\mb{Z}$ which we call the pseudocanonical subgroup of level $n$. 
\end{defi}

\medskip

We will use the notation $H_{n}$ to denote the pseudocanonical subgroup of level $n$ (when it exists) whenever the false elliptic curve $A$ is clear from the context. When there are multiple false elliptic curves in action we will use the notation $H_{n,A}$. Since $H_{n}$ is naturally equipped with an inclusion into $\mc{G}_{A}[p^{n}]$ we may take its schematic closure inside $\mc{G}_{\mc{A}}[p^{n}]$. This is a finite flat group scheme of rank $p^n$ over $\OC$ with generic fibre $H_{n}$ and we will abuse notation and denote it by $H_{n}$ as well.

\medskip

When $n=1$ we will refer to $H_{1}$ simply as the pseudocanonical subgroup and drop "of level $1$". Note that if $\alpha\, :\, T_{p}\mc{G} \ra \Zp^2$ is a $w$-ordinary trivialization with $n-1 <w \leq n$ then $(\alpha^{-1}\, {\rm mod}\, p^{n})|_{\mb{Z}/p^{n}\mb{Z} \oplus 0}$ trivializes the pseudocanonical subgroup. We record a simple lemma:

\medskip

\begin{lemm}
Let $A/C$ be a false elliptic curve and let $\alpha$ be a $w$-ordinary trivialization of $T_{p}\mc{G}$. Assume that $w>n\in \mb{Z}_{\geq 1}$ and let $m\leq n$ be a positive integer. Then $A/H_{m,A}^{\oplus 2}$ is $(w-m)$-ordinary, and for any $m^{\prime}\in \mb{Z}$ with $m< m^{\prime} \leq n$, $H_{m^{\prime}-m,A/H_{m}^{\oplus 2}}=H_{m^{\prime},A}/H_{m,A}$. 
\end{lemm}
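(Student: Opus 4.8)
The plan is to reduce everything to the computation of Hodge--Tate maps after an isogeny, using the functoriality of $\HT$ under the natural isogeny $f\colon \mc{G}_{A}\to \mc{G}_{A}/H_{m}$. First I would set up notation: write $\mc{G}=\mc{G}_{A}$, let $f\colon \mc{G}\to \mc{G}/H_{m}$ be the natural isogeny (this is the $\mc{G}$-component of the isogeny $A\to A/H_{m}^{\oplus 2}$), and let $f^{\vee}\colon (\mc{G}/H_{m})^{\vee}\to \mc{G}^{\vee}$ be its dual. Note $f$ is \'etale (since $H_{m}$ is \'etale), so $f$ induces an injection $f_{\ast}\colon T_{p}\mc{G}\hookrightarrow T_{p}(\mc{G}/H_{m})$ with image $p^{m}T_{p}(\mc{G}/H_{m})$ after identifying via the quasi-isogeny... more precisely $f_{\ast}\colon T_{p}\mc{G}\to T_{p}(\mc{G}/H_{m})$ has cokernel $H_{m}$ of order $p^{m}$. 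Dually, on differentials we get $f^{\ast}\colon \omega_{(\mc{G}/H_{m})^{\vee}}\to \omega_{\mc{G}^{\vee}}$, which (since $H_{m}^{\vee}$ has degree $m-\deg(H_{m})$... I should be careful) has a controlled cokernel. The key compatibility is the commutative square relating $\HT_{A}$ and $\HT_{A/H_{m}^{\oplus 2}}$ via $f_{\ast}$ and $f^{\ast}$, which follows formally from the functoriality of the Hodge--Tate map in the $p$-divisible group (each $\HT$ being defined by pullback of $dt/t$ along maps to $\mu_{p^{n}}$, this functoriality is immediate).

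Given this square, for the first assertion I would proceed as follows. Let $\alpha$ be a $w$-ordinary trivialization, so $\HT_{A}(\alpha(e_{1}))\in p^{w}F_{A}$. I want to produce a $(w-m)$-ordinary trivialization $\beta$ of $T_{p}(\mc{G}/H_{m})$. The correct candidate is essentially the $\beta$ appearing in the definition of the right $\glq$-action for the element of $\glq$ conjugate to $\mathrm{diag}(p^{m},1)$; concretely, using that $\alpha$ being $w$-ordinary with $n\geq m$ implies $\alpha^{-1}|_{\mathbb{Z}/p^{m} \oplus 0}$ trivializes $H_{m}$, one has $f_{\ast}\alpha(e_{1}) \in p^{m}T_{p}(\mc{G}/H_{m})$, so $\beta(e_{1}) := p^{-m}f_{\ast}\alpha(e_{1})$ and $\beta(e_{2}) := f_{\ast}\alpha(e_{2})$ is a trivialization. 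Then I compute $\HT_{A/H_{m}^{\oplus 2}}(\beta(e_{1})) = p^{-m}\HT_{A/H_{m}^{\oplus 2}}(f_{\ast}\alpha(e_{1}))$; via the compatibility square this equals $p^{-m}$ times the image of $\HT_{A}(\alpha(e_{1}))$ under the natural map $\omega_{\mc{G}^{\vee}} \to \omega_{(\mc{G}/H_{m})^{\vee}}$ (the transpose direction — I need to chase exactly which map appears, but it identifies $F_{A/H_{m}^{\oplus 2}}$ with a lattice commensurable to $F_{A}$ with index governed by $\deg$). Tracking the valuations: $\HT_{A}(\alpha(e_{1}))\in p^{w}F_{A}$ maps into $p^{w}F_{A/H_{m}^{\oplus 2}}$ up to the degree discrepancy, which is bounded so that after dividing by $p^{m}$ we land in $p^{w-m}F_{A/H_{m}^{\oplus 2}}$. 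The cleanest way to organize the valuation bookkeeping is to work with the inclusions $p^{1/(p-1)}\omega_{\mc{G}^{\vee}}\subseteq F_{A}\subseteq \omega_{\mc{G}^{\vee}}$ and their analogues downstairs, together with $\deg(H_{m}) + \deg(H_{m}^{\vee}) = m$, rather than computing $F$ on the nose.

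For the second assertion, that $H_{m'-m, A/H_{m}^{\oplus 2}} = H_{m', A}/H_{m, A}$, I would argue directly from the defining property of the pseudocanonical subgroups as kernels of reductions of $\HT$. Both sides are \'etale subgroups of $(\mc{G}/H_{m})[p^{m'-m}]$ isomorphic to $\mathbb{Z}/p^{m'-m}$, so it suffices to check they have the same $C$-points, and since $H_{m',A}/H_{m,A}$ is visibly the image of $H_{m',A}(C)$ under $f$, I need: $x \in \mc{G}[p^{m'}](C)$ lies in $H_{m',A}$ iff $f(x) \in H_{m'-m, A/H_{m}^{\oplus 2}}$. Unwinding both conditions through the compatibility square reduces this to a statement about $\HT_{A}(x)$ versus its image, modulo the appropriate power of $p$; the shift by $m$ in the exponent $\mathrm{min}(n,w)$ versus $\mathrm{min}(n,w-m)$ should exactly match the factor $p^{m}$ discrepancy between $F_{A}$ and (the image of) $F_{A/H_{m}^{\oplus 2}}$, using $n\geq m'$ so that the relevant minima are just $n$-ish and no truncation subtleties arise in the range considered.

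The main obstacle I expect is the precise bookkeeping of the lattices $F_{A}$ versus $F_{A/H_{m}^{\oplus 2}}$ inside $\omega_{(\mc{G}/H_{m})^{\vee}}$ under the isogeny — i.e.\ getting the direction of the maps and the valuation shifts exactly right, since $F_{A}$ is only commensurable to the full differential lattice with a discrepancy controlled by $\deg(H_{m})\in [0,m]$, and a careless estimate could lose a constant and give $(w-m)$-ordinarity only up to $O(1)$. I expect this is handled cleanly by never computing $F$ exactly but only using the sandwiching $p^{1/(p-1)}\omega \subseteq F \subseteq \omega$ on both sides and the exact isogeny-compatibility of $\HT$, so that all error terms are $\leq 1/(p-1)$ and in fact cancel because one is comparing $\HT$ of the \emph{same} Tate-module element upstairs and downstairs; the statement as phrased (with a clean $w-m$) should then come out, perhaps after noting that $H_{m}$ being pseudocanonical forces $\deg(H_{m})$ to take the extreme value, making the isogeny on $\omega$ an isomorphism up to the universal $p^{1/(p-1)}$ fudge.
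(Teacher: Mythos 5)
Your skeleton is the same as the paper's: the isogeny $f\colon \mc{G}\to\mc{G}/H_{m}$, the commutative square expressing functoriality of $\HT$, and the trivialization $\beta$ with $\beta(e_1)$ essentially $p^{-m}f_{\ast}\alpha(e_1)$ (in the paper this is packaged as $(A,\alpha).g$ for $g=\left(\begin{smallmatrix}1&0\\0&p^m\end{smallmatrix}\right)$, which also explains the equivariance of $\pi_{\HT}$). The gap is in the step you yourself flag as the obstacle: the valuation bookkeeping. Your proposed mechanism — sandwiching $p^{1/(p-1)}\omega_{\mc{G}^{\vee}}\subseteq F_A\subseteq\omega_{\mc{G}^{\vee}}$ on both sides, tracking $\deg(H_m)+\deg(H_m^{\vee})=m$, and hoping the $O(1/(p-1))$ errors cancel — would, if carried out as described, only give $(w-m)$-ordinarity up to a bounded loss, not the exact statement; and your fallback ("$H_m$ pseudocanonical forces $\deg(H_m)$ extreme, so the map on $\omega$ is an isomorphism up to the $p^{1/(p-1)}$ fudge") is neither proved in your setting nor a route to an exact bound. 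What you are missing is that no lattice comparison is needed at all: since $F_A=\HT_A(T_p\mc{G}\otimes_{\Zp}\OC)$ and $F_{A/H_m^{\oplus 2}}=\HT_{A/H_m^{\oplus 2}}(T_p(\mc{G}/H_m)\otimes_{\Zp}\OC)$ are by definition the images of the \emph{integral} Tate modules, the commutativity of your square together with $f_{\ast}(T_p\mc{G})\subseteq T_p(\mc{G}/H_m)$ gives the containment $(f^{\vee})^{\ast}(F_A)\subseteq F_{A/H_m^{\oplus 2}}$ for free. Then the exact relation $p^{m}\HT_{A/H_m^{\oplus 2}}(\beta(e_1))=(f^{\vee})^{\ast}\HT_A(\alpha(e_1))\in p^{w}F_{A/H_m^{\oplus 2}}$, divided by $p^{m}$ inside the torsion-free rank-one module $\omega_{(\mc{G}/H_m)^{\vee}}$, yields $\HT_{A/H_m^{\oplus 2}}(\beta(e_1))\in p^{w-m}F_{A/H_m^{\oplus 2}}$ with no fudge factors. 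This is exactly how the paper argues, and it is why your intuition that the errors "cancel because one compares $\HT$ of the same element upstairs and downstairs" is correct — but you need this inclusion of image lattices, not degree estimates, to make it rigorous.

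For the second assertion your plan (compare kernels of the reduced $\HT$ maps on $C$-points through the square) can be made to work, but it is more involved than necessary: since $w-m>m'-m$ and $w>m'$, both $H_{m'-m,A/H_m^{\oplus 2}}$ and $H_{m',A}/H_{m,A}$ are cyclic of order $p^{m'-m}$ and are generated, respectively, by $\beta(e_1)\bmod p^{m'-m}$ and by $f(\alpha(e_1))$, and these generators coincide by the construction of $\beta$; this immediate identification of generators replaces your kernel-chasing and avoids any discussion of the truncation $\min(n,w)$ versus $\min(n,w-m)$.
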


\begin{proof}
Let $g\in \glq$ denote the matrix $ \left( \begin{smallmatrix} 1 & 0 \\ 0 & p^{m} \end{smallmatrix} \right)$. Then $(A,\alpha).g=(A/H_{m,A}^{\oplus 2},\beta)$ where $\beta$ is defined by this equality. Let $f$ denote the natural isogeny $\mc{G} \ra \mc{G}/H_{m}$. From the definitions we get a commutative diagram 
\[
\xymatrix{\Zp^2\ar[d]^{g^{\vee}}\ar[r]^{\alpha} & T_{p}\mc{G}\ar[d]^{f_{\ast}}\ar[r]^{\HT_{A}} & F_{A}\ar[d]^{(f^{\vee})^{\ast}}\\
\Zp^2\ar[r]^{\beta} &T_{p}(\mc{G}/H_{m})\ar[r]^{\quad \HT_{A/H_{m}^{\oplus 2}}} & F_{A/H_{m}^{\oplus 2}}}
\] 
and direct computation gives that $p^{m}\HT_{A/H_{m}^{\oplus 2}}(\beta(e_{1}))=(f^{\vee})^{\ast}\HT_{A}(\alpha(e_{1}))$. Since $\HT_{A}(\alpha(e_{1})) \in p^{w}F_{A}$ we see that $\HT_{A/H_{m}^{\oplus 2}}(\beta(e_{1})) \in p^{w-m}F_{A/H_{m}^{\oplus 2}}$ which proves the first assertion. For the second assertion, observe that by definition $H_{m^{\prime}-m,A/H_{m}^{\oplus 2}}$  and $H_{m^{\prime},A}/H_{m,A}$ are generated by $\beta(e_{1}) \, {\rm mod}\, p^{m^{\prime}-m}$ and $f(\alpha(e_{1})) \, {\rm mod} \, p^{m}$ respectively, and that these are equal. 
\end{proof}

\medskip

\begin{rema}
The commutativity of the diagram in the proof above is also what essentially proves the $\glq$-equivariance of the Hodge-Tate period map $\pi_{\HT}$, and the first assertion may be viewed more transparently as a direct consequence of this equivariance for the element $g$. Note also that the second assertion of the Lemma mirrors properties of the usual canonical subgroups of higher level.
\end{rema}

\medskip

Next we recall some calculations from Oort-Tate theory which are recorded in \cite[\S 6.5 Lemme 9]{far} (we thank an anonymous referee for pointing out this reference). For the last statement, see  Proposition 1.2.8 of \cite{kas} (for further reference see Remark 1.2.7 of \emph{loc.cit} and \S 3 of \cite{buz}; note that these references treat elliptic curves but the results carry over \emph{verbatim}).

\medskip

\begin{lemm}
Let $H$ be a finite flat group scheme over $\OC$ of degree $p$. Then $H$ is isomorphic to $\Spec(\OC[Y]/(Y^{p}-aY))$ for some $a\in \OC$ and determined up to isomorphism by $v(a)$, and the following holds:
\begin{enumerate}
\item $\omega_{H}=(\OC/a\OC).dY$ and hence $\deg(H)=v(a)$.
\item The image of the (linearized) Hodge-Tate map $\HT_{H^{\vee}}\, :\, H^{\vee}(C)\otimes \OC \ra  \omega_{H}$ is equal to $(c\OC/a\OC).dY$, where $v(c)=(1-v(a))/(p-1)$. 
\end{enumerate}
Moreover, if $A/C$ is a false elliptic curve such that $H\sub \mc{G}_{\mc{A}}[p]$ and $\deg(H)>1/(p+1)$, then $H$ is the canonical subgroup of $\mc{G}_{\mc{A}}$.
\end{lemm}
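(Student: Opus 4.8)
The statement has three parts, and the plan is to treat them in the order given. First, the classification of finite flat group schemes of degree $p$ over $\OC$ by Oort--Tate theory: any such $H$ is of the form $\Spec(\OC[Y]/(Y^p - aY))$ (after absorbing the unit $b$ in the standard Oort--Tate presentation $Y^p = bY$ into the coordinate, which we can do since $\OC$ is a valuation ring, hence local, so units have $p$th roots up to the usual subtlety---one uses that $\OC$ is integrally closed and $p-1$ is prime to $p$, or simply rescales $Y$), and the isomorphism class depends only on $v(a)$. I would cite \cite[\S 6.5 Lemme 9]{far} for this, as the text already indicates, rather than reprove it.

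For part (1), I would compute directly from the presentation: $\omega_H$ is the module of invariant differentials, generated by $dY$ modulo the relation obtained by differentiating $Y^p - aY = 0$, namely $(pY^{p-1} - a)dY = 0$; evaluating at the origin (the identity section) kills $pY^{p-1}$ and leaves $a\, dY = 0$, so $\omega_H = (\OC/a\OC)\cdot dY$ and $\deg(H) = v(a)$ by the definition of degree recalled earlier in the paper. For part (2), I would invoke the Cartier dual: $H^\vee$ is again Oort--Tate of the form $Y^p = a'Y$ with $v(a) + v(a') = 1$ (the Oort--Tate duality relation, again from \cite{far}), and then the image of $\HT_{H^\vee}$ on $\omega_H$ is computed from the explicit Hodge--Tate map on points---a point of $H^\vee(C)$ is a homomorphism $H \to \mu_p$, which pulls back $dt/t$; the resulting submodule of $\omega_H = (\OC/a)\,dY$ is $(c\OC/a\OC)\,dY$ where $c$ is pinned down by the valuation bookkeeping $v(c) = v(a')/(p-1) = (1 - v(a))/(p-1)$. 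This is the ``routine calculation'' part and I would present it compactly, referencing \cite{far}.

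For the final assertion, suppose $A/C$ is a false elliptic curve with $H \subseteq \mc{G}_{\mc{A}}[p]$ and $\deg(H) > 1/(p+1)$. Here I would appeal to the classical theory of the canonical subgroup: by \cite[Proposition 1.2.8]{kas} (with the caveat noted in the text that \cite{kas,buz} are written for elliptic curves but transfer verbatim to $\mc{G}_{\mc{A}}$, the height-two $p$-divisible group cut out by the idempotent $e$), the canonical subgroup of $\mc{G}_{\mc{A}}$ exists precisely when the Hasse invariant (equivalently, the degree of the connected part of $\mc{G}_{\mc{A}}[p]$) is small enough, and it is characterized as the unique subgroup of degree $> 1/(p+1)$ (this is the ``too big to be anything but canonical'' phenomenon: the two non-identity subgroup schemes of $\mc{G}_{\mc{A}}[p]$ have degrees summing to $1$, so at most one can exceed $1/(p+1)$, and when the canonical subgroup exists it is the one that does). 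Thus $\deg(H) > 1/(p+1)$ forces $H$ to be the canonical subgroup.

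\textbf{Main obstacle.} The only genuinely delicate point is the Cartier-duality computation in part (2): matching up the Oort--Tate parameter of $H^\vee$ with that of $H$ and then tracking how the invariant differential $dt/t$ on $\mu_p$ pulls back through an arbitrary point of $H^\vee(C)$, so as to identify the image submodule exactly as $(c\OC/a\OC)\,dY$ with the stated valuation of $c$. Everything else is either a citation (\cite{far}, \cite{kas}, \cite{buz}) or a short direct computation from the explicit presentation. I would lean on \cite[\S 6.5 Lemme 9]{far} to supply the duality bookkeeping and keep the exposition brief.
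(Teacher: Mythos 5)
Your approach coincides with the paper's: the paper gives no proof of this lemma at all, simply citing \cite[\S 6.5 Lemme 9]{far} for the Oort--Tate presentation and parts (1)--(2), and \cite[Proposition 1.2.8]{kas} (with \cite{buz}) for the final statement, exactly as you propose; your sketched computations ($\omega_H=(\OC/a\OC)\,dY$ from differentiating $Y^p-aY$, and $v(c)=v(a')/(p-1)=(1-v(a))/(p-1)$ via the Oort--Tate duality $v(a)+v(a')=1$) are precisely the content those citations supply.

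One caution about your parenthetical justification of the last assertion: it is not correct as stated. $\mc{G}_{\mc{A}}[p]$ has $p+1$ order-$p$ subgroup schemes over $C$, not two; the relation ``degrees sum to $1$'' holds for $\deg H+\deg(\mc{G}_{\mc{A}}[p]/H)$ for a single $H$, not for two distinct order-$p$ subgroups; and since $1/(p+1)<1/2$, such a sum relation would not in any case force at most one subgroup to have degree exceeding $1/(p+1)$. The actual dichotomy behind \cite[Proposition 1.2.8]{kas} (seen from the Newton polygon of $[p]$ on the formal group) is: if the Hodge height $h$ satisfies $h<p/(p+1)$, the canonical subgroup exists with degree $1-h>1/(p+1)$ while every other order-$p$ subgroup has degree $h/p<1/(p+1)$; and if $h\geq p/(p+1)$, every order-$p$ subgroup has degree exactly $1/(p+1)$. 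Either way a subgroup of degree $>1/(p+1)$ must be the canonical one. Since your proof leans on the citation rather than on this heuristic, the proposal stands, but the heuristic should be dropped or replaced by the correct dichotomy.
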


\medskip

We will use these properties freely in this section. Using this we can now show that the pseudocanonical subgroup coincides with the canonical subgroup for sufficiently large $w$ (as a qualitative statement this is implicit in \cite{sch3}, cf. Lemma III.3.8).

\medskip

\begin{lemm}
Let $A/C$ be a $w$-ordinary false elliptic curve and assume that $p/(p^{2}-1)<w\leq 1$. Then $H_{1}$ is the canonical subgroup of $\mc{G}_{\mc{A}}$.
\end{lemm}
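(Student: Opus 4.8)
The statement asks us to show that for a $w$-ordinary false elliptic curve $A/C$ with $p/(p^2-1) < w \leq 1$, the pseudocanonical subgroup $H_1$ equals the canonical subgroup of $\mc{G}_{\mc{A}}$. By the last sentence of the preceding Oort--Tate lemma, it suffices to show that $\deg(H_1) > 1/(p+1)$, where $H_1$ is viewed as the schematic closure inside $\mc{G}_{\mc{A}}[p]$. So the plan is to compute (or rather bound from below) the degree of $H_1$ using the $w$-ordinarity hypothesis and the Hodge-Tate exact sequence.

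First I would fix a $w$-ordinary trivialization $\alpha$ of $T_p\mc{G}$, so $\HT_A(\alpha(e_1)) \in p^w F_A$, and recall that $H_1$ is (the schematic closure of) the order-$p$ subgroup of $\mc{G}_{\mc{A}}[p]$ generated by $\alpha(e_1) \bmod p$. Write $H = H_1$ and set $d = \deg(H) = v(a)$ in the Oort--Tate presentation $H \cong \Spec(\OC[Y]/(Y^p - aY))$. The key is to relate $d$ to $w$ via the two relevant Hodge-Tate maps: on the one hand, part (2) of the Oort--Tate lemma tells us the image of $\HT_{H^\vee}$ inside $\omega_H$, and on the other hand the map $\HT_A$ on $\mc{G}_{\mc{A}}[p]$ restricted to $H$ factors through $\omega_{H^\vee}$ and is compatible (via the closed immersion $H \hookrightarrow \mc{G}_{\mc{A}}[p]$ and functoriality of $\HT$) with the map $T_p\mc{G} \to \omega_{\mc{G}_{\mc{A}}^\vee} = F_A$. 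The point is that $\HT_A(\alpha(e_1))$, a generator of $\HT_A$ applied to a generator of $T_pH$, has valuation $\geq w$ (relative to a generator of $F_A$, since it lies in $p^w F_A$ but we need that it is exactly in $p^w F_A$ and not too deep — here one should be a little careful, but the bound $\HT_A(\alpha(e_1)) \in p^w F_A$ is what we are given and what we use). Translating through the diagram, the ``depth'' of $\HT_A$ on the $H$-part is governed by $v(c) = (1 - v(a))/(p-1) = (1-d)/(p-1)$ together with $v(a) = d$ itself, and $w$-ordinarity forces an inequality between $w$ and these quantities.

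Concretely, I expect the mechanism to be: $F_A$ sits between $p^{1/(p-1)}\omega_{\mc{G}^\vee_{\mc{A}}}$ and $\omega_{\mc{G}^\vee_{\mc{A}}}$, and $\omega_H$ is a quotient of $\omega_{\mc{G}^\vee_{\mc{A}}}$ (dual to $\Lie H \hookrightarrow \Lie \mc{G}_{\mc{A}}$), so the condition that $\HT_A(\alpha(e_1))$ lands in $p^w F_A$ but generates a $\mb{Z}/p$ inside $\mc{G}[p]$ translates, after chasing through part (2) of the Oort--Tate lemma, into $w \leq \min(d, \ldots)$ or into a relation of the form $1 - d \leq (p-1)(1-w)$, i.e. $d \geq 1 - (p-1)(1-w) = (p-1)w - (p-2)$; combined with the complementary estimate coming from the fact that $F_A$ has index $1/(p-1) - (\text{something})$ one gets a two-sided squeeze. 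I would then check that the hypothesis $w > p/(p^2-1)$ is exactly what makes the resulting lower bound for $d$ exceed $1/(p+1)$: indeed $p/(p^2-1) = \frac{1}{p-1} \cdot \frac{p}{p+1}$, and the arithmetic should work out so that $d > 1/(p+1)$ precisely when $w > p/(p^2-1)$. The main obstacle, and the step requiring genuine care rather than bookkeeping, is getting the valuation relation in part (2) of the Oort--Tate lemma correctly aligned with the lattice $F_A$ (as opposed to $\omega_{\mc{G}^\vee_{\mc{A}}}$) and with the normalization of $w$-ordinarity — i.e. pinning down exactly which valuations enter and with which signs, so that the final numerology matches $p/(p^2-1)$ on the nose. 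Once that alignment is fixed, invoking the last clause of the Oort--Tate lemma finishes the proof.
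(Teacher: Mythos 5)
Your reduction is the same as the paper's: by the last clause of the Oort--Tate lemma everything comes down to the lower bound $\deg(H_{1})>1/(p+1)$, and the tool is the valuation data in part (2) of that lemma together with functoriality of the Hodge--Tate map along $H_{1}\hookrightarrow \mc{G}_{\mc{A}}[p]$. But the heart of the argument --- the quantitative link between $w$ and $\deg(H_{1})$ --- is precisely the step you leave open, and the candidate relation you float, $d\geq (p-1)w-(p-2)$, is wrong: at $w$ just above $p/(p^{2}-1)$ this bound is negative for $p\geq 3$, so it cannot produce $d>1/(p+1)$. Moreover you apply the Oort--Tate valuation to the wrong map: the quantity $(1-d)/(p-1)$ you quote is the valuation of the image of $\HT_{H_{1}^{\vee}}$ in $\omega_{H_{1}}$, whereas what is needed is $\HT_{H_{1}}:H_{1}(C)\ra \omega_{H_{1}^{\vee}}$. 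The correct computation runs as follows. Write $H_{1}=\Spec(\OC[Y]/(Y^{p}-aY))$ with $v(a)=d=\deg(H_{1})$ and $H_{1}^{\vee}=\Spec(\OC[Y]/(Y^{p}-bY))$ with $ab=p$. Applying part (2) of the lemma to $H_{1}^{\vee}$, a generator $s\in H_{1}(C)$ has $v(\HT_{H_{1}}(s))=(1-v(b))/(p-1)=d/(p-1)$ inside $\omega_{H_{1}^{\vee}}$. The inclusion $\omega_{H_{1}^{\vee}}\hookrightarrow \omega_{\mc{G}_{\mc{A}}[p]^{\vee}}=\omega_{\mc{G}_{\mc{A}}^{\vee}}/p\,\omega_{\mc{G}_{\mc{A}}^{\vee}}$ is, in suitable generators, multiplication by $a$ on $\OC/b\OC\ra \OC/p\OC$, so functoriality gives $v(\HT_{\mc{G}_{\mc{A}}[p]}(s))=d+d/(p-1)=pd/(p-1)$. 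Since $s=\alpha(e_{1})\bmod p$ and $A$ is $w$-ordinary, this element lies in $p^{w}\omega_{\mc{G}_{\mc{A}}[p]^{\vee}}$, whence $pd/(p-1)\geq w$, i.e. $d\geq (p-1)w/p$, and $w>p/(p^{2}-1)$ then gives $d>1/(p+1)$.

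Two smaller points. First, your identification $\omega_{\mc{G}_{\mc{A}}^{\vee}}=F_{A}$ is not correct in general (one only has $p^{1/(p-1)}\omega_{\mc{G}_{\mc{A}}^{\vee}}\sub F_{A}\sub \omega_{\mc{G}_{\mc{A}}^{\vee}}$), but the alignment issue you worry about is a non-issue and no ``two-sided squeeze'' is needed: since $p^{w}F_{A}\sub p^{w}\omega_{\mc{G}_{\mc{A}}^{\vee}}$, the hypothesis $\HT_{A}(\alpha(e_{1}))\in p^{w}F_{A}$ already implies the membership in $p^{w}\omega_{\mc{G}_{\mc{A}}[p]^{\vee}}$ used above, and an upper bound on the divisibility is never required for the lower bound on $\deg(H_{1})$. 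Second, nothing about ``generating a $\mb{Z}/p$'' enters the valuation count beyond the fact that $s$ is a generator, which is what makes the Oort--Tate valuation of $\HT_{H_{1}}(s)$ exactly $d/(p-1)$. Until the displayed computation is carried out, the proposal is a plan rather than a proof.
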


\begin{proof}
Consider the commutative diagram 
\[
\xymatrix{0  \ar[r]& H_{1}(C)\ar[d]_{\HT_{H_{1}}}\ar[r] & \mc{G}_{\mc{A}}[p](C)\ar[d]^{\HT_{\mc{G}_{\mc{A}}[p]}} \\
0 \ar[r] & \omega_{H_{1}^{\vee}} \ar[r] & \omega_{\mc{G}_{\mc{A}}[p]^{\vee}}}
\] 
with exact rows. We have $\omega_{\mc{G}_{\mc{A}}[p]^{\vee}}=\omega_{\mc{G}_{\mc{A}}^{\vee}}/p\omega_{\mc{G}_{\mc{A}}^{\vee}}$. Note that $H_{1}$ is an Oort-Tate group scheme and hence isomorphic to $\Spec(\OC[Y]/(Y^{p}-aY))$ for some $a\in \OC$ with $v(a)=\deg(H_{1})$ and $H_{1}^{\vee}$ is isomorphic to $\Spec(\OC[Y]/(Y^{p}-bY))$ with $ab=p$. Fix a generator $s\in H_{1}(C)$. By choosing generators the inclusion $\omega_{H_{1}^{\vee}} \ra \omega _{\mc{G}_{\mc{A}}[p]^{\vee}}$ may be written as $\OC/b\OC \ra \OC/p\OC$ where the map is multiplication by $a$, and $\HT_{H_{1}^{\vee}}(s)$ has valuation $v(a)/(p-1)$. Since $A$ is $w$-ordinary we have $a\HT_{H_{1}^{\vee}}(s)=\HT_{\mc{A}[p]}(s)\in p^{w}\omega_{\mc{A}[p]^{\vee}}$ and hence $pv(a)/(p-1) \geq w$, i.e. $\deg(H_{1}) \geq (p-1)w/p$. By our assumption on $w$ we deduce $\deg(H_{1})>1/(p+1)$ and hence that $H_{1}$ is the canonical subgroup. 
\end{proof}

\begin{rema}
As emphasized by a referee, one may also bound the Hodge height of $\mc{G}_{A}$, at least under stronger assumptions on $w$ (recall that the Hodge height is the valuation of the Hasse invariant, truncated by $1$). For example, using \cite[Th\'eor\`eme II.1.1]{fgl} and \cite[Proposition 3.1.2]{aip}, one sees that if $\tfrac{1}{2}+\tfrac{1}{p-1}<w\leq 1$ and $p\geq 5$, then the Hodge height of $\mc{G}_{A}$ is $\leq 1-w+\tfrac{1}{p-1}$.
\end{rema}

We will now, somewhat overdue, discuss the interpretation of $w$-ordinarity in terms of the Hodge-Tate period map. Define a coordinate $z$ on $\Pro$ by letting $z$ correspond to the line spanned by $\left( \begin{smallmatrix} x \\ y \end{smallmatrix} \right)$ with $z=-y/x$ and for $w\in\mb{Q}_{>0}$ define $U_{w}\sub \Pro$ to be the locus $|z|\leq |p^{w}|$. If $w\leq n$, then this locus is  $K_{0}(p^{n})$-stable. It is a rational subset of $V_{1}$. Now define 
$$\Uinfw=\pi_{\HT}^{-1}(U _{w}).$$ 
This is a $K_{0}(p^{n})$-stable open subspace of $\Xinf$. In fact, $\Uinfw$ is affinoid perfectoid since $U_{w}$ is a rational subset of $V_{1}\sub \Pro$ and therefore $\Uinfw$ is a rational subset of $\mc{V}_{1}$ since $\pi_{\HT}$ is adic. The conclusion follows from \cite[Theorem 6.3(ii)]{perf}. Note that, directly from the definition, the $(C,\OC)$-points of $\Uinfw$ are exactly the pairs $(A,\alpha)$ for which $\alpha$ is $w$-ordinary.

\medskip

In this article we will want to vary $w$ in order to capture all weights when defining overconvergent modular forms. A minor disadvantage of the loci $\Uinfw$ is that they are stable under different compact open subgroups as $w$ varies. We will instead define related loci that have the advantage that they all stable under the action of $K_{0}(p)$ (independent of $w$). 

\begin{defi}
Let $w\in \mb{Q}_{>0}$ and let $A$ be a $w$-ordinary false elliptic curve over $C$. Let $\alpha$ be a trivialisation of $T_{p}\mc{G}$. We say that $\alpha$ is \emph{strict} if $(\alpha \mod\, p)|_{\mb{Z}/p\mb{Z} \oplus 0}$ trivializes the pseudocanonical subgroup.

\end{defi}

Note that if $\alpha$ is $w$-ordinary then it is strict. Given $(A,\alpha)$ with $A$ $w$-ordinary and $\alpha$ strict, the orbit of $(A,\alpha)$ in $\Xinf$ under $K_{0}(p)$ consists exactly of the pairs $(A,\beta)$ for which $\beta$ is strict. We define $\Pro_{w}\sub V_{1}$ to be the rational subsets whose $(C,\OC)$-points correspond to

$$ \{z\in \OC \mid \exists s\in p\Zp\, :\, |z-s|\leq|p^{w}| \}. $$ 

\smallskip

One checks directly that if $\gamma\in K_{0}(p)$ and $|z|,|s|<1$, then $|\gamma^{\vee}z-\gamma^{\vee}s|=|z-s|$. Thus $\Pro_{w}$ is stable under $K_{0}(p)$ and moreover $\Pro_{w}=U_{w}.K_{0}(p)$. We then define 
$$\Xinfw=\pi_{\HT}^{-1}(\Pro_{w}).$$ 
By the discussion above these are $K_{0}(p)$-stable and a $(C,\OC)$-point $(A,\alpha)$ of $\Xinf$ is in $\Xinfw$ if and only if $A$ is $w$-ordinary and $\alpha$ is strict.

\begin{theo}\label{th1} Let $w\in \mb{Q}_{>0}$.
\begin{enumerate}
\item There is a unique affinoid open rigid subspace $\Xw \sub \X$ whose $(C,\OC)$-points are exactly the pairs $(A,H)$ with $A$ a $w$-ordinary false elliptic curves (with $K^{p}$-level structure) and $H$ its pseudocanonical subgroup.

\item Let $q\, :\, \Xinf \ra \X$ denote the projection map. Then $\Xinfw = q^{-1}(\mc{X}_{w})$.

\item The sets  $(\Xw)_{w}$ form a cofinal set of open neighbourhoods of the closure of the ordinary-multiplicative locus in $\X$.
\end{enumerate}
\end{theo}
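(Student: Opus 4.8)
The plan is to prove the three assertions in turn, using the structure of $\pi_{\HT}$ and Scholze's descent machinery from Theorem \ref{theo: scholze}.

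\textbf{Existence and uniqueness of $\Xw$ (part 1).}
First I would recall that $\Pro_w \sub V_1$ is a rational subset, and that $\mc{V}_1 = \pi_{\HT}^{-1}(V_1)$ descends to an affinoid open $S_1 \sub \mc{X}_{K(p^N)}$ at some finite level $N$. Since $\Pro_w$ is $K_0(p)$-stable, $\Uinfw$ (more precisely $\Xinfw$) is $K_0(p)$-stable; the idea is then to descend $\Xinfw$ along the (pro-finite-\'etale, hence effective) tower $\Xinf \sim \varprojlim_n \mc{X}_{K(p^n)}$ first to some $\mc{X}_{K(p^n)}$ and then, using $K_0(p)$-stability, down to an open subspace $\Xw \sub \mc{X} = \mc{X}_{K_0(p)}$. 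Concretely, $\Xinfw$ is the preimage of a quasi-compact open $U_n \sub \mc{X}_{K(p^n)}$ for $n$ large (by the almost-compatibility of the tower); one checks $U_n$ is stable under $K(p)/K(p^n)$-translates appropriately so that it is the preimage of an open $\Xw \sub \mc{X}$. That $\Xw$ is affinoid follows because it is a rational subset: at finite level $\Pro_w$ pulls back along the finite-level Hodge-Tate-type period map $\mc{X}_{K_0(p^n)} \to \Pro/K_0(p^n)$ to a rational subset, or alternatively because $\Xw$ sits inside the (affinoid) finite-level descent of $\mc{V}_1$. For the moduli description of the $(C,\OC)$-points: by the discussion preceding the theorem, $(C,\OC)$-points of $\Xinfw$ are pairs $(A,\alpha)$ with $A$ $w$-ordinary and $\alpha$ strict; the fibre of $q: \Xinf \to \mc{X}$ over a point $(A,H)$ is the $K_0(p)$-orbit of trivializations $\alpha$ compatible with $H$, and such an orbit meets $\Xinfw$ exactly when some (equivalently every) strict $\alpha$ in it is $w$-ordinary-compatible, which translates to $A$ being $w$-ordinary with $H = H_1$. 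Uniqueness is then immediate since a reduced rigid space is determined by its classical (here $(C,\OC)$-) points together with the structure inherited from $\mc{X}$.

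\textbf{Part 2.}
This should be essentially formal once part 1 is set up correctly: by construction $\Xw$ is the open subspace of $\mc{X}$ with $q^{-1}(\Xw) = \Xinfw$, so the content is really just to make the descent in part 1 compatible with $q$. I would phrase part 1 so that $\Xw$ is \emph{defined} as the image locus, and then part 2 is a restatement; the only thing to verify is that $q^{-1}$ of the descended open is exactly $\pi_{\HT}^{-1}(\Pro_w)$, which holds because $\Pro_w = U_w . K_0(p)$ and $\pi_{\HT}$ is $\gl(\Qp)$-equivariant, so $\pi_{\HT}^{-1}(\Pro_w)$ is already $K_0(p)$-stable and hence pulled back from level $K_0(p)$.

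\textbf{Part 3 (cofinality).}
The plan is to compare the $\Xw$ with the standard overconvergent neighbourhoods of the ordinary locus defined via the Hasse invariant. On the ordinary-multiplicative locus, $H_1$ is the canonical (multiplicative) subgroup, and the Lemma above (relating $\deg H_1 \geq (p-1)w/p$, and $\deg H_1 > 1/(p+1) \Rightarrow$ canonical) shows that for $w$ close to $1$, $\Xw$ is contained in the locus where the canonical subgroup exists and has large degree, i.e. a small strict neighbourhood of the ordinary-multiplicative locus. Conversely, given any strict neighbourhood $Z$ of the closure of the ordinary-multiplicative locus, I would argue that $Z$ contains $\Xw$ for $w$ sufficiently close to $1$: the ordinary-multiplicative locus is the intersection $\bigcap_{w \to 1} \Xw$ (a point $(A,H)$ is ordinary with $H$ multiplicative iff $A$ is $w$-ordinary for all $w < 1$ and $H = H_1$, by the remark that ordinary $=$ $\infty$-ordinary), the $\Xw$ are a decreasing family of affinoids, and a decreasing family of affinoids with intersection contained in an open $Z$ must have some member contained in $Z$ (quasi-compactness: $\mc{X} \setminus Z$ is closed, and $\bigcap_w (\Xw \setminus Z) = \emptyset$, so some $\Xw \setminus Z = \emptyset$). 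One also checks that each $\Xw$ is connected to/contains the ordinary-multiplicative locus appropriately so that the family is genuinely a neighbourhood basis rather than missing components.

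\textbf{Main obstacle.}
I expect the hard part to be part 1, specifically the descent of $\Xinfw$ from infinite level down to $\mc{X}_{K_0(p)}$ while simultaneously keeping track of the moduli interpretation. Scholze's theorem gives descent of $\mc{V}_1$ to \emph{some} finite level $N$, but one must check that the further rational-subset cut by $\Pro_w$ descends all the way to $K_0(p)$-level (not just to $K(p^N)$ or $K_0(p^N)$), which uses the precise $K_0(p)$-stability $\Pro_w = U_w.K_0(p)$ together with the fact that $\pi_{\HT}$ intertwines the $\gl(\Qp)$-actions; and then one must match the abstract descended space with the moduli problem $(A,H)$, for which the cleanest route is probably to exhibit $\Xw$ at finite level $K_0(p)$ directly as $\{\, \deg H \geq \text{(something)} \,\}$-type locus or via the finite-level Hodge–Tate period map, rather than purely by abstract descent. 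Getting this identification clean, and making sure ``$w$-ordinary'' as defined via $\HT_A$ on $C$-points glues to an honest rational subspace, is where the real work lies.
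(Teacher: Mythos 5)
Your overall skeleton for (1)--(2) (take the image/descent of the $K_0(p)$-stable locus $\Xinfw$, check everything on $(C,\OC)$-points, use that any trivialization lifting the pseudocanonical subgroup is automatically strict) is essentially the paper's argument. But the step you wave at hardest — affinoidness of $\Xw$ — is a genuine gap, and both of your proposed justifications fail. There is no finite-level Hodge--Tate period map $\mc{X}_{K_0(p^n)}\to \Pro/K_0(p^n)$ to pull $\Pro_w$ back along (the period map exists only at infinite level, and $\Pro/K_0(p^n)$ is not a rigid space), and ``$\Xw$ sits inside the affinoid finite-level descent of $\mc{V}_1$'' proves nothing, since an open subset of an affinoid need not be affinoid. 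What the paper actually does is: since $\Xinfw$ is a rational subset of $\mc{V}_1$ and rational subsets come from finite level, there is an affinoid $S\sub\mc{X}_{K(p^N)}$ with $\Xinfw=q_{K(p^N)}^{-1}(S)$; then $\Xw$ is the quotient of $S$ by the finite group $K_0(p)/K(p^N)$, and affinoidness is supplied by the dedicated result on quotients of rigid spaces by finite group actions (Corollary \ref{coro: quotients} in the appendix, which the authors had to prove, and whose proof is delicate enough that an earlier version had a gap). This quotient-by-a-finite-group input is the missing idea in your proposal.

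Part (3) also has a real error: you characterize the ordinary-multiplicative locus as $\bigcap_{w\to 1}\Xw$, i.e.\ via ``$w$-ordinary for all $w<1$'', but ordinarity is equivalent to $w$-ordinarity for \emph{all} $w>0$ ($w\to\infty$), and a non-ordinary curve can perfectly well be $w$-ordinary for every $w<1$ (indeed up to $w\le p/(p-1)$, since $\deg H_1\ge (p-1)w/p$ and $\deg H_1\le 1$). Consequently your containment in small strict neighbourhoods ``for $w$ close to $1$'' cannot give cofinality: the lemma you invoke only applies for $w\le 1$ and only bounds $\deg H_1$ below by $(p-1)/p<1$, so it never forces $\Xw$ into the neighbourhoods $\{\deg H\ge r\}$ with $r$ close to $1$. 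The paper gets the needed quantitative statement by a different device: acting by $h=\left(\begin{smallmatrix} p & 0\\ 0 & 1\end{smallmatrix}\right)$ sends $\Xinfw$ to $\mc{X}_{\infty,w+1}$, the quotient by an anticanonical subgroup improves the degree of the canonical subgroup, and the recurrence $\delta_{n+1}=(p-1)/p+\delta_n/p$ yields $\deg H\ge 1-p^{-n}$ on $\mc{X}_n$, whence cofinality against the known cofinal family $\{\deg H\ge r\}$. Your compactness argument (a decreasing family of quasicompact opens whose intersection lies in $Z$ has a member inside $Z$) is in the spirit of the qualitative constructible-topology argument the paper merely alludes to, and could be made to work after correcting the limit to $w\to\infty$, but as written it also conflates classical points with adic points: you must control all points of $\bigcap_w\Xw$, not just the $(C,\OC)$-points, before quasicompactness can be applied.
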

\begin{proof}
Write, for any level $K_{p}\sub \gl(\Zp)$, $q_{K_{p}}$ for the projection map $\Xinf \ra \X$. We prove (1) and (2) together. Indeed, we define $\Xw := q(\Xinfw)$. Since $q$ is pro-\'etale, $\Xw$ is open and quasicompact (\cite{sch2}, Lemma 3.10(iv)) and the characterization of the $(C,\OC)$-points of $\Xw$ follows directly from the characterization of the $(C,\OC)$-points of $\Xinfw$. Note that uniqueness is clear since quasicompact open subsets of quasiseparated rigid analytic varieties (viewed as adic spaces) are determined by their classical points (cf. \cite[Theorem 2.21]{perf}). This also finishes the proof (1), except that we need to show that $\Xw$ is affinoid. 

\medskip

\noindent By the definition of the inverse limit topology we may find $m$ and a quasicompact open subset $W\sub \mc{X}_{K(p^m)}$ such that $q_{K(p^m)}^{-1}(W)=\Xinfw$. We claim that $W=q_{K(p^{m})}(\Xinfw)$. Indeed, both sets are determined by their $(C,\OC)$-points and $q_{K(p^m)}$ is surjective on $(C,\OC)$-points. Thus it remains to see that if $r$ is the natural map $\mc{X}_{K(p^m)} \ra \X$, then $r^{-1}(\Xw)=W$, which we can check on $(C,\OC)$-points. This follows from the surjectivity of $q_{K_{p}}$ on $(C,\OC)$-points for arbitrary $K_{p}$ and the fact that any trivialization that maps down to the pseudocanonical subgroup is automatically strict (in the sense that one/any lift to a trivialization of the whole Tate module is strict). This proves (2). Finally we finish the proof of (1) by showing that $\Xw$ is affinoid. To see this we use $\Xinfw = q^{-1}(\Xw)$. Since $\Xinfw$ is a rational subset of $\mc{V}_{1}$ we may find some large $N$ and an affinoid $S\sub \mc{X}_{K(p^{N})}$ such that $\Xinfw=q_{K(p^{N})}^{-1}(S)$ (this follows from Theorem \ref{theo: scholze} and the fact that rational subsets come from finite level). Then $\Xw$ is the quotient of $S$ by the finite group $K_{0}(p)/K(p^{N})$ and hence affinoid by Corollary \ref{coro: quotients}.

\medskip

\noindent We now prove (3). As a qualitative result it follows by general topology arguments using the constructible topology, cf. \cite[Lemma III.3.8]{sch3}. Here we deduce a quantitative version using the Lubin-Katz theory of the canonical subgroup. For a statement of the results we need phrased in terms of the degree function see Proposition 1.2.8 of \cite{kas} (as before this reference treats elliptic curves but the results carry over \emph{verbatim}). Without loss of generality we restrict our attention to $w=1,2,3,...$. On $\mc{X}_{1}$, $H_{1}$ is the canonical subgroup and $\deg(H_{1})\geq (p-1)/p$. Let $h= \left( \begin{smallmatrix} p & 0 \\ 0 & 1 \end{smallmatrix} \right) \in \glq$. Then $h(\Xinfw)=\mc{X}_{\infty,w+1}$ for all $w$. Moreover, if $(A,\alpha)\in \mc{X}_{K(p^{\infty}),1}(C,\OC)$ and $(A,\alpha).h=(A/H^{\oplus 2},\beta)$, then $H$ is anticanonical and hence of degree $(1-\deg(H_{1}))/p$. Hence $\deg(\mc{G}[p]/H)=1-(1-\deg(H_{1}))/p>1/2$ and $\mc{G}[p]/H$ is the canonical subgroup. Write $\delta_{n}$ for the degree of the canonical subgroup of $(h^n).A$. Then we get the recurrence relation $\delta_{n+1}=(p-1)/p+\delta_{n}/p$ and from $\delta_{1}\geq (p-1)/p$ we deduce that
$$ \delta_{n} \geq \delta_{n}^{\prime}:= \frac{p-1}{p} \sum_{i=0}^{n-1}\frac{1}{p^{i}}=1-\frac{1}{p^{n}}. $$
We have therefore proved that if $(A,H)\in \mc{X}_{n}(C,\OC)$ then $deg(H)\geq \delta^{\prime}_{n}$. As $\delta^{\prime}_{n} \ra 1 $ as $n\ra \infty$, the result follows since the loci $\{ (A,H) \mid deg(H) \geq r \}$ for $r \in (0,1)$ form a cofinal set of open neighbourhoods of the closure of the ordinary-multiplicative locus (which is the locus $\{ (A,H) \mid deg(H)=1 \}$) . 
\end{proof}

\medskip

\subsection{The fundamental period and a non-vanishing section}\label{sec-period} Recall the coordinate $z$ on $\Pro$ defined earlier. The action of $g = \left( \begin{smallmatrix} a & b \\ c & d \end{smallmatrix} \right) \in \gl(\Qp)$ is

$$ z.g=g^{\vee}z = \frac{az+c}{bz+d}.$$

\smallskip

Below, whenever we have a matrix $g \in \gl$ we will use $a,b,c,d$ as above to denote its entries. Note that $z$ defines a function in $H^{0}(V_{1},\mc{O}_{\Pro}^{+})$ and by composing with $\pi_{\HT}$ we obtain a function $\mf{z}\in H^{0}(\mc{V}_{1},\mc{O}_{\Xinf}^{+})$ which we will call the \textbf{fundamental period}. We will use the same notation to denote its restriction to $\Xinfw$ for any $w>0$; by definition $\mf{z}\in H^{0}(\Xinfw, p\Zp+p^{w}\oo_{\Xinf}^{+})$. Note that if $\gamma = \left( \begin{smallmatrix} a & b \\ c & d \end{smallmatrix} \right) \in K_{0}(p)$ then 
$$ \gamma^{\ast}\mf{z} = \frac{a\mf{z}+c}{b\mf{z}+d} $$
as functions on $\Xinfw$. Next we wish to trivialize $\mc{O}(1)$ over $S=\{z\neq \infty\}\sub \Pro$ by defining a non-vanishing section. Everything we write in this section is standard but we repeat it since we will need some explicit formulas. The section we are after is algebraic so we will momentarily work in the realm of algebraic geometry. Explicitly it is given by the following formula: The line bundle $\mc{O}(1)$ has a geometric incarnation with total space $(\gl \times \mb{A}^{1})/B$ where $B$ denotes the lower triangular Borel in $\gl$ and acts on $\mb{A}^{1}$ by $g.x = d^{-1}x$ (this equivariant structure may differ from the canonical one, but it is the one corresponding to the Hodge-Tate sequence on $\Xinf$). Global sections correspond to functions $ f\, :\, \gl \ra \mb{A}^{1}$ satisfying $f(gh)=d(h)^{-1}f(g)$ for $g\in \gl$, $h \in B$. There is a morphism of equivariant vector bundles
$$ (\gl /B)\times \mb{A}^{2} \ra (\gl \times \mb{A}^{1})/B, $$ 
given by
$$ \left(g, \left( \begin{matrix} x \\ y \end{matrix} \right) \right) \mapsto \left(g, dx-by \right). $$
Here the left hand side has the $\gl$-action $g.(h,v)=(gh,(g^{\vee})^{-1}v)$. Then the global section of $(\gl/B)\times \mb{A}^{2}$ given by the constant function $g \mapsto e_{2}$ maps to global section of $\mc{O}(1)$ given by the function  
$$ s\, : \, \gl \ra \mb{A}^{1}; $$
$$ s(g)=-b(g). $$
Note that $s=0$ if and only $g\in B$ so $s$ is invertible on $S$. Let us now return to the rigid analytic world and let $W\sub S$ be an open subset such $h^{\vee}(W) \sub W$ for some $h\in \gl(C)$. Then by direct calculation 
$$ (sh^{\vee})(g)= d(g)b(h)-b(g)d(h). $$
Thus
$$ \frac{s\circ h^{\vee}}{s}(g) = \frac{d(h)b(g)-b(h)d(g)}{b(g)}. $$
This is a function on $W$. Taking $g=g_{z}= \left( \begin{smallmatrix} 0 & -1 \\ 1 & z \end{smallmatrix} \right)$ we see that 
$$ \frac{s\circ h^{\vee}}{s}(z) = b(h)z+d(h). $$
We may then pull $s$ back via $\pi_{\HT}$ to get (compatible) non-vanishing sections $\mf{s}\in H^{0}(\Xinfw, \omega)$ for all $w$ satisfying
$$ \frac{\gamma^{\ast}\mf{s}}{\mf{s}} = b\mf{z}+d $$
for all $\gamma \in K_{0}(p)$. We remark that $\mf{s}$ is one of the "fake" Hasse invariants constructed in \cite{sch3}.

\medskip

\subsection{A perfectoid definition of overconvergent modular forms}

We will give definitions of sheaves of overconvergent modular forms with prescribed small or affinoid weight. Recall the forgetful morphism $q: \Xinfw \ra \Xw \sub \X$.

\begin{defi}\label{mainocdef}
Let $\U$ be a weight and let $w\in \mb{Q}_{>0}$ be such that $w\geq 1+s_{\U}$. We define a sheaf $\omega_{\U,w}^{\dg}$ on $\Xw$ by
$$ \omega_{\U,w}^{\dagger}(U)=\left\{ f \in \oo _{\Xinfw}(q^{-1}(U)) \ctens A_{\U} \mid \gamma^{\ast}f =  \chi_{\U}(b\mf{z}+d)^{-1}f  \; \forall \gamma \in K_0(p)\right\} $$
where $U \sub \Xw$ is a qcqs open subset.
\end{defi}

We remark that, since $b\mf{z}+d\in \Zp^{\times}(1+p^{w}\oo_{\Xinfw}^{+})$, the assumption $w\geq 1+s_{\U}$ ensures that $\chi_{\U}(b\mf{z}+d)$ is well defined (by Proposition \ref{prop-extend}). Define
$$\Delta_0(p) = \{  (\begin{smallmatrix} a & b \\ c & d \end{smallmatrix}) \in {\rm M}_{2}(\Zp)\cap \gl(\Qp) \mid c \in p\mb{Z}_p, \ d \in \mb{Z}_p ^{\times} \}.$$
This is a submonoid of $\gl(\Qp)$ containing $K_{0}(p)$, and it stabilizes $\Xinfw$ for all $w$. We can form a sheaf $\mc{F}$ on $\Xinfw$ by $\mc{F}(U_{\infty})=\oo_{\Xinfw}(U_{\infty})\ctens A_{\U}$ for $U_{\infty}\sub \Xinfw$ qcqs (\emph{not} necessarily of the form $q^{-1}(U)$), and we may equip it with a $\Delta_{0}(p)$-equivariant structure by the rule
$$ \gamma \cdot_{\U} - \, :\, \ \mc{F}(U_{\infty}) \ra \mc{F}(\gamma^{-1}U_{\infty}); $$
$$ \gamma \cdot_{\U}f = \chi_{\U}(b\mf{z}+d)\gamma^{\ast}f $$
for $\gamma\in \Delta_{0}(p)$. Then $\omega_{\U,w}^{\dg}=(q_{\ast}\mc{F})^{K_{0}(p)}$ and we obtain an action of the double cosets $K_{0}(p)\gamma K_{0}(p)$  for $\gamma\in \Delta_{0}(p)$ on $\omega_{\U,w}^{\dg}$, given by the following standard formula:
$$ [K_{0}(p)\gamma K_{0}(p)]\, :\, \omega_{\U,w}^{\dg}(V) \ra \omega_{\U,w}^{\dg}(U);$$
$$ [K_{0}(p)\gamma K_{0}(p)].f = \sum_{i}\chi_{\U}(b_{i}\mf{z}+d_{i})\gamma_{i}^{\ast}f $$
where $U\sub \Xw$ is qcqs, $V\sub \Xw$ is the image of $U$ under $[K_{0}(p)\gamma K_{0}(p)]$ viewed as a correspondence on $\Xw$, and $K_{0}(p)\gamma K_{0}(p) = \coprod_{i}\gamma_{i}K_{0}(p)$ is a coset decomposition. This gives us the Hecke action at $p$ for $\omega_{\U,w}^{\dg}$. As in the complex case, we may also view this action as an action by correspondences. We give a few remarks on this for Hecke operators at primes $\ell\neq p$ below; the action at $p$ is similar. The statements analogous to Proposition \ref{prop:hecke} that are needed follow from Proposition \ref{prop:omega}.

\medskip

For the Hecke actions at $\ell \neq p$, let us momentarily introduce the tame level into our notation, writing $\omega^{\dagger}_{\U,w,K^p}$ for the sheaf on $\mc{X}_{w,K^p}\subset \mc{X}_{K_0(p) K^p}$.  The construction of Hecke operators is then immediate from the following proposition.

\begin{prop}\label{prop:hecke}
\begin{enumerate}
\item For any $g \in G(\mathbb{A}_{f}^{p})$ with $g: \mc{X}_{w,K^p} \overset{\sim}{\to} \mc{X}_{w,g^{-1} K^p g}$ the associated isomorphism of Shimura curves, there is a canonical isomorphism $g^{\ast} \omega^{\dagger}_{\U,w,g^{-1} K^p g} \cong \omega^{\dagger}_{\U,w,K^p}$.
\item For any inclusion $K_2^p \subset K_1^p$ of tame levels with $\pi: \mc{X}_{w,K_2^p} \to \mc{X}_{w,K_1^p}$ the associated finite \'etale projection of Shimura curves, there is a canonical isomorphism \[\pi_{\ast}\omega^{\dagger}_{\U,w,K_2^p} \cong \omega^{\dagger}_{\U,w,K_1^p} \otimes_{\mc{O}_{\mc{X}_{w,K_1^p}}} \pi_{\ast} \mc{O}_{\mc{X}_{w,K_2^p}}\]  In particular there is a canonical $\mc{O}_{\mc{X}_{w,K_1^p}}$-linear trace map \[ \pi_{\ast}\omega^{\dagger}_{\U,w,K_2^p} \to \omega^{\dagger}_{\U,w,K_1^p} \]
\end{enumerate}
\end{prop}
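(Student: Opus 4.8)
The plan is to reduce both statements to the level of the perfectoid cover $\Xinfw$, where $\omega$ is trivialized by the section $\mf{s}$ and the descent to $\Xw$ is governed purely by the $K_0(p)$-equivariance via $\chi_{\U}(b\mf{z}+d)$. The key observation is that the tame-level maps (isomorphisms $g$ and finite \'etale projections $\pi$) commute with the Hodge--Tate period map and with the action of $\glq$ at $p$; in particular the fundamental period $\mf{z}$ and the section $\mf{s}$ are pulled back compatibly along any change of tame level, since they are pulled back from $\Pro$ and from finite level respectively. So the problem genuinely only involves the tame-away-from-$p$ structure, and $\chi_{\U}(b\mf{z}+d)$ is inert under these maps.

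For part (1): given $g \in G(\mathbb{A}_f^p)$, the isomorphism $g : \mc{X}_{w,K^p} \overset{\sim}{\to} \mc{X}_{w,g^{-1}K^p g}$ lifts to a $\glq$-equivariant isomorphism $\widetilde{g}: \Xinfw_{,K^p} \overset{\sim}{\to} \Xinfw_{,g^{-1}K^p g}$ of the perfectoid covers, compatible with the two projections $q$ and with $\pi_{\HT}$, hence $\widetilde{g}^{\ast}\mf{z} = \mf{z}$ and $\widetilde{g}^{\ast}\mf{s} = \mf{s}$ (and likewise $\widetilde{g}^{\ast}$ acts as the identity on the weight-$\U$ twist). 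First I would spell out that $\omega^{\dg}_{\U,w,K^p}$ is by definition $(q_{\ast}\mc{F}_{K^p})^{K_0(p)}$ where $\mc{F}_{K^p}(U_{\infty}) = \oo_{\Xinfw_{,K^p}}(U_{\infty})\ctens A_{\U}$ with its $\Delta_0(p)$-equivariant structure twisted by $\chi_{\U}(b\mf{z}+d)$; then $\widetilde{g}^{\ast}$ gives an isomorphism $\mc{F}_{g^{-1}K^p g} \cong \widetilde{g}_{\ast}\mc{F}_{K^p}$ of $\Delta_0(p)$-equivariant sheaves because it fixes $\mf{z}$, and pushing forward along $q$ and taking $K_0(p)$-invariants yields the claimed canonical isomorphism $g^{\ast}\omega^{\dg}_{\U,w,g^{-1}K^p g} \cong \omega^{\dg}_{\U,w,K^p}$.

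For part (2): given $K_2^p \subset K_1^p$ with $\pi$ the finite \'etale projection, one has a cartesian-type diagram in which the infinite-level covers $\Xinfw_{,K_2^p} \to \Xinfw_{,K_1^p}$ also form a finite \'etale cover (of degree $[K_1^p : K_2^p]$), compatibly with $q$ and with the $\Delta_0(p)$-action. The sheaf-theoretic identity $\pi_{\ast}(\pi^{\ast}\mc{L} \otimes \mc{M}) \cong \mc{L}\otimes \pi_{\ast}\mc{M}$ (projection formula) for $\mc{L} = \omega^{\dg}_{\U,w,K_1^p}$ and $\mc{M} = \oo_{\mc{X}_{w,K_2^p}}$, combined with the fact that $\pi^{\ast}\omega^{\dg}_{\U,w,K_1^p} \cong \omega^{\dg}_{\U,w,K_2^p}$ (which is part (1) with trivial $g$, or rather the same $\mf{z}, \mf{s}$-compatibility applied to a change of tame level rather than a tame automorphism), gives the stated formula; the trace map is then obtained by composing with the usual trace $\pi_{\ast}\oo_{\mc{X}_{w,K_2^p}} \to \oo_{\mc{X}_{w,K_1^p}}$ for the finite \'etale map $\pi$. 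The main obstacle I anticipate is purely bookkeeping: one must check that $\omega^{\dg}_{\U,w}$ really is a line bundle on $\Xw$ (so that the projection formula applies and $\pi^{\ast}$ behaves well) --- this is exactly the non-trivial effectivity point flagged in the introduction, namely that pro-\'etale descent of the line bundle on $\Xinfw$ is effective here --- and that all the compatibilities between $q$, $\pi_{\HT}$, and the various level maps hold on the nose rather than just up to non-canonical isomorphism. Granting the earlier structural results (in particular Proposition \ref{prop:omega}, which is cited as supplying precisely the statements needed), these checks are routine, and the essential content is the $\mf{z}$-, $\mf{s}$-invariance under change of tame level together with the projection formula.
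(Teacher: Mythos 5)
Your part (1) is essentially the paper's own argument: once one knows $g^{\ast}\mf{z}=\mf{z}$ (which the paper quotes from \cite{sch3}), the isomorphism is immediate from the definition of the sheaves, and your spelling-out via the equivariant sheaf $\mc{F}$ on the infinite-level cover is the same reasoning.

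For part (2), however, there is a genuine gap. The isomorphism $\pi^{\ast}\omega^{\dagger}_{\U,w,K_1^p}\cong\omega^{\dagger}_{\U,w,K_2^p}$, which your projection-formula argument takes as input, is \emph{not} ``part (1) with trivial $g$'': part (1) concerns an isomorphism of Shimura curves, whereas $\pi$ is a finite \'etale cover of degree $[K_1^p:K_2^p]$, and this isomorphism is precisely the nontrivial content of the statement. The invariance $\pi_{\infty}^{\ast}\mf{z}=\mf{z}$ only produces a natural map $\pi^{\ast}\omega^{\dagger}_{\U,w,K_1^p}\to\omega^{\dagger}_{\U,w,K_2^p}$; to show it is an isomorphism one must commute the $\chi_{\U}$-twisted $K_0(p)$-invariants with the base change $\oo(U)\to\oo(\pi^{-1}U)$, and this is exactly what the paper's proof does: since the square formed by $\pi_\infty,q_1,q_2,\pi$ is cartesian, one has $\oo(q_2^{-1}\pi^{-1}U)\cong\oo(q_1^{-1}U)\otimes_{\oo(U)}\oo(\pi^{-1}U)$ (using that $\oo(\pi^{-1}U)$ is a finite projective $\oo(U)$-Banach module), one then applies $-\ctens A_{\U}$ via Lemma \ref{lemm: commute}, observes that the twisted $K_0(p)$-action lives entirely on the first factor because $\pi_{\infty}^{\ast}\mf{z}=\mf{z}$, and takes invariants, which pass through the finite flat tensor factor. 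That single computation yields both your missing isomorphism and the displayed formula at once, with no projection formula needed. Note also that your projection-formula route presupposes that $\omega^{\dagger}_{\U,w}$ is locally projective (of rank one) over $\oo_{\Xw}\ctens A_{\U}$; that is Theorem \ref{theo:rankone}, not Proposition \ref{prop:omega} (which concerns the completed pro-\'etale sheaves), and although Theorem \ref{theo:rankone} is proved independently of this proposition (so invoking it would not be circular), you would need to cite it correctly and still supply the invariants-versus-base-change step above. As written, the essential step of part (2) is assumed rather than proved.
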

\begin{proof}
(1) follows immediately from the definition of the sheaves together with the fact that $g^{\ast}\mf{z}=\mf{z}$; the latter follows from \cite[Theorem IV.1.1(iv)]{sch3}.

\medskip

For (2) we consider the pullback diagram
\[
\xymatrix{\mathcal{X}_{\infty,w,K_{2}^{p}}\ar[r]^{q_{2}}\ar[d]^{\pi_{\infty}} & \mathcal{X}_{w,K_{2}^{p}}\ar[d]^{\pi}\\
\mathcal{X}_{\infty,w,K_{1}^{p}}\ar[r]^{q_{1}} & \mathcal{X}_{w,K_{1}^{p}}
}
\]
of adic spaces. Given a rational subset $U\subset\mathcal{X}_{w,K_{1}^{p}}$,
we then have
\begin{eqnarray*}
\mathcal{O}(q_{2}^{-1}\pi_{1}U) & = & \mathcal{O}(\pi_{\infty}^{-1}q_{1}^{-1}U)\\
 & \cong & \mathcal{O}(q_{1}^{-1}U)\widehat{\otimes}_{\mathcal{O}(U)}\mathcal{O}(\pi^{-1}U)\\
 & \cong & \mathcal{O}(q_{1}^{-1}U)\otimes_{\mathcal{O}(U)}\mathcal{O}(\pi^{-1}U)
\end{eqnarray*}
where in the final line we use the fact that $\mathcal{O}(\pi^{-1}U)$
is a finite projective $\mathcal{O}(U)$-Banach module. Applying $-\widehat{\otimes}A_{\mathcal{U}}$
and making use of Lemma \ref{lemm: commute}, we get a canonical isomorphism
\[
\mathcal{O}(q_{2}^{-1}\pi_{1}^{-1}U)\widehat{\otimes}A_{\mathcal{U}}\cong\left(\mathcal{O}(q_{1}^{-1}U)\widehat{\otimes}A_{\mathcal{U}}\right)\otimes_{\mathcal{O}(U)}\mathcal{O}(\pi^{-1}U).
\]
Since $\pi_{\infty}^{\ast}\mathfrak{z}=\mathfrak{z}$, this isomorphism
is equivariant for the $\chi_{\mathcal{U}}$-twisted action of $K_{0}(p)$.
Passing to $K_{0}(p)$-invariants for the twisted action, the left-hand
side then becomes $\omega_{\mathcal{U},w,K_{2}^{p}}^{\dagger}(\pi^{-1}U)=(\pi_{\ast}\omega_{\mathcal{U},w,K_{2}^{p}}^{\dagger})(U)$,
while the right-hand side becomes $\omega_{\mathcal{U},w,K_{1}^{p}}^{\dagger}(U)\otimes_{\mathcal{O}(U)}(\pi_{\ast}\mathcal{O}_{\mathcal{X}_{w,K_{2}^{p}}})(U)$,
so (2) follows.
\end{proof}

\medskip

We now define our spaces of overconvergent modular forms.

\begin{defi} Let $\U$ be a weight and let $w\in \mb{Q}_{>0}$ be such that $w\geq 1+s_{\U}$.
\begin{enumerate}
\item We define the space of $w$-overconvergent modular forms $\mc{M}^{\dg,w}_{\U}$ of weight $\U$ by
$$ \mc{M}^{\dg,w}_{\U} = H^{0}(\Xw, \omega^{\dg}_{\U,w} \otimes_{\oo_{\Xw}}\Omega_{\Xw}^{1}). $$

\item We define the space of overconvergent modular forms $\mc{M}^{\dg}_{\U,w}$ of weight $\U$ by
$$ \mc{M}_{\U}^{\dg}= \varinjlim_{w}\mc{M}^{\dg,w}_{\U} . $$
\end{enumerate}
\end{defi}
The spaces $\mc{M}^{\dg,w}_{\U}$ are $\Qp$-Banach spaces, and $\mc{M}_{\U}^{\dg}$ is an LB-space. Using the functoriality of $\Omega_{\X}^{1}$ one may define Hecke operators on $\mc{M}^{\dg}_{\U,w}$ in the same way that we did for $\omega_{\U,w}^{\dg}$.

\begin{rema}
The reason we have formulated the definition in this way is because it fits naturally with our discussion of the overconvergent Eichler-Shimura map in \S \ref{sec:4}-\ref{sec:5}. It would be more fitting with our desire to be ``explicit" to consider the space
$$ H^{0}(\Xw, \omega^{\dg}_{\U+2,w}) $$
where $\U+2$ is the weight $(R_{\U}, z \mapsto \chi_{\U}(z)z^{2})$. Note that this shift by $2$ is forced upon us by our convention to associate $k\in \mb{Z}$ with the character $z \mapsto z^{k-2}$. Since $\omega$ is trivialized over $\Xinfw$ by $\mf{s}$ one sees that $\omega_{\U+2,w}^{\dg} \cong \omega_{\U,w}^{\dg}\otimes_{\oo_{\Xw}}\omega^{2}$. By the Kodaira-Spencer isomorphism $\Omega_{\X}^{1}\cong \omega^{2}$ we then have that $\omega^{\dg}_{\U+2,w}\cong \omega_{\U,w}^{\dg} \otimes_{\oo_{\Xw}} \Omega_{\Xw}^{1}$. Thus the two definitions give the same spaces. However, it is well known that the Kodaira-Spencer isomorphism fails to be equivariant for the \emph{natural} Hecke actions on both sides. Indeed it is customary in the theory of modular forms to renormalize the natural Hecke action on $\omega^{\otimes k}$ by multiplying $T_{\ell}$ by $\ell^{-1}$ (more precisely one multiplies the action of any double coset by $\det^{-1}$). The Kodaira-Spencer isomorphism is then Hecke-equivariant. Thus, by defining $\mc{M}^{\dg}_{\U,w}$ the way we have we can use the natural Hecke actions; there is no need to normalize. 

\medskip

For a discussion of how the normalized Hecke action on $\omega^{\otimes k}$ corresponds to the natural Hecke action on $\omega^{\otimes k-2}\otimes_{\oo_{\X}}\Omega_{\X}^{1}$, see \cite[p. 257-258]{fc}.
\end{rema}

\subsection{Locally projective of rank one}

In this subsection we prove that $\omega_{\mc{U},w}^{\dagger}$ is locally projective of rank one as a sheaf
of $\mc{O}_{\mc{X}_w} \widehat{\otimes} A_{\mc{U}}$-modules.

\medskip

First we prove a general lemma. Let $A_{\infty}$ be a uniform $\Qp$-Banach algebra equipped with an action of a profinite group $G$ by continuous homomorphisms. Let $A=A_{\infty}^{G}$. We record the following easy facts:

\begin{prop}
$A$ is a closed subalgebra of $A_{\infty}$, hence carries an induced structure of a uniform $\Qp$-Banach algebra, and $A^{\circ} = (A_{\infty}^{\circ})^{G}$.
\end{prop}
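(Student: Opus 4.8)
The plan is to verify the three assertions in turn, using only elementary facts about Banach algebras and continuity of the $G$-action. First I would check that $A = A_\infty^G$ is closed: since $G$ acts by continuous (hence bounded, by the uniformity and the open mapping theorem, or simply by hypothesis) $\Qp$-algebra automorphisms, for each $g \in G$ the fixed locus $\{x \in A_\infty \mid gx = x\} = \ker(g - \mathrm{id})$ is closed, and $A$ is the intersection of these over all $g$, hence closed. It is visibly a $\Qp$-subalgebra containing $1$. A closed subalgebra of a Banach algebra is a Banach algebra in the restricted norm, and the restricted norm is again the spectral norm because the spectral radius of an element of $A$ computed in $A$ agrees with that computed in $A_\infty$ (powers of $x\in A$ lie in $A$, and $\|x^n\|_A = \|x^n\|_{A_\infty}$), so $A$ is uniform.

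For the identification $A^\circ = (A_\infty^\circ)^G$: by definition $A^\circ$ is the unit ball of $A$ for its spectral norm, which is the restriction of the spectral norm of $A_\infty$ by the previous paragraph, so $A^\circ = A \cap A_\infty^\circ = A_\infty^G \cap A_\infty^\circ = (A_\infty^\circ)^G$, the last equality because $A_\infty^\circ$ is $G$-stable (the $G$-action is isometric for the spectral norm, since $g$ is an algebra automorphism and the spectral norm is intrinsic: $\|gx\| = \lim_n \|(gx)^n\|^{1/n} = \lim_n \|g(x^n)\|^{1/n}$, and one needs $g$ isometric, which follows from $g$ and $g^{-1}$ both being bounded algebra maps so they preserve the set of power-bounded elements, forcing $\|gx\| \le \|x\|$ for all $x$ and hence equality).

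The one point requiring a little care — and the main (mild) obstacle — is justifying that the $G$-action is isometric for the spectral norm, i.e. that each $g$ and $g^{-1}$ is norm-nonincreasing. This is where uniformity is used crucially: a bounded $\Qp$-algebra homomorphism $\phi: A_\infty \to A_\infty$ satisfies $\phi(A_\infty^\circ) \subseteq A_\infty^\circ$ because $A_\infty^\circ$ is exactly the set of power-bounded elements (uniformity) and homomorphisms preserve power-boundedness up to the bound of $\phi$; but in fact, since $\phi$ is bicontinuous here (it has a continuous inverse), $\phi(A_\infty^\circ) = A_\infty^\circ$, giving $\|\phi(x)\| = \|x\|$. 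So the action restricts to an action on $A_\infty^\circ$ by ring automorphisms, and taking $G$-invariants commutes with the inclusion $A_\infty^\circ \hookrightarrow A_\infty$, yielding $A^\circ = (A_\infty^\circ)^G$. I would present this as a short self-contained paragraph, since all the ingredients (closedness of fixed loci, intrinsic characterization of the spectral norm on a uniform Banach algebra, stability of $A_\infty^\circ$ under bicontinuous automorphisms) are standard.
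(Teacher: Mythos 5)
Your proof is correct; the paper records this proposition without proof as an ``easy fact,'' and your argument (closedness as an intersection of kernels of the continuous maps $g-\mathrm{id}$, power-multiplicativity of the restricted norm since powers of $x\in A$ stay in $A$, and $A^{\circ}=A\cap A_{\infty}^{\circ}$ because power-boundedness is the same computed in $A$ or $A_{\infty}$) is exactly the expected elementary one. One minor simplification: the last identity already gives $A^{\circ}=A_{\infty}^{G}\cap A_{\infty}^{\circ}=(A_{\infty}^{\circ})^{G}$ purely set-theoretically, so the paragraph establishing that each $g$ preserves $A_{\infty}^{\circ}$ and acts isometrically, while correct (bounded algebra endomorphisms preserve power-bounded elements, and $g^{-1}$ is also bounded), is not actually needed for the stated equality.
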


We then have:

\begin{lemm}\label{lemm: inv1}
Keep the above notations and assumptions.
\begin{enumerate}
\item Let $M$ be any profinite flat $\Zp$-module (in particular $M$ could be a small $\Zp$-algebra). We equip $M$ with the trivial $G$-action. Then $(A_{\infty} \ctens M)^{G} = A \ctens M$.

\item Let $V$ be a Banach space over $\Qp$ (in particular $V$ could be a reduced affinoid $\Qp$-algebra topologically of finite type). Equip $V$ with the trivial $G$-action. Then $(A_{\infty} \ctens_{\Qp} V)^{G} = A \ctens_{\Qp} V$.

\end{enumerate}
\end{lemm}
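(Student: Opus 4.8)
\textbf{Plan for the proof of Lemma \ref{lemm: inv1}.} The two statements are parallel, so the plan is to treat them uniformly, with (1) being the more delicate case since the completed tensor product $\ctens$ with a profinite flat $\Zp$-module is the non-standard one defined in the appendix. The overall strategy is: reduce to the level of integral/bounded subobjects, use that $A^{\circ} = (A_{\infty}^{\circ})^{G}$ (the preceding Proposition), and then exploit the explicit ``profinite $\times$ $p$-adic'' description of the completed tensor product together with exactness of taking $G$-invariants in the relevant category.

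First I would recall the construction of $A_{\infty}^{\circ} \ctens_{\Zp} M$ for $M$ profinite flat: writing $M = \varprojlim_{i} M_{i}$ with each $M_{i}$ a finite $\Zp$-module (or, after the fixed choice of defining ideal, $M/\mathfrak{a}^{i}M$ finite), one has $A_{\infty}^{\circ}\ctens_{\Zp} M = \varprojlim_{i,n} (A_{\infty}^{\circ}/p^{n}) \otimes_{\Zp} M_{i}$, and then inverts $p$. The key point is that the functor $N \mapsto N^{G}$ commutes with the limits appearing here: for each fixed finite level, $((A_{\infty}^{\circ}/p^{n}) \otimes_{\Zp} M_{i})^{G} = (A_{\infty}^{\circ}/p^{n})^{G} \otimes_{\Zp} M_{i}$ because $M_{i}$ is a finite $\Zp$-module with trivial action, so one is reduced to the fact that $G$-invariants commute with the (flat, finitely presented) base change $-\otimes_{\Zp} M_{i}$ and with reduction mod $p^{n}$ — here one uses that $M_{i}$ is free or at least that the relevant Tor vanishes, which is where $p$-torsion-freeness/flatness of $M$ enters. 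Then $G$-invariants commute with the inverse limit over $(i,n)$ since inverse limits commute with inverse limits (or: $\varprojlim$ is left exact, and $(-)^{G}$ is an equalizer, hence left exact). Finally one checks $(A_{\infty}^{\circ}/p^{n})^{G}$ agrees with $A^{\circ}/p^{n}$ up to bounded $p$-power torsion, using $A^{\circ} = (A_{\infty}^{\circ})^{G}$ and the snake lemma applied to multiplication by $p^{n}$ on $0 \to A^{\circ} \to A_{\infty}^{\circ} \to A_{\infty}^{\circ}/A^{\circ} \to 0$; inverting $p$ kills this torsion and yields $(A_{\infty} \ctens M)^{G} = A \ctens M$. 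Part (2) is the same argument but easier: $V$ has an orthonormalizable (or just admissible) Banach structure over $\Qp$, so $A_{\infty} \ctens_{\Qp} V$ is a completion of $\bigoplus A_{\infty} \cdot e_{j}$ (for an orthonormal basis, in the orthonormalizable case) and taking $G$-invariants is visibly $\varprojlim_{n}$ of $(A_{\infty}^{\circ}/p^{n})^{G}$-coefficient statements; the general Banach case follows by writing $V$ as a quotient of an orthonormalizable space and using left-exactness of $(-)^{G}$ together with the admissibility/open mapping theorem to control the quotient topology.

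I expect the main obstacle to be the bookkeeping in part (1) at the integral level: precisely controlling the failure of $(A_{\infty}^{\circ}/p^{n})^{G} \to (A_{\infty}/p^{n}A_{\infty})^{G}$-type statements by bounded torsion, and making sure that commuting $(-)^{G}$ past both the $p$-adic completion and the profinite (i.e. $\mathfrak{a}$-adic) completion is legitimate when these are intertwined in the mixed completed tensor product. Once one has the clean statement that $(-)^{G}$ commutes with $\varprojlim$ of the relevant towers of finite $\Zp$-modules-with-$G$-action, and that $(A_{\infty}^{\circ})^{G} = A^{\circ}$ (given), everything else is formal. A secondary, purely cosmetic issue is that the statement "in particular $M$ could be a small $\Zp$-algebra / $V$ could be a reduced affinoid" adds nothing to the proof — the ring structure plays no role, exactly as the authors note in the introduction — so I would prove the module statements and remark that the ring structure is inherited automatically.
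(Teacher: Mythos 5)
Your overall strategy is the paper's strategy — reduce to the integral level, use the explicit description of the mixed completed tensor product together with $A^{\circ}=(A_{\infty}^{\circ})^{G}$, and commute $(-)^{G}$ with the limits — but two of your intermediate steps are not correct as stated. First, for an arbitrary finite quotient $M_{i}=M/I_{i}$ the identity $((A_{\infty}^{\circ}/p^{n})\otimes_{\Zp}M_{i})^{G}=(A_{\infty}^{\circ}/p^{n})^{G}\otimes_{\Zp}M_{i}$ is not automatic: $M_{i}$ is a torsion $\Zp$-module, so it is not flat over $\Zp$, and if $M_{i}$ has a cyclic summand $\Zp/p^{a}$ with $a<n$ the left side involves $(A_{\infty}^{\circ}/p^{a})^{G}$, which need not agree with the reduction of $(A_{\infty}^{\circ}/p^{n})^{G}$; flatness of $M$ over $\Zp$ does not rescue this. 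The step is harmless only if you take the cofinal system of open submodules furnished by a pseudobasis, so that each quotient is finite \emph{free} over $\Zp/p^{n}$ — exactly the system $M_{J,n}$ used in Proposition \ref{prop: appflat}. Second, and more seriously, the claim that $(A_{\infty}^{\circ}/p^{n})^{G}$ agrees with $A^{\circ}/p^{n}$ up to \emph{bounded} $p$-power torsion, to be killed by inverting $p$ at the end, does not hold up: from $0\ra A_{\infty}^{\circ}\overset{p^{n}}{\ra}A_{\infty}^{\circ}\ra A_{\infty}^{\circ}/p^{n}\ra 0$ the cokernel of $A^{\circ}/p^{n}\ra(A_{\infty}^{\circ}/p^{n})^{G}$ injects into $H^{1}(G,A_{\infty}^{\circ})[p^{n}]$, so it is killed by $p^{n}$ but by no power of $p$ uniform in $n$, and an inverse limit of torsion modules need not be torsion (e.g. $\varprojlim_{n}\Zp/p^{n}=\Zp$), so inverting $p$ after passing to the limit does not dispose of the discrepancy.

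The repair is immediate and is already contained in your own plan: never compare modulo $p^{n}$ at all. Since $(-)^{G}$ is left exact it commutes with all the inverse limits in sight, and since $A_{\infty}^{\circ}$ is $p$-adically complete one gets $\varprojlim_{n}(A_{\infty}^{\circ}/p^{n})^{G}=(A_{\infty}^{\circ})^{G}=A^{\circ}$ directly; with the pseudobasis system this yields $(A_{\infty}^{\circ}\ctens M)^{G}=\prod_{i\in I}(A_{\infty}^{\circ})^{G}=\prod_{i\in I}A^{\circ}=A^{\circ}\ctens M$, and inverting $p$ commutes with $(-)^{G}$ here because $\prod_{i\in I}A_{\infty}^{\circ}$ is $p$-torsion free. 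This is precisely how the paper argues: it invokes Proposition \ref{prop: appflat} to identify $A^{\circ}\ctens M\ra A_{\infty}^{\circ}\ctens M$ with the inclusion $\prod_{i\in I}A^{\circ}\sub\prod_{i\in I}A_{\infty}^{\circ}$ with coordinate-wise $G$-action, after which the lemma is immediate from $A=A_{\infty}^{G}$. For part (2), your detour through quotients of orthonormalizable spaces is both unnecessary and insufficient: every $\Qp$-Banach space is orthonormalizable after passing to an equivalent norm (which changes neither $\ctens_{\Qp}$ nor the invariants), and left exactness of $(-)^{G}$ does not compute invariants of a quotient in any case; the paper simply runs the same argument with an orthonormal basis in place of a pseudobasis.
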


\begin{proof}
To prove (1), we choose a pseudobasis $(e_{i})_{i\in I}$ of $M$ and follow the computation in Proposition \ref{prop: appflat} to see that the natural map $A^{\circ} \ctens M \ra A^{\circ}_{\infty} \ctens M $ is the inclusion 
$$ \prod_{i\in I} A^{\circ}e_{i} \sub \prod_{i\in I} A^{\circ}_{\infty}e_{i} $$
with the action of $G$ on the right hand side being coordinate-wise. Thus the natural map $A \ctens M \ra A_{\infty} \ctens M$ is the inclusion of bounded sequences in $A$ indexed by $I$ into bounded sequences in $A_{\infty}$ indexed by $I$, with the $G$-action on the latter being coordinate-wise. The desired statement now follows from the definition of $A$ as $A_{\infty}^{G}$. 

The proof of (2) is similar, using an orthonormal basis instead of a pseudobasis.
\end{proof}

We record the following fact, which is certainly implicit in \cite{sch2} (our main reason for recording it is that some assumption on the sheaf $\mc{F}$ seems to be needed).

\begin{lemm}\label{newinv}
Let $Y$ be a rigid analytic variety, let $G$ be a profinite group and assume that $Y_{\infty} \in Y_{\proet}$ is a Galois $G$-cover of $Y$. Let $U \in Y_{\proet}$ be quasicompact and quasiseparated and set $U_{\infty}:=U \times_{Y}Y_{\infty}$; this is a Galois $G$-cover of $U$. Let $\mc{F}$ be a sheaf on $Y_{\proet}$ that comes via pullback from $Y_{\et}$. Then $\mc{F}(U)=\mc{F}(U_{\infty})^{G}$.
\end{lemm}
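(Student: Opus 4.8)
The plan is to verify the statement by expressing both sides as limits over the pro-\'etale site and comparing them via the hypothesis that $\mc{F}$ is pulled back from $Y_{\et}$. First I would recall from \cite{sch2} that since $U$ is quasicompact and quasiseparated, and $Y_\infty \to Y$ is a pro-\'etale Galois $G$-cover (so in particular $Y_\infty$ is, pro-\'etale-locally on $Y$, a cofiltered limit of finite \'etale $G/G_i$-covers $Y_i$ for $G_i$ running over the open normal subgroups of $G$), the cover $U_\infty = U \times_Y Y_\infty$ is likewise a pro-\'etale $G$-cover of $U$, and we may write $U_\infty \sim \varprojlim_i U_i$ with $U_i = U \times_Y Y_i$ finite \'etale Galois over $U$ with group $G/G_i$.

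Next, since $\mc{F}$ comes by pullback from $Y_{\et}$, its restriction to $U_{i,\proet}$ is the pullback of a sheaf (still denoted $\mc{F}$) on $U_{i,\et}$, and in particular $\mc{F}(U_i)$ agrees with the value of this \'etale sheaf on $U_i$. The key input is then that for a \emph{finite} \'etale Galois cover $U_i \to U$ with group $G/G_i$, one has $\mc{F}(U) = \mc{F}(U_i)^{G/G_i}$: this is just Galois descent (equivalently, the sheaf condition for the \'etale cover $U_i \to U$, written out using that $U_i \times_U U_i \cong \coprod_{G/G_i} U_i$). Taking the colimit over $i$ and using that $\mc{F}(U_\infty) = \varinjlim_i \mc{F}(U_i)$ — which holds because $\mc{F}$ is pulled back from the \'etale site and $U_\infty \sim \varprojlim_i U_i$ with the $U_i$ qcqs, so that pro-\'etale cohomology (in particular $H^0$) commutes with this limit by \cite[Lemma 3.16, Lemma 3.18]{sch2} or the analogous statement there — one gets
$$ \mc{F}(U_\infty)^G = \left( \varinjlim_i \mc{F}(U_i) \right)^G = \varinjlim_i \mc{F}(U_i)^{G/G_i} = \varinjlim_i \mc{F}(U) = \mc{F}(U), $$
where the second equality uses that $G$ acts on the colimit through its finite quotients $G/G_i$ (so that taking $G$-invariants commutes with the filtered colimit, each term being fixed by the open subgroup $G_i$) and the third is the finite Galois descent just recalled.

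The main obstacle I anticipate is justifying carefully that $\mc{F}(U_\infty) = \varinjlim_i \mc{F}(U_i)$ as $G$-modules, i.e. that the value of $\mc{F}$ on the limit object $U_\infty$ is computed by the colimit of its values at finite level. This is exactly the point where the hypothesis ``$\mc{F}$ comes via pullback from $Y_{\et}$'' is indispensable — it fails for a general sheaf on $Y_{\proet}$, which is the content of the parenthetical warning before the lemma — and it is essentially \cite[Lemma 3.16]{sch2} (continuity of pullback sheaves from $Y_\et$ along cofiltered limits of qcqs objects), so in the write-up I would simply cite that and then the rest is the short Galois-descent/colimit manipulation above. One should also note a minor point: the presentation $U_\infty \sim \varprojlim U_i$ may only exist pro-\'etale locally on $Y$, but since we are computing a global section (an $H^0$) over a qcqs base this is harmless — one covers $U$ by finitely many such pieces and uses the sheaf property, or simply observes that the statement $\mc{F}(U) = \mc{F}(U_\infty)^G$ can be checked after such a cover.
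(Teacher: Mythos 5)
Your proposal is correct and is essentially the paper's own argument: present $U_{\infty}$ as a cofiltered limit, use continuity of pullback sheaves (\cite[Lemma 3.16]{sch2}) to write $\mc{F}(U_{\infty})$ as a filtered colimit, exchange $G$-invariants with the colimit, and finish by finite-level Galois descent plus continuity again. The one point you flag as the main obstacle is indeed where the write-up needs care: \cite[Lemma 3.16]{sch2} computes sections of a pullback sheaf on a qcqs object presented as a limit of \emph{\'etale} objects, while the terms $U\times_{Y}Y_{i}$ of your presentation are only pro-\'etale over $Y$; the fix, which is exactly what the paper does, is to also choose a pro-\'etale presentation $U=\varprojlim_{k}V_{k}$ with $V_{k}\in Y_{\et}$, so that $U_{\infty}=\varprojlim_{k,i}V_{k}\times_{Y}Y_{i}$ has \'etale terms and the lemma (applied twice) yields your colimit identity; the paper then performs the Galois descent at the finite \'etale level $V_{k}\times_{Y}Y_{i}\to V_{k}$ rather than over $U$, but your variant over $U$ is equally valid since the sheaf axiom applies to the cover $U\times_{Y}Y_{i}\to U$. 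Your justification for commuting invariants with the filtered colimit (the action on each term factors through the finite quotient $G/G_{i}$) works; the paper instead invokes eventual injectivity of the transition maps. Finally, the closing worry about the presentation of $Y_{\infty}$ existing only pro-\'etale locally is unnecessary: by definition of a Galois $G$-cover one has globally $Y_{\infty}=\varprojlim_{j}Y_{j}$ with each $Y_{j}\to Y$ Galois with group $G/G_{j}$.
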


\begin{proof}
Recall that, by the definitions, we may find a system of open normal subgroups $G_{j}$ of $G$ such that $G\cong \varprojlim_{j} G/G_{j}$, and a compatible system of Galois $G/G_{j}$-covers $Y_{j}$ of $Y$ such that $Y_{\infty} = \varprojlim_{j}Y_{j}$ is a pro-\'etale presentation. Pick a pro-\'etale presentation $U=\varprojlim_{i}U_{i}$; we get $U_{\infty}=\varprojlim_{i,j}U_{i}\times_{Y}Y_{j}$. Note that $U_{\infty}$ is quasicompact and quasiseparated by \cite[Proposition 3.12(v)]{sch2}. We may then compute
\begin{eqnarray*}
\mc{F}(U_{\infty})^{G} & = & \left( \varinjlim_{i,j} \mc{F}(U_{i}\times_{Y}Y_{j}) \right)^{G} \\
 & = & \varinjlim_{i,j} \left( \mc{F}(U_{i}\times_{Y}Y_{j})^{G} \right)  \\
 & = & \varinjlim_{i,j} \mc{F}(U_{i}) =  \mc{F}(U). \end{eqnarray*}
Here we have used \cite[Lemma 3.16]{sch2} for the first and fourth equality and that $U_{i}\times_{Y}Y_{j} \ra U_{i}$ is a Galois $G/G_{j}$-cover for the third equality. For the second equality we use that $\mc{F}(U_{i}\times_{Y}Y_{j}) \ra \mc{F}(U_{i^{\prime}}\times_{Y}Y_{j^{\prime}})$ is injective for large enough $i$, since $U_{i^{\prime}}\times_{Y}Y_{j^{\prime}} \ra U_{i}\times_{Y}Y_{j}$ is an \'etale cover for large enough $i$ (by the definition of a pro-\'etale presentation), and that direct limits commute with taking invariants if the maps in the direct system are eventually injective.
\end{proof}

\begin{rema}\label{cartanleray}
An elaboration of this argument allows one to deduce the full ``Cartan--Leray spectral sequence"
$$ E_{2}^{pq}=H^{p}_{cts}(G, H^{q}(U_{\infty}, \mc{F})) \implies H^{p+q}(U, \mc{F}) $$
from the \v{C}ech-to-derived functor spectral sequence, under the same assumptions on $U$ and $\mc{F}$ (here $H^{q}(U_{\infty},\mc{F})$ is given the discrete topology). This is implicit in \cite{sch2} and used repeatedly there (though it should be noted that it is the \v{C}ech-to-derived functor spectral sequence itself that is referred to as the Cartan--Leray spectral sequence in \cite{sch2}, following SGA 4).
\end{rema}

\begin{lemm} \label{lemm: inv2}
Let $K_{p}\sub \gl(\Zp)$ be an open compact subgroup and let $U\sub \mc{X}_{K_{p}}$ be an open subset. Put $U_{\infty}=q_{K_{p}}^{-1}(U)\sub \Xinf$. Then $\oo_{\mc{X}_{K_{p}}}^{+}(U)=\oo_{\Xinf}^{+}(U_{\infty})^{K_{p}}$, and hence $\oo_{\mc{X}_{K_{p}}}(U)=\oo_{\Xinf}(U_{\infty})^{K_{p}}$.
\end{lemm}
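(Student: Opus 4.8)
The statement is that $\oo_{\mc{X}_{K_p}}^+(U) = \oo_{\Xinf}^+(U_\infty)^{K_p}$ (and likewise after inverting $p$), where $U_\infty = q_{K_p}^{-1}(U)$. The natural strategy is to exhibit $U_\infty \to U$ as a pro-\'etale Galois cover with group $K_p$ and then invoke the invariance statement for the structure sheaf on the pro-\'etale site. Concretely, I would first recall that by Scholze's construction (Theorem \ref{theo: scholze}) $\Xinf \sim \varprojlim_n \mc{X}_{K(p^n)}$ and that the transition maps $\mc{X}_{K(p^{n+1})} \to \mc{X}_{K(p^n)}$ are finite \'etale, so that $\Xinf \to \mc{X}_{K_p}$ is a pro-\'etale presentation with Galois group $K_p = \varprojlim_n K_p/(K_p \cap K(p^n))$ (shrinking to those $n$ with $K(p^n) \subseteq K_p$). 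Pulling this back along $U \hookrightarrow \mc{X}_{K_p}$, the object $U_\infty \in \mc{X}_{K_p,\proet}$ is a Galois $K_p$-cover of $U$ in the sense needed for Lemma \ref{newinv}.

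The second step is to apply Lemma \ref{newinv} with $Y = \mc{X}_{K_p}$, $G = K_p$, $Y_\infty = \Xinf$, and $\mc{F} = \ohatp_{\mc{X}_{K_p}}$ (resp.\ $\ohat_{\mc{X}_{K_p}}$), the integral (resp.\ rational) completed structure sheaf on the pro-\'etale site. The one hypothesis of Lemma \ref{newinv} that is not formal is that $\mc{F}$ come via pullback from the \'etale site; but on a quasicompact quasiseparated open $U$, $\ohatp(U)$ agrees with $\oo^+_{\mc{X}_{K_p},\et}(U)$ (more precisely with the $p$-adic completion of the uniform completion, which for our affinoid-type $U$ is just $\oo^+(U)$), and similarly $\ohat(U) = \oo(U)$; this is the content of \cite[Lemma 4.10 / Corollary 6.19]{sch2}. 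So on such $U$ the completed structure sheaf is the pullback of the honest structure sheaf, and Lemma \ref{newinv} gives $\oo_{\mc{X}_{K_p}}^+(U) = \ohatp(U) = \ohatp(U_\infty)^{K_p}$. Finally, $\ohatp(U_\infty) = \oo_{\Xinf}^+(U_\infty)$ because $U_\infty$ is affinoid perfectoid (it is a rational subset of the affinoid perfectoid $\mc{V}_1$ or can be covered by such, using Theorem \ref{theo: scholze}), so the completed structure sheaf evaluated on it coincides with the usual structure sheaf of the perfectoid space; this identifies the right-hand side with $\oo_{\Xinf}^+(U_\infty)^{K_p}$. Inverting $p$ gives the statement for $\oo$ rather than $\oo^+$.

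The main obstacle is purely bookkeeping around which sheaf lives on which site: one must be careful that ``$\oo_{\mc{X}_{K_p}}^+(U)$'' in the statement means the sections of the analytic (or \'etale) structure sheaf, that these agree with sections of the pro-\'etale completed structure sheaf $\ohatp$ on qcqs opens, and that $\ohatp$ is indeed a pullback from the \'etale site so that Lemma \ref{newinv} applies. Once these identifications are in place the argument is a one-line application of Lemma \ref{newinv}. A secondary point to check is that $U$ may be taken qcqs (or covered by qcqs opens, with the sheaf property then finishing the general case), since Lemma \ref{newinv} is stated for qcqs $U$; for a general open $U$ one covers it by qcqs opens and uses that both sides are sheaves.
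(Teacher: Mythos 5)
Your overall architecture (exhibit $U_\infty \to U$ as a pro-\'etale Galois $K_p$-cover, take invariants, and use the comparison results of \cite{sch2}) is the same as the paper's, but there is a genuine gap at the central step: you apply Lemma \ref{newinv} with $\mc{F}=\ohatp_{\mc{X}_{K_p}}$ (resp.\ $\ohat$), and this sheaf does \emph{not} satisfy the lemma's hypothesis of being a pullback from the \'etale site. Your attempted justification --- that on qcqs opens $U$ one has $\ohatp(U)=\oo^+(U)$, so ``on such $U$ the completed structure sheaf is the pullback of the honest structure sheaf'' --- conflates an equality of sections on certain objects with the sheaf-level hypothesis. The relevant difference shows up exactly where it matters, namely on $U_\infty$: the pullback $\nu^{\ast}\oo^+_{\et}$ has sections $\varinjlim_n \oo^+(U_n)$ there (no completion), whereas $\ohatp(U_\infty)=\oo^+_{\Xinf}(U_\infty)$ is its $p$-adic completion. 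So applying Lemma \ref{newinv} to the uncompleted sheaf does not give the statement about $\oo^+_{\Xinf}(U_\infty)^{K_p}$, and applying it to the completed sheaf is not licensed; the hypothesis you dismiss as bookkeeping is the crux.

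The repair is the paper's maneuver: apply Lemma \ref{newinv} to the torsion sheaves $\oo^+_{\mc{X}_{K_p}}/p^m$, which genuinely come from the \'etale site, to get $(\oo^+/p^m)(U)=\bigl((\oo^+/p^m)(U_\infty)\bigr)^{K_p}$ for every $m$; then pass to the inverse limit over $m$ (invariants commute with inverse limits), which yields $\ohatp(U)=\ohatp(U_\infty)^{K_p}$ \emph{by the definition} of $\ohatp$. Only after this does one invoke the comparisons you cite: $\ohat(U)=\oo(U)$ by \cite[Corollary 6.19]{sch2}, whence $\ohatp(U)=\oo^+(U)$ via \cite[Lemma 4.2(ii),(v)]{sch2}, and $\ohatp(U_\infty)=\oo^+_{\Xinf}(U_\infty)$ because $U_\infty$ is perfectoid. (Also note your parenthetical reason for the last identification is off: $U$ is an arbitrary quasicompact open of $\mc{X}_{K_p}$, so $U_\infty$ need not be a rational subset of $\mc{V}_1$; one just uses that it is a quasicompact open of the perfectoid space $\Xinf$.) Your reduction to quasicompact $U$ by gluing, and the identification of $U_\infty\to U$ as a Galois $K_p$-cover, are fine and agree with the paper.
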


\begin{proof}
We work on the pro-\'etale site of $\mc{X}_{K_{p}}$. We may assume that $U$ is quasicompact; the general case follows by gluing. $U_{\infty} \ra U$ is a perfectoid object of $\mc{X}_{K_{p},\proet}$ which is Galois with group $K_{p}$, and Lemma \ref{newinv} implies that  $(\oo_{\mc{X}_{K_{p}}}^{+}/p^{m})(U)=(\oo_{\X_{K_{p}}}^{+}/p^{m})(U_{\infty})^{K_{p}}$ for all $m$. Taking the inverse limit we get, by definition, that $\wh{\oo}_{\X_{K_{p}}}^{+}(U)=\wh{\oo}_{\X_{K_{p}}}^{+}(U_{\infty})^{K_{p}}$. By \cite[Corollary 6.19]{sch2} we have $\wh{\oo}_{\mc{X}_{K_{p}}}(U)=\oo_{\mc{X}_{K_{p}}}(U)$. It then follows from \cite[Lemma 4.2(ii),(v)]{sch2} that $\wh{\oo}^{+}_{\mc{X}_{K_{p}}}(U)=\oo^{+}_{\mc{X}_{K_{p}}}(U)$, which finishes the proof (note that $\wh{\oo}_{\X_{K_{p}}}^{+}(U_{\infty})=\oo^{+}_{\Xinf}(U_{\infty})$ ).
\end{proof}

Now let $U\sub \mc{X}_{w}$ be any rational subset, $U_{n}$
its preimage in $\mc{X}_{K(p^{n})}$ and put $U_{\infty}=q^{-1}(U)\sub \Xinfw$.
Note that $U_{n} \ra U$ is finite \'etale and so $\oo_{\mc{X}_{K(p^{n})}}(U_{n})$ is a finite projective $\oo_{\X}(U)$-module since $\oo_{\X}(U)$ is Noetherian. Suppose that $\omega|_{U}$ is free,
and choose a nowhere vanishing section (i.e. generator) $\eta_{U}\in H^{0}(U,\omega)$.
Define $t_{U}\in\oo_{\Xinf}(U_{\infty})$ by the equality
\[\mf{s}=t_{U}\cdot q^{\ast}\eta_{U}.\]

\begin{prop}\label{prop: approx} We have $\gamma^{\ast}t_{U}=(b\mf{z}+d)t_{U}$
for any $\gamma\in K_{0}(p)$, and $t_{U}$ is a unit. Furthermore,
for any $m\in\mb{Z}_{\geq1}$ we may choose some large $n=n(m)$
and elements\begin{eqnarray*}
t_{U}^{(n)} & \in & 1+p^{m}\oo_{\Xinf}^{+}(U_{\infty}),\\
s_{U,n} & \in & \oo_{\mc{X}_{K(p^{n})}}(U_{n})^{\times},\end{eqnarray*}
such that $t_{U}=t_{U}^{(n)}s_{U,n}$.
\end{prop}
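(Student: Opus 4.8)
The plan is to dispatch the transformation law and the unit claim formally, and then prove the approximation statement by a density argument.

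For the transformation law: since $q\colon \Xinfw \to \Xw$ is the quotient by $K_0(p)$, we have $q\circ\gamma = q$ for every $\gamma \in K_0(p)$, hence $\gamma^{\ast}(q^{\ast}\eta_U) = q^{\ast}\eta_U$. Applying $\gamma^{\ast}$ to the defining relation $\mf{s} = t_U\cdot q^{\ast}\eta_U$ and using $\gamma^{\ast}\mf{s} = (b\mf{z}+d)\mf{s}$ from \S\ref{sec-period}, we obtain $(b\mf{z}+d)\,t_U\cdot q^{\ast}\eta_U = (\gamma^{\ast}t_U)\cdot q^{\ast}\eta_U$ in $\omega(U_\infty)$; since $\omega|_{U_\infty} = q^{\ast}(\omega|_U)$ is free on the generator $q^{\ast}\eta_U$, we may cancel it to get $\gamma^{\ast}t_U = (b\mf{z}+d)t_U$. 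For the unit claim: $\mf{s}$ and $q^{\ast}\eta_U$ are both nowhere-vanishing sections of the line bundle $\omega$ over $U_\infty$, so $t_U = \mf{s}\cdot(q^{\ast}\eta_U)^{-1}$ is a nowhere-vanishing function on $U_\infty$; since $U_\infty$ is affinoid perfectoid, $\oo_{\Xinf}(U_\infty)$ is a uniform complete Tate ring, in which a nowhere-vanishing function is a unit.

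For the approximation statement, the input is that $U_\infty = q^{-1}(U) \sim \varprojlim_n U_n$ — which follows from Scholze's presentation $\Xinf \sim \varprojlim_n \mc{X}_{K(p^n)}$ (Theorem \ref{theo: scholze}) since $U$ is a rational subset of $\Xw$ — so that $\bigcup_n \oo_{\mc{X}_{K(p^n)}}(U_n)$ is dense in $\oo_{\Xinf}(U_\infty)$ for the spectral norm (and each $\oo_{\mc{X}_{K(p^n)}}(U_n) \to \oo_{\Xinf}(U_\infty)$ is injective, $U_\infty \to U_n$ being surjective and $U_n$ reduced). Given $m$, set $\delta = |p|^{m}\,\|t_U^{-1}\|^{-1} > 0$ (finite by the unit claim) and use density to choose $n$ large and $s_{U,n} \in \oo_{\mc{X}_{K(p^n)}}(U_n)$ with $\|t_U - s_{U,n}\| \leq \delta$. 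Then at each point $x$ of $U_\infty$ one has $|t_U(x) - s_{U,n}(x)| \leq |p|^{m}|t_U(x)| < |t_U(x)|$, so $|s_{U,n}(x)| = |t_U(x)| \neq 0$; hence $s_{U,n}$ is nowhere-vanishing on $U_\infty$, and therefore (by surjectivity of $U_\infty \to U_n$) on $U_n$, so $s_{U,n} \in \oo_{\mc{X}_{K(p^n)}}(U_n)^{\times}$. Finally put $t_U^{(n)} = t_U/s_{U,n}$; the same pointwise bound gives $|t_U^{(n)}(x) - 1| = |t_U(x)-s_{U,n}(x)|/|t_U(x)| \leq |p|^{m}$ for all $x$, i.e.\ $\|t_U^{(n)} - 1\| \leq |p|^{m}$, which for the affinoid perfectoid $U_\infty$ means $t_U^{(n)} \in 1 + p^{m}\oo_{\Xinf}^{+}(U_\infty)$; and $t_U = t_U^{(n)}s_{U,n}$ by construction. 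The only non-formal points are the two facts quoted from Scholze's construction — that $\bigcup_n \oo_{\mc{X}_{K(p^n)}}(U_n)$ is dense in $\oo_{\Xinf}(U_\infty)$, and that $\oo_{\Xinf}^{+}(U_\infty)$ is exactly the unit ball for the spectral norm — both immediate from Theorem \ref{theo: scholze} together with the descent of rational subsets to finite level; I expect no real obstacle beyond making these invocations precise.
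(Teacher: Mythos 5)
Your proof is correct and follows essentially the same route as the paper's: the transformation law and invertibility are the same formal computation, and your approximation step is the paper's argument (approximate $t_U$ by a finite-level function, using the completed-colimit/density description of functions on $U_{\infty}$ coming from $\Xinf \sim \varprojlim_n \mc{X}_{K(p^n)}$, then divide), the only cosmetic difference being that the paper first scales $p^N t_U$ into $\oo^{+}_{\Xinf}(U_{\infty})$ and works $p$-adically where you work with the spectral norm. The one imprecision is your claim that $\oo^{+}_{\Xinf}(U_{\infty})$ is exactly the unit ball of the spectral norm: that unit ball is $\oo_{\Xinf}(U_{\infty})^{\circ}$, and your pointwise estimates are a priori only at rank-one points, so strictly you obtain $t_U^{(n)}-1 \in p^{m}\oo_{\Xinf}(U_{\infty})^{\circ}$; since $p\,\oo^{\circ}\subseteq \oo^{+}$ and $m$ is arbitrary this costs nothing (or one can argue as the paper does, using openness of $\oo^{+}$ to get $p^{N}t_U$ and $p^{M-N}t_U^{-1}$ into $\oo^{+}$ and then bounding at all points of the adic spectrum).
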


\begin{proof} Since neither $\mf{s}$ nor $q^{\ast}\eta_{U}$ vanish, $t_{U}$ is a unit, and 
$$(b\mf{z}+d)t_{U}q^{\ast}\eta_{U}= (b\mf{z}+d)\mf{s}=\gamma^{\ast}\mf{s}= (\gamma^{\ast}t_{U})q^{\ast}\eta_{U} $$ 
since $\gamma^{\ast}q^{\ast}\eta_{U}=q^{\ast}\eta_{U}$, hence $\gamma^{\ast}t_{U}=(b\mf{z}+d)t_{U}$. For the second sentence, first choose $M,N\in \mb{Z}_{\geq 0}$  such that $|p^{M}|\leq |p^{N}t_{U}| \leq 1$ (possible by quasicompactness of $U_{\infty}$ and invertibility of $t_{U}$). Since $\oo_{\Xinf}^{+}(U_{\infty}) = (\varinjlim \oo_{\mc{X}_{K(p^{n})}}^{+}(U_{n}))^{\wedge}$ ($p$-adic completion) we may find an $n=n(m)$ and $s\in \oo_{\mc{X}_{K(p^{n})}}^{+}(U_{n})$ such that $p^{N}t_{U}-s\in p^{M+m}\oo_{\Xinf}^{+}(U_{\infty})$. Then we set $s_{U,n}=p^{-N}s$ and $t_{U}^{(n)}=t_{U}/s_{U,n}$ and these do the job.
\end{proof}

This has several consequences - in particular, since $s_{U,n}$ is
fixed by $K(p^{n})$ and $t_{U}^{(n)}$ is $p$-adically close to
$1$, the element $\chi_{\mc{U}}(t_{U}^{(n)})\in (\oo_{\Xinf}(U_{\infty}) \ctens A_{\mc{U}})^{\times}$ is well-defined (for $m$
large enough) and satisfies $\gamma^{\ast}(\chi_{\U}(t_{U}^{(n)}))=\chi_{\U}(b\mf{z}+d)\chi_{\mc{U}}(t_{U}^{(n)})$
for all $\gamma\in K(p^{n})$.
\medskip
Recall that 
$$ \omega_{\U,w}^{\dagger}(U):=\left\{ f\in\oo_{\Xinf}(U_{\infty})\ctens A_{\U} \mid \gamma^{\ast}f=\chi_{\U}(b\mf{z}+d)^{-1}f\,\forall\gamma\in K_{0}(p)\right\} . $$
Thus, given any $f\in\omega_{\mc{U},w}^{\dagger}(U)$, the element $f\cdot\chi_{\mc{U}}(t_{U}^{(n)})\in\mathcal{O}_{\Xinf}(U_{\infty})\ctens A_{\mc{U}}$
lies in $(\oo_{\Xinf}(U_{\infty})\ctens A_{\mc{U}})^{K(p^{n})}$, which is equal to $\oo_{\mc{X}_{K(p^{n})}}(U_{n}) \ctens A_{\U}$ by Lemma \ref{lemm: inv1} and Lemma \ref{lemm: inv2}.

\medskip

Furthermore, for any $\gamma\in G_{n}:=K_{0}(p)/K(p^{n})$, we have
\begin{eqnarray*}
\gamma^{\ast}(f\cdot\chi_{\U}(t_{U}^{(n)})) & = & \chi_{\U}(b\mf{z}+d)^{-1}\cdot f\cdot\chi_{\U}(\gamma^{\ast}t_{U}^{(n)})\\
 & = & f\cdot\chi_{\U}(t_{U}^{(n)})\cdot\chi_{\U}\left(\frac{\gamma^{\ast}t_{U}^{(n)}}{(b\mf{z}+d)t_{U}^{(n)}}\right)\\
 & = & f\cdot\chi_{\U}(t_{U}^{(n)})\cdot\chi_{\U}\left(\frac{s_{U,n}}{\gamma^{\ast}s_{U,n}}\right)\\
 & = & f\cdot\chi_{\U}(t_{U}^{(n)})\cdot\chi_{\U}(j_{U,n}(\gamma))^{-1}\end{eqnarray*}
where $j_{U,n}$ is the cocycle \begin{eqnarray*}
j_{U,n}(\gamma):G_{n} & \to & \Zp^{\times}\cdot\left(1+p^{\mathrm{min}(w,m)}\mathcal{O}_{\X_{K(p^{n})}}(U_{n})^{\circ}\right)\\
\gamma & \mapsto & \frac{\gamma^{\ast}s_{U,n}}{s_{U,n}}.\end{eqnarray*}
In summary, we find that the map $f\mapsto f\cdot\chi_{\U}(t_{U}^{(n)})$
defines an $\oo_{\X}(U)$-\emph{module} \emph{isomorphism }of $\omega_{\U,w}^{\dagger}(U)$
onto the subspace of functions $f_{0}\in\oo_{\mc{X}_{K(p^{n})}}(U_{n})\ctens A_{\U}$
fixed by the twisted action \begin{eqnarray*}
\gamma:\oo_{\mc{X}_{K(p^{n})}}(U_{n})\ctens A_{\mc{U}} & \to & \oo_{\mc{X}_{K(p^{n})}}(U_{n})\ctens A_{\mc{U}}\\
f_{0} & \mapsto & \chi_{\mc{U}}(j_{U,n}(\gamma))\cdot\gamma^{\ast}f_{0}\end{eqnarray*}
of $G_{n}$. Since $G_{n}$ is a finite group, the usual idempotent
$$ e_n = \frac{1}{|G_{n}|}\sum_{\gamma\in G_{n}} \chi_{\U}(j_{U,n}(\gamma))^{-1}\gamma $$
in the group algebra $(\oo_{\mc{X}_{K(p^{n})}}(U_{n}) \ctens A_{\U})[G_{n}]$ defines an $\oo_{\X}(U)\ctens A_{\U}$-module splitting of the inclusion $\omega_{\U,w} ^{\dg} (U)\sub \oo_{\mc{X}_{K(p^{n})}}(U_{n})\ctens A_{\U}$.
This realizes $\omega_{\U,w}^{\dagger}(U)$ as a direct summand of a finite
projective $\oo_{\X}(U) \ctens A_{\U}$-module, and therefore $\omega_{\U,w}^{\dagger}(U)$
is finite projective over $\oo_{\X}(U) \ctens A_{\U}$ as desired. We may now prove the main result of this section:

\begin{theo} \label{theo:rankone}
We have $\omega_{\mc{U},w} ^{\dg}=\Loc(\omega_{\U,w}^{\dg}(\Xw))$ and $\omega_{\U,w}^{\dg}(\Xw)$ is a finite projective $\oo_{\X}(\Xw) \ctens A_{\U}$-module of rank $1$. Moreover, $\omega_{\U,w}^{\dg}$ is \'etale locally free.
\end{theo}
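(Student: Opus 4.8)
The plan is to reduce everything to finite level -- where the pro-\'etale $K_0(p)$-cover $q\colon\Xinfw\to\Xw$ is replaced by the finite \'etale Galois covers $U_n\to U$ of rational opens, for which descent is effective -- and then to glue. The finite-level input is precisely the idempotent computation carried out just above; what remains is to package it into the three assertions.

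\emph{Localization.} Since $\Xw$ is affinoid and $\omega|_{\Xw}$ is a line bundle, I would first choose a finite cover of $\Xw$ by rational subsets $U_i$ on which $\omega$ is free, fix a generator $\eta_{U_i}$ of $\omega|_{U_i}$, and apply the construction preceding the theorem on every rational $V\subseteq U_i$. The key point is that, once $n$ has been chosen large enough to work on $U_i$ itself, all the auxiliary data on $V$ (the element $t_V^{(n)}$, the unit $s_{V,n}$, the cocycle $j_{V,n}$ and hence the idempotent $e_n$) may simply be taken by restriction from $U_i$: this is because sup-norms do not increase under passage to a rational subset, so the estimates of Proposition \ref{prop: approx} persist. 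Granting this, the $\oo_\X(U_i)\ctens A_\U$-linear identifications $\omega_{\U,w}^\dg(V)\cong e_n\cdot\big(\oo_{\mc{X}_{K(p^n)}}(V_n)\ctens A_\U\big)$ are compatible under restriction, and using $\oo_{\mc{X}_{K(p^n)}}(V_n)=\oo_{\mc{X}_{K(p^n)}}(U_{i,n})\otimes_{\oo_\X(U_i)}\oo_\X(V)$ together with the compatibility of $\ctens A_\U$ with this finite base change (Lemma \ref{lemm: commute}) we obtain
$$\omega_{\U,w}^\dg(V)\;\cong\;\omega_{\U,w}^\dg(U_i)\otimes_{\oo_\X(U_i)\ctens A_\U}\big(\oo_\X(V)\ctens A_\U\big).$$
Hence $\omega_{\U,w}^\dg|_{U_i}=\Loc\big(\omega_{\U,w}^\dg(U_i)\big)$, and so $\omega_{\U,w}^\dg$ is a coherent sheaf of $\oo_{\Xw}\ctens A_\U$-modules.

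\emph{Rank one and local freeness.} Taking $V=U_i$ above exhibits $\omega_{\U,w}^\dg(U_i)$ as the image of the idempotent $e_n=\tfrac{1}{|G_n|}\sum_{\gamma\in G_n}\chi_\U(j_{U_i,n}(\gamma))^{-1}\gamma$ acting on the free rank-one module $\oo_{\mc{X}_{K(p^n)}}(U_{i,n})\ctens A_\U$, viewed as a finite projective $\oo_\X(U_i)\ctens A_\U$-module of rank $|G_n|$. Because $\oo_\X(U_i)\to\oo_{\mc{X}_{K(p^n)}}(U_{i,n})$ is finite \'etale and $G_n$-Galois, a property preserved by $-\ctens A_\U$ as the algebra is finite, one has $\tr(1)=|G_n|$ and $\tr(\gamma)=0$ for $\gamma\neq1$ on $\oo_{\mc{X}_{K(p^n)}}(U_{i,n})\ctens A_\U$ over $\oo_\X(U_i)\ctens A_\U$; hence $\tr(e_n)=1$ and $\omega_{\U,w}^\dg(U_i)$ is finite projective of rank one. (Invariantly, $\omega_{\U,w}^\dg(U_i)$ is the faithfully flat Galois descent of the rank-one free module $\oo_{\mc{X}_{K(p^n)}}(U_{i,n})\ctens A_\U$ along $\oo_\X(U_i)\ctens A_\U\to\oo_{\mc{X}_{K(p^n)}}(U_{i,n})\ctens A_\U$ for the twisted semilinear $G_n$-action $\gamma\mapsto\chi_\U(j_{U_i,n}(\gamma))\,\gamma^\ast$, which is a genuine semilinear action since $\chi_\U\circ j_{U_i,n}$ is a $1$-cocycle valued in units.) In particular $\omega_{\U,w}^\dg$ is a line bundle on $(\Xw,\oo_{\Xw}\ctens A_\U)$, hence Zariski-locally -- and a fortiori \'etale-locally -- free of rank one.

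\emph{Globalization, and the main difficulty.} Finally, $\omega_{\U,w}^\dg$ is a coherent sheaf of rank one on the affinoid $\Xw$ with structure sheaf $\oo_{\Xw}\ctens A_\U$, so the glueing theorem of Kiehl -- classical when $\U$ is affinoid, and supplied by \S\ref{sec:6.2}--\ref{sec:6.3} when $\U$ is small -- gives $\omega_{\U,w}^\dg=\Loc\big(\omega_{\U,w}^\dg(\Xw)\big)$ with $\omega_{\U,w}^\dg(\Xw)$ finite projective over $\oo_\X(\Xw)\ctens A_\U$, of rank one since this may be checked on the $U_i$. The only real obstacle is organizational rather than conceptual: one must verify that the choices in Proposition \ref{prop: approx}, and the resulting cocycles and idempotents, restrict compatibly and uniformly in $n$, and that $\ctens A_\U$ interacts with the finite \'etale covers $U_n\to U$ exactly as an ordinary tensor product does -- both of which are ultimately statements about the completed tensor products developed in the appendix. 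With those in hand, the rank-one assertion is a one-line trace computation (equivalently, faithfully flat Galois descent), and the $\Loc$ and global-projectivity assertions are Kiehl's theorem.
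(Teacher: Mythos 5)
Your argument is essentially the paper's own: you prove coherence on the opens $U_i$ by showing that the idempotent description of $\omega^{\dg}_{\U,w}$ is compatible with restriction to rational $V\subseteq U_i$ via Lemma \ref{lemm: commute}, you then invoke Theorem \ref{theo: thmA} for the statements about $\Loc$ and global finite projectivity, and you get rank one from the Galois-descent/idempotent description (your trace computation $\tr(e_n)=1$ is a fine equivalent of the paper's descent argument, granting -- as the paper also does by applying Lemma \ref{lemm: commute} twice -- that $\oo_{\X}(U_i)\ctens A_{\U}\to\oo_{\mc{X}_{K(p^n)}}(U_{i,n})\ctens A_{\U}$ is still finite \'etale and $G_n$-Galois).

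The one step that is not justified as written is the final deduction ``line bundle, hence Zariski-locally -- and a fortiori \'etale-locally -- free.'' A finite projective module of rank one over $\oo_{\X}(V)\ctens A_{\U}$ need not be free, and shrinking $V$ inside $\Xw$ only localizes in the curve direction, not in the weight direction, so there is no reason the sheaf trivializes on an analytic cover of $\Xw$; ``Zariski-locally free'' is precisely what you cannot assert without further argument. The correct (and readily available) justification is the one sitting in your own parenthetical remark: $\omega^{\dg}_{\U,w}(U_i)$ is the descent, along the finite \'etale $G_n$-cover $U_{i,n}\to U_i$ with the twisted semilinear action, of the free rank-one module $\oo_{\mc{X}_{K(p^n)}}(U_{i,n})\ctens A_{\U}$; hence the pullback of $\omega^{\dg}_{\U,w}$ to $U_{i,n}$ is free, which is exactly the meaning of ``\'etale locally free'' in the statement and is how the paper concludes. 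With that substitution your proof matches the paper's.
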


\begin{proof}
To prove that $\omega_{\mc{U},w}^{\dg}=\Loc(\omega_{\U,w}^{\dg}(\Xw))$  and that it is locally projective of finite rank it suffices, by Theorem \ref{theo: thmA}, to prove that $\omega_{\U,w}^{\dg}$ is a coherent $\oo_{\X} \ctens A_{\U} $-module (it is then locally projective by the above). To do this, we work locally using the $U$ above. We wish to show that for $V\sub U$ (without loss of generality assume $V$ rational) the natural map 
$$(\oo_{\X}(V) \ctens A_{\U} ) \otimes_{(\oo_{\X}(U) \ctens A_{\U} )} \omega_{\U,w}^{\dg}(U) \ra \omega_{\U,w}^{\dg}(V)$$
is an isomorphism. To see this, note that by applying Lemma \ref{lemm: commute} twice the natural map
$$ (\oo_{\X}(V) \ctens A_{\U} ) \otimes_{(\oo_{\X}(U) \ctens A_{\U} )} (\oo_{\mc{X}_{K(p^{n})}}(U_{n}) \ctens A_{\U}) \ra \oo_{\mc{X}_{K(p^{n})}}(V_{n}) \ctens A_{\U} $$
is an isomorphism (note that $\oo_{\X}(V) \otimes_{\oo_{\X}(U)} \oo_{\mc{X}_{K(p^{n})}}(U_{n}) \cong \oo_{\mc{X}_{K(p^{n})}}(V_{n}) $). This isomorphism matches up the idempotents on both sides, giving us the desired isomorphism. To prove that the rank is one note that $\oo_{\X}(U) \ra \oo_{\mc{X}_{K(p^{n})}}(U_{n})$ is Galois with group $G_{n}$ and by applying Lemma \ref{lemm: commute} twice we may deduce that $\oo_{\X}(U) \ctens A_{\U} \ra \oo_{\mc{X}_{K(p^{n})}}(U_{n}) \ctens A_{\U}$ is also Galois with group $G_{n}$. Moreover, the twisted action whose invariants give $\omega_{\U,w}^{\dg}(U)$ is a Galois descent datum, so we see that $\omega_{\U,w}^{\dg}(U)$ is the descent of a rank $1$ free module, hence rank $1$ projective as desired. Finally, note that this also proves that $\omega_{\U,w}$ becomes trivial over $U_{n}$, which gives the final statement of the theorem.
\end{proof}

The techniques of this proof also yield the following result.

\begin{lemm}\label{lemm:fibre} Let $\U=(A_{\U},\chi_{\U})$ be a weight, and let $i: A_{\U} \to A_{\mc{Z}}$ be a surjection such that $\mc{Z}=(A_{\mc{Z}},\chi_{\mc{Z}}=i\circ \chi_{\U})$ is also a weight and such that $\ker(i)$ is generated by a regular element $x\in A_{\U}$. Then we have a natural exact sequence of sheaves 
$$ 0 \to \omega_{\U,w}^{\dg} \overset{\cdot x}{\to} \omega_{\U,w}^{\dg} \to \omega_{\mc{Z},w}^{\dg}\to 0$$ on $\mc{X}_{w}$, and an exact sequence of global sections $$0\to H^0(\mc{X}_{w},\omega_{\U,w}^{\dg}) \overset{\cdot x}{\to} H^0(\mc{X}_w,\omega_{\U,w}^{\dg})\to H^0(\mc{X}_w,\omega_{\mc{Z},w}^{\dg})\to 0.$$

\end{lemm}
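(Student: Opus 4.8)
The plan is to reduce everything to the local computation carried out in the proof of Theorem \ref{theo:rankone}. There, over a rational $U \subseteq \mc{X}_w$ on which $\omega$ is free, the module $\omega_{\U,w}^{\dg}(U)$ is identified --- via multiplication by $\chi_{\U}(t_U^{(n)})$ --- with the direct summand $N_{\U} := e_{n}^{\U}\cdot P_{\U}$ of $P_{\U} := \oo_{\mc{X}_{K(p^{n})}}(U_n)\ctens A_{\U}$ cut out by the idempotent $e_{n}^{\U} = \tfrac{1}{|G_n|}\sum_{\gamma \in G_n}\chi_{\U}(j_{U,n}(\gamma))^{-1}\gamma$, where $G_n = K_0(p)/K(p^n)$ and $n$ is chosen large depending on $U$. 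I will produce the exact sequence first over such $U$, then glue to sheaves, then pass to global sections.

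The first ingredient is purely weight-theoretic: since the series defining the extended character $\chi_{\U}$ in Proposition \ref{prop-extend} maps termwise under the continuous surjection $i$, one has $i \circ \chi_{\U} = \chi_{\mc{Z}}$ on the whole extended domain; in particular $i(\chi_{\U}(t_U^{(n)})) = \chi_{\mc{Z}}(t_U^{(n)})$, $i(\chi_{\U}(b\mf{z}+d)) = \chi_{\mc{Z}}(b\mf{z}+d)$, and $|\chi_{\mc{Z}}(1+p)-1|_{\mc{Z}} \leq |\chi_{\U}(1+p)-1|_{\U}$, so $s_{\mc{Z}} \leq s_{\U}$ and $\omega_{\mc{Z},w}^{\dg}$ is defined whenever $\omega_{\U,w}^{\dg}$ is. Consequently the map $\pi: P_{\U} \to P_{\mc{Z}} := \oo_{\mc{X}_{K(p^{n})}}(U_n)\ctens A_{\mc{Z}}$ induced by $i$ is $G_n$-equivariant and sends $e_n^{\U}$ to $e_n^{\mc{Z}}$; moreover, under the twist isomorphisms $\omega_{\U,w}^{\dg}(U)\cong N_{\U}$ and $\omega_{\mc{Z},w}^{\dg}(U)\cong N_{\mc{Z}}$, it corresponds to the natural map $\omega_{\U,w}^{\dg} \to \omega_{\mc{Z},w}^{\dg}$ (which is just reduction of the defining sections along $i$ and visibly lands in $\omega_{\mc{Z},w}^{\dg}$, as it intertwines the two $\chi$-twisted $K_0(p)$-actions).

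The second ingredient is that $\oo_{\mc{X}_{K(p^{n})}}(U_n)\ctens -$ is exact on the short exact sequence $0 \to A_{\U} \overset{\cdot x}{\to} A_{\U} \to A_{\mc{Z}} \to 0$; here regularity of $x$ is used to see that this sequence is strict exact ($xA_{\U} = \ker i$ is closed and $\cdot x$ is a topological isomorphism onto it by the open mapping theorem), and the exactness of $\ctens$ then follows from the flatness properties of the relevant completed tensor products established in the appendix --- $\ctens_{\Qp}$ against a $\Qp$-Banach space in the affinoid case, the mixed completed tensor product with a profinite flat module in the small case (Proposition \ref{prop: appflat}, Lemma \ref{lemm: commute}). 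Granting this, $P_{\mc{Z}} = P_{\U}/xP_{\U}$ with $\cdot x$ injective on $P_{\U}$; since $x \in A_{\U}$ is central and commutes with the $G_n$-action it commutes with $e_n^{\U}$, and a one-line manipulation gives $N_{\U}\cap xP_{\U} = xN_{\U}$ and $\pi(N_{\U}) = e_n^{\mc{Z}}P_{\mc{Z}} = N_{\mc{Z}}$. Hence $0 \to N_{\U} \overset{\cdot x}{\to} N_{\U} \to N_{\mc{Z}} \to 0$ is exact, and untwisting yields the exact sequence of sections over $U$.

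Finally, $\cdot x$ and $\omega_{\U,w}^{\dg}\to\omega_{\mc{Z},w}^{\dg}$ are globally defined morphisms of sheaves, and the rational $U$ on which $\omega$ is free form a basis of $\mc{X}_w$, so exactness of the sequence of sheaves follows from the local statement. For the sequence of global sections, all three sheaves are coherent $\oo_{\mc{X}_w}\ctens A_{\U}$-modules (for $\omega_{\mc{Z},w}^{\dg}$ one uses that $\oo_{\mc{X}_w}\ctens A_{\mc{Z}}$ is a quotient of $\oo_{\mc{X}_w}\ctens A_{\U}$), $\mc{X}_w$ is affinoid, and by Theorem \ref{theo: thmA} the global sections functor is exact on such sheaves (equivalently $H^1(\mc{X}_w,-)$ vanishes on them), so applying it concludes. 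I expect the main obstacle to be the exactness of $\ctens$ on $0\to A_{\U}\overset{\cdot x}{\to}A_{\U}\to A_{\mc{Z}}\to 0$ in the small-weight case, i.e. checking that passage to the quotient by the regular element $x$ is compatible with a pseudobasis and the mixed completed tensor product; the rest is formal bookkeeping around the idempotent already constructed in the proof of Theorem \ref{theo:rankone}.
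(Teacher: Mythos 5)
Your proposal follows the paper's own proof almost step for step: localize to the sets $U$ used in the proof of Theorem \ref{theo:rankone}, transport the problem through the idempotent $e_n$ into $\oo_{\mc{X}_{K(p^n)}}(U_n)\ctens A_{\U}$, prove exactness of the weight sequence after applying the completed tensor, glue, and finish with vanishing of $H^1(\mc{X}_w,\omega_{\U,w}^{\dg})$ (the paper cites Theorem \ref{theo:rankone} together with Proposition \ref{prop:loc}, which is what your appeal to Theorem \ref{theo: thmA} amounts to). The one substantive weak point is exactly the step you flag as the "main obstacle": exactness of $\ctens$ against $0\to A_{\U}\overset{\cdot x}{\to}A_{\U}\to A_{\mc{Z}}\to 0$ in the small-weight case. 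The appendix results you invoke do not settle it: Proposition \ref{prop: appflat} and Corollary \ref{cor:complete} give exactness of $X\mapsto X\ctens M$ in the Banach variable $X$ for a \emph{fixed} profinite flat $M$, whereas here you need exactness in the $M$-variable, and your strictness/open-mapping remark does not by itself produce it. The paper closes this gap differently and more cleanly: it proves that $A_{\U}\ctens\oo_{\mc{X}_{K(p^n)}}(U_n)$ is flat over $A_{\U}$, by the same Noetherian-completion argument as Lemma \ref{lemm:flat}(2); flatness immediately gives injectivity of $\cdot x$, and the cokernel is identified with $A_{\mc{Z}}\ctens\oo_{\mc{X}_{K(p^n)}}(U_n)$ because $\ker(i)=(x)$ is finitely generated, so completion commutes with the quotient. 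With that substitution for your second ingredient, the rest of your argument (equivariance of the reduction map, compatibility with the idempotents, gluing over a basis, and exactness of global sections) coincides with the paper's proof.
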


\begin{proof} There is certainly a (not necessarily exact) sequence of sheaves $$ 0 \to \omega_{\U,w}^{\dg} \overset{\cdot x}{\to} \omega_{\U,w}^{\dg} \to \omega_{\mc{Z},w}^{\dg}\to 0,$$and we check exactness of this sequence on a basis of suitably small open subsets $U \subset \mc{X}_w$ as in the proof of Theorem \ref{theo:rankone}.  By assumption we have a short exact sequence 
$$0 \to A_{\U} \overset{\cdot x}{\to} A_{\U} \to A_{\mc{Z}} \to 0$$ 
of $A_{\U}$-modules. Tensoring this sequence over $A_{\U}$ with $A_{\U} \ctens \oo_{\mc{X}_{K(p^n)}}(U_n)$, we obtain by the flatness of $A_{\U} \ctens \oo_{\mc{X}_{K(p^n)}}(U_n)$ over $A_{\U}$  (proved in the same way as Lemma \ref{lemm:flat}(2)) a short exact sequence 
$$0 \to A_{\U} \ctens \oo_{\mc{X}_{K(p^n)}}(U_n) \overset{\cdot x}{\to} A_{\U} \ctens \oo_{\mc{X}_{K(p^n)}}(U_n) \to A_{\mc{Z}} \ctens \oo_{\mc{X}_{K(p^n)}}(U_n) \to 0.$$
Applying the idempotent $e_n$ as above gives a short exact sequence $$ 0 \to \omega_{\U,w}^{\dagger}(U) \overset{\cdot x}{\to} \omega_{\U,w}^{\dg}(U) \to \omega_{\mc{Z},w}^{\dg}(U)\to 0$$ as desired.

Taking cohomology, we note that $H^1(\mc{X}_w, \omega_{\U,w}^{\dg})=0$ by Theorem \ref{theo:rankone} and Proposition \ref{prop:loc}, and the lemma follows.
\end{proof}

\subsection{Comparison with other definitions of overconvergent modular forms}

In this section we take $\V=(S_{\V},\chi_{\V})$ to be an open affinoid weight, and we will compare our definition of overconvergent modular forms of weight $\V$ with those in the literature (all known to be equivalent). More specifically we will compare it to that of \cite{pil}, trivially modified to our compact Shimura curves. 

\medskip

We now recall the definition of the "Pilloni torsor". This is the object denoted by $F_{n}^{\times}$ in \cite{pil} but we will use the notation $\mc{T}(n,v)$. For any $n \geq 1$ and any $v<\frac{p-1}{p^n}$, $\mc{T}(n,v)$ is a rigid space equipped with a smooth surjective morphism $\pr:\mc{T}(n,v)\to \mc{X}(v)$, where $\mc{X}(v)\sub \mc{X}$ is the locus where the Hodge height is $\leq v$.  For any point $x \in \mc{X}(v)(C,\mc{O}_C)$, the $(C,\mc{O}_C)$-points in the fiber $\pr^{-1}(x)$ consists of the differentials $\eta \in \omega_{\mc{G}_{\mc{A}}}$ that reduce to an element in the image of the Hodge-Tate map $(H_{n}^{\vee})^{\gen} \ra \omega_{H_{n}}$, where $H_{n}$ is the canonical subgroup of $\mc{G}$ of level $n$ and $(H_{n}^{\vee})^{\gen}\sub H_{n}^{\vee}$ denotes the subset of generators. The set $\pr^{-1}(x)$ is a torsor for the group $\Zp^{\times}(1+p^{n-\frac{p^{n}-1}{p-1}\Hdg(x)}\mc{O}_{C})$, and we think of $\mc{T}(n,v)$ as an open subspace of $\mc{T}$, where $\mc{T}$ is the total space of $e(0^{\ast}\Omega_{\mc{A}^{\univ} / \mc{X}(v)}^{1})$ (here $0\, :\, \mc{X}(v) \ra \mc{A}^{\univ}$ is the zero section; this is a line bundle on $\mc{X}(v)$). We remark that the canonical action of $\Zp^{\times}$ on $\mc{T}$ preserves $\mc{T}(n,v)$ (as should be clear from the description of the fibres above). This induces an action on functions.

\medskip

If we fix $w=n-\frac{p^{n}-1}{p-1}v$, then $\mc{X}(v)\sub \mc{X}_{w}$; this follows for example from \cite[Proposition 3.2.1]{aip}. We will compare our construction of $w$-overconvergent modular forms over $\mc{X}(v)$ to the notion defined in \cite{pil}. For the purpose of the comparison, a \emph{Pilloni form} of weight $\V$ over a quasi-compact open $U\sub \mc{X}(v)$ is defined to be an element $\mbf{f}\in \oo_{\mc{T}(n,v)}(\pr^{-1}(U)) \ctens_{\Qp} S_{\V}$ (i.e a function on $\pr^{-1}(U)\times \V \sub \mc{T}(n,v)\times \mc{W}$. Here we abuse notation and write $\V$ also for the image of the natural map into $\mc{W}$ induced by $\V$) such that $z.\mbf{f}=\chi_{\V}^{-1}(z)\mbf{f}$ for all $z\in \Zp^{\times}$.

\medskip

Now fix $U\sub \mc{X}(v)$ as used in the previous section, such that we have a nowhere vanishing $\eta_{U}\in \omega(U)$. We freely use the notation of the previous section with one exception: we use $r$ instead of the already used letter $n$. Thus we have functions $t_{U}^{(r)}\in 1+p^{m}\oo_{\Xinf}^{+}(U_{\infty})$ and $s_{U,r}\in \oo_{\mc{X}_{K(p^{r})}}(U_{r})^{\times}$ such that 
$$ \mf{s}=t_{U}^{(r)}s_{U,r}\eta_{U}.$$
Let $\mc{T}_{U,r}(n,v)=\mc{T}(n,v) \times_{U}U_{r}$. It is an open subset of $\mc{T}_{U,r}:=\mc{T} \times_{U}U_{r}$. It inherits commuting actions of $G_{r}:=K_{0}(p)/K(p^{r})$ and $\Zp^{\times}$.
\begin{lemm} \label{lemm: triv}
Assume that $m\geq n$ and $r\geq n$. Then $s_{U,r}\eta_{U}$ trivializes $\mc{T}_{U,r}(n,v)$.
\end{lemm}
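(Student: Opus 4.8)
The plan is to unwind the definition of the Pilloni torsor $\mc{T}(n,v)$ and check that the section $s_{U,r}\eta_U$ of (the pullback of) $\omega$ to $U_r$ actually lands in the torsor, i.e. that over every geometric point of $U_r$ its value reduces into the image of the Hodge-Tate map $(H_n^{\vee})^{\gen} \to \omega_{H_n}$. Recall that $\mf{s} = \HT_A(\alpha(e_2))$ is one of Scholze's fake Hasse invariants, and that $\mf{s} = t_U^{(r)} s_{U,r}\eta_U$ with $t_U^{(r)} \in 1+p^m\oo_{\Xinf}^+(U_\infty)$. Since $m \geq n$, the factor $t_U^{(r)}$ is congruent to $1$ modulo $p^n$, so $s_{U,r}\eta_U$ and $\mf{s}$ have the same reduction modulo $p^n$ in $\omega/p^n$; hence it is enough to analyze $\mf{s} \bmod p^n$. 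First I would check that $s_{U,r}\eta_U$ genuinely descends to a section over $U_r$ (not just $U_\infty$): $s_{U,r} \in \oo_{\mc{X}_{K(p^r)}}(U_r)^\times$ and $\eta_U$ is pulled back from $U$, so this is immediate, and $r \geq n$ guarantees that the relevant level structure at $p$ is fine enough to see the canonical subgroup of level $n$ and its generators.

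The key geometric input is the identification, on $\Xinfw$ (and hence after $w = n - \tfrac{p^n-1}{p-1}v$ over $\mc{X}(v)$), of the pseudocanonical subgroup with the canonical subgroup — this is the content of the lemmas proved earlier via Oort-Tate theory and the Lubin-Katz recurrence in the proof of Theorem \ref{th1}(3), together with the bound $\deg(H_n) \geq 1 - p^{-n}$ on $\mc{X}_n$. Using the commutative diagram relating $\HT_A$ to $\HT_{H_n}$ (as in the proof of the Lemma comparing $H_1$ with the canonical subgroup), I would show that $\HT_A(\alpha(e_2)) \bmod p^{\min(n,w)}$ is precisely the image under $\HT_{H_n^{\vee}}$ of the generator of $H_n^{\vee}$ dual to $\alpha(e_2) \bmod p^n \in H_n$ — here one uses that a strict $\alpha$ has $\alpha(e_1)$ trivializing the pseudocanonical subgroup, so that (after the relevant $g^{\vee}$-twist built into the $\gl(\Qp)$-action) $\alpha(e_2)$ pairs with a generator of $H_n^{\vee}$. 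Thus $\mf{s} \bmod p^n$, and therefore $s_{U,r}\eta_U \bmod p^n$, reduces into $\Ima\!\big((H_n^{\vee})^{\gen} \to \omega_{H_n}\big)$, which is exactly the fiberwise condition defining $\mc{T}(n,v)$; this gives a morphism $U_r \to \mc{T}_{U,r}(n,v)$ splitting $\pr$, i.e. a trivialization.

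Finally I would observe that $\mc{T}(n,v)$ is a torsor under $\Zp^\times(1+p^{n-\frac{p^n-1}{p-1}\Hdg(x)}\OC)$, and since $w = n - \tfrac{p^n-1}{p-1}v \geq n - \tfrac{p^n-1}{p-1}\Hdg(x)$ on $\mc{X}(v)$, the condition $m \geq n$ (forcing $t_U^{(r)} \equiv 1$ to the needed precision) is exactly what is required for $s_{U,r}\eta_U$ to differ from $\mf{s}$ by a torsor element, so the trivialization statement is consistent and sharp. The main obstacle I expect is bookkeeping the precise valuations and Tate twists in the Hodge-Tate pairing identification — matching "$\HT_A$ lands in $p^{\min(n,w)}F_A$" against "reduction into the image of $\HT_{H_n^{\vee}}$ on generators" requires care with the Oort-Tate parameter $v(a)$ of $H_n^{\vee}$ and with the choice of compatible $p^n$-th roots of unity used to ignore the twist; everything else is formal once the earlier lemmas on pseudocanonical $=$ canonical are in hand.
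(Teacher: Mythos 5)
Your plan is correct and follows essentially the same route as the paper's proof: check the torsor condition on geometric points, where $\mf{s}(A,\alpha)=\HT_{A}(\alpha(e_{2}))$ and $\alpha(e_{2})$ maps to a generator of $H_{n}^{\vee}$ via the canonical duality $\mc{G}[p^{n}]\cong\mc{G}[p^{n}]^{\vee}\ra H_{n}^{\vee}$ (using strictness and the agreement of the pseudocanonical with the canonical subgroup), and then absorb the unit $t_{U}^{(r)}$ using $m\geq n$. The only slip is one of phrasing --- $\alpha(e_{2})\bmod p^{n}$ is not an element of $H_{n}$; it is $\alpha(e_{1})$ that generates $H_{n}$, and $\alpha(e_{2})$ defines a generator of $H_{n}^{\vee}$ via the Weil pairing --- but your subsequent clarification shows you intend exactly this.
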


\begin{proof}
We think of $s_{U,r}\eta_{U}$ as a section $U_{r} \ra \mc{T}_{U,r}$ and we wish to show that the image lands inside $\mc{T}_{U,r}(n,v)$, for which it is enough to argue on geometric points. Take a $(C,\OC)$-point $\bar{x}$ and lift it to a $(C,\OC)$-point $x$ of $\Xinf$. Then we see that 
$$(s_{U,r}\eta_{U})(\bar{x})=((t_{U}^{(r)})^{-1}\mf{s})(x).$$
If $\bar{x}=(A,\bar{\alpha})\in \mc{Y}_{K(p^{r})}^{gd}$ we put $x=(A,\alpha)$. Then $\mf{s}(A,\alpha)=\HT_{A}(\alpha(e_{2}))$ by definition and this maps to a generator of $H_{n}^{\vee}$ via the canonical map $\mc{G}[p^{n}]\cong \mc{G}[p^{n}]^{\vee} \ra H_{n}^{\vee}$, so $\mf{s}(A,\alpha)$ lies in the fibre of $\mc{T}_{U,r}(n,v)$ over $(A,\bar{\alpha})$. This proves that $\mf{s}(x)$ lies in the fibre of $\mc{T}_{U,r}(n,v)$ over $\bar{x}$ for all $x$. The result follows since $t_{U}^{(r)}$ is small.
\end{proof}

The assumption $m\geq n$ and $r\geq n$ will be in force throughout the rest of this section so that the Lemma applies. Recall what we proved in the process of proving Theorem \ref{theo:rankone}: $w$-overconvergent modular forms $f$ of weight $\V$ over $U$ identifies, via the map $f\mapsto f_{0}:=\chi_{\V}(t_{U}^{(r)})f$, with functions $f_{0}\in  \oo_{\mc{X}(p^{r})}(U_{r}) \ctens_{\Qp} S_{\V}$ such that 
$$ \gamma^{\ast}f_{0}=\chi_{\V}(j_{U,r}(\gamma))^{-1}f_{0} $$
for all $\gamma\in G_{r}$.

\begin{prop}\label{prop: ident}
\begin{enumerate}

\item The space of Pilloni forms of weight $\V$ over $U$ is isomorphic, via pullback, to the space of functions $\mbf{g}$ on $\mc{T}_{U,r}(n,v)\times \V$ such that $z.\mbf{g}=\chi_{\V}(z)^{-1}\mbf{g}$ for all $z\in \Zp^{\times}$ and $\gamma^{\ast}\mbf{g}=\mbf{g}$ for all $\gamma \in G_{r}$. 

\item The space of $w$-overconvergent modular forms of weight $\V$ over $U$ is isomorphic to the space of functions $g_{0}$ on $\mc{T}_{U,r}(n,v) \times \V$ such that $z.g_{0}=g_{0}$ for all $z\in \Zp^{\times}$ and $\gamma^{\ast}g_{0}=\chi_{\V}(j_{U,r}(\gamma))^{-1}g_{0}$ for all $\gamma \in G_{r}$. The isomorphism is the map $f\mapsto f_{0}$ above composed with pullback from $U_{r} \times \V$ to $T_{U,r}(n,v) \times \V$.

\end{enumerate}
\end{prop}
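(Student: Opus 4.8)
\medskip

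The plan is to prove both parts by the same two moves: Galois descent along the finite \'etale cover $U_{r}\to U$ (with group $G_{r}=K_{0}(p)/K(p^{r})$), which linearises the torsor-theoretic bookkeeping, followed by the explicit trivialisation of $\mc{T}_{U,r}(n,v)$ furnished by Lemma \ref{lemm: triv}.

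\medskip

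For part (1): observe that $\mc{T}_{U,r}(n,v)=\pr^{-1}(U)\times_{U}U_{r}\to \pr^{-1}(U)$ is the base change of the finite \'etale Galois cover $U_{r}\to U$ along the $G_{r}$-equivariant map $\pr$, hence is itself finite \'etale and Galois with group $G_{r}$, and its $G_{r}$-action commutes with the $\Zp^{\times}$-scaling action. Therefore $\oo(\pr^{-1}(U))=\oo(\mc{T}_{U,r}(n,v))^{G_{r}}$; applying the exact functor $-\ctens_{\Qp}S_{\V}$, which commutes with $G_{r}$-invariants by Lemma \ref{lemm: inv1}(2) (the finite group $G_{r}$ being profinite), gives $\oo(\pr^{-1}(U))\ctens_{\Qp}S_{\V}=\big(\oo(\mc{T}_{U,r}(n,v))\ctens_{\Qp}S_{\V}\big)^{G_{r}}$. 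Since the $\Zp^{\times}$-action still commutes with $G_{r}$, passing to the $\chi_{\V}^{-1}$-eigenspace for $\Zp^{\times}$ on both sides identifies the space of Pilloni forms of weight $\V$ over $U$ with the space of $\mbf{g}\in\oo(\mc{T}_{U,r}(n,v))\ctens_{\Qp}S_{\V}$ such that $z.\mbf{g}=\chi_{\V}(z)^{-1}\mbf{g}$ for all $z\in\Zp^{\times}$ and $\gamma^{\ast}\mbf{g}=\mbf{g}$ for all $\gamma\in G_{r}$; the isomorphism is exactly pullback along $\mc{T}_{U,r}(n,v)\to\pr^{-1}(U)$, as asserted.

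\medskip

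For part (2): recall from the paragraph preceding the Proposition (extracted from the proof of Theorem \ref{theo:rankone}) that $f\mapsto f_{0}:=\chi_{\V}(t_{U}^{(r)})f$ identifies $w$-overconvergent modular forms of weight $\V$ over $U$ with $\{\,f_{0}\in\oo_{\mc{X}_{K(p^{r})}}(U_{r})\ctens_{\Qp}S_{\V}\mid\gamma^{\ast}f_{0}=\chi_{\V}(j_{U,r}(\gamma))^{-1}f_{0}\ \forall\gamma\in G_{r}\,\}$. It then suffices to prove that pullback along the projection $\pr\colon\mc{T}_{U,r}(n,v)\to U_{r}$ carries this space bijectively onto the space of $g_{0}$ described in (2). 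Injectivity, the fact that $\pr^{\ast}f_{0}$ is $\Zp^{\times}$-invariant (since $\pr$ is), and $G_{r}$-equivariance with the correct cocycle twist (since $\pr$ is $G_{r}$-equivariant) are all immediate. For surjectivity one invokes Lemma \ref{lemm: triv}, whose section $s_{U,r}\eta_{U}$ realises $\mc{T}_{U,r}(n,v)$ as an open subspace of a $\Zp^{\times}(1+p^{c}\OC)$-torsor over $U_{r}$ (for a suitable $c\ge w$), the scaling $\Zp^{\times}$ acting through the first factor of the torsor group. One restricts a given $g_{0}$ along this section and recovers $g_{0}$ from the restriction, using that $\chi_{\V}$ extends to a character of the full torsor group (Proposition \ref{prop-extend}) together with the cocycle identity $j_{U,r}(\gamma)=(b\mf{z}+d)\cdot(1+O(p^{m}))$, which shows that the $G_{r}$-action already moves one along the fibrewise torsor directions; this matches the two spaces.

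\medskip

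I expect the crux to be exactly this last reconstruction step in part (2): a function on $\mc{T}_{U,r}(n,v)\times\V$ that is merely $\Zp^{\times}$-invariant still lives on a space of positive relative dimension over $U_{r}\times\V$, so one genuinely needs the interplay of the $\Zp^{\times}$-invariance with the $G_{r}$-transformation law (and hence the concrete Hodge--Tate description of the Pilloni torsor, not just its abstract torsor structure) to force descent to $U_{r}\times\V$. Once that is in hand, the remainder of both parts is formal Galois descent together with the exactness and base-change properties of the completed tensor product $-\ctens_{\Qp}S_{\V}$.
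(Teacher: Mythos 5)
Your part (1) is correct and is the same argument as the paper's: it is exactly the statement that $\pr^{-1}(U)\times \V$ is the quotient of $\mc{T}_{U,r}(n,v)\times\V$ by $G_{r}$, i.e.\ Galois descent along $U_{r}\to U$ base-changed to the torsor, compatibly with the $\Zp^{\times}$-eigenspace decomposition.

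The gap is in part (2), at the surjectivity step you yourself identify as the crux. What must be shown is that any $g_{0}$ on $\mc{T}_{U,r}(n,v)\times\V$ satisfying only $z.g_{0}=g_{0}$ for $z\in\Zp^{\times}$ (plus the $G_{r}$-cocycle rule) is a pullback from $U_{r}\times\V$, i.e.\ that $\Zp^{\times}$-invariant functions descend along $\pr\colon \mc{T}_{U,r}(n,v)\to U_{r}$. The mechanism you invoke --- that ``the $G_{r}$-action already moves one along the fibrewise torsor directions,'' via the cocycle $j_{U,r}$ and the extension of $\chi_{\V}$ from Proposition \ref{prop-extend} --- is not correct: $G_{r}$ acts on $\mc{T}_{U,r}(n,v)=\mc{T}(n,v)\times_{U}U_{r}$ through the $U_{r}$-factor, so it carries the fibre of $\pr$ over $x\in U_{r}$ to the fibre over $x\gamma$ and never moves points within a fibre; moreover the condition in (2) is untwisted $\Zp^{\times}$-invariance, so neither $j_{U,r}$ nor the extended character enters. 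Consequently ``restrict along the section and recover $g_{0}$'' is circular: restriction along $s_{U,r}\eta_{U}$ recovers $g_{0}$ only if $g_{0}$ is already known to be constant on the fibres of $\pr$, which is precisely the point at issue. The correct mechanism is rigid-analytic density: each fibre of $\pr$ is a torsor under a group of the form $\Zp^{\times}(1+p^{c}\OC)$ with $c\geq w$, and every connected component $u(1+p^{c}\OC)$ of this group contains the infinite set $u(1+p^{\lceil c\rceil}\Zp)\subset\Zp^{\times}$; hence, by the identity theorem (applied in the group variable, in families over the affinoid $U_{r}$ and with coefficients in $S_{\V}$, e.g.\ after trivializing via Lemma \ref{lemm: triv}), invariance under $\Zp^{\times}$ forces invariance under the full group $\Zp^{\times}(1+p^{c}\OC)$, so $g_{0}$ is constant on the fibres and equals the pullback of its restriction along the section $s_{U,r}\eta_{U}$. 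This is the ``similar descent'' the paper's (very terse) proof alludes to: $U_{r}\times\V$ plays the role of the quotient of $\mc{T}_{U,r}(n,v)\times\V$ by $\Zp^{\times}$, in the sense of invariant functions, just as (1) is the quotient by $G_{r}$. With that step supplied, the remainder of your part (2) --- injectivity, the compatibility of the two transformation rules under pullback, and the reduction via $f\mapsto f_{0}$ --- is fine.
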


\begin{proof}
(1) follows from the fact that $\pr^{-1}(U)\times \V$ is the quotient of $T_{U,r}(n,v) \times \V$ by $G_{r}$. The proof of (2) is a similar descent.
\end{proof}

To identify the two spaces, it remains to go from one kind of function on $\mc{T}_{U,r}(n,w) \times \V$ to the other. The key lies in trivializing the cocycle $\chi_{\V}(j_{U,r}(\gamma))$. Note that $s_{U,r}\eta_{U}$, as a nowhere vanishing section of $\mc{T}_{U,r}$ also canonically defines a function on $\mc{T}_{U,r}$ \emph{minus the zero section} which we will denote by $(s_{U,r}\eta_{U})^{\vee}$. The same applies to $\eta_{U}$ itself and we have $(s_{U,r}\eta_{U})^{\vee}=s_{U,r}^{-1}\eta_{U}^{\vee}$. By restriction we obtain functions on $\mc{T}_{U,r}(n,v)$.

\begin{lemm}\label{lemm: small}
$(s_{U,r}\eta_{U})^{\vee}\in \Zp^{\times}.(1+p^{w}\oo_{\mc{T}_{U,r}}^{+}(\mc{T}_{U,r}(n,v)))$.
\end{lemm}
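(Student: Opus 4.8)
The plan is to unwind the definition of $(s_{U,r}\eta_U)^{\vee}$ and relate it to the function $\mf{s}^{\vee}$ associated to the canonical trivializing section $\mf{s}$ of $\omega$ over $\Xinfw$, since we already control the relationship $\mf{s}=t_U^{(r)}s_{U,r}\eta_U$ from Proposition \ref{prop: approx} and its refinement in Lemma \ref{lemm: triv}. First I would note that a nowhere-vanishing section $\sigma$ of a line bundle $\scl$ over a space $Y$ defines a function $\sigma^{\vee}$ on the total space of $\scl$ minus the zero section (namely, the unique linear function on fibers taking value $1$ at $\sigma$), and that this construction is multiplicative in the obvious sense: if $\sigma' = u\sigma$ for a unit $u$, then $(\sigma')^{\vee} = u^{-1}\sigma^{\vee}$. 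Applying this with $\sigma = \mf{s}$ (pulled back to $\mc{T}_{U,r}(n,v)$ via the relevant maps — here one uses that $\mc{T}_{U,r}(n,v)$ sits over $U_r$ and that $\mf{s}$ lives at infinite level, so one should be slightly careful about which space these functions live on, but pulling everything back to $\mc{T}_{U,r}(n,v)\times_{U_r}(\text{infinite level})$ or just working on $(C,\OC)$-points suffices) and $\sigma' = s_{U,r}\eta_U$, we get $(s_{U,r}\eta_U)^{\vee} = t_U^{(r)}\cdot \mf{s}^{\vee}$.

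The next step is to show that $\mf{s}^{\vee}$, restricted to $\mc{T}_{U,r}(n,v)$, lies in $\Zp^{\times}\cdot(1+p^{w}\oo^{+})$. This should follow directly from the definition of $\mc{T}(n,v)$: by construction, the $(C,\OC)$-points in the fiber $\pr^{-1}(\bar x)$ over a point $\bar x = (A,\bar\alpha)$ are the differentials $\eta\in\omega_{\mc{G}_{\mathcal A}}$ reducing into the image of the Hodge-Tate map $(H_n^{\vee})^{\gen}\to\omega_{H_n}$, and this fiber is a torsor under $\Zp^{\times}(1+p^{n-\frac{p^n-1}{p-1}\Hdg(\bar x)}\OC)$. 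Since we have fixed $w = n - \frac{p^n-1}{p-1}v$ and $\Hdg(\bar x)\leq v$ on $\mc{X}(v)$, the torsor group is contained in $\Zp^{\times}(1+p^{w}\OC)$. As $\mf{s}(A,\alpha) = \HT_A(\alpha(e_2))$ is one such point in the fiber (this is exactly the content of the computation in the proof of Lemma \ref{lemm: triv}), any other point $\eta$ in the fiber differs from it by an element of $\Zp^{\times}(1+p^{w}\OC)$, i.e. $\eta^{\vee}/\mf{s}^{\vee}$ — equivalently the value of $\mf{s}^{\vee}$ at $\eta$ — lies in $\Zp^{\times}(1+p^{w}\OC)$. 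Running this over all geometric points of $\mc{T}_{U,r}(n,v)$ (and using that the integral structure sheaf is computed by its values on $(C,\OC)$-points for these quasicompact spaces, plus the fact that $w$-integrality of a function can be checked pointwise) gives $\mf{s}^{\vee}\in\Zp^{\times}\cdot(1+p^{w}\oo^{+}_{\mc{T}_{U,r}}(\mc{T}_{U,r}(n,v)))$.

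Finally I would combine the two steps: $(s_{U,r}\eta_U)^{\vee} = t_U^{(r)}\cdot\mf{s}^{\vee}$, where $t_U^{(r)}\in 1+p^{m}\oo^{+}_{\Xinf}(U_\infty)\subset 1+p^{w}\oo^{+}$ since $m\geq n \geq w$ is in force (one should double check the precise inequality between $m$ and $w$ needed — we have $m\geq n$ and $w = n - \frac{p^n-1}{p-1}v < n$, so indeed $p^m \oo^+ \subseteq p^w\oo^+$), and $\mf{s}^{\vee}\in\Zp^{\times}(1+p^{w}\oo^{+})$ by the previous paragraph. Since $1+p^{w}\oo^{+}$ is multiplicatively closed (for $w>0$), the product lies in $\Zp^{\times}\cdot(1+p^{w}\oo^{+}_{\mc{T}_{U,r}}(\mc{T}_{U,r}(n,v)))$, which is the claim.

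The main obstacle I anticipate is purely bookkeeping: making sure all these functions are compared on a common space. The section $\mf{s}$ lives on $\Xinfw$, the sections $s_{U,r}\eta_U$ and $\eta_U$ live at finite level $U_r$, and the torsor $\mc{T}_{U,r}(n,v)$ lives over $U_r$; one must pull back consistently (and note $t_U^{(r)}$ genuinely requires infinite level, so the cleanest route is probably to verify the integrality statement on $(C,\OC)$-points, where everything can be compared via a chosen lift to infinite level, exactly as in the proof of Lemma \ref{lemm: triv}, and then invoke that $\oo^+$ on these quasicompact adic spaces is determined by such points). No genuinely hard estimate is involved beyond the Oort–Tate / canonical subgroup input already packaged into the definition of $\mc{T}(n,v)$ and the relation $w = n-\frac{p^n-1}{p-1}v$.
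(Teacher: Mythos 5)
Your proposal is correct and is essentially the paper's argument: the paper's proof is just ``this follows directly from Lemma \ref{lemm: triv}'', meaning precisely the torsor computation you spell out — since $s_{U,r}\eta_{U}$ is a section of $\mc{T}_{U,r}(n,v)$ and each fibre is a torsor under $\Zp^{\times}(1+p^{n-\frac{p^{n}-1}{p-1}\Hdg}\OC)\subseteq\Zp^{\times}(1+p^{w}\OC)$ (as $\Hdg\leq v$ on $\mc{X}(v)$ and $w=n-\frac{p^{n}-1}{p-1}v$), the function $(s_{U,r}\eta_{U})^{\vee}$ takes values in $\Zp^{\times}(1+p^{w}\OC)$ pointwise, whence the claim. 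Your extra factorization $(s_{U,r}\eta_{U})^{\vee}=t_{U}^{(r)}\mf{s}^{\vee}$ merely re-derives part of Lemma \ref{lemm: triv}; quoting its statement (that $s_{U,r}\eta_{U}$ itself lies in each torsor fibre) makes the detour through $\mf{s}$ and $t_{U}^{(r)}$ unnecessary.
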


\begin{proof}
This follows directly from Lemma \ref{lemm: triv}.
\end{proof}

This implies that we may apply $\chi_{\V}$ to $(s_{U,r}\eta_{U})^{\vee}$, and we may therefore define a $\oo_{\mc{X}(v)}(U) \ctens_{\Qp} S_{\V}$-module isomorphism
$$ \Phi\, :\, \oo_{\mc{T}_{U,r}}(\mc{T}_{U,r}(n,v)) \ctens_{\Qp} S_{\V} \ra \oo_{\mc{T}_{U,r}}(\mc{T}_{U,r}(n,v)) \ctens_{\Qp} S_{\V} $$
which is simply multiplication by $\chi_{\V}((s_{U,r}\eta_{U})^{\vee})$:
$$ \Phi(h)=\chi_{\V}((s_{U,r}\eta_{U})^{\vee})h. $$

\begin{theo}\label{pillonicomp}
The image of the space of functions in part (1) of Proposition \ref{prop: ident} under $\Phi$ is the space of functions in part (2) of Proposition \ref{prop: ident}. Moreover, the induced isomorphism between the space of $w$-overconvergent modular forms of weight $\V$ over $U$ and the space of Pilloni forms of weight $\V$ over $U$ is independent of all choices and hence the isomorphisms for varying $U$ glue together to an isomorphism of sheaves over $\Xw$.
\end{theo}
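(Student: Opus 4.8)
The plan is to check that $\Phi$ exactly matches up the two equivariance conditions of Proposition \ref{prop: ident}, and then to recognize the resulting composite isomorphism as a choice-free formula after passing to infinite level.

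\emph{Step 1: transformation of the multiplier.} I would first compute how $\chi_{\V}((s_{U,r}\eta_{U})^{\vee})$ behaves under the two actions on $\mc{T}_{U,r}(n,v)$. Because $s_{U,r}\eta_{U}$ trivializes $\mc{T}_{U,r}(n,v)$ (Lemma \ref{lemm: triv}), the dual function $(s_{U,r}\eta_{U})^{\vee}$ lies in the extended domain of $\chi_{\V}$ by Lemma \ref{lemm: small} and Proposition \ref{prop-extend}. For the canonical fibrewise $\Zp^{\times}$-action on $\mc{T}$, the section $s_{U,r}\eta_{U}$ is invariant and $(s_{U,r}\eta_{U})^{\vee}$ is homogeneous of degree one, so $z.\chi_{\V}((s_{U,r}\eta_{U})^{\vee}) = \chi_{\V}(z)\,\chi_{\V}((s_{U,r}\eta_{U})^{\vee})$ for $z\in\Zp^{\times}$. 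For $\gamma\in G_{r}$, the section $\eta_{U}$ is pulled back from $U$, hence $\gamma^{\ast}\eta_{U}^{\vee}=\eta_{U}^{\vee}$, while $\gamma^{\ast}s_{U,r}=j_{U,r}(\gamma)s_{U,r}$; combined with $(s_{U,r}\eta_{U})^{\vee}=s_{U,r}^{-1}\eta_{U}^{\vee}$ this gives $\gamma^{\ast}(s_{U,r}\eta_{U})^{\vee}=j_{U,r}(\gamma)^{-1}(s_{U,r}\eta_{U})^{\vee}$, whence $\gamma^{\ast}\chi_{\V}((s_{U,r}\eta_{U})^{\vee})=\chi_{\V}(j_{U,r}(\gamma))^{-1}\chi_{\V}((s_{U,r}\eta_{U})^{\vee})$ (here $j_{U,r}(\gamma)$ lies in the domain of $\chi_{\V}$ by the standing assumptions $m,r\geq n$ and $w\geq 1+s_{\V}$). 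Feeding these into $\Phi(h)=\chi_{\V}((s_{U,r}\eta_{U})^{\vee})h$, any $\mbf{g}$ with $z.\mbf{g}=\chi_{\V}(z)^{-1}\mbf{g}$ and $\gamma^{\ast}\mbf{g}=\mbf{g}$ is sent to $g_{0}=\Phi(\mbf{g})$ with $z.g_{0}=g_{0}$ and $\gamma^{\ast}g_{0}=\chi_{\V}(j_{U,r}(\gamma))^{-1}g_{0}$; that is, $\Phi$ carries the space of Proposition \ref{prop: ident}(1) into that of Proposition \ref{prop: ident}(2). Since $\Phi$ is invertible on $\oo_{\mc{T}_{U,r}}(\mc{T}_{U,r}(n,v))\ctens_{\Qp}S_{\V}$ and the identical computation applied to $\Phi^{-1}$ gives the reverse inclusion, $\Phi$ restricts to an isomorphism between the two spaces, which via Proposition \ref{prop: ident} is the asserted isomorphism between $w$-overconvergent forms and Pilloni forms over $U$.

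\emph{Step 2: independence of choices and gluing.} Unwinding the definitions, the isomorphism sends a $w$-overconvergent form $f$ over $U$ to the descent along the $G_{r}$-quotient of $\Phi^{-1}$ applied to the pullback of $f_{0}=\chi_{\V}(t_{U}^{(r)})f$, i.e.\ to the descent of $\chi_{\V}((s_{U,r}\eta_{U})^{\vee})^{-1}\chi_{\V}(t_{U}^{(r)})\cdot f$ on $\mc{T}_{U,r}(n,v)\times\V$. The key point is that the relation $\mf{s}=t_{U}^{(r)}s_{U,r}\eta_{U}$ dualizes to $(s_{U,r}\eta_{U})^{\vee}=t_{U}^{(r)}\,\mf{s}^{\vee}$, where $\mf{s}^{\vee}$ is the tautological nowhere-vanishing function attached to the \emph{canonical} section $\mf{s}\in H^{0}(\Xinfw,\omega)$, restricted to $\mc{T}(n,v)\times_{\mc{X}(v)}U_{\infty}$ (which is the pullback to infinite level of $\mc{T}_{U,r}(n,v)$, independent of $r$). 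Hence the multiplier collapses to $\chi_{\V}(\mf{s}^{\vee})^{-1}$, and on $\mc{T}(n,v)\times_{\mc{X}(v)}U_{\infty}\times\V$ the composite is simply $f\mapsto \chi_{\V}(\mf{s}^{\vee})^{-1}\cdot f$ --- a formula involving only $\mf{s}$ and $f$, with no remaining choice of $\eta_{U}$, $r$, $m$, $s_{U,r}$ or $t_{U}^{(r)}$. Since pullback to $\mc{T}(n,v)\times_{\mc{X}(v)}U_{\infty}\times\V$ is injective on functions over $\pr^{-1}(U)\times\V$ (resp.\ over $U\times\V$), this pins down both sides uniquely, so the isomorphism is independent of all choices; in particular, over an overlap $U'\subset U$ it may be computed with the restricted data, giving compatibility on overlaps. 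Therefore the local isomorphisms glue to an isomorphism of sheaves over $\mc{X}(v)$, as claimed.

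\emph{Main obstacle.} The substance of the argument, rather than a genuine difficulty, is the bookkeeping in Step 1: keeping the $\Zp^{\times}$- and $G_{r}$-equivariance conventions on $\mc{T}$, on $\omega$, and on all intervening pullbacks mutually consistent (the sign of the $\Zp^{\times}$-homogeneity is in fact forced by the requirement that Step 1 reproduce \emph{exactly} the conditions of Proposition \ref{prop: ident}(2)), together with verifying that every element to which $\chi_{\V}$ is applied truly lies in the extended domain $B_{s}^{\times}$ of Proposition \ref{prop-extend}. Once the identity $(s_{U,r}\eta_{U})^{\vee}=t_{U}^{(r)}\mf{s}^{\vee}$ is isolated, choice-independence and gluing are formal.
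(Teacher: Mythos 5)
Your proposal is correct and follows essentially the same route as the paper: the trivialization of the cocycle via $\gamma^{\ast}(s_{U,r}\eta_{U})^{\vee}=j_{U,r}(\gamma)^{-1}(s_{U,r}\eta_{U})^{\vee}$ (equivalently $\chi_{\V}(j_{U,r}(\gamma))^{-1}=\chi_{\V}(\gamma^{\ast}(s_{U,r}\eta_{U})^{\vee})/\chi_{\V}((s_{U,r}\eta_{U})^{\vee})$), and the identification of the composite map with $f\mapsto \chi_{\V}(t_{U}^{(r)})\chi_{\V}((s_{U,r}\eta_{U})^{\vee})^{-1}f=\chi_{\V}(\mf{s}^{\vee})^{-1}f$, from which independence of choices and gluing follow. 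You in fact spell out slightly more than the paper does (the $\Zp^{\times}$-equivariance bookkeeping and the pullback-to-infinite-level argument that the paper leaves to the reader), so no gaps to flag.
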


\begin{proof}
For the first part, the key thing to notice is that
$$ j_{U,r}(\gamma)=\frac{(s_{U,r}\eta_{U})^{\vee}}{\gamma^{\ast}(s_{U,r}\eta_{U})^{\vee}} $$
and hence that
$$ \chi_{\V}(j_{U,r}(\gamma))^{-1}=\frac{\chi_{\V}(\gamma^{\ast}(s_{U,r}\eta_{U})^{\vee})}{\chi_{\V}((s_{U,r}\eta_{U})^{\vee})}. $$
The rest is then a straightforward computation. For the second part we remark that the composite isomorphism from $w$-overconvergent modular forms to Pilloni forms is given by
$$ f \mapsto \chi_{\V}(t_{U}^{(r)})\chi_{\V}((s_{U,r}\eta_{U})^{\vee})^{-1}f $$
and then descending the right hand side to $\pr^{-1}(U)\times \V$. Morally, the right hand side is equal to $\chi_{\V}(\mf{s}^{\vee})^{-1}f$ (recall that $t_{U}^{(r)}s_{U,r}\eta_{U}=\mf{s}$) and hence independent of the choices made. To turn this into a rigorous argument is straightforward but tedious and notationally cumbersome. We leave the details to the interested reader.
\end{proof}

\medskip

\section{Overconvergent modular symbols}\label{sec:3}

We will recall some material on overconvergent modular symbols in the form we need. Most of these constructions are probably well known with the exception of certain filtrations defined in \cite{han}.

\subsection{Basic definitions and the filtrations}

Let $\mbf{A}^{s}$ be the affinoid ring over $\Qp$ defined by 
$$ \mbf{A}^{s}=\{ f: \Zp \ra \Qp \mid f\,\mathrm{analytic\, on\, each\,}p^{s}\Zp-\mathrm{coset} \} . $$
We let $\mbf{A}^{s,\circ}$ denote the subring of powerbounded elements of $\mbf{A}^{s}$. Given a weight $\U$, consider
the module $$ \mbf{A}_{\U}^{s,\circ} = \mbf{A}^{s,\circ} \ctens_{\Zp} A_{\U}^{\circ} $$ 
where the completion is the $p$-\emph{adic completion}. Recall the Amice basis $(e_{j}^{s})_{j\geq 0}$ of $\mbf{A}^{s,\circ}$ from Theorem \ref{theo: amice}. Using it we may write $\mbf{A}_{\U}^{s,\circ}$ as 
$$ \mbf{A}_{\U}^{s,\circ}=\wh{\bigoplus}_{j\geq 0} A_{\U}^{\circ}e_{j}^{s} $$
and hence view elements of $\mbf{A}_{\U}^{s,\circ}$ as functions $\Zp \ra A_{\U}^{\circ}$. 
For any $s \geq s_{\mc{U}}$ and any $d\in\Zp^{\times},c\in p\Zp$, $x \mapsto \chi_{\U}(cx+d)$ defines an element of $\mbf{A}_{\U}^{s,\circ}$ (by calculations very similar to those in the proof of Proposition \ref{prop-extend}), and we then consider $\mbf{A}_{\U}^{s,\circ}$ endowed with the right $\Delta_{0}(p)$-action 
$$(f\cdot_{\U}\gamma)(x)=\chi_{\U}(cx+d)f\left(\frac{ax+b}{cx+d}\right);$$
one checks without too much trouble that $f\cdot_{\U}\gamma \in \mbf{A}_{\U}^{s,\circ}$. We set $\mbf{D}_{\U}^{s,\circ}=\Hom_{A_{\U}^{\circ}}(\mbf{A}_{\U}^{s,\circ},A^{\circ}_{\U})$
and $\mbf{D}_{\U}^{s}=\mbf{D}_{\U}^{s,\circ}[\frac{1}{p}]$, with
the dual left action. It is the continuous $A_{\U}[\frac{1}{p}]$-dual of the $A_{\U}[\frac{1}{p}]$-Banach module $\mbf{A}_{\U}^{s}:=\mbf{A}_{\U}^{s,\circ}[\frac{1}{p}]$. Note that 
$$ \mbf{D}^{s,\circ}_{\U}=\Hom_{\Zp}(\mbf{A}^{s,\circ},A^{\circ}_{\U}). $$
The Amice basis gives an orthonormal $A_{\U}[\frac{1}{p}]$-basis of $\mbf{A}_{\U}^{s}$ and induces an isomorphism 
$$ \mbf{D}_{\U}^{s}\, \tilde{\longrightarrow}\, \prod_{j\geq 0}A^{\circ}_{\U} $$
given by
$$ \mu \mapsto (\mu(e^{s}_{j}))_{j\geq 0}. $$

\begin{prop} Let $\U=(R_{\U},\chi_{\U})$ be a small weight and let $s\geq 1+s_{\U}$.
\begin{enumerate}
\item $\mbf{D}_{\U}^{s,\circ}$ admits a decreasing $\Delta_0(p)$-stable filtration by sub-$R_{\U}$-modules
\[
\mbf{D}_{\U}^{s,\circ}=\mathrm{Fil}^{0}\mbf{D}_{\U}^{s,\circ}\supset\mathrm{Fil}^{1}\mbf{D}_{\U}^{s,\circ}\supset\cdots\supset\mathrm{Fil}^{i}\mbf{D}_{\U}^{s,\circ}\supset\cdots\]
such that each quotient $\mbf{D}_{\U}^{s,\circ}/\mathrm{Fil}^{k}\mbf{D}_{\U}^{s,\circ}$
is a finite abelian group of exponent $p^{k}$, the group $K(p^{s+k})$
acts trivially on $\mbf{D}_{\U}^{s,\circ}/\mathrm{Fil}^{k}\mbf{D}_{\U}^{s,\circ}$, and
$\mbf{D}_{\U}^{s,\circ}\cong \varprojlim _k \mbf{D}_{\U}^{s,\circ}/\mathrm{Fil}^{k}\mbf{D}_{\U}^{s,\circ}$.

\item $\mbf{D}_{\U}^{s,\circ}$, with the topology induced by the submodules $({\rm Fil}^{k}\mbf{D}_{\U}^{s,\circ})_{k\geq 0}$, is a profinite flat $\Zp$-module.

\end{enumerate}

\end{prop}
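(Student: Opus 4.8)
The plan is to construct the filtration following \cite{han}. Recall that the Amice basis identifies $\mbf{D}_{\U}^{s,\circ}=\Hom_{\Zp}(\mbf{A}^{s,\circ},A_{\U}^{\circ})$ with the product $\prod_{j\geq 0}A_{\U}^{\circ}$ via $\mu\mapsto(\mu(e_j^s))_{j\geq 0}$, that $A_{\U}^{\circ}=R_{\U}$ is itself profinite (the quotients $R_{\U}/\mf{a}^{n}$ are finite) and $(p,\mf{a})$-adically complete, and that $\gamma\in\Delta_{0}(p)$ acts on moments by $(\gamma\mu)(e_i^s)=\sum_{j}(e_i^s\cdot_{\U}\gamma)_{j}\,\mu(e_j^s)$, where $(e_i^s\cdot_{\U}\gamma)_{j}\in A_{\U}^{\circ}$ is the $j$-th Amice coefficient of $e_i^s\cdot_{\U}\gamma=\chi_{\U}(cx+d)\,e_i^s\!\left(\tfrac{ax+b}{cx+d}\right)$. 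What must be produced is a cofinal system of finite $\Delta_{0}(p)$-stable quotients of $\mbf{D}_{\U}^{s,\circ}$, the $k$-th of exponent $p^{k}$ and killed by $K(p^{s+k})$. The obvious guess --- constrain $\mu(e_j^s)$ modulo $(p^{k},\mf{a}^{n(k)})$ for $j$ below a sharp cutoff $N(k)$ --- gives finite quotients of the right exponent but is \emph{not} $\Delta_{0}(p)$-stable: the action (already through the Iwahori $K_{0}(p)$) does not respect the Amice coordinates, and the tail of $e_i^s\cdot_{\U}\gamma$ does not decay quickly enough relative to a sharp cutoff. So $\mathrm{Fil}^{k}$ has to be chosen more carefully, with ``slanted'' thresholds --- constraining $\mu(e_j^s)$ modulo $(p^{\,k-\lfloor j/C\rfloor},\mf{a}^{\,n(k)-\lfloor j/C'\rfloor})$, with no condition once the exponents vanish, for suitable $C,C',n(k),N(k)$ --- and possibly replaced by a finite intersection of $\Delta_{0}(p)$-translates to achieve exact stability. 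Pinning these choices down so that all four required properties hold simultaneously is the substance of the statement.

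The analytic inputs are the following. First, a decay estimate: for $\gamma\in\Delta_{0}(p)$ the composite $e_i^s\circ\tfrac{ax+b}{cx+d}$ is a $\Zp$-valued \emph{analytic} function on $\Zp$ (since $cx+d\in\Zp^{\times}$ on $\Zp$) --- concretely it equals $\tfrac{\lfloor p^{-s}i\rfloor!}{i!}\cdot\tfrac{P_i(x)}{(cx+d)^{i}}$ with $P_i=\prod_{l=0}^{i-1}((a-lc)x+(b-ld))\in\Zp[x]$ of degree $\leq i$ --- and from this one bounds its $j$-th $s$-Amice coefficient below by a quantity tending to $+\infty$ with $j$ \emph{uniformly in $\gamma$} (the bound degrades with $i$, which is part of what makes the bookkeeping delicate); multiplying by $\chi_{\U}(cx+d)$, whose Amice coefficients likewise decay uniformly in $c\in p\Zp$, $d\in\Zp^{\times}$ (by the estimates in the proof of Proposition \ref{prop-extend}), then controls $e_i^s\cdot_{\U}\gamma$. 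Second, a congruence estimate: for $\gamma\in K(p^{N})$ one has $\tfrac{ax+b}{cx+d}=x+p^{N}h_{\gamma}(x)$ with $h_{\gamma}\in\mbf{A}^{s,\circ}$, and $\chi_{\U}(cx+d)$ lies in a congruence subgroup of $(A_{\U}^{\circ})^{\times}$ whose depth grows with $N-s_{\U}$; expanding $e_i^s$ along $p^{s}$-cosets via Theorem \ref{theo: amice} shows that $e_i^s\cdot_{\U}\gamma$ is congruent to $e_i^s$ modulo a deep ideal of $\mbf{A}_{\U}^{s,\circ}$ for each fixed $i$, which is what makes $K(p^{s+k})$ act trivially on the $k$-th quotient once the thresholds are lined up (here $s\geq 1+s_{\U}$ enters, exactly as in Proposition \ref{prop-extend}). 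Finally, the completeness of $R_{\U}$ is needed so that moments are recovered from their reductions.

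Granting the construction, part (1) is then routine: the $\mathrm{Fil}^{k}$ are $A_{\U}^{\circ}$-submodules and decreasing by construction; $\mbf{D}_{\U}^{s,\circ}/\mathrm{Fil}^{k}$ embeds into a finite product of finite groups of exponent dividing $p^{k}$ (finite because $R_{\U}/\mf{a}^{n}$ is finite), so is finite of exponent $p^{k}$; $\Delta_{0}(p)$-stability and triviality of the $K(p^{s+k})$-action are built in; and $\mbf{D}_{\U}^{s,\circ}\xrightarrow{\ \sim\ }\varprojlim_{k}\mbf{D}_{\U}^{s,\circ}/\mathrm{Fil}^{k}$ since the thresholds tend to infinity and $R_{\U}$ is complete. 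For part (2): profiniteness of $\mbf{D}_{\U}^{s,\circ}$ in the $\mathrm{Fil}^{\bullet}$-topology is precisely (1). For flatness, use $\mbf{D}_{\U}^{s,\circ}\cong\prod_{j\geq 0}A_{\U}^{\circ}$: a small $\Zp$-algebra is a profinite flat $\Zp$-module in the sense of \S\ref{sec:6.1}, the class of profinite flat $\Zp$-modules is closed under products, and one checks that the $\mathrm{Fil}^{\bullet}$-topology is cofinal with the product topology on $\prod_{j}R_{\U}$ (which amounts to the monotone growth of the thresholds together with $p\in\mf{a}$); equivalently, $\mbf{A}^{s,\circ}$ is profinite flat with pseudobasis $(e_j^s)$, so $\mbf{D}_{\U}^{s,\circ}=\Hom_{\Zp}(\mbf{A}^{s,\circ},A_{\U}^{\circ})$ is manifestly of the required shape.

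The main obstacle is exactly the construction of the filtration: producing a single family $\mathrm{Fil}^{k}\mbf{D}_{\U}^{s,\circ}$ that is simultaneously $\Delta_{0}(p)$-stable, has \emph{finite} quotients of exponent \emph{exactly} $p^{k}$ killed by $K(p^{s+k})$, and exhausts the profinite topology, in the presence of a $\Delta_{0}(p)$-action that does not respect the Amice coordinates at all. This forces the ``slanted'' thresholds above and a careful bookkeeping balancing the $(p,\mf{a})$-adic depth in $R_{\U}$ against the Amice-degree cutoff; the feasibility of that bookkeeping rests on the uniform decay estimate, which in turn rests on the observation that $e_i^s\circ\tfrac{ax+b}{cx+d}$ is honestly analytic on $\Zp$.
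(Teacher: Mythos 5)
There is a genuine gap: the entire content of the proposition is the construction of the filtration, and your proposal leaves exactly that open. You correctly diagnose that a sharp cutoff in the Amice coordinates is not $\Delta_{0}(p)$-stable, but your proposed fix (``slanted'' thresholds $\mu(e_{j}^{s})\bmod (p^{k-\lfloor j/C\rfloor},\mf{a}^{n(k)-\lfloor j/C'\rfloor})$, possibly intersected over $\Delta_{0}(p)$-translates) is never pinned down, and you say yourself that ``pinning these choices down\dots is the substance of the statement'' and that this bookkeeping is ``the main obstacle.'' As written this is a plan, not a proof; it is not clear that any choice of constants makes all four properties (stability, finiteness, exponent $p^{k}$, triviality of $K(p^{s+k})$) hold simultaneously, and no verification is attempted.

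The idea you are missing, which the paper takes from \cite[\S 2.2]{han}, is to change the radius of analyticity instead of slanting thresholds: one defines
$$\mathrm{Fil}^{k}\mbf{D}_{\U}^{s,\circ}:=\ker\left(\mbf{D}_{\U}^{s,\circ}\ra \mbf{D}_{\U}^{s-1,\circ}/\mf{a}_{\U}^{k}\mbf{D}_{\U}^{s-1,\circ}\right),$$
where the first map is the restriction of distributions dual to the inclusion $\mbf{A}_{\U}^{s-1,\circ}\subset\mbf{A}_{\U}^{s,\circ}$ (this is where the hypothesis $s\geq 1+s_{\U}$ is used, so that $\mbf{D}_{\U}^{s-1,\circ}$ makes sense). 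With this definition $\Delta_{0}(p)$-stability is automatic, since the restriction map is $\Delta_{0}(p)$-equivariant and $\mf{a}_{\U}^{k}$ is a stable submodule; there is no coordinate bookkeeping to do. Finiteness of the quotient comes from the comparison of Amice bases: $e_{j}^{s-1}=\bigl(\lfloor p^{1-s}j\rfloor!/\lfloor p^{-s}j\rfloor!\bigr)e_{j}^{s}$, and the valuation of this factorial ratio grows without bound in $j$, so for $j$ large the moment $\mu(e_{j}^{s-1})$ lands in $\mf{a}_{\U}^{k}$ (as $p\in\mf{a}_{\U}$) and only finitely many moments survive modulo $\mf{a}_{\U}^{k}$; the exponent-$p^{k}$ claim follows from $p^{k}\in\mf{a}_{\U}^{k}$, and the $K(p^{s+k})$-triviality and the identification $\mbf{D}_{\U}^{s,\circ}\cong\varprojlim_{k}\mbf{D}_{\U}^{s,\circ}/\mathrm{Fil}^{k}\mbf{D}_{\U}^{s,\circ}$ are the congruence estimates carried out in \cite{han}. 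Your reduction of (2) to (1) plus the product description $\mbf{D}_{\U}^{s,\circ}\cong\prod_{j\geq 0}A_{\U}^{\circ}$ is fine, but it only becomes available once the filtration has actually been constructed.
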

\begin{proof} We will only recall how the filtrations are constructed and refer to \cite[\S 2.2]{han} for the details (note that (2) is a consequence of (1)).  The module ${\rm Fil}^{k}\mbf{D}_{\U}^{s,\circ}$ is defined to be the kernel of the natural map $$\mbf{D}_{\U}^{s,\circ} \ra \mbf{D}_{\U}^{s-1,\circ}/\mf{a}_{\U}^{k}\mbf{D}_{\U}^{s-1,\circ}, $$ 
where we recall that $\mf{a}_{\U}$ is our fixed choice of ideal of definition for the profinite topology on $R_{\U}$.
\end{proof}

Let $k\geq 2$ be an integer and let $A$ be a ring. We let $\scl_{k}(A)$ denote the module of polynomials $A[X]^{\mathrm{deg}\leq k-2}$ with left $M_{2}(A)$-action
$$ (\delta\cdot_{k}p)(X)=(d+bX)^{k-2}p\left(\frac{c+aX}{d+bX}\right),$$
and set in particular $\scl_{k}=\scl_{k}(\Qp)$ and $\scl_{k}^{\circ}=\scl_{k}(\Zp)$.
By direct calculation, the map\begin{eqnarray*}
\rho_{k}:\mbf{D}_{k}^{s,\circ} & \to & \scl_{k}^{\circ}\\
\mu & \mapsto & \int(1+Xx)^{k-2}\mu(x)=\\
 &  & =\sum_{j=0}^{k-2}\left(\begin{array}{c}
k-2\\
j\end{array}\right)\mu(x^{j})X^{j}\end{eqnarray*}
is $\Delta_{0}(p)$-equivariant.

\begin{defi} \label{def-int}
 The integration map in weight $k$, denoted $i_{k}$, is the $\Delta_0(p)$-equivariant map $i_{k}:\mbf{D}_{\U}^{s,\circ}\to\scl_{k}^{\circ}$
defined by $i_{k}=\rho_{k}\circ\sigma_{k}$.
\end{defi}

\subsection{Slope decompositions}\label{slopedecomp}

We will recall material from \cite{as} and \cite{han1} in order to define slope decompositions on the spaces of overconvergent modular symbols that we are interested in. We will mildly abuse notation by writing $\XC$ for the complex Shimura curve of level $K:=K^{p}K_{0}(p)$ (viewed as a Riemann surface). Any $K_{0}(p)$-module $M$ defines a local system on $\XC$ which we will also denote by $M$. If $M$ in addition is a $\Delta_{0}(p)$-module then we get induced Hecke actions as well. The spaces of overconvergent modular symbols that we are interested in are 
$$ H^{1}(\XC, \mbf{D}_{\U}^{s}) $$
for open weights $\U$. We will give these spaces slope decompositions using the methods of \cite{han1}. Almost everything goes through \emph{verbatim} and we will content ourselves with a brief discussion. Throughout this section, $h$ will denote a non-negative rational number. For definitions and generalities on slope decompositions that we will use we refer to \cite[\S 2.2]{jn} (we remark that the notion introduced in \cite[\S 4]{as} needs slight tweaking for the purpose of constructing the whole eigencurve, as opposed to just local pieces).

\medskip  

Let us start by recalling the general setup from \cite[\S 2.1]{han1}. We have the functorial \emph{adelic} (co)chain complexes $C_{\bullet}^{\ad}(K,-)$ and $C_{\ad}^{\bullet}(K,-)$ whose (co)homology functorially computes homology and cohomology of local systems attached to $\Delta_{0}(p)$-modules, respectively. Fix once and for all a choice of triangulation of $\XC$. This choice gives us functorial complexes $C_{\bullet}(K,-)$ and $C^{\bullet}(K,-)$, called \emph{Borel-Serre complexes}, which are chain homotopic to $C_{\bullet}^{\ad}(K,-)$ and $C_{\ad}^{\bullet}(K,-)$ respectively. We fix such a chain homotopy once and for all. The key features of the Borel-Serre complexes are that there are non-negative integers $r(i)$ such that $r(i)=0$ for $i<0$ and for $i$ sufficiently large, and such that for any $R[\Dp]$-module $M$ (where $R$ is any commutative ring), 
$$ C_{i}(K,M) \cong M^{r(i)}$$
functorially as $R$-modules (and similarly for $C^{\bullet}(K,-)$, with the same integers $r(i)$). Therefore the total complex $\bigoplus_{i}C_{i}(K,M)$ inherits properties of $M$, such as being Banach if $R$ is a $\Qp$-Banach algebra (and also orthonormalizability). In this case, we have a canonical and functorial topological duality isomorphism
$$ C^{\bullet}(K,\Hom_{R,cts}(M,P)) \cong \Hom_{R,cts}(C_{\bullet}(K,M),P) $$
where $M$ and $P$ are Banach $R$-modules. 

\medskip

We now follow \cite[\S 3.1]{han1} using the orthonormalizable $A_{\U}[\frac{1}{p}]$-module $\mbf{A}_{\U}^{s}$, where $\U=(A_{\U},\chi_{\U})$ is an open weight. Our $U_{p}$-operator is given by the double coset $K_{0}(p) \left( \begin{smallmatrix} p & 0 \\ 0 & 1 \end{smallmatrix} \right) K_{0}(p)$ and the formalism gives us lifts to our Borel-Serre complexes that we denote by $\tilde{U}$. We note that $\tilde{U} \in \End_{A_{\U}[\frac{1}{p}]}(C_{\bullet}(K,\mbf{A}_{\U}^{s}))$ is compact and we denote its Fredholm determinant by $F_{\U}(X)\in A_{\U}[\frac{1}{p}][[X]]$. The proof of \cite[Proposition 3.1.1]{han1} goes through for small weights to show that this definition is independent of $s$. Thus the existence of a slope $\leq h$-decomposition of $C_{\bullet}(K,\mbf{A}_{\U}^{s})$ is equivalent to the existence of a slope $\leq h$-factorization of $F_{\U}(X)$. If $\V$ is another open weight with $\V^{\rig} \sub \U^{\rig}$, then the relation $\mbf{A}_{\V}^{s} \cong \mbf{A}^{s}_{\U} \ctens_{A_{\U}[\frac{1}{p}]} A_{\V}[\frac{1}{p}] $ implies that the $F_{\U}$ glue to a power series $F(X) \in \oo_{\ms{W}}(\ms{W})[[X]]$. Also \cite[Proposition 3.1.2]{han1} goes through in this more general setting: the slope $\leq h$-subcomplex $C_{\bullet}(K,\mbf{A}_{\U}^{s})_{\leq h}$ of $C_{\bullet}(K,\mbf{A}_{\U}^{s})$ is independent of $s$ (if it exists) and if $\V^{\rig} \sub \U^{\rig}$ then we have a canonical isomorphism
$$ C_{\bullet}(K,\mbf{A}_{\U}^{s})_{\leq h} \otimes_{A_{\U}[\frac{1}{p}]} A_{\V}[\frac{1}{p}] \cong C_{\bullet}(K,\mbf{A}_{\V}^{s})_{\leq h}. $$

We say that $(\U, h)$ is a \emph{slope datum} if $C_{\bullet}(K,\mbf{A}_{\U}^{s})$ has a slope $\leq h$-decomposition or equivalently if $F_{\U}$ has a slope $\leq h$-factorization. We have the following version of \cite[Proposition 3.1.3]{han1}:

\begin{prop}
Assume that $(\U,h)$ is a slope datum and that $\V = (S_{\V}, \chi _{\V})$ is an open affinoid weight with $\V \sub \U^{\rig}$. Then there is a canonical isomorphism
$$ H_{\ast}(\XC,\mbf{A}_{\U}^{s})_{\leq h} \otimes_{A_{\U}[\frac{1}{p}]}S_{\V} \cong H_{\ast}(\XC,\mbf{A}_{\V}^{s})_{\leq h} $$
for any $s\geq s_{\U}$.
\end{prop}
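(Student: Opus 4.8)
The plan is to reduce the statement to the corresponding fact at the level of Borel--Serre complexes --- which has essentially been recorded already --- and then to commute the homology functor past a flat base change. First I would recall that, since $(\U,h)$ is a slope datum, the bounded complex $C_{\bullet}(K,\mbf{A}_{\U}^{s})$ of orthonormalizable $A_{\U}[\frac{1}{p}]$-Banach modules carries a slope $\leq h$-decomposition with respect to the compact operator $\tilde{U}$, and that the summand $C_{\bullet}(K,\mbf{A}_{\U}^{s})_{\leq h}$ is a \emph{bounded complex of finite projective $A_{\U}[\frac{1}{p}]$-modules} (the usual Riesz theory of compact operators over affinoid/Banach algebras, cf. \cite[\S 2.2]{jn}), and that it is independent of $s\geq s_{\U}$ by the analogue of \cite[Proposition 3.1.2]{han1} recalled above. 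Since a slope $\leq h$-decomposition of a bounded complex induces one on its homology, and the slope $\leq h$-part of the homology is precisely the homology of the slope $\leq h$-subcomplex, there are canonical identifications $H_{\ast}(\XC,\mbf{A}_{\U}^{s})_{\leq h}\cong H_{\ast}\big(C_{\bullet}(K,\mbf{A}_{\U}^{s})_{\leq h}\big)$, and likewise with $\U$ replaced by $\V$.

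Next I would invoke the complex-level base-change isomorphism
$$ C_{\bullet}(K,\mbf{A}_{\U}^{s})_{\leq h}\otimes_{A_{\U}[\frac{1}{p}]}S_{\V}\;\cong\;C_{\bullet}(K,\mbf{A}_{\V}^{s})_{\leq h} $$
established above (no completion is needed on the left, since $C_{\bullet}(K,\mbf{A}_{\U}^{s})_{\leq h}$ consists of finite projective modules). The only remaining point is that $-\otimes_{A_{\U}[\frac{1}{p}]}S_{\V}$ commutes with taking homology of this bounded complex, for which it suffices that $A_{\U}[\frac{1}{p}]\to S_{\V}$ be flat. When $\U$ is an affinoid weight this is the flatness of the open immersion $\V^{\rig}\inj\U^{\rig}=\Spa(S_{\U},S_{\U}^{\circ})$; when $\U$ is small one factors the map as $R_{\U}[\frac{1}{p}]\to\oo(\U^{\rig})\to S_{\V}$, where the second arrow is again the flatness of an affinoid open subspace of the quasi-Stein space $\U^{\rig}$, and the first arrow is the (standard) flatness of the map from a small $\Zp$-algebra, $p$ inverted, to the ring of global functions on its rigid generic fibre. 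Combining the two displayed isomorphisms with the identifications of the previous paragraph yields
$$ H_{\ast}(\XC,\mbf{A}_{\U}^{s})_{\leq h}\otimes_{A_{\U}[\frac{1}{p}]}S_{\V}\;\cong\;H_{\ast}(\XC,\mbf{A}_{\V}^{s})_{\leq h}, $$
and this isomorphism is canonical because each arrow involved --- the fixed chain homotopy to the adelic complex, the slope projectors, the base-change map of complexes --- is canonical.

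I do not expect a genuine obstacle here: this is a mild variant of \cite[Proposition 3.1.3]{han1}, and the content is largely bookkeeping. The points that do require (routine) care, precisely because they are not literally covered in \emph{loc.\ cit.}, are (i) that the structural facts about slope $\leq h$-subcomplexes --- finiteness, projectivity, independence of $s$, compatibility with homology --- continue to hold when $\U$ is a \emph{small} weight rather than an affinoid one, and (ii) the flatness of $R_{\U}[\frac{1}{p}]\to S_{\V}$ in that small-weight case. Neither presents a real difficulty; if one wished to sidestep flatness entirely one could restrict to the locus over which $H_{\ast}(\XC,\mbf{A}_{\U}^{s})_{\leq h}$ is projective, but that would only give a weaker, generic statement, whereas flatness gives the clean result as stated.
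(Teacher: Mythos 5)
Your argument is correct and is structurally the same as the paper's: everything reduces, exactly as in \cite[Proposition 3.1.3]{han1}, to the complex-level base-change isomorphism already recorded before the statement together with the flatness of $A_{\U}[\frac{1}{p}]\to S_{\V}$, which lets you commute $-\otimes_{A_{\U}[\frac{1}{p}]}S_{\V}$ past homology. The one place where you genuinely diverge is the proof of that flatness in the small-weight case: you factor the map as $R_{\U}[\frac{1}{p}]\to\oo(\U^{\rig})\to S_{\V}$, which forces you to invoke (i) flatness of the Iwasawa-type algebra into the ring of global functions on the quasi-Stein generic fibre and (ii) Schneider--Teitelbaum/Fr\'echet--Stein flatness of $\oo(\U^{\rig})\to\oo(V_n)$ for an affinoid member of the exhaustion containing $\V$. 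Both facts are true, so there is no gap, but they carry more machinery than needed (and your claim (i), while standard, does require a Mittag--Leffler-type argument rather than being an immediate citation). The paper instead factors through a rational subset $V\supseteq\V$ of the generic fibre of $\Spa(A_{\U},A_{\U})$: then $A_{\U}\to\oo(V)$ is flat because it is a rational localization of a Noetherian adic ring, and $\oo(V)\to S_{\V}$ is flat because it is an open immersion of affinoids. That route stays entirely within Noetherian rational localizations and is the more economical way to get the same flatness; otherwise the two proofs coincide.
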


\begin{proof}
When $\U$ is affinoid this is exactly \cite[Proposition 3.1.3]{han1}, so assume without loss of generality that $\U = (A _{\U}, \chi _{\U})$ is small. The same proof will go through if we can verify that $A_{\U}[\frac{1}{p}] \ra S_{\V}$ is flat. This is standard. One proof goes as follows: Consider the adic space $\Spa(A_{\U},A_{\U})$ and pick a rational subset $V$ in its generic fibre which contains $\V$. Then $\oo(V) \ra S_{\V}$ is flat since it corresponds to an open immersion of affinoids in rigid geometry, and $A_{\U} \ra \oo(V)$ is flat since it is a rational localization in the theory of adic spaces when the rings of definition are Noetherian.
\end{proof}

We may then give the analogue of the above proposition for $H^{\ast}(\XC,\mbf{D}_{\U}^{s})$ (cf. \cite[Proposition 3.1.5]{han1}). 

\begin{prop}\label{slopeV}
Assume that $(\U,h)$ is a slope datum. Then $C^{\bullet}(K,\mbf{D}_{\U}^{s})$, and hence $H^{\ast}(\XC,\mbf{D}_{\U}^{s})$, admit slope $\leq h$-decompositions. If furthermore $\V \sub \U^{\rig}$ is an open affinoid weight, then there are canonical isomorphisms
$$ C^{\bullet}(K,\mbf{D}_{\U}^{s})_{\leq h} \otimes_{A_{\U}[\frac{1}{p}]}S_{\V} \cong C^{\bullet}(K,\mbf{D}_{\V}^{s})_{\leq h} $$
and 
$$ H^{\ast}(\XC,\mbf{D}_{\U}^{s})_{\leq h}  \otimes_{A_{\U}[\frac{1}{p}]}S_{\V} \cong H^{\ast}(\XC,\mbf{D}_{\V}^{s})_{\leq h}. $$
\end{prop}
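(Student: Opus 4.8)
The plan is to deduce everything by dualizing the corresponding statements for the homology complex $C_{\bullet}(K,\mbf{A}_{\U}^{s})$. Concretely, I would apply the canonical topological duality isomorphism $C^{\bullet}(K,\Hom_{R,cts}(M,P)) \cong \Hom_{R,cts}(C_{\bullet}(K,M),P)$ recalled above with $R=A_{\U}[\frac{1}{p}]$, $M=\mbf{A}_{\U}^{s}$ and $P=A_{\U}[\frac{1}{p}]$. Since $\mbf{D}_{\U}^{s}$ is by definition the continuous $A_{\U}[\frac{1}{p}]$-dual of $\mbf{A}_{\U}^{s}$, this produces a canonical isomorphism $C^{\bullet}(K,\mbf{D}_{\U}^{s}) \cong \Hom_{A_{\U}[\frac{1}{p}],cts}(C_{\bullet}(K,\mbf{A}_{\U}^{s}),A_{\U}[\frac{1}{p}])$ under which $\tilde{U}$ on the left corresponds to the transpose of $\tilde{U}$ on the right. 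This is exactly the situation of \cite[Proposition 3.1.5]{han1}, now over a possibly small base $A_{\U}$, and I would follow that argument. Since $(\U,h)$ is a slope datum, $C_{\bullet}(K,\mbf{A}_{\U}^{s})$ decomposes as $C_{\bullet}(K,\mbf{A}_{\U}^{s})_{\leq h}\oplus C_{\bullet}(K,\mbf{A}_{\U}^{s})_{>h}$ with the $\leq h$-summand a bounded complex of finite projective $A_{\U}[\frac{1}{p}]$-modules (note $A_{\U}[\frac{1}{p}]$ is Noetherian, being finite over $\Zp[[X_{1},\dots,X_{d}]][\frac{1}{p}]$ in the small case). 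Applying $\Hom_{A_{\U}[\frac{1}{p}],cts}(-,A_{\U}[\frac{1}{p}])$ and using that the continuous dual of a finite projective module is the ordinary dual, I get a $\tilde{U}$-stable direct sum decomposition of $C^{\bullet}(K,\mbf{D}_{\U}^{s})$; since the transpose preserves characteristic power series in each (finite projective) degree, the summand dual to the $\leq h$-part is precisely the slope $\leq h$-part. As taking slope $\leq h$-parts is exact, this descends to $H^{\ast}(\XC,\mbf{D}_{\U}^{s})$, with $H^{\ast}(\XC,\mbf{D}_{\U}^{s})_{\leq h}=H^{\ast}\bigl(C^{\bullet}(K,\mbf{D}_{\U}^{s})_{\leq h}\bigr)$.

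For the base-change isomorphisms I would start from the identification $C_{\bullet}(K,\mbf{A}_{\U}^{s})_{\leq h}\otimes_{A_{\U}[\frac{1}{p}]}S_{\V}\cong C_{\bullet}(K,\mbf{A}_{\V}^{s})_{\leq h}$ established for the homology complex (the analogue of \cite[Proposition 3.1.2]{han1}). Applying $\Hom(-,A_{\U}[\frac{1}{p}])$, resp. $\Hom(-,S_{\V})$, and using that for a finite projective $A_{\U}[\frac{1}{p}]$-module $P$ the natural map $\Hom_{A_{\U}[\frac{1}{p}]}(P,A_{\U}[\frac{1}{p}])\otimes_{A_{\U}[\frac{1}{p}]}S_{\V}\to\Hom_{S_{\V}}(P\otimes_{A_{\U}[\frac{1}{p}]}S_{\V},S_{\V})$ is an isomorphism, together with the identification of the $\leq h$-part from the previous step, yields $C^{\bullet}(K,\mbf{D}_{\U}^{s})_{\leq h}\otimes_{A_{\U}[\frac{1}{p}]}S_{\V}\cong C^{\bullet}(K,\mbf{D}_{\V}^{s})_{\leq h}$. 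For the cohomology version I would invoke flatness of $A_{\U}[\frac{1}{p}]\to S_{\V}$ — established in the proof of the preceding proposition via a rational localization of $\Spa(A_{\U},A_{\U})$ — so that $-\otimes_{A_{\U}[\frac{1}{p}]}S_{\V}$ is exact and hence commutes with the cohomology of the bounded complex $C^{\bullet}(K,\mbf{D}_{\U}^{s})_{\leq h}$, giving $H^{\ast}(\XC,\mbf{D}_{\U}^{s})_{\leq h}\otimes_{A_{\U}[\frac{1}{p}]}S_{\V}\cong H^{\ast}(\XC,\mbf{D}_{\V}^{s})_{\leq h}$.

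The only genuinely delicate point — and it is mild — is the compatibility between slope decompositions and topological duality used above: one needs to know that dualizing a slope $\leq h$-decomposition of the orthonormalizable homology complex produces a slope $\leq h$-decomposition for the transpose operator, rather than a decomposition governed by the reciprocal Newton polygon. This works because the direct sum decomposition is respected by $\Hom(-,A_{\U}[\frac{1}{p}])$ termwise, the $\leq h$-piece is finite projective so its continuous dual is the plain dual, and the transpose of an operator on a finite projective module has the same characteristic power series (hence the same Newton polygon) as the operator itself; this is the one place where finite-projectivity of the slope $\leq h$-parts is essential. The remainder is a formal transcription of \cite[Proposition 3.1.5]{han1} to the slightly more general base, the only new inputs being the Noetherianity of $A_{\U}[\frac{1}{p}]$ and the flatness of $A_{\U}[\frac{1}{p}]\to S_{\V}$, both already available.
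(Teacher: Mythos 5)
Your proposal is correct and follows essentially the same route as the paper: the paper's proof simply invokes the duality $C^{\bullet}(K,\mbf{D}_{\U}^{s})\cong \Hom_{A_{\U}[\frac{1}{p}],cts}(C_{\bullet}(K,\mbf{A}_{\U}^{s}),A_{\U}[\frac{1}{p}])$ together with the flatness of $A_{\U}[\frac{1}{p}]\to S_{\V}$ and states that the argument of \cite[Proposition 3.1.5]{han1} goes through verbatim. What you have written is a faithful expansion of exactly that argument, including the one genuinely delicate point (that dualizing the slope $\leq h$-decomposition of the orthonormalizable homology complex yields a slope $\leq h$-decomposition for the transpose, using finite projectivity of the $\leq h$-part and invariance of the characteristic power series under transpose).
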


\begin{proof}
Using the duality $C^{\bullet}(K,\mbf{D}_{\U}^{s}) \cong \Hom_{A_{\U}[\frac{1}{p}],cts}(C_{\bullet}(K,\mbf{A}_{\U}^{s}),A_{\U}[\frac{1}{p}])$ and the flatness of $A_{\U}[\frac{1}{p}] \ra S_{\V}$ the proof of \cite[Proposition 3.1.5]{han1} goes through \emph{verbatim}.
\end{proof}

\section{Sheaves on the pro-\'etale site}\label{sec:4}

In the next two sections we will often consider Shimura curves over $\Cp$ as well as $\Qp$. Any rigid analytic variety we have defined may be base changed from $\Qp$ to $\Cp$, and we will denote this base change by a subscript $-_{\Cp}$, e.g. $\X_{\Cp}$. The space $\X_{\Cp}$ may be considered as an object of $\X_{\proet}$ using the pro-\'etale presentation $\X_{\Cp}=\varprojlim_{K}\X_{K}$, where $K/\Qp$ is finite and $\X_{K}$ denotes the base change of $\X$ to $K$. Note that the slice $\X_{\proet}/\X_{\Cp}$ is equivalent to $\X_{\Cp,\proet}$ as sites (use \cite[Proposition 3.15]{sch2} and \cite[Proposition 7.4]{perf}). We define $\Xinf := \varprojlim_{K} \mc{X}_{\infty,K}$ (with $K$ as above) where the inverse limit is taken in the category of perfectoid spaces. Equivalently, we may define it as (the perfectoid space corresponding to) the perfectoid object $\varprojlim_{K}\X_{\infty,K}$ in $\X_{\proet}$; it lives in $\X_{\proet}/\X_{\Cp} \cong \X_{\Cp,\proet}$. Similar remarks apply to $\Xw$, $\Xinfw$ et cetera.

\subsection{A handy lemma}

Let $X$ be a rigid analytic variety over $\Spa(\Qp,\Zp)$, $G$ a profinite group, and $X_{\infty}$ a perfectoid space with $X_{\infty}\ra X$ a pro-\'etale $G$-covering of $X$, i.e. $X_{\infty}\sim\varprojlim_{j} X_{j}$
where $(X_{j})_{j\in J}$ is an inverse system of rigid analytic varieties finite
\'etale over $X$ equipped with a right action of $G$, such that each
$X_{j}\to X$ is a finite \'etale $G/G_{j}$-covering for $G_{j} \sub G$
a cofinal sequence of normal open subgroups. We can and often will view $X_{\infty}$ as an object in the pro-\'etale site of $X$. Let $M$ be a profinite flat $\Zp$-module equipped with a continuous left $G$-action.
Define a sheaf on the pro-\'etale site of $X$ by
$$ (M \ctens \oo_{X})(V)=\left(M \ctens \wh{\oo}_{X}(V\times_{X}X_{\infty})\right)^{G}, $$
where $V$ is a qcqs
object in $X_{\proet}$. Here
the right action of $G$ on the tower $(X_{i})_{i}$ induces a left
action on $\wh{\oo}_{X}(V_{\infty})$, and the action
indicated is the diagonal left action $g\cdot(m\otimes f)=gm\otimes g^{\ast}f$.

\begin{lemm}\label{handy}
Given any presentation $M = \varprojlim_{i} M_i$, $M_{i}=M/I_{i}$ as in Definition \ref{defi: ctens} with each $I_i$ preserved by $G$, the sheaf $M \ctens \oo_{X}$
coincides with the sheaf
$$ \wh{\mb{Q}}_{p}\otimes_{\wh{\mb{Z}}_{p}}\varprojlim _i \left(\nu^{\ast}\wt{M}_{i}\otimes_{\Zp}\oo_{X}^{+}\right), $$
where $\wt{M}_{i}$ is the locally constant sheaf on $X_{\et}$
associated with $M_{i}$, and we recall that $\nu\, :\, X_{\proet} \ra X_{\et}$ is the canonical morphism of sites.
\end{lemm}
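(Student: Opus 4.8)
The strategy is to compare the two sheaves section-by-section on qcqs objects $V\in X_{\proet}$ and then invoke the fact that a morphism of sheaves that is an isomorphism on a basis is an isomorphism. First I would unwind the definition of the right-hand sheaf. Since $\nu^{\ast}\wt M_i \otimes_{\Zp}\oo_X^+$ is (by the definition of $\nu^{\ast}$ together with the fact that $M_i$ is finite) a sheaf on $X_{\proet}$ whose value on a qcqs $V$ computes correctly, I would evaluate the inverse limit $\varprojlim_i(\nu^{\ast}\wt M_i\otimes_{\Zp}\oo_X^+)$ on $V$; the key point here is that on qcqs objects the inverse limit of sheaves is computed termwise (the $R^1\varprojlim$ vanishes because the transition maps $\oo_X^+(W)/p^k \to \oo_X^+(W')/p^k$-type systems are Mittag-Leffler, or more simply because everything is a limit of finite pieces), so that the RHS evaluated at $V$ is $\widehat{\mb Q}_p\otimes_{\widehat{\mb Z}_p}\varprojlim_i\big(\wt M_i\otimes_{\Zp}\oo_X^+\big)(V)$.

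Next I would use the $G$-cover $X_\infty \to X$ to compute each term $\big(\nu^{\ast}\wt M_i\otimes_{\Zp}\oo_X^+\big)(V)$. Because $\wt M_i$ is locally constant (trivialized on a finite-level cover $X_j$ with $G/G_j$ acting, and $M_i$ is killed by $I_i$ so $G$ acts through a finite quotient), I can apply Lemma \ref{newinv}: the sheaf $\nu^{\ast}\wt M_i \otimes_{\Zp}\oo_X^+$ is pulled back from $X_{\et}$, so its value on $V$ equals its value on $V_\infty := V\times_X X_\infty$ taken $G$-invariants, namely $\big(\wt M_i(V_\infty)\otimes_{\Zp}\oo_X^+(V_\infty)\big)^G = \big(M_i \otimes_{\Zp}\ohatp_X(V_\infty)\big)^G$, using that $\wt M_i$ is \emph{constant} on $V_\infty$ and that $\oo_X^+(V_\infty)=\ohatp_X(V_\infty)$ for $V_\infty$ qcqs perfectoid. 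Taking the inverse limit over $i$ and then inverting $p$ and completing, and comparing with the definition of the mixed completed tensor product $M\,\ctens\,\ohat_X(V_\infty)$ from Definition \ref{defi: ctens} (which is precisely $\widehat{\mb Q}_p\otimes_{\widehat{\mb Z}_p}\varprojlim_i M_i\otimes_{\Zp}\ohatp_X(V_\infty)$, using the presentation $M=\varprojlim M_i$), I get that the RHS at $V$ is $\big(M\,\ctens\,\ohat_X(V_\infty)\big)^G$, which is exactly $(M\ctens\oo_X)(V)$.

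The one genuine subtlety, and the step I expect to be the main obstacle, is the interchange of the $G$-invariants with the inverse limit over $i$ (and with the $p$-adic completion / inverting $p$): we need $\big(\varprojlim_i M_i\otimes_{\Zp}\ohatp_X(V_\infty)\big)^G = \varprojlim_i\big(M_i\otimes_{\Zp}\ohatp_X(V_\infty)\big)^G$ and the analogous compatibility after $\widehat{\otimes}\,\mb Q_p$. The inverse-limit interchange is formal ($(-)^G=\Hom_{\Zp[G]}(\Zp,-)$ is left exact, hence commutes with limits). For the completion step one uses that $\ohatp_X(V_\infty)$ is $p$-adically complete and that each $M_i$ is a finite $\Zp$-module, so $M_i\otimes_{\Zp}\ohatp_X(V_\infty)$ is already $p$-adically complete; then $G$-invariants of a product of copies of $\ohatp_X(V_\infty)$ (which is how $M_i\otimes\ohatp_X(V_\infty)$ looks after choosing a presentation of $M_i$) is again such a module, so invariants commute with the relevant limits and localizations, following exactly the coordinate-wise argument used in the proof of Lemma \ref{lemm: inv1}. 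Finally I would remark that the identification is independent of the chosen presentation $M=\varprojlim_i M_i$ (this is built into the definition of $\ctens$), and that all the isomorphisms constructed are functorial in $V$ and compatible with restriction, so they glue to an isomorphism of sheaves on $X_{\proet}$. This completes the proof.
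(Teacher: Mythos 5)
Your skeleton is the same as the paper's: work on the basis of affinoid perfectoid $V\in X_{\proet}$, pass to $V_{\infty}=V\times_{X}X_{\infty}$, use Lemma \ref{newinv} to recover sections on $V$ as $G$-invariants of sections on $V_{\infty}$, and then commute the $G$-invariants with the inverse limit over $i$ and with inverting $p$ (your torsion-freeness argument for the last step is fine and fills in what the paper only asserts). The gap is at the one identification you treat as exact, namely
$$\left(\nu^{\ast}\wt{M}_{i}\otimes_{\Zp}\oo_{X}^{+}\right)(V_{\infty})\;=\;M_{i}\otimes_{\Zp}\ohatp_{X}(V_{\infty}).$$
Two things go wrong with your justification. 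First, $\oo_{X}^{+}(V_{\infty})\neq\ohatp_{X}(V_{\infty})$ in general: on a perfectoid object of $X_{\proet}$ the uncompleted sheaf $\oo_{X}^{+}$ returns the filtered colimit of the finite-level sections, and only its $p$-adic completion is $\ohatp_{X}(V_{\infty})$ (this colimit description is exactly what is used in the proof of Proposition \ref{prop: approx}). Second, and more seriously, the sections over $V_{\infty}$ of the tensor-product sheaf $\nu^{\ast}\wt{M}_{i}\otimes_{\Zp}\oo_{X}^{+}$ are not the naive tensor product of sections: writing $M_{i}$ as a sum of cyclic $p$-power-order groups, this sheaf is a sum of sheaf quotients $\oo_{X}^{+}/p^{k}$, and $(\oo_{X}^{+}/p^{k})(V_{\infty})$ differs a priori from $\oo_{X}^{+}(V_{\infty})/p^{k}$ because of sheafification; constancy of $\wt{M}_{i}$ on $V_{\infty}$ does not remove this issue.

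What is true, and this is precisely the paper's key observation, is that for $V_{\infty}$ affinoid perfectoid these groups are \emph{almost} equal to $\ohatp_{X}(V_{\infty})/p^{k}$, i.e. one has an almost equality $M_{i}\otimes_{\Zp}\ohatp_{X}(V_{\infty})\cong^{a}(\nu^{\ast}\wt{M}_{i}\otimes_{\Zp}\oo_{X}^{+})(V_{\infty})$ by \cite[Lemma 4.10]{sch2}. The paper then carries almost equalities through the $G$-invariants (Lemma \ref{newinv}) and the inverse limit over $i$, and the almost-zero discrepancy is killed when one inverts $p$ at the end, which is why the lemma is stated after tensoring with $\wh{\mb{Q}}_{p}$. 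So your route is the intended one, but the exactness you assert at the key step is unjustified (and the identity $\oo_{X}^{+}(V_{\infty})=\ohatp_{X}(V_{\infty})$ you invoke is false); the repair is to replace that step by the almost equality from \cite[Lemma 4.10]{sch2} and observe that inverting $p$ removes the ambiguity.
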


\begin{proof} We may check this on affinoid perfectoid $V\in X_{\proet}$ since these form a basis of $X_{\proet}$ (and are qcqs). Set $V_{\infty}=V\times_{X}X_{\infty}$, this is also affinoid perfectoid (see e.g. the proof of \cite[Lemma 4.6]{sch2}). Each $M_{i}$ is finite abelian group of $p$-power exponent. The key observation is then that there is an almost equality
$$ M_{i} \otimes_{\Zp} \wh{\oo}_{X}^{+}(V_{\infty}) =^{a} (\nu^{\ast}\wt{M}_{i} \otimes_{\Zp} \oo_{X}^{+})(V_{\infty}) $$
using \cite[Lemma 4.10]{sch2}. Taking $G$-invariants we see that, since $V_{\infty}\ra V$ is a $G$-cover, 
$$ \left( M_{i} \otimes_{\Zp} \wh{\oo}_{X}^{+}(V_{\infty}) \right)^{G} =^{a} (\nu^{\ast}\wt{M}_{i} \otimes_{\Zp} \oo_{X}^{+})(V) $$
by Lemma \ref{newinv}. Now take inverse limits and invert $p$, and use that these operations commute with taking $G$-invariants.
\end{proof}

\subsection{Sheaves of overconvergent distributions}

We introduce certain pro-\'etale sheaves that compute overconvergent modular symbols and carry a Galois action. When $\mb{L}$ is a locally constant constructible sheaf on the \'etale site of a rigid analytic variety, we may pull it back to a sheaf $\nu^{\ast}\mb{L}$ on the pro-\'etale site. We will often abuse notation and denote $\nu^{\ast}\mb{L}$ by $\mb{L}$ as well; by \cite[Corollary 3.17]{sch2} there is little harm in this. For the rest of this section we will let $\U=(R_{\U},\chi_{\U})$ be a \emph{small} weight. Note that the $K_{0}(p)$-modules $\mbf{D}_{\U}^{s,\circ}/{\rm Fil}^{k}\mbf{D}_{\U}^{s,\circ}$ define locally constant constructible sheaves on the \'etale site $\X_{\et}$.

\begin{defi} Let $\U$ be a small weight and let $s\geq 1+s_{\U}$. Set \textbf{\begin{eqnarray*}
\mbf{V}_{\U}^{s,\circ} & = & \varprojlim _k H^{1}_{\et}(\X_{\Cp},\mbf{D}_{\U}^{s,\circ}/{\rm Fil}^{k}\mbf{D}_{\U}^{s,\circ})\\
 & = & \varprojlim _k H^{1}_{\proet}(\X_{\Cp},\mbf{D}_{\U}^{s,\circ}/{\rm Fil}^{k}\mbf{D}_{\U}^{s,\circ})\end{eqnarray*}}
and
$$ \mbf{V}_{\U,\oo_{\Cp}}^{s,\circ}=\varprojlim _k \left(H^{1}_{\proet}(\X_{\Cp},\mbf{D}_{\U}^{s,\circ}/{\rm Fil}^{k}\mbf{D}_{\U}^{s,\circ})\otimes_{\Zp}\oo_{\Cp}\right). $$
We also define $\mbf{V}_{\U}^{s}=\mbf{V}_{\U}^{s,\circ}[\frac{1}{p}]$ and
$\mbf{V}_{\U,\Cp}^{s}=\mbf{V}_{\U,\oo_{\Cp}}^{s,\circ}[\frac{1}{p}]$.
\end{defi}

These objects carry natural global Galois actions. We have the following comparison:

\begin{prop}\label{prop:artin}
There is a canonical Hecke-equivariant isomorphism $\mbf{V}_{\U}^{s}\cong H^{1}(\XC,\mbf{D}_{\U}^{s})$.
\end{prop}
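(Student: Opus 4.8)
The plan is to reduce the statement to the Artin comparison theorem applied levelwise along the filtration, together with a formal passage to the inverse limit that exploits the finiteness built into the quotients $\mbf{D}_{\U}^{s,\circ}/\mathrm{Fil}^{k}\mbf{D}_{\U}^{s,\circ}$; this is essentially the argument of \cite{han}, which I will make explicit. Fix $k \geq 0$. The quotient $\mbf{D}_{\U}^{s,\circ}/\mathrm{Fil}^{k}\mbf{D}_{\U}^{s,\circ}$ is a finite $K_{0}(p)$-module, hence gives a finite locally constant sheaf on $\X_{\et}$, which I pull back to $\X_{\Cp}$. Since $\X$ is the adic space attached to a proper smooth curve over $\Qp$ possessing an algebraic model over a number field (by the theory of canonical models of Shimura curves), I would identify $H^{\ast}_{\et}(\X_{\Cp}, \mbf{D}_{\U}^{s,\circ}/\mathrm{Fil}^{k})$ first with the algebraic étale cohomology of the corresponding proper curve over $\Cp$ (Huber's comparison theorem for proper adic spaces), then with its algebraic étale cohomology over $\mathbb{C}$ (invariance of étale cohomology with finite coefficients under change of algebraically closed base field), and finally, via Artin's comparison theorem, with the singular cohomology $H^{\ast}(\XC, \mbf{D}_{\U}^{s,\circ}/\mathrm{Fil}^{k})$ of the local system on the Riemann surface $\XC$ attached to the $K_{0}(p)$-module $\mbf{D}_{\U}^{s,\circ}/\mathrm{Fil}^{k}$. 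Each of these identifications is functorial in the coefficient module, hence compatible as $k$ varies.

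Next I would pass to the inverse limit over $k$. On the Betti side I would compute using the Borel--Serre complex $C^{\bullet}(K,-)$ recalled in \S\ref{sec:3}: in each degree $C^{i}(K,M) \cong M^{r(i)}$ functorially in the $\Zp[\Delta_{0}(p)]$-module $M$, so, since $\mbf{D}_{\U}^{s,\circ} \cong \varprojlim_{k}\mbf{D}_{\U}^{s,\circ}/\mathrm{Fil}^{k}$ and finite direct sums commute with inverse limits, one has $C^{\bullet}(K, \mbf{D}_{\U}^{s,\circ}) = \varprojlim_{k} C^{\bullet}(K, \mbf{D}_{\U}^{s,\circ}/\mathrm{Fil}^{k})$ as complexes of $\Zp$-modules. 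Each term of the limiting system is a finite group, so the system is Mittag--Leffler, the relevant $\varprojlim^{1}$ terms vanish, and the Milnor exact sequence yields $H^{\ast}(\XC, \mbf{D}_{\U}^{s,\circ}) \cong \varprojlim_{k} H^{\ast}(\XC, \mbf{D}_{\U}^{s,\circ}/\mathrm{Fil}^{k})$. Combining this with the previous paragraph, and with the identification $H^{\ast}_{\et} = H^{\ast}_{\proet}$ for these finite sheaves already recorded in the definition, gives a canonical isomorphism $\mbf{V}_{\U}^{s,\circ} \cong H^{1}(\XC, \mbf{D}_{\U}^{s,\circ})$. Inverting $p$ is exact and commutes with the cohomology of the finite complex $C^{\bullet}(K,\mbf{D}_{\U}^{s,\circ})$, and $\mbf{D}_{\U}^{s} = \mbf{D}_{\U}^{s,\circ}[\frac{1}{p}]$, so this upgrades to $\mbf{V}_{\U}^{s} \cong H^{1}(\XC, \mbf{D}_{\U}^{s})$.

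For Hecke-equivariance I would observe that the $\Delta_{0}(p)$-action at $p$ is realized by morphisms of the coefficient sheaves and of the Borel--Serre complex, while the Hecke operators away from $p$ are realized by finite étale correspondences on the tower of Shimura curves; since Huber's comparison theorem and Artin's comparison theorem are functorial for morphisms and for finite étale correspondences, all of the isomorphisms above are automatically compatible with these operators, and hence so is the resulting isomorphism $\mbf{V}_{\U}^{s} \cong H^{1}(\XC, \mbf{D}_{\U}^{s})$.

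The main obstacle is really just the bookkeeping in the first step: pinning down the chain of comparisons from the (pro-)étale cohomology of the adic space $\X_{\Cp}$ to the singular cohomology of $\XC$ with the correct local system, and verifying that the $K_{0}(p)$-module attached to the local system in the rigid-analytic étale topos coincides with the one used on the Betti side. Properness of $\X$ and the existence of an algebraic model over a number field make each individual comparison standard, and the only point requiring genuine care is that the Hecke correspondences are respected throughout; once that is in place, the passage to the limit and the inversion of $p$ are purely formal, given the finiteness of the quotients $\mbf{D}_{\U}^{s,\circ}/\mathrm{Fil}^{k}\mbf{D}_{\U}^{s,\circ}$.
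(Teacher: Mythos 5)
Your proposal is correct and follows essentially the same route as the paper: apply Artin's comparison theorem to the finite quotients $\mbf{D}_{\U}^{s,\circ}/\mathrm{Fil}^{k}\mbf{D}_{\U}^{s,\circ}$ levelwise, pass to the inverse limit over $k$ using the finiteness of these quotients to commute $\varprojlim$ with $H^{1}$, invert $p$, and invoke functoriality for Hecke-equivariance. You merely make explicit some steps the paper leaves implicit (the chain of comparisons from the adic space to the Riemann surface, and the Borel--Serre/Mittag--Leffler argument for the limit), which is fine.
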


\begin{proof}
Artin's comparison theorem between \'etale and singular cohomology gives us canonical Hecke-equivariant isomorphisms 
$$H^{1}_{\et}(\X_{\Cp},\mbf{D}_{\U}^{s,\circ}/{\rm Fil}^{k}\mbf{D}_{\U}^{s,\circ})\cong H^{1}(\XC,\mbf{D}_{\U}^{s,\circ}/{\rm Fil}^{k}\mbf{D}_{\U}^{s,\circ}) $$
for all $k$. Now take inverse limits and invert $p$. For this to do the job we need to be able to commute the inverse limit and the $H^{1}$ on the right hand side. But we may do this since the relevant higher inverse limits vanish, by finiteness of $\mbf{D}_{\U}^{s,\circ}/{\rm Fil}^{k}\mbf{D}_{\U}^{s,\circ}$ and by applying \cite[Lemma 3.18]{sch2}.
\end{proof}

\medskip

We now introduce a sheaf on $\X_{\proet}$ that computes overconvergent modular symbols.

\begin{prop}\label{prop:diam}
Let $\U$ be a small weight. There is a canonical Hecke- and Galois-equivariant isomorphism
$$ \mbf{V}_{\U,\Cp}^{s}\cong H^{1}_{\proet}(\X_{\Cp},\oo \mbf{D}_{\U}^{s}) $$
where $\oo\mbf{D}_{\U}^{s}$ is the sheaf on $\X_{\proet}$
defined by 
$$ \oo\mbf{D}_{\U}^{s} := \left( \varprojlim _k \left( (\mbf{D}_{\U}^{s,\circ}/{\rm Fil}^{k}\mbf{D}_{\U}^{s,\circ})\otimes_{\Zp}\oo_{\mc{X}_{\Cp}}^{+}\right) \right) \left[\frac{1}{p}\right]. $$
\end{prop}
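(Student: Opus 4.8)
The plan is to reduce, via the primitive comparison theorem, to the already-recorded comparison between \'etale and pro-\'etale cohomology of the finite locally constant sheaves $M_{k}:=\mbf{D}_{\U}^{s,\circ}/{\rm Fil}^{k}\mbf{D}_{\U}^{s,\circ}$ (cf.\ the proof of Proposition \ref{prop:artin}). Write $\mc{F}_{k}:=M_{k}\otimes_{\Zp}\oo_{\X_{\Cp}}^{+}$ for the corresponding sheaf on $\X_{\Cp,\proet}$, so that $\oo\mbf{D}_{\U}^{s}=\big(\varprojlim_{k}\mc{F}_{k}\big)[\tfrac1p]$. Since $\X$ is a \emph{proper} and smooth rigid analytic curve, $\X_{\Cp}$ is quasicompact and quasiseparated, so pro-\'etale cohomology commutes with the filtered colimit computing $(-)[\tfrac1p]$; hence $H^{i}_{\proet}(\X_{\Cp},\oo\mbf{D}_{\U}^{s})=H^{i}_{\proet}(\X_{\Cp},\mc{F})[\tfrac1p]$ with $\mc{F}:=\varprojlim_{k}\mc{F}_{k}$, and it remains to compute this for $i=1$.

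First I would note that $\mc{F}=R\varprojlim_{k}\mc{F}_{k}$: the transition maps $M_{k+1}\surj M_{k}$ are surjective, hence so are the induced maps $\mc{F}_{k+1}\to\mc{F}_{k}$ on sections over any affinoid perfectoid $V\in\X_{\Cp,\proet}$ over which all $M_{k}$ are trivialized (these form a basis, since $\Xinf\to\X_{\Cp}$ is a pro-\'etale cover), so the tower $(\mc{F}_{k})_{k}$ is Mittag--Leffler on a basis and its derived limit is the naive one. As $R\Gamma_{\proet}(\X_{\Cp},-)$ commutes with $R\varprojlim$, for every $i$ there is a short exact sequence
\[ 0 \to R^{1}\varprojlim_{k} H^{i-1}_{\proet}(\X_{\Cp},\mc{F}_{k}) \to H^{i}_{\proet}(\X_{\Cp},\mc{F}) \to \varprojlim_{k} H^{i}_{\proet}(\X_{\Cp},\mc{F}_{k}) \to 0 . \]
Next I would invoke the primitive comparison theorem: since $\X_{\Cp}$ is smooth and proper over $\Cp$, \cite[Theorem 5.1]{sch2} for $\mb{F}_{p}$-coefficients, combined with a d\'evissage along $0\to\mb{Z}/p\to\mb{Z}/p^{n}\to\mb{Z}/p^{n-1}\to0$ applied to the finite sheaves $M_{k}$, gives natural, $G_{\Qp}$- and Hecke-equivariant almost isomorphisms $H^{i}_{\proet}(\X_{\Cp},\mc{F}_{k})\cong^{a}H^{i}_{\et}(\X_{\Cp},M_{k})\otimes_{\Zp}\oo_{\Cp}$ (using that $M_{k}$ is killed by a power of $p$, so tensoring over $\Zp$ with $\oo_{\Cp}$ agrees with reduction modulo that power, and \cite[Corollary 3.17]{sch2} to identify $H^{i}_{\et}$ and $H^{i}_{\proet}$ of $M_{k}$).

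The groups $H^{i}_{\et}(\X_{\Cp},M_{k})$ are finite (finiteness of \'etale cohomology of proper rigid analytic varieties with torsion coefficients), so for each fixed $i$ the tower $(H^{i}_{\et}(\X_{\Cp},M_{k}))_{k}$ is Mittag--Leffler; tensoring with the flat $\Zp$-algebra $\oo_{\Cp}$ preserves the eventual stabilization of images, so $(H^{i}_{\proet}(\X_{\Cp},\mc{F}_{k}))_{k}$ is almost Mittag--Leffler and $R^{1}\varprojlim_{k}H^{i}_{\proet}(\X_{\Cp},\mc{F}_{k})$ is almost zero. Plugging $i=1$ into the exact sequence, $H^{1}_{\proet}(\X_{\Cp},\mc{F})$ differs from $\varprojlim_{k}H^{1}_{\proet}(\X_{\Cp},\mc{F}_{k})\cong^{a}\varprojlim_{k}\big(H^{1}_{\et}(\X_{\Cp},M_{k})\otimes_{\Zp}\oo_{\Cp}\big)=\mbf{V}_{\U,\oo_{\Cp}}^{s,\circ}$ only by an almost-zero $\oo_{\Cp}$-submodule; since any almost-zero $\oo_{\Cp}$-module is killed by $p$ (if $p^{\epsilon}x=0$ for all $\epsilon>0$, then $px=p^{1-\epsilon}\cdot p^{\epsilon}x=0$), it vanishes after inverting $p$, and we conclude $H^{1}_{\proet}(\X_{\Cp},\oo\mbf{D}_{\U}^{s})=H^{1}_{\proet}(\X_{\Cp},\mc{F})[\tfrac1p]\cong\mbf{V}_{\U,\oo_{\Cp}}^{s,\circ}[\tfrac1p]=\mbf{V}_{\U,\Cp}^{s}$. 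The maps used are all functorial in the $\Delta_{0}(p)$-module $\mbf{D}_{\U}^{s,\circ}$ and in $\X$ (whence Hecke-equivariance), and Galois-equivariance is built into the primitive comparison theorem and into the natural semilinear $G_{\Qp}$-action on $\oo_{\X_{\Cp}}^{+}$ coming from $\X_{\Cp}=\X\times_{\Qp}\Cp$. The hard part is exactly this passage through the inverse limit over $k$: one must keep the higher inverse limits under control, and it is the finiteness theorem for torsion \'etale cohomology (giving Mittag--Leffler) together with the vanishing of almost-zero modules after inverting $p$ that makes it work --- the primitive comparison theorem itself enters only as a black box.
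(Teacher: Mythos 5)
Your argument is correct and follows essentially the same route as the paper: apply the primitive comparison theorem to each finite quotient $\mbf{D}_{\U}^{s,\circ}/{\rm Fil}^{k}\mbf{D}_{\U}^{s,\circ}$, commute the inverse limit over $k$ with cohomology (finiteness/Mittag--Leffler on the \'etale side, a basis of affinoid perfectoids on the sheaf side), and invert $p$ to kill the almost-zero discrepancies. The one step to tighten is the claim that surjectivity of the transition maps of $(\mc{F}_{k})_{k}$ on a basis already gives $\varprojlim_{k}\mc{F}_{k}=R\varprojlim_{k}\mc{F}_{k}$: one also needs the (almost) vanishing of $H^{i}(V,\mc{F}_{k})$ for $i>0$ on that basis, which holds for affinoid perfectoid $V$ by \cite[Lemma 4.10]{sch2} and is precisely the hypothesis of \cite[Lemma 3.18]{sch2} that the paper invokes; since this only yields an almost statement, it is harmless after inverting $p$.
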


\begin{proof} By \cite[Theorem 5.1]{sch2}), we have an almost isomorphism
$$ H^{1}_{\proet}(\X_{\Cp}, \mbf{D}_{\U}^{s,\circ}/{\rm Fil}^{k}\mbf{D}_{\U}^{s,\circ})\otimes_{\Zp}\oo_{\Cp} \overset{a}{\cong} H^{1}_{\proet}(\X_{\Cp},(\mbf{D}_{\U}^{s,\circ}/{\rm Fil}^{k}\mbf{D}_{\U}^{s,\circ}) \otimes_{\Zp}\oo_{\mc{X}_{\Cp}}^{+}) $$
of $\oo_{\Cp}$-modules. Passing to the inverse
limit over $k$, note that on one hand, the higher inverse limits
of any $H^{i}_{\proet}(\X_{\Cp},\mbf{D}_{\U}^{s,\circ}/{\rm Fil}^{k}\mbf{D}_{\U}^{s,\circ})\otimes_{\Zp}\oo_{\Cp}$
vanish since $H^{i}_{\proet}(\X_{\Cp},\mbf{D}_{\U}^{s,\circ}/{\rm Fil}^{k}\mbf{D}_{\U}^{s,\circ})$ is finite (by the finiteness properties of ${\rm Fil}^{\bullet}\mbf{D}_{\U}^{s,\circ}$), thus $H^{i}_{\proet}(\X_{\Cp},\mbf{D}_{\U}^{s,\circ}/{\rm Fil}^{k}\mbf{D}_{\U}^{s,\circ})\otimes_{\Zp}\oo_{\Cp}$ satisfy the Mittag-Leffler condition. On the other hand the higher inverse limit 
$$ R^{i}\varprojlim _k \, ((\mbf{D}_{\U}^{s,\circ}/{\rm Fil}^{k}\mbf{D}_{\U}^{s,\circ})\otimes_{\Zp}\oo_{X}^{+}) $$
vanishes for $i>0$. This follows from \cite[Lemma 3.18]{sch2} upon noting that (in the notation of Lemma \ref{handy}, with $X=\X_{\Cp}$, $X_{\infty}=\X_{\infty,\Cp}$ and $G=K_{0}(p)$) objects of
the form $V_{\infty}$ give a basis of $\X_{\Cp,\proet}$
satisfying the hypotheses of that lemma. Thus we may commute the inverse limit with taking cohomology on the right hand side. Inverting $p$ we get the result.
\end{proof}

Let $V=\varprojlim _i V_{i}\ra \X$ be a pro-\'etale
presentation of an affinoid perfectoid object of $\X_{\proet}$,
and let $V_{\infty}=V \times_{\X} \Xinf$.
This is still affinoid perfectoid and $V_{\infty}\to V$ is pro-\'etale. The following is immediate from Lemma \ref{handy}:

\begin{lemm}\label{lem-explicitD} 
Let $\U$ be a small weight. The sheaf $\oo\mbf{D}_{\U}^{s}$ admits
the following explicit description on qcqs $V\in \X_{\proet}$:
$$ \oo\mbf{D}_{\U}^{s}(V)= \left(\mbf{D}_{\U}^{s,\circ}\ctens \wh{\oo}_{\X}(V_{\infty})\right)^{K_{0}(p)}. $$
\end{lemm}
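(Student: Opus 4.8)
The statement is Lemma \ref{lem-explicitD}, and it is asserted to be ``immediate from Lemma \ref{handy}''. So the plan is simply to unwind the definitions and apply that lemma. First I would recall that $\mbf{D}_{\U}^{s,\circ}$ is a profinite flat $\Zp$-module, by part (2) of the Proposition on filtrations, with its profinite topology coming from the submodules $\mathrm{Fil}^{k}\mbf{D}_{\U}^{s,\circ}$, and that the quotients $M_{k} := \mbf{D}_{\U}^{s,\circ}/\mathrm{Fil}^{k}\mbf{D}_{\U}^{s,\circ}$ give a presentation $\mbf{D}_{\U}^{s,\circ} = \varprojlim_{k} M_{k}$ of the kind required in Definition \ref{defi: ctens}. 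Crucially, the filtration is $\Delta_{0}(p)$-stable, hence in particular $K_{0}(p)$-stable, so each $I_{k} := \mathrm{Fil}^{k}\mbf{D}_{\U}^{s,\circ}$ is preserved by the $G = K_{0}(p)$-action; this is exactly the hypothesis needed to invoke Lemma \ref{handy}.

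Next I would set up the geometric side: take $X = \X$, $X_{\infty} = \Xinf$, and $G = K_{0}(p)$, so that $\Xinf \to \X$ is the pro-\'etale $K_{0}(p)$-covering arising from the tower $(\X_{K(p^{n})})$ (using that $\X = \X_{K_{0}(p)}$ by our conventions, and that $\Xinf \sim \varprojlim_{n}\X_{K(p^{n})}$ with $K_{0}(p)$ acting through the quotients $K_{0}(p)/K(p^{n})$). Then the sheaf $\oo\mbf{D}_{\U}^{s}$ defined in Proposition \ref{prop:diam} is, by its very definition, $\bigl(\varprojlim_{k}(M_{k}\otimes_{\Zp}\oo_{\X_{\Cp}}^{+})\bigr)[\tfrac1p]$, which is precisely the sheaf $\wh{\mb{Q}}_{p}\otimes_{\wh{\mb{Z}}_{p}}\varprojlim_{k}(\nu^{\ast}\wt{M}_{k}\otimes_{\Zp}\oo_{X}^{+})$ appearing in the statement of Lemma \ref{handy} (after extending scalars to $\Cp$; note the slice $\X_{\proet}/\X_{\Cp}\cong \X_{\Cp,\proet}$, so working over $\Cp$ is harmless). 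Hence Lemma \ref{handy} identifies $\oo\mbf{D}_{\U}^{s}$ with the sheaf $V \mapsto \bigl(\mbf{D}_{\U}^{s,\circ}\ctens \wh{\oo}_{\X}(V\times_{\X}\Xinf)\bigr)^{K_{0}(p)}$ on qcqs $V\in\X_{\proet}$, which is exactly the claimed formula with $V_{\infty} = V\times_{\X}\Xinf$. One should just note that $V_{\infty}$ is again affinoid perfectoid and qcqs, so that $\wh{\oo}_{\X}(V_{\infty})$ is a (uniform Banach, hence in particular profinite flat after taking the $+$-part mod $p^{m}$) ring to which the mixed completed tensor product $\ctens$ applies as in the appendix.

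I do not anticipate a serious obstacle here — the content is genuinely packaged into Lemma \ref{handy} and the profiniteness statement for $\mbf{D}_{\U}^{s,\circ}$. The only point requiring a moment's care is the bookkeeping of the two descriptions of $\ctens$: on the algebraic side (Lemma \ref{handy}) the completed tensor product is built from $\oo_{X}^{+}$ and the finite groups $M_{k}$, while the target formula is phrased with $\wh{\oo}_{\X}(V_{\infty})$ and the full module $\mbf{D}_{\U}^{s,\circ}$; one checks these agree by the definition of the mixed $\ctens$ (Definition \ref{mixtens}) together with $\wh{\oo}_{\X}^{+}(V_{\infty}) = \varprojlim_{m}(\oo_{\X}^{+}/p^{m})(V_{\infty})$ and the compatibility of $\varprojlim_{k}$ with $\varprojlim_{m}$ and with $G$-invariants. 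Thus the proof is a one-line invocation of Lemma \ref{handy}, and I would write it as such.
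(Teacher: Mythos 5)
Your proposal is correct and takes essentially the same route as the paper, which simply declares the lemma ``immediate from Lemma \ref{handy}'': the points you verify (that $\mbf{D}_{\U}^{s,\circ}$ is profinite flat with the $K_{0}(p)$-stable filtration $\mathrm{Fil}^{k}$ furnishing the required presentation, that $\Xinf\to\X$ is a pro-\'etale $K_{0}(p)$-cover via the tower $(\X_{K(p^{n})})_{n}$, and that the definition of $\oo\mbf{D}_{\U}^{s}$ in Proposition \ref{prop:diam} is exactly the second sheaf in Lemma \ref{handy}) are precisely the relevant ones. Only your parenthetical remark that $\wh{\oo}_{\X}^{+}(V_{\infty})/p^{m}$ is ``profinite flat'' is off the mark and unnecessary: the mixed completed tensor product of Definition \ref{defi: ctens} requires only the first factor $\mbf{D}_{\U}^{s,\circ}$ to be profinite flat, the other factor being an arbitrary $\Zp$-module, so this does not affect the argument.
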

We remark that the sheaf $V_{\infty} \mapsto \mbf{D}_{\U}^{s,\circ}\ctens \wh{\oo}_{\X}(V_{\infty})$ on $\X_{\proet}/\Xinf$ is $\Delta_{0}(p)$-equivariant and the induced Hecke action on $\oo\mbf{D}_{\U}^{s}$ agrees with the natural action by correspondences.

\subsection{Completed sheaves of overconvergent modular forms}

Let $q:\Xinf \to \X$ be the natural projection as before. Recall that for any weight $\U$ and any $w\geq 1+s_{\U}$, we have the sheaf $\omega_{\U,w}^\dagger$ of
overconvergent modular forms of weight $\U$ on the analytic site of
$\Xw$, whose sections over a rational subset $U \sub \Xw$
are given by 
$$ \omega_{\U,w}^\dagger (U)=\left\{ f \in \oo_{\Xinf}(U_{\infty}) \ctens A_{\U} \mid  \gamma^{\ast}f=\chi_{\U}(b\mf{z}+d)^{-1}f \; \forall \gamma \in K_0(p)\right\} . $$
We may similarly define a sheaf $\omega_{\U,w,\Cp}^{\dg}$ on the analytic site of $\X_{w,\Cp}$ by
$$ \omega_{\U,w,\Cp}^\dagger (U)=\left\{ f \in \oo_{\X_{\infty,\Cp}}(U_{\infty}) \ctens A_{\U} \mid  \gamma^{\ast}f=\chi_{\U}(b\mf{z}+d)^{-1}f \; \forall \gamma \in K_0(p)\right\} $$
with $U\sub \X_{w,\Cp}$ quasi-compact.
We now define completed versions of these sheaves.

\begin{defi} We define a sheaf $\wh{\omega}_{\U,w}^{\dg}$
on $\X_{w,\proet}$ by 
$$\wh{\omega}_{\U,w}^{\dg}(V)=\left\{ f \in \wh{\oo}_{\Xw}(V_{\infty}) \ctens A_{\U} \mid  \gamma^{\ast}f=\chi_{\U}(b\mf{z}+d)^{-1}f \; \forall \gamma \in K_0(p)\right\} ,$$
where $V\in \X_{w,\proet}$ is qcqs and $V_{\infty}=V\times_{\Xw} \Xinfw$. We let $\wh{\omega}_{\U,w,\Cp}^{\dg}$ denote the restriction of $\wh{\omega}_{\U,w}^{\dg}$ to the slice $\X_{w,\proet}/\X_{w,\Cp}\cong \X_{w,\Cp,\proet}$.
\end{defi}

Let us write $\eta$ for the canonical morphism of sites from the \'etale site to the analytic site, and put $\lambda = \eta \circ \nu$.

\begin{prop}\label{prop:omega} There are canonical isomorphisms
$$ \wh{\omega}_{\U,w}^{\dg} \cong \lambda^{\ast}\omega_{\U,w}^{\dg} \otimes_{\oo_{\Xw} \ctens A_{\U}}(\wh{\oo}_{\Xw} \ctens A_{\U}) $$
and
$$ \wh{\omega}_{\U,w,\Cp}^{\dg} \cong \lambda^{\ast}\omega_{\U,w,\Cp}^{\dg} \otimes_{\oo_{\X_{w,\Cp}} \ctens A_{\U}}(\wh{\oo}_{\X_{w,\Cp}} \ctens A_{\U}). $$
This has the following consequences:
\begin{enumerate}
\item We have a canonical isomorphism
$$ \wh{\omega}_{\U,w}^{\dg} \cong \nu^{\ast}\omega_{\U,w}^{\dg} \otimes_{\oo_{\Xw} \ctens A_{\U}}(\wh{\oo}_{\Xw} \ctens A_{\U}) $$
and similarly for the $\Cp$-sheaves, where we abuse the notation and write $\omega_{\U,w}^{\dg}$ also for the \'etale sheaf $\eta^{\ast}\omega_{\U,w}^{\dg} \otimes_{\oo_{\X_{w,an}} \ctens A_{\U}} (\oo_{\X_{w,\et}} \ctens A_{\U})$ (where we have written out "an" and "et" for clarity).

\item $\omega_{\U,w,\Cp}^{\dg}$ is the base change of $\omega_{\U,w}^{\dg}$ from $\Xw$ to $\X_{w,\Cp}$ (see Lemma \ref{lemm: bc}) and hence $\omega_{\U,w,\Cp}^{\dg}={\rm Loc}(\omega_{\U,w,\Cp}^{\dg}(\X_{w,\Cp}))$.

\item We have a canonical isomorphism $\nu_{\ast}\wh{\omega}_{\U,w,\Cp}^{\dg}\cong \omega_{\U,w,\Cp}^{\dg}$ and
$$R^1\nu_{\ast}\wh{\omega}_{\U,w,\Cp}^\dagger=\omega_{\U,w,\Cp}^\dagger \otimes_{\oo_{\X_{w,\Cp}}}\Omega^1_{\X_{w,\Cp}}(-1).$$ 
Furthermore, $R^i\nu_{\ast}\wh{\omega}_{\U,w,\Cp}^\dagger$ vanishes for $i \geq 2$.

\end{enumerate}
\end{prop}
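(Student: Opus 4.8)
The core assertion is the displayed isomorphism
$$ \wh{\omega}_{\U,w}^{\dg} \cong \lambda^{\ast}\omega_{\U,w}^{\dg} \otimes_{\oo_{\Xw} \ctens A_{\U}}(\wh{\oo}_{\Xw} \ctens A_{\U}); $$
once we have this, the $\Cp$-variant follows by restricting to the slice $\X_{w,\proet}/\X_{w,\Cp}$, and consequences (1)--(3) are formal. To prove the displayed isomorphism, I would first recall from Theorem \ref{theo:rankone} that $\omega_{\U,w}^{\dg} = \Loc(\omega_{\U,w}^{\dg}(\Xw))$ is locally free of rank one over $\oo_{\Xw}\ctens A_{\U}$, and in fact becomes \emph{free} after passing to a rational cover $\{U\}$ of $\Xw$ that splits over some finite level $U_n \to U$ (as in that proof). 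So work on such a $U$; choose a generator of $\omega_{\U,w}^{\dg}(U)$, which by the explicit description of that proof is of the form $\chi_{\U}(t_U^{(n)})^{-1}$ times the image of a basis vector of $\oo_{\mc{X}_{K(p^n)}}(U_n)\ctens A_{\U}$ under the idempotent $e_n$. The key computation is then to identify sections of $\wh{\omega}_{\U,w}^{\dg}$ over a qcqs $V \in \Xw{}_{,\proet}$ lying over $U$ with $\wh{\oo}_{\Xw}(V) \ctens A_{\U}$ via multiplication by this same generator: concretely, $f \mapsto f \cdot \chi_{\U}(t_U^{(n)})$ carries $\wh{\omega}_{\U,w}^{\dg}(V)$ into $(\wh{\oo}_{\Xw}(V_\infty)\ctens A_{\U})^{K(p^n)} = \wh{\oo}_{\mc{X}_{K(p^n)}}(V_n)\ctens A_{\U}$ (using Lemma \ref{lemm: inv1} and a completed-sheaf analogue of Lemma \ref{lemm: inv2}, i.e. Lemma \ref{newinv}), and then the idempotent $e_n$ splits off $\wh{\oo}_{\Xw}(V)\ctens A_{\U}$. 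Matching this up with the finite-level description of $\omega_{\U,w}^{\dg}(U)$ gives the isomorphism over $U$, and one checks it is independent of the choices of $n$, $U$, and generator (the argument is the same as the gluing argument for Theorem \ref{theo:rankone}), hence glues to the global isomorphism of sheaves on $\X_{w,\proet}$.

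For consequence (1), the point is that $\lambda^{\ast}$ factors as $\nu^{\ast} \circ \eta^{\ast}$, and the tensor factor $\wh{\oo}_{\Xw}\ctens A_{\U}$ already absorbs the passage from the analytic to the \'etale structure sheaf; concretely, $\lambda^{\ast}\omega_{\U,w}^{\dg}\otimes_{\oo_{\Xw}\ctens A_{\U}}(\wh{\oo}_{\Xw}\ctens A_{\U}) \cong \nu^{\ast}(\eta^{\ast}\omega_{\U,w}^{\dg}\otimes_{\oo_{\X_{w,\et}}\ctens A_{\U}}(\oo_{\X_{w,\et}}\ctens A_{\U}))\otimes_{\oo_{\Xw}\ctens A_{\U}}(\wh{\oo}_{\Xw}\ctens A_{\U})$, which is the stated formula once we adopt the abusive notation $\omega_{\U,w}^{\dg}$ for the bracketed \'etale sheaf. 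Consequence (2) is immediate from $\omega_{\U,w}^{\dg} = \Loc(\omega_{\U,w}^{\dg}(\Xw))$ and compatibility of $\Loc$ with base change (Lemma \ref{lemm: bc}), since $\omega_{\U,w,\Cp}^{\dg}$ is defined by literally the same formula with $\Xinf$ replaced by $\X_{\infty,\Cp}$ and base change is flat.

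For consequence (3), I would compute $R^i\nu_{\ast}$ of both sides of the ($\Cp$-version of the) displayed isomorphism, i.e. $R^i\nu_{\ast}(\lambda^{\ast}\omega_{\U,w,\Cp}^{\dg}\otimes_{\oo\ctens A_{\U}}(\wh{\oo}_{\X_{w,\Cp}}\ctens A_{\U}))$. Since $\omega_{\U,w,\Cp}^{\dg}$ is locally free of rank one over $\oo_{\X_{w,\Cp}}\ctens A_{\U}$, the projection formula reduces this to computing $R^i\nu_{\ast}(\wh{\oo}_{\X_{w,\Cp}}\ctens A_{\U}) = R^i\nu_{\ast}(\wh{\oo}_{\X_{w,\Cp}})\ctens A_{\U}$ (the completed tensor with $A_{\U}$ commutes with $R^i\nu_{\ast}$ by a Mittag-Leffler/finiteness argument exactly as in the proof of Proposition \ref{prop:diam}, since $A_{\U}\ctens(-)$ is computed as an inverse limit of tensoring with the finite quotients $R_\U/\mf a^k$ in the small case, and is exact in the affinoid case). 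Then one invokes Scholze's computation $\nu_{\ast}\wh{\oo}_{X_{\Cp}} = \oo_{X_{\Cp}}$, $R^1\nu_{\ast}\wh{\oo}_{X_{\Cp}} = \Omega^1_{X_{\Cp}}(-1)$, and $R^i\nu_{\ast}\wh{\oo}_{X_{\Cp}} = 0$ for $i\geq 2$ (this is \cite[Corollary 6.19, Proposition 3.23]{sch2} for $X$ smooth of dimension one, which $\X_{w,\Cp}$ is), tensor back with the line bundle $\omega_{\U,w,\Cp}^{\dg}$, and read off the three claimed formulas. The main obstacle I anticipate is purely bookkeeping: being careful that the unadorned $\ctens A_{\U}$ genuinely commutes with the relevant cohomology and inverse-limit operations, which in the small-weight case requires invoking the structure theory of profinite flat modules from the appendix rather than standard flat base change; I would handle this by citing the analogous argument already used in Proposition \ref{prop:diom}, \emph{mutatis mutandis} (and noting that $\wh{\omega}_{\U,w}^{\dg}$ is, by construction, an inverse limit of $(\mbf{D}$-free analogues of) sheaves to which Lemma \ref{handy} applies).
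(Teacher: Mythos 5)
Your treatment of the main displayed isomorphism and of consequences (1) and (2) is essentially the paper's own argument: the paper also works on a rational $U$ trivializing $\omega$, twists by $\chi_{\U}(t_U^{(n)})$ to identify both $\wh{\omega}_{\U,w}^{\dg}(V)$ and $\bigl(\nu^{\ast}\omega_{\U,w}^{\dg}\otimes_{\oo_{\Xw}\ctens A_{\U}}(\wh{\oo}_{\Xw}\ctens A_{\U})\bigr)(V)$ with invariants of $\wh{\oo}_{\Xw}(V_n)\ctens A_{\U}$ under the $j_{U,n}$-twisted $G_n$-action (two applications of Lemma \ref{lemm: commute}, plus Lemma \ref{newinv}/Lemma \ref{lemm: inv1} as you say), deduces (1) from transitivity of pullbacks, and gets (2) by evaluating these isomorphisms on the analytic site together with Lemma \ref{lemm: bc}. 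Your (2) is stated a bit too quickly (``defined by literally the same formula'' is the thing to be proved, not a proof), but the same finite-level descent computation carried out over $\Cp$ supplies it, so this is only a presentational point.

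The genuine gap is in (3), at the step where you commute $\ctens A_{\U}$ with $R^i\nu_{\ast}$: you claim $R^i\nu_{\ast}(\wh{\oo}_{\X_{w,\Cp}}\ctens A_{\U})\cong (R^i\nu_{\ast}\wh{\oo}_{\X_{w,\Cp}})\ctens A_{\U}$ follows ``by a Mittag-Leffler/finiteness argument exactly as in the proof of Proposition \ref{prop:diam}.'' That argument works because the quotients $\mbf{D}_{\U}^{s,\circ}/{\rm Fil}^{k}\mbf{D}_{\U}^{s,\circ}$ are finite abelian groups, so the coefficient sheaves are locally constant with finite stalks and the relevant cohomology groups are finite. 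Here the analogous quotient sheaves are of the form $\wh{\oo}^{+}\otimes_{\Zp}R_{\U}/\mf{a}_{\U}^{k}$, i.e. finite direct sums of sheaves $\wh{\oo}^{+}/p^{m}$: their cohomology on a basis is only \emph{almost} zero, the identification of their $R^i\nu_{\ast}$ with truncated differentials holds only up to bounded $p$-torsion, and one must also know that $H^i(U_{\proet},\wh{\oo}^{+})$ is $p$-adically complete before $\ctens$ can be applied exactly and commuted with the (infinite) products coming from a pseudobasis of $R_{\U}$. None of this is delivered by the Proposition \ref{prop:diam} mechanism. This is precisely why the paper isolates the statement as Proposition \ref{prop:comp-r} and Corollary \ref{coro:comp-r} in the appendix and proves it by a local computation on toric charts (Cartan--Leray for the $\Zpn$-cover, comparison of $H^i(U_{\proet},\wh{\oo}^{+})$ with continuous $\Zpn$-cohomology up to cokernel killed by $p$, completeness, and almost commutation of cohomology with products); the proof of (3) is then, as you propose, the projection formula applied to the isomorphism in (1), but with Corollary \ref{coro:comp-r} cited for the commutation. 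So your strategy for (3) is the right one, but as written that step does not go through; you should either invoke the appendix result or reproduce its proof rather than appeal to the argument of Proposition \ref{prop:diam}.
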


\begin{proof} The isomorphisms are proven by repeating much of the proof of Theorem \ref{theo:rankone} (including the preliminary lemmas). We focus on the $\Qp$-sheaves; the case of the $\Cp$-sheaves is identical. Let $V\in \X_{w,\proet}$ be an affinoid perfectoid with image $U$ in $\Xw$ and assume without loss of generality that $U$ is a rational subset of $\Xw$ and that $\omega|_{U}$ is trivial. We use the notation of the proof of Theorem \ref{theo:rankone} freely. Arguing in the same way we obtain isomorphisms
$$ \wh{\omega}_{\U,w}^{\dg}(V) \cong (\wh{\oo}_{\Xw}(V_{n}) \ctens A_{\U})^{G_{n}} $$
and
$$ (\nu^{\ast}\omega_{\U,w}^{\dg}\otimes_{\oo_{\Xw}\ctens A_{\U}} (\wh{\oo}_{\Xw}\ctens A_{\U}))(V) \cong (\wh{\oo}_{\Xw}(U_{n}) \ctens A_{\U})^{G_{n}} \otimes_{\wh{\oo}_{\Xw}(U) \ctens A_{\U}}(\wh{\oo}_{\Xw}(V) \ctens A_{\U}) $$
where $V_{n}= U_{n} \times_{U} V$ and $G_{n}$ acts by the twisted action $f_{0} \mapsto \chi_{\U}(j_{U,n}(\gamma))\gamma^{\ast}f_{0}$. Applying Lemma \ref{lemm: commute} twice one obtains a $G_{n}$-equivariant isomorphism
$$ \wh{\oo}_{\Xw}(V_{n}) \ctens A_{\U} \cong (\wh{\oo}_{\Xw}(U_{n}) \ctens A_{\U}) \otimes_{\wh{\oo}_{\Xw}(U) \ctens A_{\U}}(\wh{\oo}_{\Xw}(V) \ctens A_{\U}). $$
Taking invariants we obtain we desired isomorphism.

\medskip

\noindent Assertion (1) then follows by transitivity of pullbacks. To prove (2), first evaluate the isomorphism for $\wh{\omega}_{\U,w}^{\dg}$ on $U\sub \X_{w,\Cp}$ to see that $\wh{\omega}_{\U,w,\Cp}^{\dg}$ restricted to the analytic site of $\X_{w,\Cp}$ is the base change of $\omega_{\U,w}^{\dg}$, and then restrict the second isomorphism to the analytic site of $\X_{w,\Cp}$ to get the first part (2). The second then follows from Lemma \ref{lemm: bc}. 

\medskip

\noindent To prove (3) we apply the projection formula to the isomorphism in (1) to get
$$R^i\nu_{\ast}\wh{\omega}_{\U,w,\Cp}^\dagger \cong \omega_{\U,w,\Cp}^\dagger \otimes_{(\oo_{\mc{X}_{w,\Cp}} \ctens A_{\U})} R^i \nu_{\ast} (\wh{\oo}_{\mc{X}_{w,\Cp}} \ctens A_{\U}),$$
and the result then follows from Corollary \ref{coro:comp-r}.
\end{proof}

\begin{coro} \label{cor:leray} The Leray spectral sequence 
$$H^i_{\et}(\X_{w,\Cp},R^j\nu_{\ast}\wh{\omega}_{\U,w,\Cp}^{\dg}) \Rightarrow H^{i+j}_{\proet}(\X_{w,\Cp},\wh{\omega}_{\U,w,\Cp}^{\dg})$$ 
induces a canonical Hecke- and Galois-equivariant isomorphism  
$$ H^{1}_{\proet}(\X_{w,\Cp},\wh{\omega}_{\U,w,\Cp}^{\dg}) \cong H^0(\mc{X}_{w,\Cp}, \omega_{\U,w,\Cp}^{\dg} \otimes_{\oo_{\mc{X}_{w,\Cp}}}\Omega^1_{\mc{X}_{w,\Cp}})(-1)$$
\end{coro}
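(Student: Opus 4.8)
The plan is to read the result directly off the Leray spectral sequence, exploiting that by Proposition \ref{prop:omega}(3) its $E_2$-page has only the two rows $j=0,1$. Write $\mc{G}=\omega_{\U,w,\Cp}^{\dg}$ for short, regarded as a sheaf on $\X_{w,\et}$ via the abuse of notation in Proposition \ref{prop:omega}(1). Then $R^0\nu_{\ast}\wh{\omega}_{\U,w,\Cp}^{\dg}=\mc{G}$, $R^1\nu_{\ast}\wh{\omega}_{\U,w,\Cp}^{\dg}=\mc{G}\otimes_{\oo_{\X_{w,\Cp}}}\Omega^1_{\X_{w,\Cp}}(-1)$, and $R^i\nu_{\ast}\wh{\omega}_{\U,w,\Cp}^{\dg}=0$ for $i\geq 2$, so the spectral sequence collapses to the standard exact sequence attached to a two-row spectral sequence
$$0 \to H^1_{\et}(\X_{w,\Cp},\mc{G}) \to H^1_{\proet}(\X_{w,\Cp},\wh{\omega}_{\U,w,\Cp}^{\dg}) \to H^0_{\et}(\X_{w,\Cp},\mc{G}\otimes_{\oo_{\X_{w,\Cp}}}\Omega^1_{\X_{w,\Cp}}(-1)) \overset{d_2}{\longrightarrow} H^2_{\et}(\X_{w,\Cp},\mc{G}).$$
Thus it suffices to prove $H^1_{\et}(\X_{w,\Cp},\mc{G})=H^2_{\et}(\X_{w,\Cp},\mc{G})=0$, after which $d_2=0$ and $H^1_{\proet}(\X_{w,\Cp},\wh{\omega}_{\U,w,\Cp}^{\dg})\cong H^0_{\et}(\X_{w,\Cp},\mc{G}\otimes_{\oo_{\X_{w,\Cp}}}\Omega^1_{\X_{w,\Cp}}(-1))$; it then only remains to identify the right-hand side with $H^0(\X_{w,\Cp},\omega_{\U,w,\Cp}^{\dg}\otimes_{\oo_{\X_{w,\Cp}}}\Omega^1_{\X_{w,\Cp}})(-1)$.

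For the vanishing, recall that $\X_{w,\Cp}$ is affinoid: $\Xw$ is affinoid by Theorem \ref{th1}(1), and affinoidness is preserved under base change from $\Qp$ to $\Cp$. By Proposition \ref{prop:omega}(2) and Theorem \ref{theo:rankone}, $\omega_{\U,w,\Cp}^{\dg}=\Loc(M)$ with $M$ a finite projective $\oo_{\X_{w,\Cp}}(\X_{w,\Cp})\ctens A_{\U}$-module of rank one, and the same holds for $\mc{G}\otimes_{\oo_{\X_{w,\Cp}}}\Omega^1_{\X_{w,\Cp}}$ since $\Omega^1_{\X_{w,\Cp}}$ is a line bundle. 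Covering $\X_{w,\Cp}$ by rational subsets over which $\mc{G}$ is a direct summand of a finite free $\oo_{\X_{w,\Cp}}\ctens A_{\U}$-module, the required acyclicity reduces to that of $\oo_{\X_{w,\Cp}}\ctens A_{\U}$ itself, which on the analytic site is (the relevant special case of) Proposition \ref{prop:loc}; the passage to the \'etale site uses the comparison $H^i_{\et}(\X_{w,\Cp},-)\cong H^i_{\mathrm{an}}(\X_{w,\Cp},-)$ for such (locally coherent) sheaves, which follows from the $R\eta_{\ast}$-acyclicity of \'etale-ified coherent sheaves on affinoids applied locally. Since $\X_{w,\Cp}$ is affinoid this kills all positive-degree cohomology, so in particular $H^1_{\et}(\X_{w,\Cp},\mc{G})=H^2_{\et}(\X_{w,\Cp},\mc{G})=0$. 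The same comparison in degree zero — equivalently the statement of Proposition \ref{prop:omega}(1) that $\nu_{\ast}$ of the \'etale-ified sheaf is the analytic one — identifies $H^0_{\et}(\X_{w,\Cp},\mc{G}\otimes_{\oo_{\X_{w,\Cp}}}\Omega^1_{\X_{w,\Cp}}(-1))$ with $H^0(\X_{w,\Cp},\omega_{\U,w,\Cp}^{\dg}\otimes_{\oo_{\X_{w,\Cp}}}\Omega^1_{\X_{w,\Cp}})(-1)$, the Tate twist being merely a twist of coefficients that may be pulled out of $H^0$.

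Finally, Hecke- and Galois-equivariance come for free from functoriality of the Leray spectral sequence: the Hecke correspondences act compatibly on $\X_{w,\proet}$ and $\X_{w,\et}$ and commute with $\nu$ (this is implicit in Propositions \ref{prop:hecke} and \ref{prop:omega}, and rests on $g^{\ast}\mf{z}=\mf{z}$), while the $G_{\Qp}$-action is the one coming from the description of $\X_{w,\Cp}$ as a pro-\'etale object of $\X_w$ (base change from $\Qp$ to $\Cp$), which acts on every term of the sequence; the twist $(-1)$ is intrinsic to $R^1\nu_{\ast}$ of a sheaf built out of $\wh{\oo}^{+}$, and records precisely this Galois action on the $\wh{\oo}$-cohomology. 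I expect the only genuine obstacle — and it is a mild one — to be the vanishing $H^{\geq 1}_{\et}(\X_{w,\Cp},\omega_{\U,w,\Cp}^{\dg})=0$, i.e. transporting Tate acyclicity for coherent sheaves on the affinoid $\X_{w,\Cp}$ both to the slightly exotic sheaves of $\oo_{\X_{w,\Cp}}\ctens A_{\U}$-modules involved and from the analytic to the \'etale topology; everything else is the formal collapse of a two-row spectral sequence.
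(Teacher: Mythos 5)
Your main isomorphism is proved correctly and by essentially the paper's own route: Proposition \ref{prop:omega}(3) makes the Leray spectral sequence two-row, and the collapse is forced by the vanishing of positive-degree \'etale cohomology of the locally projective $\oo_{\X_{w,\Cp}}\ctens A_{\U}$-modules $R^{j}\nu_{\ast}\wh{\omega}_{\U,w,\Cp}^{\dg}$ on the affinoid $\X_{w,\Cp}$; the paper quotes the \'etale version of Proposition \ref{prop:thmB} (cf.\ the remark after Proposition \ref{prop:loc}), which is exactly the acyclicity you reduce to via $\Loc$, Theorem \ref{theo:rankone} and Proposition \ref{prop:omega}(2). Your edge-sequence packaging and the paper's ``$E_2$ concentrated in degree $i=0$'' packaging are the same computation, and the degree-zero identification with $H^0(\X_{w,\Cp},\omega_{\U,w,\Cp}^{\dg}\otimes\Omega^1_{\X_{w,\Cp}})(-1)$ is again Proposition \ref{prop:omega}(3).

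Where you are too quick is the equivariance, which is where the paper spends most of its proof. Galois-equivariance and the pullback half of the Hecke correspondences are indeed formal, but the Hecke operators at $\ell\neq p$ (and similarly at $p$) also involve a trace along a finite \'etale projection $\pi$, and for that half ``functoriality of the Leray spectral sequence'' is not an argument. One must check, first, that a trace $t:\pi_{\proet,\ast}\mc{F}\to\mc{G}$ induces compatible maps on pro-\'etale cohomology, on the sheaves $R^{j}\nu_{\ast}$, and on the two spectral sequences — this is Lemma \ref{proetaletrace}, resting on exactness of $\pi_{\proet,\ast}$ and the identification $\pi_{\et,\ast}R\nu_{\ast}\cong R\nu_{\ast}\pi_{\proet,\ast}$ — and, more seriously, that Scholze's isomorphism $R^{1}\nu_{\ast}\wh{\oo}\cong\Omega^{1}(-1)$ intertwines the completed trace on $\wh{\oo}$ with $1\otimes\mathrm{tr}$ on $\Omega^{1}\otimes\pi_{\ast}\oo$. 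The latter is Lemma \ref{differentialtrace}, proved by re-entering the construction of that isomorphism (the Kummer-sequence argument in \cite[Lemma 3.24]{sch1}); it is a genuine verification, not formal functoriality. Your sentence that the Hecke correspondences ``commute with $\nu$'' is precisely the statement that has to be proved at the level of the identification $R^{1}\nu_{\ast}\wh{\omega}_{\U,w,\Cp}^{\dg}\cong\omega_{\U,w,\Cp}^{\dg}\otimes\Omega^{1}(-1)$, and as written it is an assertion rather than a proof; this is the one real gap in the proposal.
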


\begin{proof}
By Proposition \ref{prop:omega} $R^j\nu_{\ast}\wh{\omega}_{\U,w,\Cp}^{\dg}$ is a locally projective $\oo_{\X_{w,\Cp}} \ctens A_{\U}$-module for all $j$ and hence has no higher cohomology by (the \'etale version of) Proposition \ref{prop:thmB}. Therefore the Leray spectral sequence degenerates at the $E_{2}$-page, giving us isomorphisms
$$ H_{\et}^{0}(\X_{w,\Cp}, R^{j}\nu_{\ast}\wh{\omega}_{\U,w,\Cp}^{\dg}) \cong H^{j}_{\proet}(\X_{w,\Cp}, \wh{\omega}_{\U,w,\Cp}^\dagger) $$
for all $j$. Now apply Proposition \ref{prop:omega} again.  

\medskip

For Hecke-equivariance we focus on the Hecke operators away from $p$; the Hecke equivariance at $p$ follows by the same arguments but is notationally slightly different. We will only check compatibility for the trace maps (compatibility for pullbacks follow from functoriality), and we put ourselves in the situation of Proposition \ref{prop:hecke}. By the first isomorphisms in Proposition \ref{prop:omega}, the trace map
$t$ of Proposition \ref{prop:hecke}(2) induces a trace map
\[
\widehat{t}:f_{\proet,\ast}\widehat{\omega}_{\mathcal{U},w,\mathbb{C}_{p},K_{2}^{p}}^{\dagger}\to\widehat{\omega}_{\mathcal{U},w,\mathbb{C}_{p},K_{1}^{p}}^{\dagger}.
\]
We need to check that the diagram
\[
\xymatrix{H_{\proet}^{1}(\mathcal{X}_{w,\mathbb{C}_{p},K_{2}^{p}},\widehat{\omega}_{\mathcal{U},w,\mathbb{C}_{p},K_{2}^{p}}^{\dagger})\ar[r]\ar[d]_{\widehat{t}} & H^{0}(\mathcal{X}_{w,\mathbb{C}_{p},K_{2}^{p}},\omega_{\mathcal{U},w,\mathbb{C}_{p},K_{2}^{p}}^{\dagger}\otimes\Omega^{1})(-1)\ar[d]\\
H_{\proet}^{1}(\mathcal{X}_{w,\mathbb{C}_{p},K_{1}^{p}},\widehat{\omega}_{\mathcal{U},w,\mathbb{C}_{p},K_{1}^{p}}^{\dagger})\ar[r] & H^{0}(\mathcal{X}_{w,\mathbb{C}_{p},K_{1}^{p}},\omega_{\mathcal{U},w,\mathbb{C}_{p},K_{1}^{p}}^{\dagger}\otimes\Omega^{1})(-1)
}
\]
commutes, where the left hand vertical arrow is induced by $\hat{t}$
as in Lemma \ref{proetaletrace}(2) and the right hand vertical arrow is induced by $t$
in the obvious way. We do this in two steps. For the first step, we
note that Lemma \ref{proetaletrace}(3) gives a commutative diagram
\[
\xymatrix{H_{\proet}^{1}(\mathcal{X}_{w,\mathbb{C}_{p},K_{2}^{p}},\widehat{\omega}_{\mathcal{U},w,\mathbb{C}_{p},K_{2}^{p}}^{\dagger})\ar[r]\ar[d]_{\hat{t}} & H^{0}(\mathcal{X}_{w,\mathbb{C}_{p},K_{2}^{p}},R^{1}\nu_{\ast}\widehat{\omega}_{\mathcal{U},w,\mathbb{C}_{p},K_{2}^{p}}^{\dagger})\ar[d]^{R^{1}\nu_{\ast}\hat{t}}\\
H_{\proet}^{1}(\mathcal{X}_{w,\mathbb{C}_{p},K_{1}^{p}},\widehat{\omega}_{\mathcal{U},w,\mathbb{C}_{p},K_{1}^{p}}^{\dagger})\ar[r] & H^{0}(\mathcal{X}_{w,\mathbb{C}_{p},K_{1}^{p}},R^{1}\nu_{\ast}\widehat{\omega}_{\mathcal{U},w,\mathbb{C}_{p},K_{1}^{p}}^{\dagger}).
}
\]
For the second step, we claim that the diagram
\[
\xymatrix{R^{1}\nu_{\ast}\widehat{\omega}_{\mathcal{U},w,\mathbb{C}_{p},K_{2}^{p}}^{\dagger}\ar[r]\ar[d]_{R^{1}\nu_{\ast}\hat{t}} & \omega_{\mathcal{U},w,\mathbb{C}_{p},K_{2}^{p}}^{\dagger}\otimes\Omega^{1}(-1)\ar[d]^{t}\\
R^{1}\nu_{\ast}\widehat{\omega}_{\mathcal{U},w,\mathbb{C}_{p},K_{1}^{p}}^{\dagger}\ar[r] & \omega_{\mathcal{U},w,\mathbb{C}_{p},K_{1}^{p}}^{\dagger}\otimes\Omega^{1}(-1)
}
\]
commutes, where the horizontal arrows are the isomorphisms of Proposition
\ref{prop:omega}(3); this follows upon combining the first isomorphism of Proposition
\ref{prop:omega}, the pullback-pushforward adjunction of $\nu$, and Lemma \ref{differentialtrace}.
\end{proof}

\begin{lemm}\label{proetaletrace}Let $f: X \to Y$ be a finite \'etale morphism of rigid analytic varieties.
\begin{enumerate}

\item The functor $f_{\proet,\ast}$ is exact.

\item Suppose we are given abelian sheaves $\mc{F}$ and $\mc{G}$ on $X_{\proet}$ and $Y_{\proet}$, respectively, together with a ``trace" map $t: f_{\proet, \ast} \mc{F} \to \mc{G}$ of abelian sheaves on $Y_{\proet}$.  Then $t$ induces a canonical map $t: H^n_{\proet}(X,\mc{F}) \to H^n_{\proet}(Y,\mc{G})$ together with derived trace maps $R^i\nu_{\ast} t: f_{\et, \ast}R^i\nu_{X,\ast}\mc{F} \to R^i\nu_{Y,\ast}\mc{G}$ of abelian sheaves on $Y_{\et}$. 
 
\item The maps in (2) are compatible with the Grothendieck spectral sequences for $R\Gamma_{\proet} = R\Gamma_{\et} \circ R\nu_{\ast}$, i.e. the maps $t$ and $R^i \nu_{\ast} t$ fit into compatible morphisms
\[
\xymatrix{H_{\et}^{i}(X,R^{j}\nu_{X,\ast}\mathcal{F})\ar@{=>}[r]\ar[d]_{R^{j}\nu_{\ast}t} & H_{\proet}^{i+j}(X,\mathcal{F})\ar[d]^{t}\\
H_{\et}^{i}(Y,R^{j}\nu_{Y,\ast}\mathcal{G})\ar@{=>}[r] & H_{\proet}^{i+j}(Y,\mathcal{G})}
\]
of pro-\'etale cohomology groups and of the spectral sequences computing them.
\end{enumerate}
\end{lemm}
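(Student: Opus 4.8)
The plan is to prove the three parts of Lemma \ref{proetaletrace} in order, each reducing to a standard fact about finite \'etale morphisms.

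For part (1), the point is that a finite \'etale morphism $f\colon X \to Y$ of rigid analytic varieties induces, via pullback, a morphism of sites $f_{\proet}\colon X_{\proet} \to Y_{\proet}$ which is in fact ``finite'' in a suitable sense: locally on $Y_{\proet}$ the cover $X \times_Y (-)$ splits as a finite disjoint union of copies of the base. Concretely, I would argue that every point of $Y_{\proet}$ has a (cofinal system of) neighborhood(s) $V$ over which $X\times_Y V \cong \coprod_{i=1}^{d} V$; such $V$ exist because $X \to Y$ becomes split after a finite \'etale base change $Y' \to Y$, and pulling back along $Y' \to Y$ (an object of $Y_{\proet}$) one gets the splitting. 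Over such $V$ the functor $f_{\proet,\ast}$ is just $\mc{F} \mapsto \bigoplus_{i=1}^d \mc{F}|_{V_i}$, which is visibly exact; since exactness of a functor between sheaf categories can be checked on stalks (equivalently, on a basis of the site), this gives (1). Alternatively one can simply note that $f_{\proet}$ has no higher direct images because it does on a basis, which is the same computation.

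For part (2), given the trace map $t\colon f_{\proet,\ast}\mc{F} \to \mc{G}$, one gets a map on global sections and hence, composing with the edge maps of the Grothendieck spectral sequence $R\Gamma_{\proet}(X,-) = R\Gamma_{\proet}(Y, Rf_{\proet,\ast}(-))$ which by part (1) collapses to $R\Gamma_{\proet}(Y, f_{\proet,\ast}(-))$, a map $H^n_{\proet}(X,\mc{F}) = H^n_{\proet}(Y, f_{\proet,\ast}\mc{F}) \overset{t}{\to} H^n_{\proet}(Y,\mc{G})$. For the derived trace maps on the \'etale site, I would use the commuting square of morphisms of sites relating $\nu_X, \nu_Y, f_{\proet}, f_{\et}$ (i.e. $\nu_Y \circ f_{\proet} = f_{\et} \circ \nu_X$), together with the exactness from (1), to obtain a base-change isomorphism $f_{\et,\ast} R^i\nu_{X,\ast}\mc{F} \cong R^i\nu_{Y,\ast}(f_{\proet,\ast}\mc{F})$; composing $R^i\nu_{Y,\ast}$ applied to $t$ with this isomorphism yields $R^i\nu_{\ast}t\colon f_{\et,\ast}R^i\nu_{X,\ast}\mc{F} \to R^i\nu_{Y,\ast}\mc{G}$.

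For part (3), the compatibility is essentially formal once the identifications of (2) are in place: both Grothendieck spectral sequences ($R\Gamma_{\proet} = R\Gamma_{\et}\circ R\nu_{\ast}$ on $X$ and on $Y$) are functorial in the sheaf and, via the exactness of $f_{\proet,\ast}$ and $f_{\et,\ast}$ (the latter being exact for finite morphisms of \'etale sites), one has a morphism of spectral sequences induced by the natural transformation $R\Gamma_{\et}(X,R\nu_{X,\ast}(-)) = R\Gamma_{\et}(Y, f_{\et,\ast}R\nu_{X,\ast}(-)) \cong R\Gamma_{\et}(Y,R\nu_{Y,\ast}f_{\proet,\ast}(-)) \to R\Gamma_{\et}(Y,R\nu_{Y,\ast}\mc{G})$, where the last arrow is $t$. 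Unwinding, the $E_2$-terms map by $R^j\nu_\ast t$ (after taking $H^i_{\et}$) and the abutments map by the $t$ of (2), which is exactly the claimed commuting diagram. The main obstacle is bookkeeping: making sure the various morphism-of-sites identities hold at the level of \emph{sites} (not just topoi) and that the collapse of the Leray/Grothendieck spectral sequence for $f$ in part (1) is compatible with everything; the actual mathematical content is light, resting entirely on the local splitting of finite \'etale covers in the pro-\'etale topology.
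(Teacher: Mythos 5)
Your proposal is correct and follows essentially the same route as the paper: exactness of $f_{\proet,\ast}$ is checked after an \'etale base change splitting $f$, the map on $H^n_{\proet}$ comes from the collapsed Leray spectral sequence composed with $t$, the derived trace comes from the identification $f_{\et,\ast}R\nu_{X,\ast}\cong R\nu_{Y,\ast}f_{\proet,\ast}$ (using $\nu_Y\circ f_{\proet}=f_{\et}\circ\nu_X$ and exactness of both pushforwards), and (3) is the formal compatibility obtained by applying $R\Gamma_{Y,\et}$. The only cosmetic difference is that the paper verifies the underived identity $f_{\et,\ast}\nu_{X,\ast}\cong\nu_{Y,\ast}f_{\proet,\ast}$ by a direct computation on the presheaf $U\mapsto\mc{F}(U\times_Y X)$ before deriving, whereas you invoke it as a base-change isomorphism; this does not change the substance.
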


\begin{proof} To prove right-exactness of $f_{\proet,\ast}$ we need to check that it preserves surjections. For this we may work \'etale locally on $Y$, so we may pick an \'etale cover which splits $f$ and the assertion is then trivial. 

\medskip

\noindent For (2) and (3), we argue as follows.  By (1), the Leray spectral sequence gives an isomorphism $H^n_{\proet}(X,\mc{F}) \cong H^n_{\proet}(Y,f_{\proet,\ast}\mc{F})$, so composing this with the evident map $H^n_{\proet}(Y,f_{\proet,\ast}\mc{F})\to H^n_{\proet}(Y,\mc{G})$ gives the claimed map on pro-\'etale cohomology.  On the other hand, a direct calculation gives a natural isomorphism $f_{\et,\ast} \nu_{X,\ast} \cong \nu_{Y,\ast}f_{\proet,\ast}$ as functors $\mathrm{Sh}(X_{\proet}) \to \mathrm{Sh}(Y_{\et})$. Indeed, both functors send a sheaf $\mc{F}$ to the sheaf associated with the presheaf $U \mapsto \mc{F}(U \times_{Y} X)$ (where $U \times_{Y} X$ is regarded as an element of $X_{\proet}$). Since all these functors preserve injectives, we may pass to total derived functors, getting $Rf_{\et,\ast} R\nu_{X,\ast} \cong R\nu_{Y,\ast} Rf_{\proet,\ast}$ as functors $D^{+}(\mathrm{Sh}(X_{\proet})) \to D^+(\mathrm{Sh}(Y_{\et}))$. Since $f_{\proet,\ast}$ and $f_{\et,\ast}$ are both exact functors, this becomes $f_{\et,\ast} R \nu_{X,\ast} \cong R \nu_{Y,\ast}f_{\proet,\ast}$.  Applying this to $\mc{F}$ and composing with the evident map $R \nu_{Y,\ast}f_{\proet,\ast} \mc{F} \to R \nu_{Y,\ast}\mc{G}$ induced by $t$ gives a map $R\nu_{\ast} t: f_{\et, \ast}R\nu_{X,\ast}\mc{F} \to R\nu_{Y,\ast}\mc{G}$, and we obtain the maps $R^i\nu_{\ast} t$ upon passing to cohomology sheaves.  Finally, (3) follows by applying $R\Gamma_{Y,\et}$ to the map $R\nu_{\ast}t$, in combination with the isomorphism $R\Gamma_{X, \et} R\nu_{X,\ast} \cong R\Gamma_{Y,\et} f_{\et,\ast} R\nu_{X,\ast}$.
\end{proof}

\begin{lemm}\label{differentialtrace}Maintain the setup of the previous lemma, and suppose $X$ and $Y$ are smooth and are defined over a complete algebraically closed extension $C / \Qp$. Take $\mc{F}=\wh{\mc{O}}_X$ and $\mc{G}=\wh{\mc{O}}_Y$, and let $t$ be the natural $\wh{\mc{O}}_Y$-linear trace map $\mathrm{\wh{tr}}: f_{\proet,\ast} \wh{\mc{O}}_X \to \wh{\mc{O}}_Y$. Then $R^i\nu_{\ast} \mathrm{\wh{tr}}: f_{\et, \ast}R^i\nu_{X,\ast}\wh{\mc{O}}_X \to R^i\nu_{Y,\ast}\wh{\mc{O}}_Y$ coincides with the composite map 
\[ f_{\et, \ast}R^i\nu_{X,\ast}\wh{\mc{O}}_X \cong f_{\et,\ast} \Omega^i_{X}(-i) \cong \Omega^i_{Y}(-i) \otimes_{\mc{O}_Y} f_{\et,\ast}\mc{O}_X \overset{1 \otimes \mathrm{tr}}{\longrightarrow} \Omega^i_{Y}(-i) \cong R^i\nu_{Y,\ast}\wh{\mc{O}}_Y.\]
\end{lemm}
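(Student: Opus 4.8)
The strategy is to reduce everything to the Faltings--Scholze description of $R^i\nu_{\ast}\wh{\mathcal O}_X$ and then check compatibility of the trace map with the relevant functorial isomorphisms. First I would recall the source of the isomorphism $R^i\nu_{X,\ast}\wh{\mathcal O}_X \cong \Omega^i_X(-i)$: by \cite[Proposition 3.23]{sch2} (and its proof), for $X$ smooth over $C$ this isomorphism is characterized locally in terms of a choice of \'etale coordinates, via the cup product on $H^1_{\proet}$ of the structure sheaf of a torus. The key functoriality input is that this isomorphism is compatible with \'etale pullback, so that for a finite \'etale $f: X\to Y$ we get a commutative square relating $f^{\ast}R^i\nu_{Y,\ast}\wh{\mathcal O}_Y$, $R^i\nu_{X,\ast}\wh{\mathcal O}_X$, $f^{\ast}\Omega^i_Y(-i)$ and $\Omega^i_X(-i)$ (here one uses that $\Omega^i_X \cong f^{\ast}\Omega^i_Y$ since $f$ is \'etale). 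This reduces the claim to a statement purely about the Faltings trace.

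Next I would make the identification of $R^i\nu_{\ast}\wh{\tr}$ concrete. By Lemma \ref{proetaletrace}(2), $R^i\nu_{\ast}\wh{\tr}$ is obtained by applying $R\nu_{Y,\ast}$ to the composite $R\nu_{Y,\ast}f_{\proet,\ast}\wh{\mathcal O}_X \cong f_{\et,\ast}R\nu_{X,\ast}\wh{\mathcal O}_X \to R\nu_{Y,\ast}\wh{\mathcal O}_Y$ induced by $\wh{\tr}$, and passing to cohomology sheaves. Since everything in sight is $\wh{\mathcal O}_Y$-linear (resp.\ $\mathcal O_Y$-linear after taking $R^i\nu_{\ast}$) and since the formation of $R^i\nu_{\ast}$ commutes with the finite \'etale pushforward $f_{\et,\ast}$ (exactness of $f_{\et,\ast}$), we may work \'etale-locally on $Y$. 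Over a small enough affinoid $\Spa(S,S^+)\subset Y$ admitting \'etale coordinates, $f$ splits into a disjoint union of copies of the base, and $\wh{\tr}$ is literally the sum-over-sheets map; under the coordinate description of $R^i\nu_{\ast}\wh{\mathcal O}$ this visibly corresponds to the sum-over-sheets map on $\Omega^i$, i.e.\ to $1\otimes\tr$. The only subtlety is to handle a general finite \'etale $f$ which need not split over a Zariski-dense open; but since $f_{\et,\ast}$, $R^i\nu_{\ast}$ and all the coordinate isomorphisms glue, and since $f$ does split after a further finite \'etale base change on $Y$ — over which both maps in question are pulled back from $Y$ by the established \'etale-functoriality — descent along that cover finishes the argument.

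The main obstacle I anticipate is bookkeeping rather than conceptual: one must pin down precisely which isomorphism $R^i\nu_{\ast}\wh{\mathcal O}_X\cong\Omega^i_X(-i)$ is being used (the one coming from the Faltings comparison / the Hodge--Tate decomposition as in \cite{sch2}) and verify that it is natural for finite \'etale morphisms, including the correct behaviour of the Tate twist, so that ``sum over sheets'' on the pro-\'etale side is transported to $1\otimes\tr$ on the differentials side without a sign or normalization discrepancy. Once the \'etale-functoriality of the comparison isomorphism is granted — which is standard, e.g.\ from the construction via \'etale-local coordinates — the verification is a local computation with a split \'etale cover, followed by a routine gluing/descent argument.
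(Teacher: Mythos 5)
Your plan is essentially correct, but it takes a genuinely different route from the paper. The paper's proof stays inside Scholze's explicit construction of the comparison isomorphism: it takes the Kummer-type sequence $0 \to \hat{\mb{Z}}_p(1) \to \varprojlim_{\times p}\mc{O}^{\times} \to \mc{O}^{\times} \to 0$ on $X$ and on $Y$, uses the norm map $f_{\proet,\ast}S_X \to S_Y$ to relate them, fits the two diagrams of \cite[Lemma 3.24]{sch1} into a commutative cube whose chase gives the case $i=1$, and then deduces general $i$ by taking exterior products as in \cite[Proposition 3.23]{sch1}. You instead exploit that both maps to be compared live on $Y_{\et}$ and are compatible with \'etale localization: granting the \'etale-naturality of $R^i\nu_{\ast}\wh{\mc{O}} \cong \Omega^i(-i)$ (which indeed follows from the dlog construction) and the base-change compatibility of $f_{\et,\ast}$, $f_{\proet,\ast}$, $R\nu_{\ast}$ and of both traces, you reduce to a finite \'etale cover $Y' \to Y$ over which $f$ splits, where both maps are visibly the sum-over-sheets map. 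This buys you a uniform treatment of all $i$ at once (no cup-product/exterior-power step) and replaces the cube chase by routine functoriality checks, at the cost of having to spell out those base-change compatibilities carefully; the paper's route avoids the descent bookkeeping but requires constructing the norm map on the Kummer sequences and verifying the cube, plus compatibility with products for $i>1$. One inaccuracy to fix in your write-up: a finite \'etale cover of a rigid space over $C$ need \emph{not} split over small affinoids (or Zariski-locally) of $Y$, so the sentence claiming a splitting over a small affinoid with \'etale coordinates is false as stated; however, your subsequent repair --- checking the equality of the two morphisms of \'etale sheaves after pullback along a finite \'etale surjection $Y' \to Y$ that splits $f$ (which always exists, e.g.\ by iterating the diagonal trick), using that such a surjection is an \'etale cover --- is exactly the right fix, and with it the argument goes through.
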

\begin{proof}This follows from a careful reading of the proof of Lemma 3.24 of \cite{sch1}. Let us give a very brief outline of the argument. One considers a short exact sequence $S_X$
$$0 \ra \hat{\mb{Z}}_p(1) \ra \varprojlim _{\times p } \mc{O}_X ^{\times} \ra \mc{O} _X ^{\times} \ra 0$$
as in Lemma 3.24 of \cite{sch1}. We have also an analogous short exact sequence $S_Y$ on $Y$ and a norm map $f_{\proet,\ast} S_X \ra S_Y$ on $Y_{\proet}$. To both $S_X$ and $S_Y$ we can associate commutative diagrams as in \cite[Lemma 3.24]{sch1} and one shows that they fit together into a commutative cube. Chasing through this cube gives us a lemma for $i=1$. The general case follows from this as in \cite[Proposition 3.23]{sch1} by taking exterior products.
\end{proof}

\subsection{The overconvergent Eichler-Shimura map}

Working with the pro-\'etale site of $\Xw$ has the advantage of allowing us to define the overconvergent Eichler-Shimura map at the level of sheaves explicitly, using the cover $\Xinfw$ and the fundamental period $\mf{z}$. 

\medskip

Fix a \emph{small} weight $\U$. The following proposition is the key step in the construction of the overconvergent Eichler-Shimura map.

\begin{prop}\label{prop:delta} Let $w\geq 1+s_{\U}$ and $s\geq 1+s_{\U}$. Let $V_{\infty} \in \X_{w,\proet}/\Xinfw$
be qcqs. We have a map 
$$ \beta_{\U}:\mbf{D}_{\U}^{s,\circ}\ctens \wh{\oo}_{\Xw}(V_{\infty}) \ra R_{\U} \ctens \wh{\oo}_{\Xw}(V_{\infty}) $$
defined on pure tensors by $\beta_{\U}:\mu \otimes f \mapsto \mu(\chi_{\U}(1+\mf{z}x))f$. Here we use Proposition \ref{prop-extend}, with $B=\wh{\oo}_{\Xw}(V_{\infty})$, to be able to apply $\chi_{\U}$ to $1+\mf{z}x$ for any $x\in \Zp$. This defines a morphism of sheaves on $\X_{w,\proet}/\Xinfw$. $\beta_{\U}$ satisfies the equivariance relation
$$ \beta _{\U}(\gamma^{\ast} h)=\chi_{\U}(b\mf{z}+d)\gamma^{\ast}\beta _{\U}(h) $$
for any $\gamma \in \Delta_{0}(p)$, $V_{\infty}\in \X_{w,\proet}$ qcqs and $h\in \mbf{D}_{\U}^{s,\circ} \ctens \wh{\oo}_{\Xw}(\gamma^{-1}V_{\infty})$.
In particular, pushing forward to $\X_{w,\proet}$ and passing to $K_{0}(p)$-invariants, $\beta_{\U}$ induces an $R_{\U}$-linear and Hecke equivariant map 
$$ \delta_{\U}:\oo\mbf{D}_{\U}^{s} \ra \wh{\omega}_{\U,w}^{\dg} $$
of abelian sheaves on $\X_{w,\proet}$.
\end{prop}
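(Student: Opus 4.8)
The plan is to establish, in order, that $\beta_{\U}$ is well-defined, that it is a morphism of sheaves, the equivariance relation, and the passage to $K_{0}(p)$-invariants. For well-definedness the crucial point is that $x\mapsto\chi_{\U}(1+\mf{z}x)$ defines an element of $\mbf{A}_{\U}^{s,\circ}\ctens\wh{\oo}_{\Xw}^{+}(V_{\infty})\subset\mbf{A}_{\U}^{s}\ctens\wh{\oo}_{\Xw}(V_{\infty})$; granting this, $\beta_{\U}$ is the $\wh{\oo}_{\Xw}(V_{\infty})$-bilinear extension of $\mu\otimes f\mapsto(\mu\ctens\mathrm{id})(\chi_{\U}(1+\mf{z}x))\cdot f$, where $\mu\in\mbf{D}_{\U}^{s,\circ}$ is applied in the evident way, and compatibility of $\beta_{\U}$ with restriction maps is immediate since $\mf{z}$ and the formula are compatible with restriction; this gives a morphism of sheaves on $\X_{w,\proet}/\Xinfw$. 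To prove the crucial point I would use that $\mf{z}\in H^{0}(\Xinfw,p\Zp+p^{w}\oo_{\Xinf}^{+})$, so $|\mf{z}|\leq|p|$, and treat one residue class $a+p^{s}\Zp$ at a time: writing $x=a+p^{s}y$ one has $1+\mf{z}x=(1+\mf{z}a)\bigl(1+\tfrac{p^{s}\mf{z}}{1+\mf{z}a}y\bigr)$. Since $1+\mf{z}a\in\Zp^{\times}(1+p^{w}\wh{\oo}_{\Xw}^{+}(V_{\infty}))$ and $w-1\geq s_{\U}$, Proposition \ref{prop-extend} (with $B=\wh{\oo}_{\Xw}(V_{\infty})$) makes $\chi_{\U}(1+\mf{z}a)$ a unit in $(R_{\U}\ctens\wh{\oo}_{\Xw}(V_{\infty}))^{\circ}$, and since $\bigl|\tfrac{p^{s}\mf{z}}{1+\mf{z}a}\bigr|\leq|p^{s+1}|$ and $s\geq s_{\U}$, Proposition \ref{prop-extend} together with Theorem \ref{theo: amice} exhibits $\chi_{\U}\bigl(1+\tfrac{p^{s}\mf{z}}{1+\mf{z}a}y\bigr)$ as a convergent power series in $y$ with coefficients in $(R_{\U}\ctens\wh{\oo}_{\Xw}(V_{\infty}))^{\circ}$ tending to $0$; this is exactly the integrality and $s$-analyticity required.

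The equivariance relation is then a direct computation on pure tensors. For $\gamma\in\Delta_{0}(p)$ and $h=\mu\otimes f$ with $f\in\wh{\oo}_{\Xw}(\gamma^{-1}V_{\infty})$ one has $\gamma^{\ast}h=(\gamma\cdot\mu)\otimes\gamma^{\ast}f$, where $\gamma\cdot\mu$ is the dual of the action $\cdot_{\U}$, so $\beta_{\U}(\gamma^{\ast}h)=\mu\bigl(\chi_{\U}(1+\mf{z}x)\cdot_{\U}\gamma\bigr)\cdot\gamma^{\ast}f$. Expanding $\cdot_{\U}$ and using multiplicativity of $\chi_{\U}$ together with the transformation law $\gamma^{\ast}\mf{z}=\tfrac{a\mf{z}+c}{b\mf{z}+d}$,
\[ \chi_{\U}(cx+d)\,\chi_{\U}\!\bigl(1+\mf{z}\tfrac{ax+b}{cx+d}\bigr)=\chi_{\U}\bigl((c+a\mf{z})x+(d+b\mf{z})\bigr)=\chi_{\U}(b\mf{z}+d)\cdot\chi_{\U}\bigl(1+(\gamma^{\ast}\mf{z})x\bigr). \]
As $\chi_{\U}(b\mf{z}+d)$ does not involve the variable $x$ and $\mu$ commutes with $\gamma^{\ast}$ (one acts on the variable, the other on the coefficient ring), this gives $\beta_{\U}(\gamma^{\ast}h)=\chi_{\U}(b\mf{z}+d)\,\gamma^{\ast}\bigl(\mu(\chi_{\U}(1+\mf{z}x))\cdot f\bigr)=\chi_{\U}(b\mf{z}+d)\,\gamma^{\ast}\beta_{\U}(h)$, and the general case follows by linearity and continuity.

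Finally, taking $\gamma\in K_{0}(p)\subset\Delta_{0}(p)$ in the equivariance relation shows that $\beta_{\U}$ intertwines the natural $\Delta_{0}(p)$-equivariant structure on $V_{\infty}\mapsto\mbf{D}_{\U}^{s,\circ}\ctens\wh{\oo}_{\Xw}(V_{\infty})$ (recorded after Lemma \ref{lem-explicitD}) with the $\chi_{\U}$-twisted $\Delta_{0}(p)$-structure on $V_{\infty}\mapsto\wh{\oo}_{\Xw}(V_{\infty})\ctens A_{\U}$ whose $K_{0}(p)$-invariants define $\wh{\omega}_{\U,w}^{\dg}$. Pushing forward along the pro-\'etale projection $q:\Xinfw\to\Xw$ and passing to $K_{0}(p)$-invariants — identifying the source's invariants with $\oo\mbf{D}_{\U}^{s}$ via Lemma \ref{lem-explicitD} and the target's with $\wh{\omega}_{\U,w}^{\dg}$ by definition — yields the morphism $\delta_{\U}:\oo\mbf{D}_{\U}^{s}\to\wh{\omega}_{\U,w}^{\dg}$ of abelian sheaves on $\X_{w,\proet}$; it is $R_{\U}$-linear because $\beta_{\U}$ is linear over $R_{\U}$ in the first variable. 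Hecke-equivariance at $p$ follows since $\beta_{\U}$ intertwines the \emph{full} $\Delta_{0}(p)$-actions, hence the double coset operators $[K_{0}(p)\gamma K_{0}(p)]$ defined by coset decompositions as in \S\ref{sec:mfs}; away from $p$ it follows from the $G(\mathbb{A}_{f}^{p})$-invariance of $\mf{z}$ and naturality in the tame level, as in Proposition \ref{prop:hecke}.

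I expect the main obstacle to be the first step — confirming that $\chi_{\U}(1+\mf{z}x)$ really lies in $\mbf{A}_{\U}^{s,\circ}\ctens\wh{\oo}_{\Xw}^{+}(V_{\infty})$ — since this requires combining Proposition \ref{prop-extend}, Theorem \ref{theo: amice}, and the precise bound $\mf{z}\in p\Zp+p^{w}\oo_{\Xinf}^{+}$ (yielding $|\mf{z}|\leq|p|$); once this is done and the $\Delta_{0}(p)$-action conventions are pinned down, the equivariance computation and the descent to invariants are formal.
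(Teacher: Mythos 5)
Your equivariance computation and the passage to $K_{0}(p)$-invariants match the paper's argument, but there is a genuine gap at the step you dismiss as formal: the extension of $\beta_{\U}$ from pure tensors to the source $\mbf{D}_{\U}^{s,\circ}\ctens\wh{\oo}_{\Xw}(V_{\infty})$. This is a \emph{mixed completed} tensor product, essentially $\varprojlim_{k}\bigl(\mbf{D}_{\U}^{s,\circ}/\mathrm{Fil}^{k}\mbf{D}_{\U}^{s,\circ}\bigr)\otimes_{\Zp}\wh{\oo}_{\Xw}(V_{\infty})$ (up to inverting $p$), so a ``bilinear extension'' only defines the map on the algebraic tensor product; to go further you need the pairing $\mu\mapsto\mu(\chi_{\U}(1+\mf{z}x))$ to be continuous for the profinite topology on $\mbf{D}_{\U}^{s,\circ}$ given by $\mathrm{Fil}^{\bullet}$. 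That continuity does \emph{not} follow from the membership $\chi_{\U}(1+\mf{z}x)\in\mbf{A}_{\U}^{s,\circ}\ctens\wh{\oo}^{+}_{\Xw}(V_{\infty})$ which you single out as the main obstacle: since $\mathrm{Fil}^{k}=\ker\bigl(\mbf{D}_{\U}^{s,\circ}\to\mbf{D}_{\U}^{s-1,\circ}/\mf{a}_{\U}^{k}\mbf{D}_{\U}^{s-1,\circ}\bigr)$, an element of $\mathrm{Fil}^{k}$ is only guaranteed to be small against $(s-1)$-analytic integral functions. For instance the dual basis functionals $(e_{j}^{s})^{\vee}$ lie in $\mathrm{Fil}^{k}$ for $j\gg 0$ (because $e_{j}^{s}$ is a large multiple of $e_{j}^{s-1}$) yet pair to $1$ with $e_{j}^{s}$, so evaluation against a general element of $\mbf{A}_{\U}^{s,\circ}\ctens\wh{\oo}^{+}$ is not $\mathrm{Fil}$-continuous, and your extension step would fail if all you know is $s$-analyticity.

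What rescues the construction, and what the paper's proof uses, is the stronger fact that $\chi_{\U}(1+\mf{z}x)$ is $(s-1)$-analytic with integral coefficients: your own coset computation already gives this, since on a coset $a+p^{s-1}\Zp$ the local parameter $p^{s-1}\mf{z}/(1+\mf{z}a)$ still has norm $\le |p|^{s}$ and $s-1\ge s_{\U}$ (the bound $|\mf{z}|\le|p|$ is exactly what buys the extra room), and likewise $w-1\ge s_{\U}$ handles the factor $\chi_{\U}(1+\mf{z}a)$. Granting this, the formula on pure tensors defines compatible maps $(\mbf{D}_{\U}^{s-1,\circ}/\mf{a}_{\U}^{k}\mbf{D}_{\U}^{s-1,\circ})\otimes_{\Zp}\wh{\oo}_{\Xw}(V_{\infty})\to(R_{\U}/\mf{a}_{\U}^{k})\otimes_{\Zp}\wh{\oo}_{\Xw}(V_{\infty})$, hence, by the very definition of $\mathrm{Fil}^{k}$, maps on $(\mbf{D}_{\U}^{s,\circ}/\mathrm{Fil}^{k}\mbf{D}_{\U}^{s,\circ})\otimes_{\Zp}\wh{\oo}_{\Xw}(V_{\infty})$ compatible in $k$; taking $\varprojlim_{k}$ and inverting $p$ produces $\beta_{\U}$ on the completed tensor product (this is precisely the paper's well-definedness argument, and explains the shift from $s$ to $s-1$ there). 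With this repair your proof is complete; the equivariance identity, the sheaf property, and the descent to $K_{0}(p)$-invariants with Hecke equivariance are as in the paper.
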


\noindent Here we are interpreting $\chi_{\U}(1+\mf{z}x)$
as an element of $\mbf{A}_{\U}^{s,\circ}\ctens \wh{\oo}_{\Xw}(V_{\infty})$,
and extending $\mu$ by linearity and continuity to an $R_{\U}\ctens \wh{\oo}_{\Xw}(V_{\infty})$-linear
map $\mbf{A}_{\U}^{s,\circ}\ctens \wh{\oo}_{\Xw}(V_{\infty}) \ra R_{\U} \ctens \wh{\oo}_{\Xw}(V_{\infty})$.

\begin{proof} First we check that $\beta_{\U}$ is well defined, i.e. that the description on pure tensors extends by continuity. To do this, note that the description on tensors define compatible maps 
$$ (\mbf{D}_{\U}^{s-1,\circ}/\mf{a}_{\U}^{k}\mbf{D}_{\U}^{s-1,\circ}) \otimes_{\Zp} \wh{\oo}_{\Xw}(V_{\infty}) \ra (R_{\U}/\mf{a}_{\U}^{k}) \otimes_{\Zp} \wh{\oo}_{\Xw}(V_{\infty}) $$
for all $k$. By the definition of the filtrations we then get compatible maps
$$ (\mbf{D}_{\U}^{s,\circ}/{\rm Fil}^{k}\mbf{D}_{\U}^{s,\circ}) \otimes_{\Zp} \wh{\oo}_{\Xw}(V_{\infty}) \ra (R_{\U}/\mf{a}_{\U}^{k}) \otimes_{\Zp} \wh{\oo}_{\Xw}(V_{\infty}). $$
Taking limits and inverting $p$ we get the desired map. It is then enough to check equivariance on pure tensors $h=\mu\otimes f$. We compute
\begin{eqnarray*}
\beta_{\U}(\gamma h) & = & (\gamma\cdot_{\U}\mu)(\chi_{\U}(1+\mf{z}x))\gamma^{\ast}f\\
 & = & \mu\left(\chi_{\U}(cx+d)\chi_{\U}(1+\mf{z}\frac{ax+b}{cx+d})\right)\gamma^{\ast}f\\
 & = & \mu\left(\chi_{\U}(cx+d+\mf{z}(ax+b))\right)\gamma^{\ast}f\\
 & = & \mu\left(\chi_{\U}(b\mf{z}+d)\chi_{\U}(1+x\frac{a\mf{z}+c}{b\mf{z}+d})\right)\gamma^{\ast}f\\
 & = & \chi_{\U}(b\mf{z}+d)\mu\left(\chi_{\U}(1+x\frac{a\mf{z}+c}{b\mf{z}+d})\right)\gamma^{\ast}f\\
 & = & \chi_{\U}(b\mf{z}+d)\gamma^{\ast}\left(\mu\left(\chi_{\U}(1+\mf{z}x)\right)f\right)\\
 & = & \chi_{\U}(b\mf{z}+d)\gamma^{\ast}\beta _{\U}(h)\end{eqnarray*}
and arrive at the desired conclusion. We leave the Hecke equivariance away from $p$ to the reader.
\end{proof}

The map $\delta_{\U}$ is our version of the map $\delta_{k}^{\vee}(w)$ from \cite[\S 4.2]{oes}. We may then define the overconvergent Eichler-Shimura map at the level of spaces in the same way as in \cite{oes}:

\begin{defi}
Let $\U$ be a small weight. The weight $\U$ overconvergent Eichler-Shimura map $ES _{\U}$ is given as the composite
\begin{multline*}
\mbf{V}_{\U,\Cp}^{s} \cong H^1_{\proet}(\mc{X}_{\Cp},\oo\mbf{D}_{\U}^{s}) \overset{\mathrm{res}}{\ra} H^1_{\proet}(\mc{X}_{w,\Cp},\oo\mbf{D}_{\U}^{s})\overset{\delta_{\U}}{\ra}H^{1}_{\proet}(\mc{X}_{w,\Cp},\hat{\omega}_{\U,w,\Cp}^\dagger) \cong \\
\cong H^{0}(\mc{X}_{w,\Cp},\omega _{\U,w,\Cp}^\dagger \otimes \Omega_{\mc{X}_{w,\Cp}}^{1})(-1)=:\mc{M}^{\dg,w}_{\U,\Cp}(-1)
\end{multline*}
where the first map is induced by restricting cohomology classes along the inclusion $\Xw \sub \mc{X}$. $ES_{\U}$ is Hecke- and $G_{\Qp}$-equivariant by Propositions \ref{prop:artin}, \ref{prop:diam}, \ref{prop:delta} and Corollary \ref{cor:leray}.
\end{defi}

The above definition gives also a map $ES_{\kappa}$ for any individual point $\kappa \in \mc{W}(\overline{\mb{Q}}_{p})$, since any such weight is both small and affinoid. Note also that the construction of $ES_{\U}$ is functorial in $\U$: if we have a morphism $\U \ra \U^{\prime}$ compatible with the characters, then we have a commutative diagram 
\[
\xymatrix{ \mbf{V}_{\U^{\prime},\Cp}^{s} \ar[r]^{ES _{\U^{\prime}}} \ar[d] & \mc{M}^{\dg,w}_{\U^{\prime},\Cp}(-1) \ar[d] \\
\mbf{V}_{\U,\Cp}^{s} \ar[r]^{ES _{\U}} & \mc{M}^{\dg,w}_{\U,\Cp}(-1)
}
\]
where the vertical maps are the natural ones. In particular, when $\U$ is a small open weight and $\kappa\in \U^{\rig}$, we have a commutative diagram
\[
\xymatrix{ \mbf{V}_{\U,\Cp}^{s} \ar[r]^{ES _{\U}} \ar[d] & \mc{M}^{\dg,w}_{\U,\Cp}(-1) \ar[d] \\
\mbf{V}_{\kappa,\Cp}^{s} \ar[r]^{ES _{\kappa}} & \mc{M}^{\dg,w}_{\kappa,\Cp}(-1).
}
\]

\bigskip

\subsection{Factorization for weights $k\geq 2$}\label{sec: factor}
To gain some control of the overconvergent Eichler-Shimura map we will prove that it factors through the $p$-adic Eichler-Shimura map defined by Faltings (\cite{fal}) for integral weights $k \in \mb{Z} _{\geq 2}$. The key step is to prove this factorization on the level of sheaves, which is the goal of this section. In our setup, this factorization turns out to be rather transparent. 

\medskip

Let $V\in \X_{\proet}$ be qcqs and put $V _{\infty} = V \times _{\X} \Xinf$. Let $k\geq 2$. By Lemma \ref{lem-explicitD} we know that 
$$\oo\mbf{D}^{s}_{k}(V) = (\mbf{D}_{k}^{s} \ctens_{\Qp} \wh{\oo}_{\X}(V_{\infty}))^{K_{0}(p)}.$$
Let $\mc{G}^{\univ}$ be the universal $p$-divisible group over $\X$. We define $\mc{T}$ to be the relative Tate module of $\mc{G}^{\univ}$ viewed as a sheaf on $\X_{\proet}$. Let $\omega$ be as before. We then define variants of those sheaves on $\X_{\proet}$ by
$$\wh{\mc{T}} = \mc{T} \otimes _{\wh{\mb{Z}}_p} \wh{\oo} _{\X};$$
$$\wh{\omega} = \lambda^{\ast}\omega \otimes _{\oo _{\X}} \wh{\oo} _{\X}.$$
Let us write $\wh{\mc{V}}_{k} = \Sym ^{k-2} (\mc{T}) \otimes _{\wh{\mb{Z}}_p} \wh{\oo}_{\X} =  \Sym^{k} (\wh{\mc{T}})$ and identify $\ms{L}_{k}^{\circ}$ with $\Sym ^{k-2} (\Zp ^2)$ via the isomorphism sending $X^{i}$ to $e_{1}^{i}e_{2}^{k-2-i}$ ($0\leq i \leq k-2$). This identifies the $M_{2}(\Zp)$-action on $\ms{L}_{k}^{\circ}$ with the standard left action on $\Sym^{k-2}(\Zp^{2})$.
\begin{lemm}\label{lem-explicitV}
For any $V\in \X_{\proet}$ qcqs, we have $\wh{\mc{V}}_k (V) \cong (\ms{L}_k \otimes _{\mb{Q}_p} \wh{\oo}_{\X}(V_{\infty}))^{K_{0}(p)}$.
\end{lemm}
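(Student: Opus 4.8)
The statement is the exact analogue of Lemma \ref{lem-explicitD}, and the plan is to deduce it in the same way, via the ``handy lemma'' (Lemma \ref{handy}). The point is that $\Sym^{k-2}(\mc{T})$, being a locally constant constructible sheaf of finite free $\Zp$-modules with continuous $K_0(p)$-action, fits into the framework of Lemma \ref{handy}: take $X = \X$, $X_\infty = \Xinf$, $G = K_0(p)$, and $M = \Sym^{k-2}(\Zp^2)$ with its $K_0(p)$-action. This $M$ is visibly profinite flat (it is finite free over $\Zp$), and the presentation $M = \varprojlim_i M/p^i M$ consists of $G$-stable quotients. The locally constant $\X_{\et}$-sheaf attached to $M$ is, by construction, the sheaf $\Sym^{k-2}(\mc{T})$; more precisely, the relative Tate module $\mc{T}$ is the pro-\'etale sheaf $\varprojlim_n \mc{G}^{\univ}[p^n]$, and under the identification $\Xinf \sim \varprojlim_n \mc{X}_{K(p^n)}$ the trivialization $\alpha$ of $T_p\mc{G}$ over $\Xinf$ identifies $\mc{T}|_{\Xinf}$ with the constant sheaf $\Zp^2$, $K_0(p)$-equivariantly for the standard action.

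\textbf{Key steps.} First I would set up the identification $\mc{T} = \nu^\ast\wt{M_\infty}$ where $\wt{M_\infty} = \varprojlim_n \wt{(\Zp/p^n)^2}$ is the \'etale sheaf corresponding to the $K_0(p)$-representation $\Zp^2 = T_p\mc{G}$; this is just unwinding the definition of $\mc{T}$ as the relative Tate module together with Scholze's construction of $\Xinf$. Taking $\Sym^{k-2}$ throughout turns this into $\Sym^{k-2}(\mc{T}) = \nu^\ast\wt{M}$ with $M = \Sym^{k-2}(\Zp^2)$, and the $M_2(\Zp)$-action on $M$ is the standard one, which under the stated identification $X^i \leftrightarrow e_1^i e_2^{k-2-i}$ matches the action on $\scl_k^\circ$. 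Second, I would apply Lemma \ref{handy} to $M$: it gives that the pro-\'etale sheaf $\wh{\mc{V}}_k = \Sym^{k-2}(\mc{T}) \otimes_{\wh{\Zp}} \wh{\oo}_{\X}$ — which is exactly $\wh{\mb{Q}}_p \otimes_{\wh{\mb{Z}}_p} \varprojlim_i(\nu^\ast\wt{M/p^iM} \otimes_{\Zp} \oo_{\X}^+)$ — coincides with the sheaf $V \mapsto (M \ctens \wh{\oo}_{\X}(V_\infty))^{K_0(p)}$ for qcqs $V$. (Strictly, Lemma \ref{handy} is phrased for profinite flat $M$; here $M$ is finite free, so $M \ctens (-) = M \otimes_{\Zp} (-)$ and the statement simplifies accordingly.) Third, since $M$ is finite free over $\Zp$, $M \ctens \wh{\oo}_{\X}(V_\infty) = M \otimes_{\Zp} \wh{\oo}_{\X}(V_\infty)$, and inverting $p$ gives $(\scl_k \otimes_{\Qp} \wh{\oo}_{\X}(V_\infty))^{K_0(p)}$ once one checks that taking $K_0(p)$-invariants commutes with inverting $p$ (which it does, $p$ being a nonzerodivisor fixed by $K_0(p)$, and $K_0(p)$-invariants being computed on a $p$-torsion-free module).

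\textbf{Main obstacle.} The only genuine subtlety is matching the formalism of Lemma \ref{handy} — which is stated with the completed tensor product $\ctens$ and an inverse limit over $G$-stable quotients — with the slightly different-looking definition $\wh{\mc{V}}_k = \Sym^{k-2}(\mc{T}) \otimes_{\wh{\Zp}} \wh{\oo}_{\X}$ given in the text. Concretely, one must observe that $\Sym^{k-2}(\mc{T}) \otimes_{\wh{\Zp}} \wh{\oo}_{\X}$ is by definition $\varprojlim_i \bigl(\Sym^{k-2}(\mc{G}^{\univ}[p^i]) \otimes_{\Zp/p^i} \oo_{\X}^+/p^i\bigr)[1/p]$ on qcqs objects, and that $\Sym^{k-2}(\mc{G}^{\univ}[p^i]) = \nu^\ast\wt{M/p^iM}$ as \'etale sheaves, so that this matches the description in Lemma \ref{handy} verbatim. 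Everything else — finiteness of $M$, $G$-stability of the $p$-power-filtration, the almost-equality $M_i \otimes_{\Zp}\wh{\oo}_{\X}^+(V_\infty) =^a (\nu^\ast\wt{M_i}\otimes_{\Zp}\oo_{\X}^+)(V_\infty)$ — is handled inside the proof of Lemma \ref{handy} already. So the proof is genuinely short: cite Lemma \ref{handy}, note $M$ is finite free so $\ctens = \otimes_{\Zp}$, and invert $p$.
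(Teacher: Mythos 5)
Your proposal is correct and follows essentially the same route as the paper: the paper's proof is a one-liner citing Lemma \ref{handy} together with the observation that the universal trivialization over $\Xinf$ identifies $\mc{T}(V_\infty)$ with $\ms{L}_1^\circ$ functorially and $\Delta_0(p)$-equivariantly, which is exactly your reduction to $M=\Sym^{k-2}(\Zp^2)\cong\scl_k^\circ$. Your extra remarks (finite freeness making $\ctens=\otimes_{\Zp}$, and inverting $p$ commuting with invariants) are just the details the paper leaves implicit.
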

\begin{proof}
Follows from Lemma \ref{handy} upon noting that $\mc{T}(V_{\infty}) \cong \ms{L}_{1}^{\circ}$ functorially in $V$ and $\Delta_{0}(p)$-equivariantly via the universal trivialization over $\Xinf$.
\end{proof}
The above lemma and Lemma \ref{lem-explicitD} gives us a map 
$$\mc{O}\mbf{D}_k ^s(V) \ra \wh{\mc{V}}_k(V)$$
induced by the integration map
$$i_k : \mbf{D} _k ^s \ra \ms{L}_k $$
defined in Definition \ref{def-int}.

\medskip

Let $\mf{s} \in H^0(\Xinfw, \omega )$ be the non-vanishing section defined in \S \ref{sec-period}. Recall that
$$\wh{\omega} ^{\dg} _{k, w}(V) = \{ f \in \wh{\oo}_{\Xw}(V_{\infty}) | \gamma^{\ast} f = (b\mf{z}+d)^{-k}f\}$$
for $V\in \mc{X}_{w,\proet}$ qcqs.
\begin{lemm}\label{eta-isom}
We have
$$\wh{\omega} ^{\dg} _{k, w} \cong (\wh{\omega}^{\otimes k-2})|_{ \mc{X}_{w}}$$ 
via the map
$$f \mapsto f \cdot \mf s ^{\otimes k-2}$$
\end{lemm}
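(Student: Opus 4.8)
The plan is to produce an explicit isomorphism and check the equivariance bookkeeping. Since $\mf{s} \in H^0(\Xinfw, \omega)$ is nowhere vanishing (it is one of Scholze's fake Hasse invariants, cf. \S \ref{sec-period}), the section $\mf{s}^{\otimes k-2} \in H^0(\Xinfw, \omega^{\otimes k-2})$ is also nowhere vanishing, hence trivializes $\omega^{\otimes k-2}$ over $\Xinfw$. So for any qcqs $V \in \mc{X}_{w,\proet}$ with $V_{\infty} = V \times_{\Xw} \Xinfw$, multiplication by $\mf{s}^{\otimes k-2}$ gives an isomorphism of $\wh{\oo}_{\Xw}(V_{\infty})$-modules
$$ \wh{\oo}_{\Xw}(V_{\infty}) \overset{\sim}{\to} (\wh{\omega}^{\otimes k-2})(V_{\infty}), \quad f \mapsto f \cdot \mf{s}^{\otimes k-2}. $$
First I would record that this is functorial in $V$, so it defines an isomorphism of sheaves on $\mc{X}_{w,\proet}/\Xinfw$ (using Lemma \ref{handy} to describe $\wh{\omega}^{\otimes k-2}$ via its values on the $V_{\infty}$).

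Next I would carry out the equivariance computation. From \S \ref{sec-period} we have $\gamma^{\ast}\mf{s} = (b\mf{z}+d)\mf{s}$ for all $\gamma \in K_0(p)$, hence $\gamma^{\ast}(\mf{s}^{\otimes k-2}) = (b\mf{z}+d)^{k-2}\mf{s}^{\otimes k-2}$. Thus if $f \in \wh{\oo}_{\Xw}(V_{\infty})$ satisfies $\gamma^{\ast}f = (b\mf{z}+d)^{-k}f$ — wait, one must be careful with the convention: by Definition \ref{mainocdef} applied to the weight $\chi_k(z) = z^{k-2}$, we have $\wh{\omega}^{\dg}_{k,w}(V) = \{f \in \wh{\oo}_{\Xw}(V_{\infty}) \mid \gamma^{\ast}f = (b\mf{z}+d)^{-(k-2)}f \ \forall \gamma \in K_0(p)\}$, consistently with the formula $\chi_{\U}(b\mf{z}+d)^{-1}$ in the definition and with $\chi_k = z^{k-2}$. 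With this reading, for such $f$ one computes
$$ \gamma^{\ast}(f \cdot \mf{s}^{\otimes k-2}) = (\gamma^{\ast}f)(\gamma^{\ast}\mf{s}^{\otimes k-2}) = (b\mf{z}+d)^{-(k-2)}f \cdot (b\mf{z}+d)^{k-2}\mf{s}^{\otimes k-2} = f \cdot \mf{s}^{\otimes k-2}, $$
so $f \cdot \mf{s}^{\otimes k-2}$ lies in $(\wh{\omega}^{\otimes k-2})(V_{\infty})^{K_0(p)}$. Conversely, any $K_0(p)$-invariant section of $\wh{\omega}^{\otimes k-2}$ over $V_{\infty}$ can be written uniquely as $f \cdot \mf{s}^{\otimes k-2}$ since $\mf{s}^{\otimes k-2}$ is a generator, and the invariance forces $\gamma^{\ast}f = (b\mf{z}+d)^{-(k-2)}f$. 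So multiplication by $\mf{s}^{\otimes k-2}$ identifies $\wh{\omega}^{\dg}_{k,w}(V)$ with $(\wh{\omega}^{\otimes k-2})(V_{\infty})^{K_0(p)}$, and by Lemma \ref{handy} (or by the description of $q_{\ast}$ and passing to $K_0(p)$-invariants, as in the proof of Proposition \ref{prop:omega}) the latter is $((\wh{\omega}^{\otimes k-2})|_{\mc{X}_w})(V)$.

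Finally I would note that the identification is manifestly compatible with restriction maps, since multiplication by the global section $\mf{s}^{\otimes k-2}$ is, so it glues to the asserted isomorphism of sheaves $\wh{\omega}^{\dg}_{k,w} \cong (\wh{\omega}^{\otimes k-2})|_{\mc{X}_w}$ on $\mc{X}_{w,\proet}$. The only point requiring any care — and thus the ``main obstacle,'' though it is a minor one — is to keep the weight convention straight: the shift by $2$ in $\chi_k(z) = z^{k-2}$ is exactly what makes the exponents match, and the isomorphism lands in $\omega^{\otimes k-2}$ rather than $\omega^{\otimes k}$ precisely for this reason (cf. the remark after the definition of $\mc{M}^{\dg}_{\U,w}$). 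Everything else is formal manipulation with the generator $\mf{s}$ and its transformation law.
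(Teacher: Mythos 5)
Your proof is correct and takes essentially the same route as the paper: use the transformation law $\gamma^{\ast}\mf{s}=(b\mf{z}+d)\mf{s}$ to identify $\wh{\omega}^{\dg}_{k,w}(V)$ with the $K_0(p)$-invariant sections of $\wh{\omega}^{\otimes k-2}$ over $V_{\infty}$, then descend the invariants to $V$; your reading of the weight convention ($\gamma^{\ast}f=(b\mf{z}+d)^{2-k}f$, coming from $\chi_k(z)=z^{k-2}$ and Definition \ref{mainocdef}) is exactly the one the lemma requires. One small quibble: the final descent step $\wh{\omega}^{\otimes k-2}(V_{\infty})^{K_0(p)}=\wh{\omega}^{\otimes k-2}(V)$ is not really an instance of Lemma \ref{handy} (which concerns sheaves $M\ctens\oo_X$ for profinite flat $M$); the correct justification is the argument in (the first half of) the proof of Lemma \ref{lemm: inv2}, which is what the paper cites and what your parenthetical appeal to the proof of Proposition \ref{prop:omega} amounts to.
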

\begin{proof}
For $V$ qcqs the map induces an isomorphism between $\wh{\omega} ^{\dg} _{k, w}(V)$ and the set $\{ \eta \in \wh{\omega}^{\otimes k-2}(V_{\infty}) \mid \gamma^{\ast}\eta = \eta \}$ using the transformation rule for $\mf{s}$. By an argument similar to that in the (first half of the) proof of Lemma \ref{lemm: inv2} the latter is functorially isomorphic to $\wh{\omega}^{\otimes k-2}(V)$.
\end{proof}

Recall the linearized Hodge-Tate map
$$\wh{\mc{T}} \ra \wh{\omega}$$ 
as a map of sheaves of $\X_{\proet}$. Taking $(k-2)$-th symmetric powers we get a map
$$\wh{\mc{V}}_k \ra  \wh{\omega} ^{\otimes k-2}.$$
Restricting to $\Xw$ and using Lemma \ref{eta-isom} we get a map  
$$v_k: \wh{\mc{V}}_k \ra  (\wh{\omega} ^{\otimes k-2})|_{\Xinfw} \cong \wh{\omega} ^{\dg} _{k,w}$$
which we want to describe explicitly:

\begin{lemm}\label{exp-v}
Let $k\geq 2$ and let $V_{\infty} \in \X_{w,\proet}/\Xinf$ be qcqs. Define a map
$$ \ms{L}_k \otimes _{\mb{Q}_p} \wh{\oo}_{\Xw}(V_{\infty}) \ra \wh{\oo}_{\Xw}(V_{\infty}) $$
by $X^i \mapsto \mf{z}^i$ for $0\leq i \leq k-2$. This is an $\ohat_{\X}$-linear and $\Delta_{0}(p)$-equivariant morphism of sheaves on $\X_{w,\proet}/\Xinf$. Pushing forward to $\X_{w,\proet}$ and taking $K_{0}(p)$-invariants we get a map $\wh{\V}_{k} \ra \wh{\omega}_{k,w}^{\dg}$. Evaluating this on $V\in \X_{w,\proet}$ qcqs and putting $V_{\infty}=V \times_{\X}\Xinfw$, we obtain a map
$$ (\ms{L}_k \otimes _{\mb{Q}_p} \wh{\oo}_{\Xw}(V_{\infty}))^{K_{0}(p)} \ra \wh{\omega} ^{\dg} _{k,w}(V).$$
which is equal to $v_{k}$ (using  Lemma \ref{lem-explicitV}). 
\end{lemm}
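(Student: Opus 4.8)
The plan is to unwind the construction of $v_k$ and reduce everything to an explicit description of the linearized Hodge-Tate map $\wh{\mc{T}}\to\wh{\omega}$ over the trivializing cover $\Xinfw$. First I would observe that, for the $K_0(p)$-cover $q\colon\Xinfw\to\Xw$, each of the sheaves in sight ($\wh{\mc{T}}$, $\wh{\omega}$, $\wh{\mc{V}}_{k}$, $\wh{\omega}_{k,w}^{\dg}$) is recovered from its restriction to the slice $\X_{w,\proet}/\Xinf$, together with its $K_0(p)$-equivariant structure, by applying $q_{\ast}$ and passing to $K_0(p)$-invariants: for $\wh{\mc{V}}_{k}$ this is Lemma \ref{lem-explicitV}, for $\wh{\omega}_{k,w}^{\dg}$ it is built into the definition, and in general it is the same Galois-descent argument as in Lemmas \ref{newinv} and \ref{lemm: inv2}. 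Consequently a morphism between two such sheaves is equivalent to the corresponding $K_0(p)$-equivariant morphism on $\X_{w,\proet}/\Xinf$; in particular $v_k$ is determined by $v_k|_{\X_{w,\proet}/\Xinf}$, which is $\Sym^{k-2}$ of $\HT|_{\X_{w,\proet}/\Xinf}$ followed by the restriction of the isomorphism of Lemma \ref{eta-isom}, while the map of the statement is (by Lemma \ref{lem-explicitV}) $q_{\ast}$ of the slice map $\ms{L}_{k}\otimes_{\Qp}\wh{\oo}_{\Xw}(V_{\infty})\to\wh{\oo}_{\Xw}(V_{\infty})$, $X^{i}\mapsto\mf{z}^{i}$, with $K_0(p)$-invariants taken. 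So it suffices to show that, over a cofinal system of affinoid perfectoid $V_{\infty}\in\X_{w,\proet}/\Xinf$, the composite $\Sym^{k-2}(\HT)$ followed by the inverse of the eta-isomorphism carries $e_{1}^{i}e_{2}^{k-2-i}$ (identified with $X^{i}$ via the fixed isomorphism $\ms{L}_{k}^{\circ}\cong\Sym^{k-2}(\Zp^{2})$) to $\mf{z}^{i}$.

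For this computation I would use that over $\Xinfw$ the Tate module $\mc{T}$ is trivialized by the universal trivialization $\alpha$, so $\wh{\mc{T}}(V_{\infty})\cong\wh{\oo}_{\Xw}(V_{\infty})e_{1}\oplus\wh{\oo}_{\Xw}(V_{\infty})e_{2}$, while $\wh{\omega}(V_{\infty})=\wh{\oo}_{\Xw}(V_{\infty})\cdot\mf{s}$ is trivialized by the fake Hasse invariant $\mf{s}$; and in these trivializations the linearized Hodge-Tate map sends $\alpha(e_{2})\mapsto\mf{s}$ (this is the definition of $\mf{s}$, cf. \S\ref{sec-period}) and $\alpha(e_{1})\mapsto\mf{z}\cdot\mf{s}$ (this is the defining relation $\HT_{E}(\alpha(e_{1}))=\mf{z}(x)\,\HT_{E}(\alpha(e_{2}))$ of the fundamental period $\mf{z}=\pi_{\HT}^{\ast}z$ recorded in the table of analogies, which itself comes from the description of $\pi_{\HT}$ on $(C,\OC)$-points and the identification $\omega=\pi_{\HT}^{\ast}\oo(1)$ of Theorem \ref{theo: scholze}). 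Taking $(k-2)$-nd symmetric powers, $e_{1}^{i}e_{2}^{k-2-i}\mapsto(\mf{z}\mf{s})^{\otimes i}\otimes\mf{s}^{\otimes(k-2-i)}=\mf{z}^{i}\cdot\mf{s}^{\otimes k-2}$, and dividing by $\mf{s}^{\otimes k-2}$ as in Lemma \ref{eta-isom} gives $X^{i}\mapsto\mf{z}^{i}$, as required; the $\oo_{\X}$-linearity is clear and the $\Delta_{0}(p)$-equivariance is automatic since $v_k$ is equivariant by construction (alternatively, one checks it directly from $\gamma^{\ast}\mf{z}=\frac{a\mf{z}+c}{b\mf{z}+d}$ and the transformation law for $\mf{s}$, by a computation of the same shape as in the proof of Proposition \ref{prop:delta}).

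The only genuine content is the identification of the Hodge-Tate map with the pair $(\mf{z},\mf{s})$ over $\Xinfw$; everything else is bookkeeping with trivializations and the formal descent along the $K_0(p)$-cover $q$. Since in this paper $\mf{s}$ is essentially \emph{defined} as $\HT_{E}(\alpha(e_{2}))$ and the relation $\HT_{E}(\alpha(e_{1}))=\mf{z}\cdot\HT_{E}(\alpha(e_{2}))$ is precisely how $\mf{z}$ enters the theory, the main point to be careful about is matching conventions --- the $g\mapsto g^{\vee}$ twist in the $\gl(\Qp)$-action on $\Pro$, the sign in $z=-v/u$, and the polarization implicit in the definitions of $\omega$ and of $\HT_{A}$ for false elliptic curves --- but none of these is a real obstacle.
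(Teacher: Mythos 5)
Your proposal is correct and follows essentially the same route as the paper's proof: trivialize $\wh{\mc{T}}$ by the universal $\alpha$ and $\wh{\omega}^{\otimes k-2}$ by $\mf{s}^{\otimes k-2}$ over $V_{\infty}$, use $\mf{s}=\HT(\alpha(e_{2}))$ and $\HT(\alpha(e_{1}))=\mf{z}\,\HT(\alpha(e_{2}))$ on $\Xinfw$, take $(k-2)$-nd symmetric powers, and remove the factor $\mf{s}^{\otimes k-2}$ via Lemma \ref{eta-isom}. The extra remarks on descent along $q$ and on $\Delta_{0}(p)$-equivariance are fine but are exactly the bookkeeping the paper leaves implicit.
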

\begin{proof}
We describe the map $\wh{\mc{V}}_k \ra  \wh{\omega} ^{\otimes k-2}$ and then use the isomorphism of the previous lemma. Over $V_{\infty}$, the sheaf $\wh{\omega} ^{\otimes k-2}$ is trivialized by $\mf{s}^{\otimes k-2}$, so $v_{k}$ is morphism
$$ \ms{L}_k \otimes _{\mb{Q}_p} \wh{\oo}_{\Xw}(V_{\infty}) \ra \wh{\oo}_{\Xw}(V_{\infty})\mf{s}^{\otimes k-2}.$$
From the definitions we see that it sends $X^{i}$ to $\HT(\alpha(e_{1}))^{i}\HT(\alpha(e_{2}))^{k-2-i}$ where $\alpha$ denotes the trivialization of $\mc{T}$ over $V_{\infty}$ and $\HT$ denotes the Hodge-Tate map. By definition, $\mf{s}=\HT(\alpha(e_{2}))$ and over $\Xw$ we have the relation $\HT(\alpha(e_{1}))=\mf{z}\HT(\alpha(e_{2}))$. Thus $X^{i}$ is sent to $\mf{z}^{i}\mf{s}^{\otimes k-2}$. Finally observe that the isomorphism of Lemma \ref{eta-isom} sends $\mf{s}^{\otimes k-2}$ to $1$.
\end{proof}

We can now prove the factorization theorem at the level of sheaves for our overconvergent Eichler-Shimura map.

\begin{prop}\label{factor}
For $k\geq 2$ we have a commutative diagram:
\begin{equation*}
\renewcommand{\labelstyle}{\textstyle}
\xymatrix@R=3pc@C=9pc{
\mc{O}\mbf{D}^{s}_{k} \ar[r]^-{\mu \mapsto \mu((1+\mf{z}x)^{k-2})} \ar[d]_-{\mu \mapsto \mu((1+Xx)^{k-2})} & \wh{\omega} ^{\dagger} _{k, w} \\
\wh{\mc{V}}_k \ar[ur]_-{X ^i \mapsto \mf{z} ^i}
}
\end{equation*}

\end{prop}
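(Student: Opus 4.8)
The plan is to verify the commutativity of the diagram on sections over a cofinal system of objects of $\X_{w,\proet}$, namely on qcqs $V$ with $V_\infty = V\times_\X \Xinfw$ an affinoid perfectoid pro-\'etale cover of $V$. By Lemma \ref{lem-explicitD} and Lemma \ref{lem-explicitV}, all three sheaves admit explicit descriptions as $K_0(p)$-invariants of tensor products over $V_\infty$: $\mc{O}\mbf{D}^s_k(V) = (\mbf{D}_k^s \ctens_{\Qp}\wh{\oo}_{\Xw}(V_\infty))^{K_0(p)}$, $\wh{\mc{V}}_k(V) = (\ms{L}_k\otimes_{\Qp}\wh{\oo}_{\Xw}(V_\infty))^{K_0(p)}$, and $\wh{\omega}^\dg_{k,w}(V)$ is cut out inside $\wh{\oo}_{\Xw}(V_\infty)$ by the transformation rule $\gamma^\ast f = (b\mf{z}+d)^{-k}f$. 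So it suffices to check that the corresponding diagram of maps of $\wh{\oo}_{\Xw}(V_\infty)$-modules (before taking invariants) commutes; since all three maps are induced by morphisms of sheaves on $\X_{w,\proet}/\Xinf$ that are built from the explicit formulas, and all are compatible with the $K_0(p)$-action (the horizontal top map by Proposition \ref{prop:delta} with $\U=k$, the left vertical by the $\Delta_0(p)$-equivariance of the integration map $i_k = \rho_k\circ\sigma_k$ from Definition \ref{def-int}, and the diagonal by Lemma \ref{exp-v}), passing to invariants is harmless.

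Concretely, I would start with a pure tensor $\mu\otimes f \in \mbf{D}_k^s\ctens_{\Qp}\wh{\oo}_{\Xw}(V_\infty)$ and chase it around both ways. Going down then diagonally: the left vertical map sends $\mu\otimes f$ to $\left(\sum_{j=0}^{k-2}\binom{k-2}{j}\mu(x^j)X^j\right)\otimes f$ (this is exactly $\rho_k$ applied to $\mu$, tensored with $f$), and then the diagonal map substitutes $X^i\mapsto \mf{z}^i$, yielding $\sum_{j=0}^{k-2}\binom{k-2}{j}\mu(x^j)\mf{z}^j f$. Going right directly: the top map is $\mu\otimes f \mapsto \mu((1+\mf{z}x)^{k-2})f$, where $\mu$ is extended $\wh{\oo}_{\Xw}(V_\infty)$-linearly and continuously; expanding $(1+\mf{z}x)^{k-2} = \sum_{j=0}^{k-2}\binom{k-2}{j}\mf{z}^j x^j$ by the binomial theorem (a finite sum, since $k-2\in\mb{Z}_{\geq 0}$) and applying linearity gives $\sum_{j=0}^{k-2}\binom{k-2}{j}\mf{z}^j\mu(x^j)f$. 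These two expressions agree. One should also note that the header of the diagram writes the top horizontal map as $\mu\mapsto\mu((1+\mf{z}x)^{k-2})$ and the left vertical as $\mu\mapsto\mu((1+Xx)^{k-2})$; unwinding $i_k$ via $\rho_k$ (and noting $\sigma_k$ for integral $k$ is essentially the identity, or at worst matches conventions so that $i_k$ agrees with $\rho_k$ up to the chosen identification) confirms these are the same two maps described above.

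The remaining point is that this formula-level identity really does descend to an identity of morphisms of sheaves. For this I would invoke that $\wh{\oo}_{\Xw}(V_\infty)$ is a $\Qp$-Banach algebra, that $\chi_{k}(1+\mf{z}x) = (1+\mf{z}x)^{k-2}$ genuinely lies in $\mbf{A}_{k}^{s,\circ}\ctens\wh{\oo}_{\Xw}(V_\infty)$ (indeed it is a polynomial in $x$ with coefficients in $\wh{\oo}_{\Xw}^+(V_\infty)$, since $\mf{z}\in H^0(\Xinfw, p\Zp+p^w\oo^+)$), and that the extension of $\mu$ by continuity is exactly the one used in Proposition \ref{prop:delta}. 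Then the computation above is literally the identity of two continuous $\wh{\oo}_{\Xw}(V_\infty)$-linear maps agreeing on the dense span of pure tensors, hence equal; sheafifying (i.e. letting $V$ range over the basis) gives the commutative diagram of sheaves. I do not expect a serious obstacle here: the content is entirely the binomial-theorem identity $(1+\mf{z}x)^{k-2} = \sum_j \binom{k-2}{j}\mf{z}^j x^j$ together with the bookkeeping already set up in Lemma \ref{lem-explicitD}, Lemma \ref{lem-explicitV}, Lemma \ref{exp-v}, and Proposition \ref{prop:delta}. The only mild care needed is to make sure the identification $\ms{L}_k^\circ\cong\Sym^{k-2}(\Zp^2)$ (sending $X^i\mapsto e_1^i e_2^{k-2-i}$) is compatible with the trivialization of $\wh{\mc{T}}$ over $\Xinf$ and the relation $\HT(\alpha(e_1)) = \mf{z}\,\HT(\alpha(e_2))$ over $\Xw$, so that the diagonal arrow really is $X^i\mapsto\mf{z}^i$ as asserted in Lemma \ref{exp-v}; but this has already been pinned down there.
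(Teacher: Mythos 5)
Your proposal is correct and follows essentially the same route as the paper: the paper's proof simply invokes the prior discussion (Lemmas \ref{lem-explicitD}, \ref{lem-explicitV}, \ref{exp-v} and Proposition \ref{prop:delta}) together with the binomial-theorem computation on pure tensors that you carry out explicitly. Your extra care about density of pure tensors, passage to $K_0(p)$-invariants, and the identification $\ms{L}_k^\circ\cong\Sym^{k-2}(\Zp^2)$ just makes explicit what the paper leaves implicit.
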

\begin{proof}
It follows from our prior discussion and Lemma \ref{exp-v}.
\end{proof}

\begin{rema}\label{factorglobal}
This diagram, which is a diagram of sheaves over $\Xw$, may be viewed as the restriction of the following diagram of sheaves over $\X$:
\begin{equation*}
\renewcommand{\labelstyle}{\textstyle}
\xymatrix@R=3pc@C=9pc{
\mc{O}\mbf{D}^{s}_{k} \ar[r]^-{\mu \mapsto \mu((1+\mf{z}x)^{k-2})\mf{s}^{\otimes k-2}} \ar[d]_-{\mu \mapsto \mu((1+Xx)^{k-2})} & \wh{\omega}^{\otimes k-2} \\
\wh{\mc{V}}_k \ar[ur]_-{X ^i \mapsto \mf{z}^i\mf{s}^{\otimes k-2}}
}
\end{equation*}
using the isomorphism of Lemma \ref{eta-isom}. Note that although $\mf{z}$ and $\mf{s}$ do not extend to $\Xinf$, the products $\mf{z}^{i}\mf{s}^{\otimes k-2}$ do extend to $\Xinf$ for all $0\leq i \leq k-2$, so the formulas in the diagram make sense. This will cause our overconvergent Eichler-Shimura maps to have very large kernels at classical weights $k\geq 2$ (see \S \ref{sec: Faltings}).  
\end{rema}

\section{The overconvergent Eichler-Shimura map over the eigencurve}\label{sec:5}

In this section we prove our main results concerning the overconvergent Eichler-Shimura map. These results are analogous to the main results of \cite{oes}. However, the payoff for working with arbitrary small open weights is that we may glue our overconvergent Eichler-Shimura maps for different small open weights into a morphism of coherent sheaves over the eigencurve. This allows us to work over rather arbitrary regions of weight space and with general slope cutoffs, as opposed to the rather special open discs inside the analytic part of weight space used in \cite{oes}.

\subsection{Sheaves on the eigencurve}\label{sec:sheaves}

Let $\U$ be a small weight and let $s \geq 1+s_{\U}$. Recall we have defined the modules
$$\mbf{V}^{s} _{\U} = H^{1} _{\proet}(\X _{\Cp}, \mbf{D} _{\U}^s);$$ 
$$\mbf{V}^{s} _{\U, \Cp} \cong H^{1} _{\proet}(\X_ {\Cp}, \oo \mbf{D} _{\U}^s);$$ 
$$\mc{M}^{\dg,w} _{\U} = H^{0}(\Xw, \omega^{\dg} _{\U,w} \otimes _{\oo _{\Xw}} \Omega^{1} _{\Xw});$$  
$$\mc{M}^{\dg,w} _{\U, \Cp} = H^{0} (\X_{w, \Cp}, \omega ^{\dg} _{\U,w, \Cp} \otimes _{\oo_{\X_{w, \Cp}}} \Omega ^1 _{\X_{w,\Cp}}).$$
In this subsection we spread the finite-slope pieces of these modules into sheaves on the whole eigencurve $\mc{C}=\mc{C}_{K^p}$, and we glue our Eichler-Shimura maps into a morphism of sheaves.

\medskip

Recall the rigid analytic weight space $\mc{W}=\Spf(\mathbb{Z}_p[[\mathbb{Z}_p^{\times}]])^{\mathrm{rig}}$, with its universal character $\chi_{\mc{W}}:\mathbb{Z}_p^{\times} \to \mc{O}(\mc{W})^{\times}$. If $\mc{U}$ is an open weight, we write $\mc{O}^{bd}(\mc{U}^{\rig})$ for the ring of bounded functions on $\U^{\rig}$. This is a Banach $\Qp$-algebra equal to $A_{\U}[\frac{1}{p}]$ as a dense subring, and with a natural map $\mc{O}(\mc{W})\to \mc{O}(\mc{U}^{\rig})$. Recall that we have defined the Fredholm determinant
$$F_{\mc{U}} (T) = \det (1- \wt{U}T | C_{\bullet}(K,\mbf{A}_{\U}^{s}))$$
in \S \ref{slopedecomp}. It is an element of $\oo(\U^{\rig})^{bd}\{\{T\}\}$, the ring of entire power series with coefficients in $\oo^{bd}(\U^{rig})\{\{T\}\}$, and we showed that it glues to a Fredholm series $F(T)\in \oo(\mc{W})\{\{T\}\}$ (see e.g. \cite[Definition 4.1.1]{han1} for the definition of a Fredholm series). Now let $\V=(S_{\V},\chi_{\V})$ be an open \emph{affinoid} weight. Arguing as in \cite[\S 5.2]{pil} we see that $\mc{M}_{\V}^{\dg,w}$ satisfies Buzzard's property (Pr) over $S_{\V}$ (i.e. it is a direct summand of an orthonormalizable $S_{\V}$-module, see the paragraph before Lemma 2.11 of \cite{buz2}). Since $U_{p}$ is compact we get a Fredholm determinant
$$G_{\mc{V}} (T) = \det (1- U_p T | \mc{M}_{\mc{V}}^{\dg,w})$$
in $\mc{O}(\mc{V})\{\{ T \}\}$ and these glue to a Fredholm series $G(T)\in \oo(\mc{W})\{\{T\}\}$. Let us set $H(T)=F(T)G(T)$, this is also a Fredholm series. Recall (\cite[\S 4.1]{han1}) that a \emph{Fredholm hypersurface} is a closed subvariety of $\mc{W}\times \mb{A}^{1}$ cut out by a Fredholm series. Given a Fredholm series $f$ we write $\ms{Z}(f)$ for the corresponding Fredholm hypersurface. Recall that, if $h\in \mb{Q}_{\geq 0}$ and $W\sub \mc{W}$ is open, then $f$ has a slope $\leq h$-decomposition in $\oo^{bd}(W)\{\{T\}\}$ if and only if the natural map $\ms{Z}_{W, h}(f):=\ms{Z}(f)\cap (W\times \mb{B}[0,h]) \ra \mb{B}[0,h]$ is finite flat, where $\mb{B}[0,h]\sub \mb{A}^{1}$ is the closed disc around $0$ of radius $p^{h}$. We say that $(W,h)$ is \emph{slope-adapted} for $f$ if these equivalent conditions hold. We have the following key result:
\begin{lemm}
There exists a collection of pairs $(\V_{i},h_{i})$, with $\mc{V}\sub \mc{W}$ open and affinoid and $h_{i}\in \mb{Q}_{\geq 0}$, such that $(\V_{i},h_{i})$ is slope-adapted for $H$ for all $i$ and the $(\ms{Z}_{\V_{i},h_{i}}(H))_{i}$ form an open (admissible) cover of $\ms{Z}(H)$. Moreover, for each $i$ we may find a small open weight $\U_{i}$ such that $\V_{i}\sub \U^{\rig}_{i}$ and $(\U^{\rig}_{i},h_{i})$ is slope-adapted for $H$.
\end{lemm}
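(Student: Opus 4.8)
The plan is to reduce the statement to two standard facts about Fredholm hypersurfaces: that slope-adapted pairs $(W,h)$ give an admissible cover of any Fredholm hypersurface $\ms{Z}(f)$, and that any affinoid open $\V \sub \mc{W}$ can be enlarged to a small open weight $\U$ with $\V \sub \U^{\rig}$. For the first point, I would invoke the structure theory of Fredholm hypersurfaces (as in \cite[\S 4]{han1} or Coleman-Mazur/Buzzard): every point of $\ms{Z}(H)$ has an affinoid neighbourhood of the form $\ms{Z}_{W,h}(H)$ with $W$ open affinoid in $\mc{W}$ and $h \in \mb{Q}_{\geq 0}$, on which the projection to $\mb{B}[0,h]$ is finite flat, and finitely many such (as $\ms{Z}(H)$ is covered by quasi-compacts) suffice locally; standard arguments produce an admissible cover $(\ms{Z}_{\V_i,h_i}(H))_i$. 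The key input is that $H = FG$ is a genuine Fredholm series in $\oo(\mc{W})\{\{T\}\}$, which was established just above in the text, so the general theory applies verbatim.

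Next I would address the passage from $\V_i$ to a small open weight $\U_i$. Here the point is purely about weight space: $\mc{W} = \Spf(\Zp[[\Zp^{\times}]])^{\rig}$, and its affinoid subdomains $\V$ with $\mc{O}^{bd}(\V)$ normal arise as $\Spf(R)^{\rig}$ for suitable $R$ finite over $\Zp[[X_1,\dots,X_d]]$; more simply, any affinoid $\V \sub \mc{W}$ is contained in a slightly larger affinoid $\V'$ which is itself of the form $\Spf(R_{\U})^{\rig}$ for a small weight $\U$ (e.g. take $\V'$ to be a union of the standard ``wide open'' affinoids cut out by $|[1+p]-\zeta| \leq p^{-r}$, whose associated formal model has normal, topologically finite type ring over $\Zp$). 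After possibly normalizing, $\U_i := \U$ is a small open weight with $\V_i \sub \U_i^{\rig}$. One then needs that $(\U_i^{\rig},h_i)$ is still slope-adapted for $H$; since slope-adaptedness only depends on the restriction of $H$ and the condition $\ms{Z}_{W,h}(H) \ra \mb{B}[0,h]$ finite flat is open in $W$ in an appropriate sense (one can shrink $\U_i^{\rig}$ toward $\V_i$ if necessary, or note $H$ already has a slope $\leq h_i$-factorization on the larger domain by the gluing construction of $F$ and $G$), this follows by a routine argument: enlarge $\V_i$ only slightly, within the region where the factorization persists.

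The main obstacle I expect is the bookkeeping in the second step — arranging that the enlargement $\V_i \rightsquigarrow \U_i^{\rig}$ can be done \emph{simultaneously} with preserving slope-adaptedness for $H$, rather than just for $F$ or $G$ separately. Concretely one wants: given that $(\V_i,h_i)$ is slope-adapted for $H$, find $\U_i$ with $\V_i \sub \U_i^{\rig}$ and $(\U_i^{\rig},h_i)$ slope-adapted for $H$. The cleanest route is to first choose the covering data $(\V_i, h_i)$ so that each $\V_i$ is already a \emph{small} open affinoid weight in the above sense (i.e. $\V_i = \U_i^{\rig}$ from the outset), by refining the admissible cover of $\ms{Z}(H)$ using only basic affinoids of weight space that underlie small weights — these are cofinal among affinoid subdomains of $\mc{W}$. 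Then the second sentence of the lemma is automatic with $\U_i$ the small weight whose rig space is $\V_i$ (or its normalization). I would present the proof in this order: (1) recall that basic affinoids underlying small open weights form a basis for $\mc{W}$; (2) apply the structure theory of $\ms{Z}(H)$ to get a slope-adapted admissible cover refinable to such basic affinoids; (3) for each member set $\U_i$ to be the corresponding small weight, observing $\V_i = \U_i^{\rig}$ and that slope-adaptedness is inherited. This is essentially the argument of \cite[\S 4.3]{han1}, adapted to our $H$.
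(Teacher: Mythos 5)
Your first step is fine and coincides with what the paper actually does: the paper's entire proof is a citation of the Coleman--Mazur/Buzzard covering theorem (via \cite[Lemmas 2.3.1--2.3.4]{han}, cf. \cite[Theorem 4.6]{buz2}), the only input being that $H=FG$ is a genuine Fredholm series, which was checked just before the lemma.

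The second half of your proposal has a genuine gap. You want to arrange $\V_i=\U_i^{\rig}$ from the outset, on the claim that affinoids of the form $\U^{\rig}$, $\U$ a small open weight, are cofinal among affinoid subdomains of $\mc{W}$ (and, in your fallback, that any affinoid $\V$ sits inside a slightly larger \emph{affinoid} of the form $\Spf(R_{\U})^{\rig}$). This is false: for a small weight $\U=(R_{\U},\chi_{\U})$ with $R_{\U}$ finite over $\Zp[[X_{1},\dots,X_{d}]]$, $d\geq 1$, the ring $R_{\U}$ is semi-local and the Berthelot generic fibre $\Spf(R_{\U})^{\rig}$ is a wide-open, quasi-Stein, non-quasi-compact space (e.g. an open disc), hence never an affinoid open of $\mc{W}$; so one can neither take $\V_i=\U_i^{\rig}$ nor find such an affinoid enlargement. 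Consequently the second sentence of the lemma is not ``automatic'': its real content is that the slope $\leq h_i$ factorization of $H$, which the first step only provides over the quasi-compact affinoid $\V_i$, must be shown to exist over the strictly larger non-quasi-compact wide open $\U_i^{\rig}$ (slope-adaptedness is demanded over all of $\U_i^{\rig}$, so ``shrinking $\U_i^{\rig}$ toward $\V_i$'' is not available, and ``the factorization persists on a slight enlargement'' is precisely the assertion to be proved, not an argument). That extension step is exactly what \cite[Lemmas 2.3.2--2.3.4]{han}, cited in the paper, supply; your write-up needs either that citation or an actual spreading-out argument for the slope $\leq h_i$ factorization of $H$ over a wide-open neighbourhood of $\V_i$ of the form $\U^{\rig}$.
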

\begin{proof}
See \cite[Lemmas 2.3.1-2.3.4]{han} (the first part is a theorem of Coleman-Mazur and Buzzard and is the key step in the "eigenvariety machine", cf. \cite[Theorem 4.6]{buz2}).
\end{proof}

Fix collections $\V_{i}$, $\U_{i}$ and $h_{i}$ satisfying the conclusions of Lemma. For now, let $h\in \mb{Q}_{\geq 0}$ and let $\U$ be any small open weight such that $(\U^{rig},h)$ is slope-adapted for $H$, and let $\V \sub \U^{\rig}$ be an open affinoid weight. Recall our overconvergent Eichler-Shimura map 
$$ ES_{\U}\, :\, \mbf{V}_{\U,\Cp}^{s} \ra \mc{M}_{\U,\Cp}^{\dg,w}(-1). $$
Since $(\U^{\rig},h)$ is slope-adapted for $H$ (and hence for $F$ and $G$) and $ES_{\U}$ is Hecke-equivariant we may take slope $\leq h$-parts on both sides to obtain an $R_{\U}[\frac{1}{p}]$-linear map
$$ ES_{\U,\leq h}\,:\, \mbf{V}_{\U,\Cp,\leq h}^{s} \ra \mc{M}_{\U,\Cp,\leq h}^{\dg,w}(-1)$$
of finite projective $R_{\U}[\frac{1}{p}]$-modules. We may compose with this map the natural map $\mc{M}_{\U, \Cp, \leq h}^{\dg,w}(-1) \ra \mc{M}_{\V,\Cp, \leq h}^{\dg,w}(-1)$ and tensor the source with $S_{\V}$ to obtain an $S_{\V}$-linear map
$$ ES_{\V,\leq h}\, :\, \mbf{V}_{\V,\Cp,\leq h}^{s}\cong \mbf{V}_{\U,\Cp,\leq h}^{s}\otimes_{R_{\U}[\frac{1}{p}]}S_{\V} \ra \mc{M}_{\mc{V},\Cp, \leq h}^{\dg,w}(-1)$$
where the first isomorphism is that of Proposition \ref{slopeV}.

\medskip

Let us now recall the eigencurves for $B^{\times}$ of tame level $K^{p}$ constructed out of overconvergent modular forms resp. overconvergent modular symbols, using the perspective of \cite[\S 4]{han1}. Let $\Sigma_{0}$ denote the finite set of primes $\ell$ for which $K_{\ell}\not \cong \gl(\mb{Z}_{\ell})$ and let $\Sigma = \Sigma_{0}\cup \{p\}$. We let $\mb{T}$ denote the abstract Hecke algebra generated by commuting formal variables $T_{\ell}$, $S_{\ell}$ for $\ell \notin \Sigma$ and $U_{p}$. Then $\mb{T}$ acts on all spaces of modular forms and modular symbols used in this article. We may construct two eigenvariety data (\cite[Definition 4.2.1]{han1}) $\mf{D}_{\mc{M}^{\dg}}=(\mc{W}, \ms{Z}(H), \mc{M}^{\dg}, \mb{T}, \psi_{\mc{M}^{\dg}})$ and $\mf{D}_{\mbf{V}}=(\mc{W}, \ms{Z}(H), \mbf{V}, \mb{T}, \psi_{\mbf{V}})$. Here $\mc{M}^{\dg}$ and $\mbf{V}$ are the coherent sheaves on $\ms{Z}(H)$ obtained by gluing the $\varinjlim_{w} \mc{M} ^{\dg,w} _{\mc{V}_{i},\leq h_{i}}$ resp. the $\varprojlim_{s}\mbf{V}_{\mc{V}_{i},\leq h_{i}}^{s}$ over the $\ms{Z}_{\mc{V}_{i},h_{i}}(H)$ (see \cite[Proposition 4.3.1]{han1} and the discussion following it for the construction of $\mbf{V}$; the construction of $\mc{M}^{\dg}$ is similar but easier). Given these, \cite[Theorem 4.2.2]{han1} allows us to construct the eigenvarieties $\mc{C}_{\mc{M}^{\dg}}$ and $\mc{C}_{\mbf{V}}$ together with coherent sheaves $\mc{M}^{\dg}$ on $\mc{C}_{\mc{M}^{\dg}}$ resp. $\mbf{V}$ on $\mc{C}_{\mbf{V}}$ (this is a mild abuse of notation; we will have no further reason to consider the sheaves with the same name over $\ms{Z}(H)$). Since the following proposition is well known we only give a brief sketch of the proof:

\begin{prop}
There is a canonical isomorphism $\mc{C}_{\mc{M}^{\dg}}\cong \mc{C}_{\mbf{V}}$ compatible with the projection maps down to $\mc{W}$ and the actions of $\mb{T}$.
\end{prop}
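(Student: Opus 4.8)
The plan is to apply the standard eigenvariety uniqueness machinery: two eigenvariety data over the same spectral variety $\ms{Z}(H)$ that carry the ``same'' systems of Hecke eigenvalues give isomorphic eigenvarieties. Concretely, by \cite[Theorem 4.2.2]{han1} (or the analogous uniqueness statements in \cite{buz2}), to produce the desired canonical isomorphism $\mc{C}_{\mc{M}^{\dg}}\cong \mc{C}_{\mbf{V}}$ compatible with the maps to $\mc{W}$ and the $\mb{T}$-action, it suffices to check that the two eigenvariety data have the same set of $\overline{\mb{Q}}_p$-points, i.e. that a system of $\mb{T}$-eigenvalues $\lambda: \mb{T}\to \overline{\mb{Q}}_p$ of finite slope and weight $\kappa$ occurs in $\mc{M}^{\dg}_{\kappa}$ if and only if it occurs in $\mbf{V}_{\kappa}$. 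Both eigencurves live over the \emph{same} Fredholm hypersurface $\ms{Z}(H)$ with $H = FG$, so the existence of a common nested open cover by slope-adapted pairs $(\V_i,h_i)$ is already in hand, and the comparison reduces to a fibrewise statement at individual classical-or-overconvergent weights $\kappa$.

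First I would fix a point $\kappa\in \mc{W}(\overline{\mb{Q}}_p)$ (which, as remarked after the definition of $ES_{\U}$, is simultaneously small and affinoid) and a finite slope $h$, and compare the $U_p$-slope-$\leq h$ parts of $\mc{M}^{\dg,w}_{\kappa}$ and of $H^1(\XC,\mbf{D}^s_{\kappa}) = \mbf{V}^s_{\kappa}$ as $\mb{T}$-modules. The key input is the classical/overconvergent comparison between overconvergent modular symbols and overconvergent modular forms: on the finite-slope part, the overconvergent Eichler-Shimura map $ES_{\kappa}$ of the previous section — together with its known behaviour (after \cite{oes}, and using the factorization of \S\ref{sec: factor} at integral weights $k\geq 2$) — identifies the relevant Hecke eigensystems. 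More robustly, one can invoke the already-established fact (due to Stevens, Coleman, and recorded in \cite{as},\cite{han1},\cite{bel}) that Stevens's overconvergent modular symbols and Coleman's overconvergent modular forms carry exactly the same finite-slope systems of Hecke eigenvalues over any given weight; since our $\mbf{V}^s_{\kappa}$ and $\mc{M}^{\dg,w}_{\kappa}$ are canonically (Hecke-equivariantly) identified with those classical objects — via Proposition \ref{prop:artin} on the symbols side and Theorem \ref{pillonicomp} on the forms side — the matching of eigensystems follows immediately.

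Having matched the $\overline{\mb{Q}}_p$-points, I would then feed this into the eigenvariety uniqueness: both $\mc{C}_{\mc{M}^{\dg}}$ and $\mc{C}_{\mbf{V}}$ are, by construction, the unique eigenvariety attached to their eigenvariety datum via \cite[Theorem 4.2.2]{han1}, and two data over the same spectral variety with the same very Zariski-dense set of classical points and the same Hecke action determine canonically isomorphic eigenvarieties. The compatibility with the projections to $\mc{W}$ and with $\mb{T}$ is part of that statement, so it is automatic. One should be slightly careful that both constructions use the \emph{same} choice of $H=FG$ and the \emph{same} cover $(\ms{Z}_{\V_i,h_i}(H))_i$ — which is exactly why $H$ was taken to be the product $FG$ in the setup above — so the identification can be performed chart by chart and glued.

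The main obstacle is the fibrewise comparison of Hecke eigensystems: one must be sure that ``finite-slope overconvergent modular symbols'' and ``finite-slope overconvergent modular forms'' really do see the same eigensystems at \emph{every} weight $\kappa$, not merely at non-critical classical weights. For classical weights $k\geq 2$ of non-critical slope this is classical Eichler-Shimura plus classicality, but for general $\kappa$ (and for critical slope) it requires the full strength of the theory — e.g. the results of Bella\"iche \cite{bel} on the comparison of eigencurves, or an argument via the overconvergent Eichler-Shimura map of \S\ref{sec:4}-\S\ref{sec:5} showing surjectivity onto a dense subspace. Since the proposition is stated as ``well known'' and only a brief sketch is expected, I would cite the relevant comparison results rather than reprove them, and simply indicate that the density of classical points together with these comparisons forces the two eigenvariety data to coincide.
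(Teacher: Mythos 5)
There is a genuine gap, and it is exactly the point you flag at the end as ``the main obstacle''. Your strategy makes the fibrewise matching of finite-slope Hecke eigensystems at \emph{every} weight $\kappa$ (including critical slope and non-classical $\kappa$) the input to the comparison, and then proposes to discharge it by citing Bella\"iche or the overconvergent Eichler--Shimura map. Neither works here: Bella\"iche's comparison is for modular curves, not for the compact quaternionic curves of this paper, and more importantly the statement ``the same finite-slope eigensystems occur in $\mc{M}^{\dg}_{\kappa}$ and in $\mbf{V}^{s}_{\kappa}$ for all $\kappa$'' is essentially equivalent to the isomorphism $\mc{C}_{\mc{M}^{\dg}}\cong\mc{C}_{\mbf{V}}$ you are trying to prove, so taking it as an input is circular. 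The map $ES_{\kappa}$ cannot rescue this either: it has a huge kernel, its surjectivity is only established at non-critical classical points (Proposition \ref{prop:image}), and the sheaf-level map $\mc{ES}$ over the eigencurve is constructed \emph{after} this proposition, using the identification of the two eigencurves. Also note that the uniqueness/comparison statement you want is not \cite[Theorem 4.2.2]{han1} (that is the construction machine) but the interpolation theorem \cite[Theorem 5.1.2]{han1}.

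The paper's proof avoids the global fibrewise claim entirely: one applies \cite[Theorem 5.1.2]{han1} twice, once in each direction, to the two eigenvariety data (modified by replacing $\ms{Z}(H)$ with the support of $\mc{M}^{\dg}$, resp.\ of $\mbf{V}$). That theorem only requires the eigensystems to agree on a \emph{very Zariski-dense} subset of points, and for this one takes the non-critical classical (crystalline) points, where the control theorems of Stevens and Coleman (\cite[Theorem 3.2.5]{han1}, \cite[Theorem 4.16]{joh}) reduce both sides to classical cohomology and classical forms, and the classical Eichler--Shimura isomorphism matches the eigensystems. One also needs, and the paper records, that both eigencurves are reduced and equidimensional of dimension $1$ (``unmixed''), so that the two closed immersions obtained in this way are inverse isomorphisms; your sketch does not address reducedness at all. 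So the correct repair of your argument is to weaken your claimed input from ``all weights'' to ``a very Zariski-dense set of non-critical classical points'' and to invoke the interpolation theorem in both directions, rather than to seek an unconditional weight-by-weight comparison.
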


\begin{proof}
This follows by applying \cite[Theorem 5.1.2]{han1} twice (once in each direction), upon noting that both eigencurves are reduced and equi-dimensional (of dimension $1$; they are "unmixed" in the terminology of \cite{han1}). For these applications one should modify the eigenvariety data, replacing $\ms{Z}(H)$ in $\mf{D}_{\mc{M}^{\dg}}$ with the support of $\mc{M}^{\dg}$ and similarly for $\mf{D}_{\mbf{V}}$. The relevant very Zariski dense subsets are then constructed using the control/classicality theorems of Stevens and Coleman together with the usual Eichler-Shimura isomorphism (see \cite[Theorem 3.2.5]{han1} for a general version of the control theorem for overconvergent modular symbols, and see e.g. \cite[Theorem 4.16]{joh} for a proof of Coleman's control theorem in the context of the compact Shimura curves used here).
\end{proof}

In light of this we will identify the two eigencurves and simply denote it by $\mc{C}$. It carries two coherent sheaves $\mc{M}^{\dg}$ and $\mbf{V}$ which are determined by canonical isomorphisms
$$ \mc{M}^{\dg}(\mc{C}_{\V_{i},h_{i}}) \cong \varinjlim_{w} \mc{M}_{\V_{i}, \leq h_{i}}^{\dg,w} \cong \mc{M}_{\V_{i}, \leq h_{i}}^{\dg,w}; $$
$$ \mbf{V}(\mc{C}_{\V_{i},h_{i}}) \cong \varprojlim_{s}\mbf{V}_{\V_{i}, \leq h_{i}}^{s} \cong \mbf{V}_{\V_{i}, \leq h_{i}}^{s} $$
for all $i$ and sufficiently large $w$ resp. $s$, where $\mc{C}_{\V_{i},h_{i}}$ is the preimage of $\ms{Z}_{\V_{i},h_{i}}(H)$ in $\mc{C}$. We form the sheaves $\mc{M}^{\dg}_{\Cp} := \mc{M}^{\dg} \ctens_{\Qp} \Cp $ and $\mbf{V}_{\Cp}:= \mbf{V}\ctens_{\Qp} \Cp$. They are determined by canonical isomorphisms
$$ \mc{M}^{\dg}_{\Cp}(\mc{C}_{\V_{i},h_{i}}) \cong \varinjlim_{w} \mc{M}_{\V_{i},\Cp, \leq h_{i}}^{\dg,w} \cong \mc{M}_{\V_{i},\Cp, \leq h_{i}}^{\dg,w}; $$
$$ \mbf{V}_{\Cp}(\mc{C}_{\V_{i},h_{i}}) \cong \varprojlim_{s}\mbf{V}_{\V_{i}, \Cp, \leq h_{i}}^{s} \cong \mbf{V}_{\V_{i}, \Cp, \leq h_{i}}^{s} $$
for any sufficiently large $w$ resp. $s$. We may also naturally view these sheaves as coherent sheaves on the base change $\mc{C}_{\Cp}$ of $\mc{C}$ from $\Qp$ to $\Cp$. They are then determined by the obvious modifications of the above canonical isomorphisms. 

\medskip

With these preparations we may now state the main theorem of this section which glues the maps $ES_{\V_{i},\leq h_{i}}$:
\begin{theo}
There exists a canonical Hecke and Galois-equivariant morphism
$$\mc{ES}: \mbf{V} _{\Cp} \ra \mc{M}^{\dg} _{\Cp}(-1)$$
of coherent sheaves of $\oo _{\mc{C}} \ctens_{\Qp} \Cp$-modules on $\mc{C}$ (or coherent sheaves of $\oo_{\mc{C}_{\Cp}}$-modules on $\mc{C}_{\Cp}$), which glues $(ES _{\V_i, \leq h_i})_{i}$.
\end{theo}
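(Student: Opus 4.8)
The plan is to construct the morphism locally over the slope-adapted pieces $\mc{C}_{\V_i,h_i}$ of the eigencurve and then glue. For each $i$, recall that $\V_i \sub \U_i^{\rig}$ with $(\U_i^{\rig},h_i)$ slope-adapted for $H$, so we have at our disposal the map $ES_{\V_i,\leq h_i}\, :\, \mbf{V}_{\V_i,\Cp,\leq h_i}^{s} \ra \mc{M}_{\V_i,\Cp,\leq h_i}^{\dg,w}(-1)$ constructed above (for $w,s$ large enough and independent of the choices in the appropriate range). Under the canonical identifications $\mbf{V}_{\Cp}(\mc{C}_{\V_i,h_i}) \cong \mbf{V}_{\V_i,\Cp,\leq h_i}^{s}$ and $\mc{M}^{\dg}_{\Cp}(\mc{C}_{\V_i,h_i}) \cong \mc{M}_{\V_i,\Cp,\leq h_i}^{\dg,w}$, this is exactly a morphism of the sections of our two coherent sheaves over the affinoid $\mc{C}_{\V_i,h_i}$; since these sections are finite modules over $\oo(\mc{C}_{\V_i,h_i})\ctens_{\Qp}\Cp$ and $ES_{\V_i,\leq h_i}$ is linear over (the image of) $S_{\V_i}$, localizing gives a morphism of sheaves $\mc{ES}_i$ over $\mc{C}_{\V_i,h_i}$.

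The main work is checking that the $\mc{ES}_i$ agree on overlaps $\mc{C}_{\V_i,h_i}\cap \mc{C}_{\V_j,h_j}$, so that they glue to a global morphism. Here I would use the functoriality of the whole construction in the weight, established in the discussion after the definition of $ES_{\U}$: for a morphism $\U \ra \U'$ compatible with characters one has a commuting square relating $ES_{\U'}$ and $ES_{\U}$, and likewise $ES_{\V}$ is obtained from $ES_{\U}$ by the natural base-change map $\mc{M}_{\U,\Cp,\leq h}^{\dg,w}(-1)\ra \mc{M}_{\V,\Cp,\leq h}^{\dg,w}(-1)$ together with the isomorphism $\mbf{V}_{\V,\Cp,\leq h}^{s}\cong \mbf{V}_{\U,\Cp,\leq h}^{s}\otimes_{R_{\U}[\frac1p]}S_{\V}$ from Proposition \ref{slopeV}. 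Given two indices $i,j$, after passing to a common refinement $(\V,h)$ of the overlap (one may shrink the affinoids in weight space and note that slope-adaptedness is preserved under such shrinking, and that any two small open weights containing $\V$ can be compared after a further shrinking inside $\mc{W}$), both $\mc{ES}_i$ and $\mc{ES}_j$ restrict to the map built from $ES_{\V,\leq h}$; this is because the transition isomorphisms defining $\mbf{V}_{\Cp}$ and $\mc{M}^{\dg}_{\Cp}$ as sheaves on $\mc{C}$ are precisely the compatibility isomorphisms of Proposition \ref{slopeV} (for $\mbf{V}$) and their analogues for $\mc{M}^{\dg}$ (from the proof of Theorem \ref{theo:rankone} and flat base change in the weight), and the functoriality square ensures $ES$ intertwines them. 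Hence $\mc{ES}_i|_{\mc{C}_{\V_i,h_i}\cap\mc{C}_{\V_j,h_j}} = \mc{ES}_j|_{\mc{C}_{\V_i,h_i}\cap\mc{C}_{\V_j,h_j}}$.

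Finally, Hecke-equivariance is inherited from the Hecke-equivariance of each $ES_{\U}$ (established via Propositions \ref{prop:artin}, \ref{prop:diam}, \ref{prop:delta} and Corollary \ref{cor:leray}) together with the fact that the coherent sheaf structures on $\mbf{V}_{\Cp}$ and $\mc{M}^{\dg}_{\Cp}$ over $\mc{C}$, and the $U_p$-slope decompositions used to cut them out, are by construction $\mb{T}$-equivariant; similarly $G_{\Qp}$-equivariance is inherited from the Galois-equivariance of each $ES_{\U}$ and the fact that the Galois action commutes with the Hecke action and hence preserves the slope pieces. Uniqueness of the glued morphism is automatic since a morphism of sheaves is determined by its restrictions to a cover.

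I expect the main obstacle to be purely bookkeeping: one must be careful that the $w$ and $s$ parameters can be chosen uniformly over a cofinal system, that the identifications $\mbf{V}_{\Cp}(\mc{C}_{\V_i,h_i})\cong \mbf{V}_{\V_i,\Cp,\leq h_i}^{s}$ etc. are exactly the ones used to define the global sheaves, and that comparing two small open weights $\U_i,\U_j$ over an overlap really does reduce to a single $ES_{\V,\leq h}$ after shrinking — the subtlety being that $\U_i$ and $\U_j$ need not be comparable as weights, so one genuinely needs the intermediate affinoid weight $\V$ and the two functoriality squares $ES_{\U_i}\rightsquigarrow ES_{\V} \leftsquigarrow ES_{\U_j}$. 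None of this requires new ideas beyond what is already set up, but it is the step where care is needed.
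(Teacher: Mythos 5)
Your proposal follows essentially the same route as the paper: the paper likewise induces maps over the cover $(\mc{C}_{\V_i,h_i})_i$ via the canonical identifications and glues them using the functoriality of $ES_{\U}$ in the small open weight together with the naturality of $ES_{\V,\leq h}$ for $\V\subset\U^{\rig}$, explicitly describing this verification as tedious but straightforward and leaving the details (the bookkeeping you flag, including comparison of non-comparable $\U_i,\U_j$ through intermediate affinoid weights and compatibility of slope pieces) to the reader. Your write-up is correct and in fact supplies more of that bookkeeping than the paper does.
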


\begin{proof}
The $ES_{\V_{i},\leq h_{i}}$ induce maps over the cover $(\mc{C}_{\V_{i},h_{i}})_{i}$ using the canonical isomorphisms. Checking that they glue is tedious but straightforward, using the functoriality of the maps $ES_{\U}$ in the small open weight $\U$ as well as the naturality of the construction of $ES_{\V,\leq h}$ for $\V \sub \U^{rig}$, $\V$ open affinoid and $\U$ small open such that $(\U^{rig},h)$ is slope-adapted for $H$. We leave the remaining details to the reader.
\end{proof}

The morphism $\mc{ES}$ is our main object of study in this section. Our main technique to study it is via specialization to non-critical classical points of $\mc{C}$ (recall that a classical point $x$ is non-critical if $h<k-1$, where $h$ is the slope and $k$ is the weight of $x$). Let $\kappa \in \mc{W}(L)$ with $L/\Qp$ finite and let $(\V,h)$ be slope-adapted with $\V$ connected and affinoid and $\kappa \in \V(L)$. By \cite[Theorem 3.3.1]{han1} there is a second quadrant spectral sequence (with $s$ big enough)
$$ E_{2}^{pq}={\rm Tor}_{-p}^{\oo_{\mc{W}}(\V)}(H^{q}(K,\mbf{D}_{\V}^{s})_{\leq h}, L) \implies H^{p+q}(K, \mbf{D}_{\kappa}^{s})_{\leq h} $$
which degenerates at the $E_{2}$-page since $\oo_{\mc{W}}(\V)$ is a Dedekind domain. This gives us a short exact sequence 
$$ 0 \ra \mbf{V}(\mc{C}_{\V,h})\otimes_{\oo_{\mc{W}}(\V)}L \ra H^{1}(K, \mbf{D}_{\kappa}^{s})_{\leq h} \ra {\rm Tor}_{1}^{\oo_{\mc{W}}(\V)}(H^{2}(K,\mbf{D}_{\V}^{s})_{\leq h},L) \ra 0. $$
Now assume that $x\in \mc{C}(L^{\prime})$ is a classical non-critical point of weight $k=\kappa$ (with $L^{\prime}/L$ a finite field extension). After localizing the above short exact sequence at $x$ it becomes an isomorphism 
$$ (\mbf{V}(\mc{C}_{\V,h})\otimes_{\oo_{\mc{W}}(\V)}L)_{x} \cong (H^{1}(K, \mbf{D}_{k}^{s})_{\leq h})_{x}. $$
This follows from the fact that $x$ does not occur in $H^{2}(K,\mbf{D}_{\V}^{s})_{\leq h}$, which by \cite[Proposition 4.5.2]{han1} follows from the fact that it does not occur in $H^{2}(K,\mbf{D}_{k}^{s})_{\leq h}$, which in turn follows from the control theorem. Using the control theorem again, the right hand side of the isomorphism is canonically isomorphic to $H^{1}_{\et}(\X_{\Cp}, \V_{k})_{x}$. From this we deduce that the fiber of $\mbf{V}$ at $x$ is the largest semisimple quotient of the generalized eigenspace in $H_{\et}^{1}(\X_{\Cp}, \V_{k})$ associated with $x$.

\medskip

By a similar but simpler analysis (using Lemma \ref{lemm:fibre}) one sees that the fibre of $\mc{M}^{\dg}$ at an arbitrary point $x\in \mc{C}(\overline{\mb{Q}}_{p})$ is equal to the largest semisimple quotient of the generalized eigenspace in $H^{0}(\Xw,\omega^{\dg}_{\kappa,w}\otimes_{\oo_{\Xw}}\Omega_{\Xw}^{1})$ associated with $x$ where $w$ is sufficiently large. One may of course apply the control theorem if $x$ is non-critical and classical to gain a further refinement. Similarly, the analogous statements apply to the same objects based changed from $\Qp$ to $\Cp$. 

\medskip

We will need a few extras fact about $\mc{C}$ before we proceed to analyze $\mc{ES}$.

\begin{lemm}
Let $\kappa \in \mc{W}(\overline{\mb{Q}}_{p})$ be a weight and let $s\geq s_{\kappa}$. Then $H^{0}(K,\mbf{D}_{\kappa}^{s})=0$.
\end{lemm}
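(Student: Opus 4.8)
The statement asserts that $H^{0}(K,\mbf{D}_{\kappa}^{s})=0$, i.e. the degree-zero cohomology of the local system attached to the $\Dp$-module $\mbf{D}_{\kappa}^{s}$ on the compact Shimura curve $\XC$ vanishes. The plan is to identify $H^{0}(K,\mbf{D}_{\kappa}^{s})$ with the $\Gamma$-invariants $(\mbf{D}_{\kappa}^{s})^{\Gamma}$ for the relevant arithmetic groups $\Gamma$ (the components of $\XC$ correspond to finitely many such groups acting on the upper half-plane, and since $B$ is a division algebra these are cocompact), and then show that these invariants are trivial. Concretely, I would first reduce to showing that for each relevant $\Gamma$, no nonzero $\mu \in \mbf{D}_{\kappa}^{s}$ is fixed by all of $\Gamma$. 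Because $B$ is a quaternion \emph{division} algebra split at $p$ and $K_{0}(p)\subset\gl(\Zp)$, the image of $\Gamma$ in $\gl(\Qp)$ under the $p$-adic embedding is a (congruence) subgroup of $K_0(p)$ that is Zariski-dense in $\SL_2$; in particular it contains enough elements to pin down invariants.

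The key computational step is then local: I would argue that the only element of $\mbf{D}_{\kappa}^{s}$ invariant under a Zariski-dense subgroup of (the norm-one elements of) $K_0(p)$ is zero. Dually, this is equivalent to the statement that a Zariski-dense subgroup of $K_0(p)$ acting on the module of analytic functions $\mbf{A}_{\kappa}^{s}$ has no nonzero invariant functionals, which in turn follows once one knows the coinvariants, or more simply once one exhibits a single unipotent element $\left(\begin{smallmatrix}1 & 0\\ c & 1\end{smallmatrix}\right)\in K_0(p)$ with $c\in p\Zp\setminus\{0\}$ and shows directly that the only $\mu$ fixed by all its powers vanishes. Here the action is $(\mu\cdot_\kappa\gamma)(f) = \mu(f\cdot_\kappa\gamma^{-1})$ with $(f\cdot_\kappa\gamma)(x) = \chi_{\kappa}(cx+d)f(\tfrac{ax+b}{cx+d})$; taking $\gamma = \left(\begin{smallmatrix}1 & 0\\ p^N & 1\end{smallmatrix}\right)$ one computes that invariance forces $\mu$ to be supported, after passing to the associated power series expansion via the Amice basis $(e_j^s)$, only in a way incompatible with also being invariant under a suitable torus element $\left(\begin{smallmatrix}t & 0\\ 0 & t^{-1}\end{smallmatrix}\right)$ (or a second unipotent, coming from $\Gamma$ being nonabelian and Zariski-dense). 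The cleanest route is probably to use one unipotent to show $\mu$ is "concentrated at the boundary" of $\Zp$ and a non-commuting element of $\Gamma$ to contradict this.

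An alternative, and perhaps the slickest, argument: observe that $H^{0}$ is insensitive to $s$ (the transition maps $\mbf{D}_{\kappa}^{s'}\to\mbf{D}_{\kappa}^{s}$ for $s'\le s$ are injective with dense image on invariants, or one simply notes all $\mbf{D}_{\kappa}^{s}$ embed compatibly into the "locally analytic" distributions), so it suffices to treat one value of $s$, or even to pass to the colimit and work with the module of all locally analytic distributions, where the $\SL_2(\Qp)$-representation theory is cleanest. Then $(\mbf{D}_{\kappa})^{\Gamma}\subseteq (\mbf{D}_{\kappa})^{N'}$ for $N'$ the Zariski closure, which is all of a nontrivial unipotent subgroup together with enough of a torus; since $\mbf{D}_\kappa$ has no finite-dimensional subquotients on which such a group acts trivially (the relevant trivial-isotypic part would contribute to $H^0(\XC,\mbf{D}_\kappa^s)$, but one can compare with the integral structure: reduce mod $p$ and use that $\mbf{D}_\kappa^{s,\circ}/p$ has no $\SL_2(\Fp)$-invariants other than constants, which are killed by $\tilde U$ considerations — or more directly invoke that a nonzero invariant distribution would give a nonzero element of $H^0$ contradicting that the Euler characteristic / parabolic cohomology computations in \cite{han1} force $H^0=0$ for cocompact $\Gamma$).

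\textbf{Main obstacle.} The genuine content is the purely local, non-archimedean representation-theoretic fact that a Zariski-dense subgroup of $K_0(p)$ has no nonzero invariant vectors in $\mbf{D}_\kappa^s$ (equivalently no invariant functionals on $\mbf{A}_\kappa^s$). This requires an honest computation with the $\chi_\kappa$-twisted action on the Amice basis; the unipotent part is easy but combining it with a second generator of $\Gamma$ to kill all remaining invariants needs care, especially handling the twist $\chi_\kappa(cx+d)$ cleanly for non-integral $\kappa$. I expect this to be the step that takes real work, though the authors may well dispatch it by citing the relevant statement from \cite{han1} or \cite{as} about vanishing of $H^0$ for overconvergent modular symbols on compact Shimura curves.
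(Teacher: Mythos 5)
Your overall framing (reduce to $\Gamma$-invariants for congruence subgroups of the reduced-norm-one group, then kill invariants using a unipotent element and Amice-type considerations) matches the paper, but two steps are genuinely missing or would fail as written. First, you cannot pass from invariance under $\Gamma$ to invariance under its Zariski closure, nor to invariance under elements of $K_0(p)$ that you have not shown to lie in $\Gamma$: the representation $\mbf{D}_{\kappa}^{s}$ is infinite-dimensional and not algebraic, so Zariski density of $\Gamma$ gives nothing directly, and your chosen elements $\left(\begin{smallmatrix}1 & 0\\ p^N & 1\end{smallmatrix}\right)$ and $\left(\begin{smallmatrix}t & 0\\ 0 & t^{-1}\end{smallmatrix}\right)$ merely lie in $K_0(p)$. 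The paper fixes exactly this point by using the $p$-adic continuity of the action, so that $H^0(\Gamma,\mbf{D}_{\kappa}^{s})=H^0(\Gamma_p,\mbf{D}_{\kappa}^{s})$ for the $p$-adic closure $\Gamma_p$, and then citing a lemma of Rapinchuk together with Zariski density of $\Gamma$ in $G^1$ to conclude that $\Gamma_p$ contains an \emph{open} subgroup of $\SL_2(\Zp)$; it is this openness, not Zariski density per se, that supplies usable group elements. Your fallback appeal to ``Euler characteristic computations forcing $H^0=0$'' is circular, since the vanishing of $H^0$ is precisely the statement to be proved.

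Second, the local computation you flag as the main obstacle is an artifact of choosing the lower-triangular unipotent, whose action really does carry the twist $\chi_{\kappa}(cx+d)$. The paper instead takes the \emph{upper}-triangular unipotent $u=\left(\begin{smallmatrix}1 & a\\ 0 & 1\end{smallmatrix}\right)$ with $a\neq 0$ (available in the open subgroup above); for this element $c=0$, $d=1$, so the twist disappears and $u$ acts on distributions by pure translation, $(u\cdot\mu)(f(x))=\mu(f(x+a))$, independently of $\kappa$. A single such element then suffices: under the injective Amice transform $\mu\mapsto A_{\mu}(T)=\int (1+T)^{x}\,\mu(x)$ one has $A_{u\cdot\mu}=(1+T)^{a}A_{\mu}$, so $u\cdot\mu=\mu$ forces $\left((1+T)^{a}-1\right)A_{\mu}=0$ in the domain $\Qp[[T]]$, whence $\mu=0$. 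No second generator, torus element, or ``support at the boundary'' analysis is needed, and the non-integral weight causes no difficulty because $\kappa$ never enters the computation.
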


\begin{proof}
Write $G^1$ for the closed subgroup of $G$ of elements of reduced norm one. Under our fixed isomorphism $G(\Zp) \cong \gl(\Zp)$ we have $G^{1}(\Zp)\cong \SL_{2}(\Zp)$. To prove the lemma, it suffices to show that $H^0(\Gamma, \mbf{D}_{\kappa}^s)=0$ for any congruence subgroup $\Gamma \subset G^1(\mb{Z})$ contained in $K_{0}(p)$. By the $p$-adic continuity of the $\Gamma$-action on $\mbf{D}_{\kappa}^s$, we have 
$$ H^0(\Gamma, \mbf{D}_{\kappa}^s)=H^0(\Gamma_p, \mbf{D}_{\kappa}^s), $$ 
where $\Gamma_p$ denotes the $p$-adic closure of $\Gamma$ in $\gl(\Zp)$. By \cite[Lemma 2.7]{rap} and the Zariski density of $\Gamma$ in $G^1$ (see e.g. \cite[Theorem 4.10]{pr}), $\Gamma_p$ contains an open subgroup of $\SL_2(\Zp)$, and in particular contains a nontrivial element $u=\left( \begin{matrix} 1 & a \\ 0 & 1 \end{matrix} \right)$ for some $a \in \Zp, a\neq 0$. Note that $u$ acts on $\mu \in \mbf{D}_{\kappa}^s$ by $(u \cdot \mu)(f(x))= \mu(f(x+a))$; in particular this action does not depend on $\kappa$. Since any element of $H^0(\Gamma_p, \mbf{D}_{\kappa}^s) \sub \mbf{D}_{\kappa}^s = \mbf{D}^s \otimes_{\mb{Q}_p} L$ ($L$ the residue field of $\kappa$) is fixed by $u$, it now suffices to show that no nonzero element of $\mbf{D}^s$ is fixed by $u$. To see this, note that the Amice transform \[ \mu \mapsto A_{\mu}(T)=\int (1+T)^x \mu(x) \in \Qp[[T]] \] defines a $\Qp$-linear injection of $\mbf{D}^s$ (regarded as a ring under convolution) into a subring of $\Qp[[T]]$ (cf. \cite[\S 1.8]{col} for more detailed statements).  An easy calculation shows that $A_{u \cdot \mu}(T) = (1+T)^a A_{\mu}(T)$, so if $u \cdot \mu = \mu$ then $\left( (1+T)^a -1\right)\cdot A_\mu(T) =0$, and since $\Qp[[T]]$ is a domain this implies $A_\mu(T)=0$ and then $\mu=0$ as desired.  
\end{proof}

\begin{lemm}\label{lemm:torsionfree}
The sheaves $\mc{M}^{\dg}$ and $\mbf{V}$ are torsion free.
\end{lemm}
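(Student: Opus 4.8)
The eigencurve $\mc{C}$ is reduced and equidimensional of dimension $1$, so a coherent sheaf on $\mc{C}$ is torsion free precisely when, locally, none of its sections is annihilated by a nonzero element of the (reduced, $1$-dimensional, hence ``unmixed'') coordinate ring that is a nonzerodivisor. Since the question is local on $\mc{C}$, I would fix one of the pieces $\mc{C}_{\V_{i},h_{i}}$ of the admissible cover and work with the finite-slope modules $\mbf{V}(\mc{C}_{\V_{i},h_{i}}) \cong \mbf{V}^{s}_{\V_{i},\leq h_{i}}$ and $\mc{M}^{\dg}(\mc{C}_{\V_{i},h_{i}}) \cong \mc{M}^{\dg,w}_{\V_{i},\leq h_{i}}$ for suitable $s,w$. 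These are finite modules over $T_{\V_{i},h_{i}} := \oo_{\mc{C}}(\mc{C}_{\V_{i},h_{i}})$, which is finite over the Dedekind domain $\oo_{\mc{W}}(\V_{i})$ (recall $\V_i$ is affinoid, so $\oo_\mc{W}(\V_i)$ is a PID after shrinking, or at worst a Dedekind domain), and $T_{\V_i,h_i}$ is reduced and of pure dimension $1$ because $\mc{C}$ is. So it suffices to show these two modules are torsion free over $T_{\V_{i},h_{i}}$; equivalently, since $T_{\V_i,h_i}$ is reduced and $1$-dimensional, that every associated prime of each module is a minimal prime of $T_{\V_i,h_i}$.

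\textbf{Key steps for $\mbf{V}$.} The cohomology group $H^{1}(\XC,\mbf{D}_{\V}^{s})_{\leq h}$ (with $\V = \V_i$, $h = h_i$) is, after base change, a finitely generated $\oo_\mc{W}(\V)$-module computed as the cohomology of a perfect complex $C^{\bullet}(K,\mbf{D}_{\V}^{s})_{\leq h}$ of finite projective $\oo_\mc{W}(\V)[\tfrac1p]$-modules, by the slope-decomposition formalism of \S\ref{slopedecomp} (which gives base-change compatibility as in Proposition \ref{slopeV}). Over the Dedekind domain $\oo_{\mc{W}}(\V)$, the torsion submodule of such a cohomology module can only be supported at maximal ideals; so I would argue that the torsion of $H^{1}$ is supported on a proper Zariski-closed (hence finite, by $1$-dimensionality) subset of $\V$, and that the $U_p$-slope-$\leq h$ part removes the possibility of a whole component being torsion. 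Concretely: the generic fibre of $H^{1}(\XC,\mbf{D}_{\V}^{s})_{\leq h}$ over each irreducible component of $\mathrm{Spec}\,\oo_\mc{W}(\V)$ has the expected rank (it interpolates the classical Betti cohomology at classical weights, via the control theorem and Artin comparison of Propositions \ref{prop:artin}, and the analysis of fibres carried out just above Lemma \ref{lemm:torsionfree}), and a torsion $T_{\V,h}$-submodule would force a drop in rank at the corresponding minimal prime of $T_{\V,h}$, contradicting the fact that $\mc{C}$ is equidimensional with each component dominating $\mc{W}$. The vanishing $H^{0}(K,\mbf{D}_\kappa^{s})=0$ from the preceding lemma is what ensures no lower-degree contribution pollutes $H^1$; together with the finiteness of $H^2$ and the $\mathrm{Tor}$-spectral sequence recalled above, the fibre of $\mbf{V}$ at a classical non-critical point injects $\mbf{V}(\mc{C}_{\V,h})\otimes L \hookrightarrow H^1(K,\mbf{D}_k^s)_{\leq h}$, forcing $\mbf{V}$ to be torsion free at all such points, which are very Zariski dense in $\mc{C}$; torsion freeness then propagates since the support of a torsion subsheaf would be a nonempty open of a component, hence meet this dense set.

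\textbf{Key steps for $\mc{M}^{\dg}$.} This is the easier half. By Lemma \ref{lemm:fibre}, for a regular element $x$ cutting out a point of $\V$ we have an exact sequence $0\to \omega^{\dg}_{\U,w}\xrightarrow{\cdot x}\omega^{\dg}_{\U,w}\to \omega^{\dg}_{\mc{Z},w}\to 0$ and hence $0\to H^0(\mc{X}_w,\omega^{\dg}_{\U,w})\xrightarrow{\cdot x}H^0(\mc{X}_w,\omega^{\dg}_{\U,w})\to H^0(\mc{X}_w,\omega^{\dg}_{\mc{Z},w})\to 0$; in particular $H^0(\mc{X}_w,\omega^{\dg}_{\V,w}\otimes\Omega^1)$ is $x$-torsion free for every such $x$, i.e. torsion free over $\oo_\mc{W}(\V)$, and Buzzard's property (Pr) is preserved under taking the slope-$\leq h$ summand, which is still torsion free. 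Descending along the surjection $\oo_\mc{W}(\V)\to T_{\V,h}$ (or rather, using that $T_{\V,h}$ is finite and torsion free over $\oo_\mc{W}(\V)$, and that $\mc{M}^{\dg}(\mc{C}_{\V,h})$ is a $T_{\V,h}$-module whose underlying $\oo_\mc{W}(\V)$-module is torsion free) forces $\mc{M}^{\dg}(\mc{C}_{\V,h})$ to have no associated prime mapping to a maximal ideal of $\oo_\mc{W}(\V)$, so all its associated primes are minimal in $T_{\V,h}$. Gluing over the cover $(\mc{C}_{\V_i,h_i})_i$ finishes the proof for both sheaves.

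\textbf{Main obstacle.} The genuinely delicate point is the $\mbf{V}$ case: one must rule out a whole irreducible component of $\mc{C}_{\V,h}$ being torsion-supported, which is a statement about ``no drop in generic rank'' of the interpolated cohomology. I expect this to come down to combining (i) the base-change compatibility and finiteness of the slope-$\leq h$ Borel--Serre complex over the Dedekind base, (ii) the density of classical non-critical points and the control theorem identifying the fibre there with honest étale/Betti cohomology of the appropriate rank, and (iii) equidimensionality of $\mc{C}$ so that each component dominates $\mc{W}$ — these force the torsion locus to be a proper closed subset of each component of $\mathrm{Spec}\,T_{\V,h}$, which over a $1$-dimensional reduced ring means exactly torsion freeness. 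Everything else is bookkeeping with the already-established structure of the eigencurve and its sheaves.
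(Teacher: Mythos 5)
Your treatment of $\mc{M}^{\dg}$ is fine and matches the paper's (torsion-freeness over $\oo_{\mc{W}}(\V)$ is essentially immediate from the definition, or from the exact sequence of Lemma \ref{lemm:fibre}, and then passes to the finite $\oo_{\mc{W}}(\V)$-algebra $\oo_{\mc{C}}(\mc{C}_{\V,h})$). The problem is the $\mbf{V}$ half. Over the Dedekind ring $\oo_{\mc{W}}(\V)$, torsion-freeness is \emph{not} a statement about generic ranks on components, nor can it be propagated from a Zariski-dense set of points: the torsion subsheaf of a coherent sheaf is coherent, so its support is Zariski \emph{closed}, and on a curve it can perfectly well be a finite set of closed points disjoint from the classical non-critical locus. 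Thus your two mechanisms --- ``a torsion submodule would force a drop in rank at a minimal prime'' and ``torsion-freeness at a very Zariski dense set of classical points propagates'' --- only exclude torsion supported on whole components; they say nothing about $\mf{m}_{\kappa}$-torsion concentrated at finitely many non-classical (or critical) weights $\kappa$, which is exactly the case one has to rule out. In other words, the torsion-free locus is open, so checking it on a dense set proves nothing.

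The fix is to make the check at \emph{every} maximal ideal of $\oo_{\mc{W}}(\V)$, and you already have the ingredients in hand: for an arbitrary $\kappa\in\V(\overline{\mb{Q}}_p)$ with residue field $L$, the $\mf{m}_{\kappa}$-torsion of $H^{1}(K,\mbf{D}_{\V}^{s})_{\leq h}$ is $\mathrm{Tor}_{1}^{\oo_{\mc{W}}(\V)}(H^{1}(K,\mbf{D}_{\V}^{s})_{\leq h},L)$, and the degenerate Tor spectral sequence of \cite[Theorem 3.3.1]{han1} in total degree $0$ exhibits this Tor group as a subquotient of $H^{0}(K,\mbf{D}_{\kappa}^{s})_{\leq h}$, which vanishes by the preceding lemma --- and that lemma holds for \emph{all} weights $\kappa$, not just classical ones. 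This is the paper's argument; it needs no control theorem, no rank count, and no density of classical points. You invoked the $H^{0}$-vanishing and the spectral sequence only at classical non-critical points, which is where the gap enters.
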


\begin{proof}
We start with $\mc{M}^{\dg}$. This is a local statement so we may work over some $\mc{C}_{\V,h}$ coming from a slope-adapted $(\V,h)$ with $\V$ affinoid. Then $\oo_{\mc{C}}(\mc{C}_{\V,h})$ is finite over $\oo_{\mc{W}}(\V)$, so it suffices to show torsion freeness over $\oo_{\mc{W}}(\V)$, which is clear by definition.

\medskip

\noindent We now prove torsion-freeness for $\mbf{V}$. Working locally as above it is enough to show that $H^{1}(K,\mbf{D}_{\V}^{s})_{\leq h}$ is a torsion-free $\oo_{\mc{W}}(\V)$-module, where $\V \sub \mc{W}$ is affinoid and $(\V,h)$ is slope-adapted. Let $\kappa \in \V(\overline{\mb{Q}}_{p})$ be a weight, cutting out a maximal ideal $\mf{m}_{\kappa}$ and let $L:=\oo_{\mc{W}}(\V)/\mf{m}_{\kappa}$. It is enough to show to that the $\mf{m}_{\kappa}$-torsion vanishes for all $\kappa$. This torsion is equal to ${\rm Tor}_{1}^{\oo_{\mc{W}}(\V)}(H^{1}(K,\mbf{D}_{\V}^{s})_{\leq h},L)$. Using the Tor-spectral sequence (\cite[Theorem 3.3.1]{han1}) one sees that this ${\rm Tor}$-group is a subquotient of $H^{0}(K,\mbf{D}_{\kappa}^{s})_{\leq h}$, which vanishes by the previous Lemma.
\end{proof}

Next, assume that our tame level is of the form $K_1(N)$ for some $N$ with $(N,dp)=1$. Let $\mc{C}^{\mathrm{nc}}\subset \mc{C}$ denote the set of non-critical classical crystalline points for which the roots of the $p$th Hecke polynomial are distinct.\footnote{This last condition is conjecturally automatic.} For each positive divisor $M|N$, let $\mc{C}^{\mathrm{nc}}_{M-\mathrm{new}}$ denote the set of points $x$ in $\mc{C}^{\mathrm{nc}}$ for which the tame Artin conductor of the Galois representation $\rho_x$ associated to $x$ is exactly $Md$. Note that the tame Artin conductor is sometimes just called the tame conductor; see the discussion in \cite{bel} before Lemma IV.4.3. Let $\mc{C}_{M-\mathrm{new}}$ denote the Zariski-closure of $\mc{C}^{\mathrm{nc}}_{M-\mathrm{new}}$ in $\mc{C}$. The following lemma was stated without proof for modular curves in \cite{han}.

\begin{lemm}Assume that the tame level is of the form $K_{1}(N)$. For any $M|N$, $\mc{C}_{M-\mathrm{new}}$ is a union of irreducible components of $\mc{C}$, and $\mc{C} = \cup_{M|N} \mc{C}_{M-\mathrm{new}}$.  
\end{lemm}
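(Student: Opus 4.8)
The plan is to reduce everything to the decomposition of classical spaces of automorphic forms according to tame conductor, transported along the classicality/control theorems and then spread out over the eigencurve via Zariski density.

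First I would set up the abstract eigenvariety framework: recall that $\mc{C}$ is equidimensional of dimension $1$ and reduced, so its irreducible components are well-defined, and that a Zariski-closed subset of $\mc{C}$ which contains a subset of $\mc{C}^{\nc}$ that is Zariski dense in a union of components of $\mc{C}$ is itself that union of components. Thus it suffices to show (i) that each $\mc{C}^{\nc}_{M-\new}$ is Zariski dense in a union of irreducible components of $\mc{C}$, and (ii) that every irreducible component of $\mc{C}$ meets some $\mc{C}^{\nc}_{M-\new}$ (equivalently, $\mc{C}^{\nc} = \coprod_{M|N} \mc{C}^{\nc}_{M-\new}$, which is immediate since the tame Artin conductor of $\rho_x$ for $x \in \mc{C}^{\nc}$ is of the form $Md$ with $M|N$ by local-global compatibility and the fact that the tame level is $K_1(N)$). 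The last point uses that $\mc{C}^{\nc}$ is very Zariski dense in $\mc{C}$ (this is built into the eigenvariety data, cf. the proof of the isomorphism $\mc{C}_{\mc{M}^\dg}\cong\mc{C}_{\mbf{V}}$ and \cite[Theorem 5.1.2]{han1}), together with the fact that a very Zariski dense subset of a reduced equidimensional rigid space decomposes into its intersections with the irreducible components, each of which is Zariski dense in that component.

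The heart of the matter, and the step I expect to be the main obstacle, is (i): showing that $\mc{C}^{\nc}_{M-\new}$ is \emph{closed} under specialization in the appropriate sense, i.e. that its Zariski closure contains no points of $\mc{C}^{\nc}_{M'-\new}$ for $M' \neq M$, so that it carves out a union of components rather than something smaller. For this I would argue as follows. The tame Artin conductor of $\rho_x$ is locally constant on $\mc{C}^{\nc}$: away from $p$, the family of Galois representations carried by (the normalization of) $\mc{C}$ restricts, at each prime $\ell | Nd$, to a family of $G_{\Qp_\ell}$-representations, and for points in $\mc{C}^{\nc}$ the conductor at $\ell$ is computed by the dimension of inertia-invariants, which can only jump \emph{up} under specialization. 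On the other hand, by the classical newform theory for $B^\times$ (Jacquet--Langlands transferred to $\GL_2$, or directly the theory of local new vectors), the conductor at each bad $\ell$ is constrained by the tame level $K^p = K_1(N)^p$: at any classical point the local component is generated by its $K_\ell$-fixed vectors, which forces the conductor at $\ell$ to divide the $\ell$-part of $N$, and in fact the dimension of $K_\ell$-invariants is determined by it. Combining upper-semicontinuity of the conductor with the rigidity coming from the fixed tame level $K_1(N)$ (which pins the conductor from above), one gets that the total tame conductor is constant on connected components of $\mc{C}^{\nc}$, hence $\mc{C}^{\nc}_{M-\new}$ is a union of connected components of $\mc{C}^{\nc}$, and its closure is a union of irreducible components of $\mc{C}$. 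The decomposition $\mc{C} = \cup_{M|N}\mc{C}_{M-\new}$ then follows by taking closures in (ii).

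Concretely, the technical points to nail down are: the existence of the big Galois pseudorepresentation on $\mc{C}$ and its local-global compatibility at primes $\ell \neq p$ for the classical points (standard, via interpolation of Hecke eigenvalues and Chebotarev, as in \cite{bel}); the upper-semicontinuity of $\ell$-adic conductors in the family (this is where one must be a little careful — one works with the pseudorepresentation or passes to a cover where a genuine representation exists, and uses that $\dim \rho^{I_\ell}$ is upper semicontinuous); and the classical statement that for automorphic representations of $B^\times$ of tame level $K_1(N)$ the product of local tame conductors is $Md$ with $M$ the exact new-level. I would cite \cite{bel} (discussion before Lemma IV.4.3) for the conductor bookkeeping and Carayol-type local-global compatibility, and otherwise the argument is formal manipulation with the eigenvariety and Zariski density as recalled above.
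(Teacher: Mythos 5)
Your overall reduction (show each $\mc{C}^{\nc}_{M-\new}$ is Zariski dense in a union of irreducible components, then use Zariski density of $\mc{C}^{\nc}$) is fine, but the step you yourself call the heart of the matter is exactly where the argument breaks, and the two ingredients you invoke do not close it. Upper semicontinuity of $\dim \rho_x^{I_\ell}$ (inertia invariants can only jump up at special points, so the conductor can only \emph{drop} there) and the constraint from the tame level $K_1(N)$ (the conductor divides $Nd$) are both \emph{upper} bounds: the first says the conductor at any point of an irreducible component $Z$ divides the generic conductor of $Z$, the second says that generic conductor divides $Nd$. Neither rules out a strict drop at a particular non-critical classical crystalline point, i.e. a point of $\mc{C}^{\nc}_{M'-\new}$ with $M'$ a proper divisor of $M$ sitting on a component whose generic points are $M$-new; excluding precisely this is the content of the lemma, so the conclusion that the tame conductor is ``constant on connected components of $\mc{C}^{\nc}$'' is a non sequitur as written (note also that $\mc{C}^{\nc}$ is just a set of points, so what you need is constancy on $\mc{C}^{\nc}\cap Z$ for each irreducible component $Z$). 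To make your route work you would need a genuine rigidity input that bounds the conductor from \emph{below} along a component: for instance, local constancy of the $I_\ell$-inertial type ($\ell \neq p$) on connected components of (a normalization or cover of) $\mc{C}$ where an honest family of representations exists, which handles the principal-series part, together with purity/weight–monodromy at classical points to exclude degeneration of the monodromy operator at Steinberg-type primes. None of this appears in your sketch, and it is not a routine verification.

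The paper takes a different route that avoids any constancy or semicontinuity claim: adapting \cite[Lemma IV.4.7]{bel}, one shows that $\mc{C}^{\nc}_{M-\new}$ is an \emph{accumulation} subset of $\mc{C}$ (each of its points has a neighborhood basis of affinoids in which it is Zariski dense); this forces every irreducible component of the closure $\mc{C}_{M-\new}$ to have positive dimension, and then \cite[Corollary 2.2.7]{con} together with equidimensionality of $\mc{C}$ in dimension one shows $\mc{C}_{M-\new}$ is a union of irreducible components, the covering statement following from Zariski density of $\mc{C}^{\nc}=\cup_{M|N}\mc{C}^{\nc}_{M-\new}$. If you want to pursue your semicontinuity approach, you must either supply the missing lower-bound/rigidity argument indicated above or reduce to this accumulation statement.
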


\begin{proof}Adapting the proof of \cite[Lemma IV.4.7]{bel}, one shows that $\mc{C}^{\mathrm{nc}}_{M-\mathrm{new}}$ is an \emph{accumulation subset} of $\mc{C}$: each point $x \in \mc{C}^{\mathrm{nc}}_{M-\mathrm{new}}$ has a neighborhood basis of affinoids $U\subset \mc{C}$ for which $U \cap \mc{C}^{\mathrm{nc}}_{M-\mathrm{new}}$ is Zariski-dense in $U$.  This property implies that each irreducible component of the Zariski-closure $\mc{C}_{M-\mathrm{new}}$ has positive dimension. Since $\mc{C}$ is equidimensional of dimension one, we deduce from \cite[Corollary 2.2.7]{con}  that $\mc{C}_{M-\mathrm{new}}$ is a union of irreducible components of $\mc{C}$. Since $\mc{C}^{\mathrm{nc}}=\cup_{M \mid N} \mc{C}^{\mathrm{nc}}_{M-\mathrm{new}}$ is a Zariski-dense accumulation subset of $\mc{C}$, the remainder of the lemma is clear.
\end{proof}

\subsection{Faltings's Eichler-Shimura map}\label{sec: Faltings} In this section we adapt Faltings's construction (\cite{fal}) of the $p$-adic Eichler-Shimura morphism to our setting, and use it to give a precise description of $ES_{k}$ at weights $k\geq 2$. Fix $k \in \mb{Z}_{\geq 2}$. Recall the morphism 
$$ \wh{\V}_{k} \ra \wh{\omega}^{\otimes k-2} $$
defined in \S \ref{sec: factor}. It induces a map
$$H^{1}_{\proet}(\X_{\Cp},\wh{\V}_{k}) \ra H^{1}_{\proet}(\X_{\Cp}, \wh{\omega}^{\otimes k-2}).$$ 
Note that we have an exact sequence
\begin{equation}\label{eq:omega} 
0 \ra H^{1}_{\et}(\X_{\Cp}, \omega^{\otimes k-2}) \ra H^{1}_{\proet}(\X_{\Cp}, \wh{\omega}^{\otimes k-2}) \ra H^{0}_{\et}(\X_{\Cp}, \omega^{\otimes k-2}\otimes \Omega_{\X_{\Cp}}^{1})(-1) \ra 0
\end{equation}
coming from the spectral sequence $H^i_{\et}(\X_{\Cp},R^j\nu_{\ast}\wh{\omega} ^{\otimes k-2}) \Rightarrow H^{i+j}_{\proet}(\X_{\Cp},\wh{\omega} ^{\otimes k-2})$, which degenerates at the $E_{2}$-page by an argument similar to (but simpler than) the proof of Proposition \ref{prop:omega}.

\begin{prop}\label{Faltings ES}
The composite map 
$$H^{1}_{\proet}(\X_{\Cp},\wh{\V}_{k}) \ra H^{1}_{\proet}(\X_{\Cp}, \wh{\omega}^{\otimes k-2}) \ra H^{0}_{\et}(\X_{\Cp}, \omega^{\otimes k-2}\otimes \Omega_{\X_{\Cp}}^{1})(-1)$$
is surjective, and the kernel is isomorphic to $H^{0}_{\et}(\X_{\Cp}, \omega^{\otimes k-2}\otimes \Omega_{\X_{\Cp}}^{1})(k)$ as a Hecke- and Galois module.
\end{prop}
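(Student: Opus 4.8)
The plan is to analyze the two-step composite map using the classical Eichler--Shimura isomorphism together with the Hodge--Tate decomposition for the étale cohomology of $\X_{\Cp}$ with coefficients in $\V_k$. First I would recall that $H^1_{\et}(\X_{\Cp},\V_k)$ is a Galois representation whose Hodge--Tate weights are $0$ and $k-1$; more precisely, the Hodge--Tate comparison (Faltings, or equivalently Scholze's results applied to the sheaf $\V_k = \Sym^{k-2}\mc{T}$ on $\X_{\proet}$) gives a Galois- and Hecke-equivariant short exact sequence
\begin{equation*}
0 \ra H^0_{\et}(\X_{\Cp},\omega^{\otimes k-2}\otimes\Omega^1_{\X_{\Cp}})(-1) \ra H^1_{\proet}(\X_{\Cp},\wh{\V}_k) \ra H^1_{\et}(\X_{\Cp},\omega^{\otimes -(k-2)})(k-2) \ra 0,
\end{equation*}
coming from the degeneration of $H^i_{\et}(\X_{\Cp},R^j\nu_\ast\wh{\V}_k)\Rightarrow H^{i+j}_{\proet}(\X_{\Cp},\wh{\V}_k)$ and the computation $R^j\nu_\ast\wh{\V}_k = \bigoplus$ of the relevant graded pieces (here one uses $\mathrm{Sym}$ of the Hodge--Tate filtration $0\to\wh\omega^{-1}(1)\to\wh{\mc T}\to\wh\omega\to 0$). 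The point is that $R^1\nu_\ast\wh{\V}_k$ decomposes, but only the top graded piece $\omega^{\otimes k-2}\otimes\Omega^1(-1)$ survives into the part of $H^1_{\proet}(\X_{\Cp},\wh\omega^{\otimes k-2})$ relevant to us.

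Next I would identify the map $H^1_{\proet}(\X_{\Cp},\wh{\V}_k)\to H^1_{\proet}(\X_{\Cp},\wh\omega^{\otimes k-2})$ induced by $\wh{\V}_k\to\wh\omega^{\otimes k-2}$ with the natural projection onto the ``highest-weight'' constituents. Concretely, on $R^1\nu_\ast$ the morphism $\wh{\V}_k\to\wh\omega^{\otimes k-2}$ induces on associated graded exactly the projection from the $\mathrm{Sym}^{k-2}$ of the Hodge--Tate sequence onto its top piece; composing with the projection $H^1_{\proet}(\X_{\Cp},\wh\omega^{\otimes k-2})\to H^0_{\et}(\X_{\Cp},\omega^{\otimes k-2}\otimes\Omega^1)(-1)$ from the exact sequence \eqref{eq:omega}, one gets that the composite $H^1_{\proet}(\X_{\Cp},\wh{\V}_k)\to H^0_{\et}(\X_{\Cp},\omega^{\otimes k-2}\otimes\Omega^1_{\X_{\Cp}})(-1)$ is precisely the quotient map in the displayed exact sequence above. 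Surjectivity is then immediate, and the kernel is $H^0_{\et}(\X_{\Cp},\omega^{\otimes k-2}\otimes\Omega^1_{\X_{\Cp}})(-1)$. To get the claimed form $H^0_{\et}(\X_{\Cp},\omega^{\otimes k-2}\otimes\Omega^1_{\X_{\Cp}})(k)$ I would then invoke Serre duality on $\X_{\Cp}$ (a proper smooth curve): $H^0(\X_{\Cp},\omega^{\otimes k-2}\otimes\Omega^1)$ is dual to $H^1(\X_{\Cp},\omega^{\otimes -(k-2)})$, and the Hecke/Galois equivariance together with the fact that the Galois action on $H^1_{\et}(\X_{\Cp},\V_k)$ has total determinant (via Poincaré duality) twist $(k-1)$ forces the Tate twist $(-1)$ on one constituent to be matched by $(k-1)-(-1) = k$... more carefully: the two Jordan--Hölder constituents of the Hodge--Tate filtration on $H^1_{\et}(\X_{\Cp},\V_k)$ are $H^0(\omega^{k-2}\otimes\Omega^1)(-1)$ and (via the other piece) a twist of $H^1(\X,\omega^{-(k-2)})$, and by Tate/Poincaré duality the second is $H^0(\X_{\Cp},\omega^{\otimes k-2}\otimes\Omega^1_{\X_{\Cp}})^\vee(\text{appropriate twist})$; applying Faltings's computation of the Hodge--Tate weights of $H^1_{\et}(\X_{\Cp},\V_k)$ (weights $0$ and $1-k$, or $0$ and $k-1$ depending on normalization) pins the twist on the sub to be $(k)$.

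The main obstacle I expect is bookkeeping the Tate twists and normalizations consistently: one must reconcile (a) the conventions for $\V_k$ versus $\Sym^{k-2}$, (b) the direction of the Hodge--Tate sequence $0\to\wh\omega^{-1}(1)\to\wh{\mc T}\to\wh\omega\to 0$ used in \cite{sch3}, and (c) the normalization $\chi_k(z)=z^{k-2}$ used throughout the paper, together with the self-duality of $H^1_{\et}(\X_{\Cp},\V_k)$ up to a cyclotomic twist by $k-1$. A clean way to finish would be: establish surjectivity and compute the kernel as a module \emph{without} the twist first (it is manifestly $H^0_{\et}(\X_{\Cp},\omega^{\otimes k-2}\otimes\Omega^1_{\X_{\Cp}})(-1)$, being the other constituent of $R^1\nu_\ast\wh{\V}_k$ inside $H^1_{\proet}(\X_{\Cp},\wh{\V}_k)$ — wait, one must be careful: the \emph{full} kernel is the preimage of $H^1_{\et}(\X_{\Cp},\omega^{-(k-2)})$-type terms, so one should argue the kernel of the composite equals the ``lower'' Hodge--Tate constituent). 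Then identify that constituent via Serre duality on the curve with the twisted $H^0$. I would lean on \cite{fal} (or its reworking via \cite{sch1}, Lemma 3.24 and Proposition 3.23, which were already invoked in Lemma \ref{differentialtrace}) for the Hodge--Tate decomposition of $H^1_{\et}(\X_{\Cp},\V_k)$ and for the Hecke- and Galois-equivariance of all the maps, so that no new hard input is needed beyond careful diagram chasing.
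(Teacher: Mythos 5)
There is a genuine gap, and it sits exactly at the step you treat as given. Your displayed ``Hodge--Tate'' short exact sequence has the two constituents in the wrong positions: the correct sequence (which is what the paper establishes, its (\ref{eq:vk})) has $H^{1}_{\et}(\X_{\Cp},\omega^{\otimes 2-k}(k-2))$ as the \emph{sub} and $H^{0}_{\et}(\X_{\Cp},\omega^{\otimes k-2}\otimes\Omega^{1}_{\X_{\Cp}})(-1)$ as the \emph{quotient}, whereas you put the $H^{0}$ term as the sub. With your version the composite map of the proposition would factor through the subobject, so neither ``surjectivity is immediate'' nor the identification of the kernel follows; indeed your own text is inconsistent here, since you call the composite ``the quotient map in the displayed exact sequence'' while the displayed quotient is $H^{1}_{\et}(\X_{\Cp},\omega^{\otimes-(k-2)})(k-2)$, a different group, and your later parenthetical (``wait, one must be careful\ldots'') flags the problem without resolving it. Moreover, the justification you offer for the sequence --- that $R^{j}\nu_{\ast}\wh{\V}_{k}$ ``decomposes'' as the direct sum of the graded pieces of $\Sym^{k-2}$ of the Hodge--Tate filtration, with only the top piece surviving --- is false as stated: the filtration does not split as sheaves, and computing $R^{j}\nu_{\ast}\wh{\V}_{k}$ from it produces connecting maps given by the Kodaira--Spencer/Higgs field, whose nonvanishing is precisely what eliminates the intermediate graded pieces. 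So the two-constituent structure of $H^{1}_{\proet}(\X_{\Cp},\wh{\V}_{k})$ is the real content of the proposition and your proposal does not prove it.

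For comparison, the paper never computes $R^{j}\nu_{\ast}\wh{\V}_{k}$. It works with the kernel sheaf $\mathrm{Ker}_{k}$ of $\wh{\V}_{k}\ra\wh{\omega}^{\otimes k-2}$, which sits in $0\ra\wh{\omega}^{\otimes 2-k}(k-2)\ra\mathrm{Ker}_{k}\ra Q\ra 0$ with $Q$ filtered by $\wh{\omega}^{\otimes i}$ for the intermediate $i$; it then shows $H^{1}_{\proet}(\X_{\Cp},Q)$ shares no Hecke eigenvalues with $H^{1}_{\proet}(\X_{\Cp},\wh{\V}_{k})$, using the sequences of type (\ref{eq:omega}) at the intermediate weights together with the classical Eichler--Shimura isomorphism, so only $\wh{\omega}^{\otimes 2-k}(k-2)$ can contribute. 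This yields a sequence exact in the middle, and exactness at the two ends is obtained by another Hecke-eigenvalue comparison, Serre duality, and a dimension count. If you want to salvage your route, you would have to replace the asserted splitting of $R^{j}\nu_{\ast}\wh{\V}_{k}$ by an actual computation of the Higgs complex (showing the Kodaira--Spencer maps are isomorphisms, so that $R^{0}\nu_{\ast}\wh{\V}_{k}\cong\omega^{\otimes 2-k}(k-2)$ and $R^{1}\nu_{\ast}\wh{\V}_{k}\cong\omega^{\otimes k-2}\otimes\Omega^{1}(-1)$), correct the direction of the resulting extension, check that the map induced by $\wh{\V}_{k}\ra\wh{\omega}^{\otimes k-2}$ really is the edge map onto $H^{0}_{\et}$ of $R^{1}\nu_{\ast}$, and then pin down the Tate twist on the kernel, which you currently leave to an unresolved normalization discussion.
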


\begin{proof}
Recall the Hodge-Tate sequence 
$$ 0 \ra \wh{\omega}^{\otimes -1}(1) \ra \wh{\mc{T}} \ra \wh{\omega} \ra 0 $$
for $\mc{G}^{\univ}$. Since the morphism $\wh{\mc{V}}_{k} \ra \wh{\omega}^{\otimes k-2}$ comes from the map $\wh{\mc{T}} \ra \wh{\omega}$ by taking $(k-2)$-th symmetric powers, we see that its kernel ${\rm Ker}_{k}$ sits in a short exact sequence
$$  0 \ra \wh{\omega}^{\otimes 2-k}(k-2) \ra {\rm Ker}_{k} \ra Q \ra 0 $$
where $Q$ is simply defined to be the quotient. We get exact sequences
$$ H^{1}_{\proet}(\X_{\Cp}, {\rm Ker}_{k}) \ra H^{1}_{\proet}(\X_{\Cp},\wh{\V}_{k}) \ra H^{1}_{\proet}(\X_{\Cp}, \wh{\omega}^{\otimes k-2}); $$
$$ H^{1}_{\proet}(\X_{\Cp}, \wh{\omega}^{\otimes 2-k}(k)) \ra H^{1}_{\proet}(\X_{\Cp},{\rm Ker}_{k}) \ra H^{1}_{\proet}(\X_{\Cp}, Q). $$
We have an isomorphism $H^{1}_{\proet}(\X_{\Cp},\wh{\V}_{j})\cong H^{1}_{\et}(\X_{\Cp},\Sym^{j}\mc{T})\otimes_{\Zp}\Cp $ for all $j\geq 0$. From the Hodge-Tate sequence one deduces that $Q$ carries a filtration with non-zero graded pieces $\wh{\omega}^{\otimes i}$ for $i=4-k,6-k,...,k-4$ (up to twists). Using this filtration and the sequences (\ref{eq:omega}) with $k-2$ replaced by $i=4-k,6-k,...,k-4$ together with the usual Eichler-Shimura isomorphism for quaternionic Shimura curves one sees that $H^{1}_{\proet}(\X_{\Cp},\wh{\V}_{k})$ and $H^{1}_{\proet}(\X_{\Cp}, Q)$ do not have any Hecke eigenvalues in common. Thus, looking at the second exact sequence above, we see that any generalized Hecke eigenvector in $H^{1}_{\proet}(\X_{\Cp}, {\rm Ker}_{k})$ that is not killed by the map $H^{1}_{\proet}(\X_{\Cp}, {\rm Ker}_{k}) \ra H^{1}_{\proet}(\X_{\Cp},\wh{\V}_{k})$ must come from $H^{1}_{\proet}(\X_{\Cp}, \wh{\omega}^{\otimes 2-k}(k-2))$.

\medskip

\noindent Hence we have an exact sequence
$$ H^{1}_{\proet}(\X_{\Cp}, \wh{\omega}^{\otimes 2-k}(k-2)) \ra H^{1}_{\proet}(\X_{\Cp},\wh{\V}_{k}) \ra H^{1}_{\proet}(\X_{\Cp}, \wh{\omega}^{\otimes k-2}). $$
By the exact sequence (\ref{eq:omega}) and looking at Hecke eigenvalues again we get an exact sequence
$$ H^{1}_{\proet}(\X_{\Cp}, \wh{\omega}^{\otimes 2-k}(k-2)) \ra H^{1}_{\proet}(\X_{\Cp},\wh{\V}_{k}) \ra H^{0}_{\et}(\X_{\Cp}, \omega^{\otimes k-2}\otimes \Omega_{\X_{\Cp}}^{1})(-1). $$
We now apply a similar argument to $ H^{1}_{\proet}(\X_{\Cp}, \wh{\omega}^{\otimes 2-k}(k-2))$. Replacing $k-2$ with $2-k$ in (\ref{eq:omega}) we have the exact sequence
$$ 0 \ra H^{1}_{\et}(\X_{\Cp}, \omega^{\otimes 2-k}) \ra H^{1}_{\proet}(\X_{\Cp}, \wh{\omega}^{\otimes 2-k}) \ra H^{0}_{\et}(\X_{\Cp}, \omega^{\otimes 2-k}\otimes \Omega_{\X_{\Cp}}^{1})(-1) \ra 0.$$
Arguing with Hecke eigenvalues as above and using Serre duality we get a sequence
\begin{equation}\label{eq:vk}
0 \ra H^{1}_{\et}(\X_{\Cp}, \omega^{\otimes 2-k}(k-2)) \ra H^{1}_{\proet}(\X_{\Cp},\wh{\V}_{k}) \ra H^{0}_{\et}(\X_{\Cp}, \omega^{\otimes k-2}\otimes \Omega_{\X_{\Cp}}^{1})(-1) \ra 0
\end{equation}
which is exact in the middle. By Serre duality 
$$H^{1}_{\et}(\X_{\Cp}, \omega^{\otimes 2-k}(k-2)) \cong H^{0}_{\et}(\X_{\Cp}, \omega^{\otimes k-2}\otimes \Omega_{\X_{\Cp}}^{1})(k-2).$$
Counting dimensions, we see that (\ref{eq:vk}) is a short exact sequence as desired.
\end{proof}

\medskip

We have a diagram
\begin{equation*}
\renewcommand{\labelstyle}{\textstyle}
\xymatrix@R=2pc@C=2pc{ H_{\proet}^{1}(\X_{\Cp},\oo\mbf{D}_{k}^{s}) \ar[r]^{ES _{k}} \ar[d] & H_{\proet}^{1}(\X_{w,\Cp},\wh{\omega}_{k,w}^{\dg}) \ar[r]^{\sim\,\,\,} & H^{0}_{\et}(\X_{w,\Cp},\omega_{k,w}^{\dg}\otimes \Omega_{\X_{w,\Cp}}^{1})(-1) \\
 H^{1}_{\proet}(\X_{\Cp},\wh{\V}_{k}) \ar[r] & H^{1}_{\proet}(\X_{\Cp},\wh{\omega}^{\otimes k-2}) \ar[r] \ar[u] & H_{\et}^{0}(\X_{\Cp},\omega^{\otimes k-2}\otimes \Omega_{\X_{\Cp}}^{1})(-1) \ar[u]
  }
\end{equation*}
which commutes by Remark \ref{factorglobal} and the functoriality of the remaining maps. Note that the right vertical map is injective by analytic continuation. Its image is, by definition, the space of classical modular forms. We then have:

\begin{prop}\label{prop:image}
Let $k\geq 2$. The image of $ES_{k}$ is contained in the space of classical modular forms. Moreover $ES_{k}$ is surjective on slope $\leq h$-parts when $h<k-1$.
\end{prop}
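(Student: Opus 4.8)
The plan is to read both assertions off the commutative diagram displayed just above the statement, using Proposition \ref{Faltings ES} together with the classicality theorems of Coleman and Stevens.

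For the first assertion, note that by Remark \ref{factorglobal} and the functoriality of the Leray spectral sequence the diagram commutes, so $ES_{k}$ (post-composed with the isomorphism $H^{1}_{\proet}(\X_{w,\Cp},\wh{\omega}^{\dg}_{k,w})\cong \mc{M}^{\dg,w}_{k,\Cp}(-1)$ of Corollary \ref{cor:leray}) equals the composite of the left vertical arrow $H^{1}_{\proet}(\X_{\Cp},\oo\mbf{D}^{s}_{k})\to H^{1}_{\proet}(\X_{\Cp},\wh{\V}_{k})$, the bottom row $H^{1}_{\proet}(\X_{\Cp},\wh{\V}_{k})\to H^{1}_{\proet}(\X_{\Cp},\wh{\omega}^{\otimes k-2})\to H^{0}_{\et}(\X_{\Cp},\omega^{\otimes k-2}\otimes \Omega^{1}_{\X_{\Cp}})(-1)$, and the right vertical restriction map $H^{0}_{\et}(\X_{\Cp},\omega^{\otimes k-2}\otimes \Omega^{1}_{\X_{\Cp}})(-1)\to H^{0}_{\et}(\X_{w,\Cp},\omega^{\dg}_{k,w}\otimes \Omega^{1}_{\X_{w,\Cp}})(-1)$. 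Since the last map is injective with image the space of classical modular forms (as recorded just before the statement), the image of $ES_{k}$ is contained in the classical forms.

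For the second assertion, fix $h<k-1$. Every arrow in the diagram is Hecke-equivariant and $U_{p}$ acts compactly on the source and target of $ES_{k}$, so we may pass to slope $\leq h$-parts throughout; by commutativity $(ES_{k})_{\leq h}$ then factors, up to the Corollary \ref{cor:leray} isomorphism, as $(\text{right vert})_{\leq h}\circ(\text{bottom})_{\leq h}\circ(\text{left vert})_{\leq h}$, and it suffices to check the following. \textbf{(i)} The restriction map $H^{0}_{\et}(\X_{\Cp},\omega^{\otimes k-2}\otimes \Omega^{1}_{\X_{\Cp}})_{\leq h}(-1)\to \mc{M}^{\dg,w}_{k,\Cp,\leq h}(-1)$ is an isomorphism: it is injective by analytic continuation, and surjective because an overconvergent modular form of weight $k$ and $U_{p}$-slope $<k-1$ is classical, which is Coleman's control theorem in the quaternionic setting (cf. \cite[Theorem 4.16]{joh}). \textbf{(ii)} The bottom composite $H^{1}_{\proet}(\X_{\Cp},\wh{\V}_{k})\to H^{0}_{\et}(\X_{\Cp},\omega^{\otimes k-2}\otimes \Omega^{1}_{\X_{\Cp}})(-1)$ is surjective by Proposition \ref{Faltings ES}, hence so is its slope $\leq h$-part, since passing to the slope $\leq h$-part of a Hecke-equivariant surjection of spaces admitting slope decompositions is exact. \textbf{(iii)} The left vertical map is an isomorphism on slope $\leq h$-parts: the morphism of sheaves $\oo\mbf{D}^{s}_{k}\to \wh{\V}_{k}$ is the one induced by the integration map $i_{k}:\mbf{D}^{s}_{k}\to \scl_{k}$ of Definition \ref{def-int}, so under the comparison isomorphisms $H^{1}_{\proet}(\X_{\Cp},\oo\mbf{D}^{s}_{k})\cong H^{1}(\XC,\mbf{D}^{s}_{k})\otimes_{\Qp}\Cp$ (Propositions \ref{prop:artin} and \ref{prop:diam}) and $H^{1}_{\proet}(\X_{\Cp},\wh{\V}_{k})\cong H^{1}_{\et}(\X_{\Cp},\Sym^{k-2}\mc{T})\otimes_{\Zp}\Cp\cong H^{1}(\XC,\scl_{k})\otimes_{\Qp}\Cp$ (the first isomorphism being the one noted in the proof of Proposition \ref{Faltings ES}, using the identification $\scl_{k}\cong\Sym^{k-2}\mc{T}$ from \S \ref{sec: factor}, the second being étale--singular comparison) it is identified with the map induced by $i_{k}$ on $H^{1}(\XC,-)$, base-changed to $\Cp$; and $i_{k}$ induces an isomorphism on slope $\leq h$-parts for $h<k-1$ by the control theorem for overconvergent modular symbols (\cite[Theorem 3.2.5]{han1}). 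Composing (iii), then (ii), then (i) gives surjectivity of $(ES_{k})_{\leq h}$.

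The main obstacle is the compatibility asserted in (iii): one must check that the map on pro-\'etale cohomology induced by $\oo\mbf{D}^{s}_{k}\to \wh{\V}_{k}$ corresponds, under the whole chain of comparison isomorphisms, to the integration map $i_{k}$ on classical cohomology — equivalently, that these comparison isomorphisms are natural in the coefficient sheaf and are compatible with the operation of completing a local system against $\wh{\oo}$. This is a routine but somewhat lengthy diagram chase. Everything else follows directly from the factorization established in \S \ref{sec: factor}, from Proposition \ref{Faltings ES}, and from the two control theorems.
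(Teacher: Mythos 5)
Your proof is correct and takes essentially the same route as the paper: the first claim is read off the commutative diagram, and the second follows by combining surjectivity of the bottom composite (Proposition \ref{Faltings ES}) with the control theorems of Stevens and Coleman, which make the left and right vertical maps isomorphisms on slope $\leq h$-parts for $h<k-1$. The only difference is that you make explicit the comparison-isomorphism compatibility in your step (iii), which the paper leaves implicit in the construction of the sheaf map in \S\ref{sec: factor}.
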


\begin{proof}
The first statement is clear from the diagram. To see the second, note that both the left and right vertical maps in the diagram above are isomorphisms on slope $\leq h$-parts when $h<k-1$ by the control theorems of Stevens and Coleman. To conclude, use that the map $H^{1}(\X_{\Cp},\wh{\V}_{k}) \ra H_{\et}^{0}(\X_{\Cp},\omega^{\otimes k-2}\otimes \Omega_{\X_{\Cp}}^{1})(-1)$ is surjective by Proposition \ref{Faltings ES}.
\end{proof}

Note that the kernel of $ES_{k}$ is big: it is infinite-dimensional with finite codimension.

\medskip

\subsection{Results}

In this section we deduce some properties of our overconvergent Eichler-Shimura map $\mc{ES}$ and the sheaves $\mbf{V}$ and $\mc{M}^{\dg}$. The results are very similar to those of \cite[\S 6]{oes}, but we are able to prove them in a more global form. Let us denote by $\mc{C}^{\sm}$ the smooth locus of $\mc{C}$. It contains the \'etale locus $\mc{C}^{\et}$ of the weight map, and both these loci are Zariski open. Furthermore $\mc{C}^{\et}$ contains the set $\mc{C}^{\nc}$ of non-critical classical crystalline points for which the roots of the $p$-Hecke polynomial are distinct. Let us explicitly record a lemma in rigid geometry that makes arguments involving Zariski density simpler. We will (sometimes implicitly) apply it in the remainder of this section.

\begin{lemm}\label{lemm:qs}
Let $X$ be a smooth rigid space over a non-archimedean field $K$, and assume that $(V_{n})_{n=1}^{\infty}$ is an increasing cover of $X$ by affinoids. Assume that $S\sub X$ is a very Zariski dense set of points. Then we may find an increasing cover $(U_{n})$ of $X$ by affinoids such that $S\cap U_{n}$ is very Zariski dense in $U_{n}$ for all $n$.
\end{lemm}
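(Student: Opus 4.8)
The plan is to use that $X$ is smooth (hence, being a rigid space, locally irreducible) to reduce to a purely combinatorial statement about irreducible components, and then to intersect the given affinoid cover $(V_n)$ with suitable Zariski-open subsets. First I would recall that, since $X$ is smooth over $K$, the connected components of $X$ coincide with its irreducible components, and (being a rigid space) $X$ has at most countably many of them when it admits a countable affinoid cover; call them $(Z_m)_{m\geq 1}$. A point-set $S \subset X$ is \emph{very Zariski dense} precisely when $S \cap Z_m$ is Zariski dense in $Z_m$ for every $m$ that meets $S$, and moreover $S$ meets every $Z_m$ (otherwise $S$ would fail to be very Zariski dense along a component — I would double-check the precise definition being used in the paper, but the argument is robust to the usual variants). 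In particular $S$ meets every irreducible component of $X$.

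The key step is the following: for each $n$, the affinoid $V_n$ has finitely many irreducible components, each of which is contained in some $Z_m$, and on each such component $W \subset V_n$ the set $S \cap W$ is Zariski dense, because $W$ is Zariski open in its ambient $Z_m$ (here I use smoothness again: the components of $V_n$ are open in $X$) and $S \cap Z_m$ is Zariski dense in $Z_m$. Thus $S \cap V_n$ is \emph{already} very Zariski dense in $V_n$ on those components of $V_n$ which meet $S$; the only obstruction is that $V_n$ may have irreducible components disjoint from $S$. To fix this, let $A_n$ be the (Zariski-closed, since it is a union of finitely many irreducible components) union of those components of $V_n$ not meeting $S$, and set $V_n' = V_n \setminus A_n$, a Zariski-open — hence admissible open — subset of $V_n$ on which $S$ is very Zariski dense. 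The remaining issue is that $V_n'$ need not be affinoid and the $V_n'$ need not cover $X$; but since each component of $X$ meets $S$, every point $x \in X$ lies in some $V_n$ on a component meeting $S$, hence lies in $V_n'$, so $\bigcup_n V_n' = X$. Finally, cover each $V_n'$ by finitely many affinoids $W_{n,1},\dots,W_{n,k_n}$; each $W_{n,j}$ is Zariski open in an irreducible component of $X$ (again by smoothness), so $S \cap W_{n,j}$ is Zariski dense in $W_{n,j}$, i.e. $W_{n,j}$ is one of the affinoids we want. Re-indexing the countable family $\{W_{n,j}\}$ and replacing $U_N$ by $W_{1,1} \cup \dots$ — more precisely, taking $U_N$ to be an affinoid containing $W_{n,j}$ for all pairs with $n \leq N$ and $j \leq k_n$, which exists after refining the original cover — produces the desired increasing affinoid cover.

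The main obstacle I anticipate is bookkeeping rather than mathematics: arranging the family $\{W_{n,j}\}$ into an \emph{increasing} sequence of affinoids still covering $X$ while preserving the very-Zariski-density property. This is handled by the standard trick of first fixing a cofinal increasing sequence of affinoids and then, at stage $N$, choosing an affinoid $U_N \supseteq V_N' \cap (\text{finitely many } W)$ inside the union of components of $V_N$ meeting $S$; density passes to $U_N$ because $U_N$ is then Zariski open in a finite union of irreducible components of $X$ each of which meets $S$. A secondary point to be careful about is whether ``very Zariski dense'' in the paper's sense requires accumulation at classical points or merely topological Zariski density; the argument above only uses the latter, and if the former is intended one notes that the accumulation property is local on $X$ and is inherited by the Zariski-open subsets $V_n'$ and hence by the $U_N$, so no extra work is needed.
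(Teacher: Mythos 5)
Your overall strategy --- discard the components of $V_n$ that miss $S$ and use what remains as the new cover --- is in spirit the paper's, but two of your key steps are wrong or missing as written. First, the density step: you claim that on a component $W$ of $V_n$ meeting $S$ the set $S\cap W$ is Zariski dense \emph{because} $W$ is ``Zariski open'' in its ambient global component $Z_m$ and $S\cap Z_m$ is Zariski dense there. But $W$ is only an admissible (adic) open subset of $Z_m$, not Zariski open: an affinoid subdomain of an irreducible rigid space is essentially never the complement of a closed analytic subset, and Zariski density of a point set does not pass to such subdomains (e.g.\ the Zariski dense set $\{p,p^2,p^3,\dots\}$ in the closed unit disc misses the boundary annulus entirely). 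This is exactly why the hypothesis is \emph{very} Zariski dense --- Zariski dense together with accumulation at every point of $S$, cf.\ \cite{han1} --- and not merely ``Zariski dense in every component of $X$'', which is how you paraphrase it; your argument, as written, uses only the weaker property and is therefore not robust to the choice of definition. The step can be repaired, but only via accumulation: if $s\in S\cap W$, choose an affinoid neighbourhood $U'\subseteq W$ of $s$ with $S\cap U'$ Zariski dense in $U'$; then any closed analytic subset of $W$ containing $S\cap W$ contains $U'$, hence equals the irreducible space $W$. The same confusion recurs at the end when you say density ``passes to $U_N$ because $U_N$ is Zariski open in a finite union of irreducible components of $X$''.

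Second, and more seriously, the covering step is asserted rather than proved. From ``every component of $X$ meets $S$'' you conclude that every $x\in X$ lies, for some $n$, on a component of $V_n$ meeting $S$. For a fixed $n$ this can perfectly well fail, and the passage to large $n$ is the real content of the lemma: one must show that the increasing chain $C_n\subseteq C_{n+1}\subseteq\cdots$ of components of $V_n\subseteq V_{n+1}\subseteq\cdots$ through $x$ eventually contains a point of $S$. The paper does this by a connectedness argument: the union $T=\bigcup_k C_k$ is open with open complement, hence clopen, hence contains the whole global component of $x$, which meets $S$; the resulting point of $S$ lies in some $C_k$, so that component meets $S$. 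Without some such argument your $V_n'$ (equivalently the $U_N$) are not known to cover $X$. A minor further point: your worry that $V_n'$ is not affinoid is unfounded --- it is a union of finitely many connected components of the affinoid $V_n$, hence affinoid, and it is increasing in $n$ --- so the auxiliary covers $W_{n,j}$ and the only-sketched re-indexing are unnecessary; with the two repairs above, taking $U_n:=V_n'$ (which is exactly the Zariski closure of $S\cap V_n$ in $V_n$) is precisely the paper's proof.
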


\begin{proof}
We define $U_{n}$ to be the Zariski closure of $S\cap V_{n}$ in $V_{n}$. By definition $U_{n}$ is a union of components of $V_{n}$, hence affinoid, and it remains to show that $X=\bigcup_{n=1}^{\infty}U_{n}$. For this it suffices to show that, for fixed $n$ and a fixed component $C$ of $V_{n}$, there exists $m\geq n$ such that $C\sub U_{m}$. Fix $n$ and $C$. $C$ is a subset of a (global) component $D$ of $X$, so by Zariski density we pick a point $s\in S\cap D$. If $m\geq n$, let $C_{m}$ denote the component of $V_{m}$ such that $C\sub C_{m}$. Define, for $m \geq n$,
$$ T_{m}=\{ x\in V_{m} \mid \exists k\geq m\, :\, i_{km}(x)\in C_{k} \} $$
where $i_{km}\, :\, V_{m} \ra V_{k}$ is the inclusion map. Then $T_{m}$ is a union of components and we claim that $s\in T_{m}$ for some $m$. To see this, note that $T_{m}=V_{m}\cap T_{m+1}$ for all $m\geq n$, so $T:=\bigcup_{m\geq n} T_{m}$ is an open subset of $X$ with open complement. Hence if $s\notin T$, $s$ could not lie on the same component as $C$. Therefore there is some $m$ for which $s\in T_{m}$, and hence some $k$ for which $s\in C_{k}$, which implies that $C\sub U_{k}$ by the very Zariski density of $S$.
\end{proof}

\begin{rema}\label{rema:qsc}
Note that $\mc{C}^{\sm}$ may be covered by an increasing union of affinoids. To see this note first that this is true for $\mc{C}$ since it is finite over $\mc{W}\times \mb{A}^{1}$, which is quasi-Stein. If we pick an increasing affinoid cover $(V_{n})$ of $\mc{C}$, then $\mc{C}^{\sm}=\bigcup_{n}V_{n}^{\sm}$ and $V_{n}^{\sm}$ is Zariski dense in $V_{n}$, with complement cut out by the functions $f_{n,1},...,f_{n,k_{n}}$ say. We may choose $\epsilon_{n}\in p^{\mb{Q}}$ tending to $0$ such that 
$$ \{ |f_{n,1}|,...,|f_{n,k_{n}}| < \epsilon_{n} \}\cap V_{m} \sub \{  |f_{m,1}|,...,|f_{m,k_{m}}| < 2^{n-m}\epsilon_{m} \} $$
for all $m\leq n$, where we use a general fact that the sets $(\{ |g_{i}| \leq \epsilon \})_{\epsilon\in p^{\mb{Q}}}$ define a cofinal system of neighbourhoods of $\{ |g_{i}|=0 \}$. Then we claim that the sets 
$$ U_{n} = \{  |f_{n,1}|,...,|f_{n,k_{n}}| \geq \epsilon_{n} \} $$
are affinoid and cover $\mc{C}^{\sm}$. Indeed, they cover  $\mc{C}^{\sm}$ by definition, and they are affinoid since they are rational subsets of the $V_{n}$.
\end{rema}

With this we state a simple consequence of Proposition \ref{prop:image}:

\begin{theo}\label{thm:coker}
View $\mc{ES}$ as a morphism of coherent sheaves on $\mc{C}_{\Cp}$. Then $\mc{ES}$ is surjective outside a Zariski-closed subset of $\mc{C}_{\Cp}$ of dimension $0$, and the support of $\coker(\mc{ES})$ is disjoint from the set of non-critical points.
\end{theo}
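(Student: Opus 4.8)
The plan is to reduce both assertions to a fibrewise surjectivity statement. Since $\coker(\mc{ES})$ is a coherent $\oo_{\mc{C}_{\Cp}}$-module and $-\otimes_{\oo_{\mc{C}_{\Cp}}}k(x)$ is right exact, Nakayama's lemma gives that a point $x\in\mc{C}_{\Cp}$ lies outside $\Supp(\coker\mc{ES})$ if and only if the induced map on fibres $\mc{ES}\otimes_{\oo_{\mc{C}_{\Cp}}}k(x)$ is surjective. So I would first prove that this fibre map is surjective at every non-critical classical point, which yields the second assertion, and then deduce the dimension statement formally.

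Fix a non-critical classical point $x$, of weight $\kappa=k\in\mb{Z}_{\geq 2}$ and slope $h_{0}<k-1$. By the slope-adaptedness lemma we may choose a small open weight $\U$ with $\kappa\in\U^{\rig}$ and a rational $h$ with $h_{0}\leq h<k-1$ such that $(\U^{\rig},h)$ is slope-adapted for $H$; pick also $\V\sub\U^{\rig}$ open affinoid and connected with $\kappa\in\V$. Over the corresponding affinoid $\mc{C}_{\V,h}\sub\mc{C}$ the morphism $\mc{ES}$ is, by construction, the base change to $\Cp$ of $ES_{\V,\leq h}$, which in turn is obtained from $ES_{\U,\leq h}$ by scalar extension; in particular $\mbf{V}_{\Cp}$ and $\mc{M}^{\dg}_{\Cp}$ restrict over $\mc{C}_{\V,h}$ to $\mbf{V}^{s}_{\V,\Cp,\leq h}$ and $\mc{M}^{\dg,w}_{\V,\Cp,\leq h}$ for $s,w$ large. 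Using the commutative diagram relating $ES_{\U}$ to $ES_{\kappa}=ES_{k}$ (and passing to slope $\leq h$-parts, which is legitimate since $(\U^{\rig},h)$ is slope-adapted), together with the analysis in \S\ref{sec:sheaves} of the fibres of $\mbf{V}$ and $\mc{M}^{\dg}$ at $x$ — which rests on the control theorems of Stevens and Coleman and on the Tor spectral sequence — one identifies $\mc{ES}\otimes k(x)$ with the map induced by $ES_{k}$ on the largest semisimple quotients of the generalized $x$-eigenspaces, inside $H^{1}_{\et}(\X_{\Cp},\V_{k})_{\leq h}\otimes\Cp$ on the source and inside the space of slope $\leq h$ classical modular forms (twisted by $(-1)$, which affects neither Hecke eigenspaces nor the argument) on the target.

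Now Proposition \ref{prop:image} gives precisely that $ES_{k}$ is surjective on slope $\leq h$-parts, because $h<k-1$. Surjectivity is inherited by the generalized $x$-eigenspace, which is a direct summand cut out by Hecke-equivariance after extending scalars, and then by the largest semisimple quotient, since for a finitely generated module over the local Hecke algebra at $x$ this quotient is $M/\mf{m}_{x}M$ and $-\otimes k(x)$ is right exact. Hence $\mc{ES}\otimes k(x)$ is surjective, so $x\notin\Supp(\coker\mc{ES})$; as $x$ was an arbitrary non-critical classical point, $\Supp(\coker\mc{ES})$ is disjoint from the set of non-critical points.

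For the first assertion, $Z:=\Supp(\coker\mc{ES})$ is a Zariski-closed subset of $\mc{C}_{\Cp}$, and we have just seen it avoids the non-critical classical locus. That locus is Zariski dense in $\mc{C}$ — a subset of it is the very Zariski dense set of classical points used to run the eigenvariety machine producing $\mc{C}$ — and density is preserved under base change, so it is Zariski dense in $\mc{C}_{\Cp}$. Since $\mc{C}$, and hence $\mc{C}_{\Cp}$, is equidimensional of dimension $1$, a Zariski-closed subset disjoint from a Zariski-dense set cannot contain an irreducible component, so $\dim Z=0$. The main obstacle I expect is the fibre identification in the second paragraph: making precise that the fibre of the \emph{glued} map $\mc{ES}$ at a classical point $x$ is genuinely the classical/Faltings Eichler--Shimura map on eigenspaces requires carefully chasing the functoriality of $ES_{\U}$ in $\U$, the compatibility of $ES_{\V,\leq h}$ with specialization to $\kappa$, and the identification of the relevant overconvergent spaces with classical ones via the control theorems — exactly the kind of bookkeeping the paper elsewhere defers to the reader.
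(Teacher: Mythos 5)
Your proposal is correct and follows essentially the same route as the paper's proof: reduce to surjectivity on fibres at non-critical points, identify the fibre of $\mc{ES}$ there with $ES_{k}$ on the maximal semisimple quotients of the generalized eigenspaces (via the fibre computations of \S\ref{sec:sheaves}), conclude surjectivity from Proposition \ref{prop:image}, and deduce the dimension-$0$ statement from the fact that any positive-dimensional closed subset would contain a component and hence non-critical points. Your extra bookkeeping (Nakayama, slope-adapted choices, density of the non-critical locus) just makes explicit what the paper leaves terse.
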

\begin{proof}
Since $\coker(\mc{ES})$ is coherent its support is Zariski-closed. Hence the first statement follows from the second: if the support is not of dimension $0$, then it must contain some component of $\mc{C}$ and hence non-critical points. To prove the second statement it suffices to prove that $\mc{ES}$ is surjective on fibres at non-critical points. Let $x$ be such a point, with weight $k$. By the computation of the fibres in \S \ref{sec:sheaves} and the computation of $\mc{ES}$ we see that it amounts to the Eichler-Shimura map $ES_{k}$ restricted to maximal semisimple quotients of the generalized eigenspaces for $x$. Since $x$ is non-critical this map is surjective by Proposition \ref{prop:image}. 
\end{proof}

Let us raise the following question:

\medskip

\noindent \textbf{Question}: Is the support of $\coker{\mc{ES}}$ precisely the set of $x$ with weight $k\in \mb{Z}_{\geq 2}$ such that $ES_{k}$ fails to be surjective on generalized eigenspaces for $x$?

\medskip
 
Next let us state some results on the structure of $\mbf{V}$ and $\mc{M}^{\dg}$ over $\mc{C}^{\sm}$. For any $n \in \mb{Z}_{\geq 1}$ we let $\tau(n)$ denote the number of positive divisors of $n$.

\begin{theo} Assume, for simplicity, that that the tame level $K^{p}$ is of the form $\prod_{\ell \nmid \disc(B)}K_{1}(N) \times \prod_{\ell \mid \disc(B)}\oo_{B_{\ell}}^{\times}$ for some $N\geq 3$. Let $M\in \mb{Z}_{\geq 1}$ be a divisor of $N$.
\begin{enumerate}
\item $\mbf{V}$ and $\mc{M}^{\dg}$ are locally free on $\mc{C}^{\sm}$. The rank of $\mbf{V}$ over $\mc{C}_{M-\new}^{\sm}$ is $2\tau(N/M)$ and the rank of $\mc{M}^{\dg}$ over $\mc{C}_{M-\new}^{\sm}$ is $\tau(N/M)$.

\item Let $\ell$ be a prime not dividing $Np\cdot \disc(B)$. Then $\mbf{V}$ is unramified at $\ell$ and the trace of $\Frob_{\ell}$ on $\mbf{V}$ over $\mc{C}^{\sm}_{M-\new}$ is $\tau(N/M)\cdot T_{\ell}$, where we view $T_{\ell}$ as a global function on $\mc{C}$.

\item Let $x \in \mc{C}_{N-\new}^{\sm}(\overline{\mb{Q}}_{p})$. Then the fibre of $\mbf{V}$ at $x$ is the Galois representation attached to $x$ via the theory of pseudorepresentations. 

\end{enumerate}

\end{theo}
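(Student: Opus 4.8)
The plan is to prove all three assertions by reducing to the behaviour at the very Zariski dense accumulation set $\mc{C}^{\nc}_{M-\new}$ of non-critical classical crystalline points, where the fibres of $\mbf{V}$ and $\mc{M}^{\dg}$ were identified in \S\ref{sec:sheaves}, and then propagating the conclusions by continuity and reducedness of $\mc{C}$. For part (1): $\mbf{V}$ and $\mc{M}^{\dg}$ are coherent by construction and torsion free by Lemma \ref{lemm:torsionfree}, and since $\mc{C}$ is equidimensional of dimension one every local ring of $\mc{C}^{\sm}$ is a one-dimensional regular local ring, hence a discrete valuation ring; as finitely generated torsion free modules over a DVR are free, this gives local freeness. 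To compute the rank on a component contained in $\mc{C}_{M-\new}$ — which is a union of irreducible components in which $\mc{C}^{\nc}_{M-\new}$ is Zariski dense — I would evaluate at a point $x\in\mc{C}^{\nc}_{M-\new}$, where by \S\ref{sec:sheaves} the ranks equal the dimensions of the largest semisimple quotients of the $x$-generalized eigenspaces in $H^{1}_{\et}(\X_{\Cp},\V_{k})$ and in $H^{0}(\Xw,\omega^{\dg}_{\kappa,w}\otimes_{\oo_{\Xw}}\Omega^{1}_{\Xw})$ respectively. At such a point the control theorems of Stevens and Coleman identify these with the corresponding generalized eigenspaces in classical cohomology, resp.\ classical forms on $\X$; by the classical Eichler--Shimura isomorphism, Jacquet--Langlands, and the theory of oldforms, the contribution of the newform of tame Artin conductor $Md$ attached to $x$ is $\rho_{x}^{\oplus\tau(N/M)}$ on the symbols side and a $\tau(N/M)$-dimensional space of oldforms on the forms side, and in both cases the generalized eigenspace is already semisimple because the two roots of the $p$th Hecke polynomial are distinct. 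Hence the ranks are $2\tau(N/M)$ and $\tau(N/M)$.

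For part (2): for $\ell\nmid Np\disc(B)$ the Shimura curve $\X$ has good reduction at $\ell$ and the local systems $\mbf{D}^{s,\circ}_{\U}/{\rm Fil}^{k}$ are unramified at $\ell$, so $G_{\mb{Q}}$ acts unramifiedly at $\ell$ on each $H^{1}_{\et}(\X_{\Cp},\mbf{D}^{s,\circ}_{\U}/{\rm Fil}^{k})$, hence on $\mbf{V}^{s}_{\U}$, hence on the glued sheaf $\mbf{V}$. As $\mbf{V}|_{\mc{C}^{\sm}_{M-\new}}$ is locally free with continuous Galois action, $\tr\Frob_{\ell}$ is a well-defined global section of $\oo_{\mc{C}^{\sm}_{M-\new}}$; by part (1) it takes the value $\tau(N/M)\cdot T_{\ell}(x)$ at every $x\in\mc{C}^{\nc}_{M-\new}$ (the trace of $\Frob_{\ell}$ on $\rho_{x}^{\oplus\tau(N/M)}$ being $\tau(N/M)$ times the $T_{\ell}$-eigenvalue, by the Eichler--Shimura relation at $\ell$). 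Since $\mc{C}^{\sm}_{M-\new}$ is reduced and $\mc{C}^{\nc}_{M-\new}$ is Zariski dense, $\tr\Frob_{\ell}=\tau(N/M)\cdot T_{\ell}$ identically.

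For part (3): take $M=N$, so $\tau(N/M)=1$ and, by parts (1)--(2), $\mbf{V}|_{\mc{C}^{\sm}_{N-\new}}$ is locally free of rank two with continuous $G_{\mb{Q}}$-action satisfying $\tr\Frob_{\ell}=T_{\ell}$ for all $\ell\nmid Np\disc(B)$. By Chebotarev density and continuity, the trace of this representation agrees with the trace of the universal pseudorepresentation carried by the eigencurve (whose $\tr\Frob_{\ell}$ is $T_{\ell}$). Localizing at $x\in\mc{C}^{\sm}_{N-\new}(\overline{\mb{Q}}_{p})$ and using that the fibre $\mbf{V}\otimes k(x)$ is, by construction, the largest semisimple quotient of the $x$-generalized eigenspace — on which the (commuting) Hecke algebra acts through scalars, since its nilpotent elements lie in the radical of the Hecke--Galois algebra, so that $G_{\mb{Q}}$-semisimplicity there coincides with semisimplicity over the whole algebra — the fibre is a two-dimensional semisimple $G_{\mb{Q}}$-representation with trace the specialization at $x$ of the pseudorepresentation. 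A two-dimensional semisimple representation over a field is determined up to isomorphism by its trace, so $\mbf{V}\otimes k(x)\cong\rho_{x}$, the representation attached to $x$ via pseudorepresentations.

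The main obstacle is the fibre computation at the distinguished classical points underlying part (1): one must pin down, on the quaternionic Shimura curve $\X$, the precise oldform multiplicities and the semisimplicity of the $U_{p}$-action on the old/new spaces at points with distinct $p$-Hecke-polynomial roots, and verify compatibility of the classical Eichler--Shimura isomorphism with the full Hecke action (including the trace-of-$\Frob_{\ell}$ formula) through Jacquet--Langlands. Once this input is in place, the density, reducedness and pseudorepresentation arguments of parts (2) and (3) are routine.
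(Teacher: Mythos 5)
Your proposal is correct and follows essentially the same route as the paper: local freeness from torsion-freeness (the paper's Lemma \ref{lemm:torsionfree}) plus smoothness and equidimensionality of $\mc{C}^{\sm}$, rank computation at points of $\mc{C}^{\nc}\cap\mc{C}^{\sm}_{M-\new}$ via the control theorems, classical Eichler--Shimura and Atkin--Lehner/oldform theory, and then Zariski density of these points for (2), with (3) deduced from (1)--(2). The one genuine deviation is minor: for unramifiedness at $\ell$ you argue directly from good reduction of the tower of Shimura curves, which is exactly the alternative the paper records in the remark following the theorem, whereas the proof in the text instead reduces to affinoids (via Lemma \ref{lemm:qs} and Remark \ref{rema:qsc}), localizes to the free case and checks matrix coefficients by density --- a reduction you would also want to make explicit when asserting that $\tr\Frob_{\ell}$ is an analytic function to which density applies. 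One caution on (3): your justification that the fibre is semisimple \emph{as a Galois module} (``nilpotents lie in the radical of the Hecke--Galois algebra'') does not work as stated, since the fibre is by construction only the largest Hecke-semisimple quotient; the paper itself is silent on this point (it simply says (3) follows from (2)), so this is not a gap relative to the paper, but the clean way to conclude is to note that a two-dimensional representation whose trace pseudocharacter is that of $\rho_x$ has semisimplification $\rho_x$, and is itself $\rho_x$ whenever $\rho_x$ is irreducible.
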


\begin{proof}
First we prove (1). By Lemma \ref{lemm:torsionfree} $\mbf{V}$ and $\mc{M}^{\dg}$ are torsion free coherent sheaves over $\mc{C}^{\sm}$, which is smooth of pure dimension $1$. Therefore $\mbf{V}$ and $\mc{M}^{\dg}$ are locally free. To compute the ranks over $\mc{C}_{M-\new}^{\sm}$ it suffices to compute the dimension of the fibres at points in $\mc{C}^{\nc} \cap \mc{C}_{M-\new}^{\sm}$. This is a computation using Atkin-Lehner theory, the control theorems of Stevens resp. Coleman and the classical Eichler-Shimura isomorphism (see e.g \cite[Lemma IV.6.3]{bel}).

\medskip

\noindent To prove (2), note that the statements are true after specializing to points in $\mc{C}^{\nc}$ (using Stevens's control theorem and the well known structure of the Galois representations $H^{1}_{\et}(\X,\mc{V}_{k})$). By Zariski density of $\mc{C}^{\nc}\cap \mc{C}_{M-\new}^{\sm}$ in $\mc{C}^{\sm}_{M-\new}$ we may then conclude. Note that, to check that it is unramified, one may reduce to the affinoid case by Lemma \ref{lemm:qs} and Remark \ref{rema:qsc}, where it becomes a purely ring-theoretic statement. Localizing (ring-theoretically), one may then reduce to the free case, where one can check on matrix coefficients using Zariski density.

\medskip

\noindent Finally, (3) follows from (2). 
\end{proof}

\begin{rema}
The assumption on $K^{p}$ in the above theorem is only to simplify the exposition. The theorem and the definition of the $\mc{C}_{M-\new}$ may be adapted to arbitrary tame levels $K^{p}=\prod_{\ell\neq p}K_{\ell}$ with essentially the same proof, except for the explicit formulas for the ranks.

\medskip

\noindent We remark that the statement that $\mbf{V}$ is unramified at $\ell \nmid Np\cdot \disc(B)$ also follows from the same fact for the $\mbf{V}_{\U}^{s}$ (for $\U$ small open), which in turn follows from the construction upon noting that all members of the tower $(\mc{X}_{K_{p}})_{K_{p}}$ are analytifications of schemes over $\mb{Q}$ with good reduction at $\ell$.
\end{rema}

Finally, we prove that $\mc{ES}$ generically gives us Hodge-Tate filtrations/decompositions on $\mbf{V}$.

\begin{theo}\label{theo:main} We work over $\mc{C}^{\sm}$.
\begin{enumerate}
\item The kernel $\mc{K}$ and image $\mc{I}$ of $\mc{ES}$ are locally projective sheaves of $\oo_{\mc{C}^{\sm}}\ctens_{\Qp}\Cp$-modules, and may also be viewed as locally free sheaves on $\mc{C}^{sm}_{\Cp}$.

\item Let $\epsilon_{\mc{C}^{\sm}}$ be the character of $G_{\Qp}$ defined by the composition
$$G_{\Qp} \overset{\epsilon}{\longrightarrow}  \Zp^{\times} \overset{\chi_{\mc{W}}}{\longrightarrow} \oo_{\mc{W}}^{\times} \longrightarrow  (\oo_{\mc{C}^{\sm}}^{\times}\ctens_{\Qp}\Cp)^{\times} $$
where $\epsilon$ is the $p$-adic cyclotomic character of $G_{\Qp}$ and $\chi_{\mc{W}}$ is the universal character of $\Zp^{\times}$. Then the semilinear action of $G_{\Qp}$ on the module $\mc{K}(\epsilon_{\mc{C}^{\sm}}^{-1})$ is trivial.

\item The exact sequence
$$ 0 \ra \mc{K} \ra \mbf{V}_{\Cp} \ra \mc{I} \ra 0 $$
is locally split. Zariski generically, the splitting may be taken to be equivariant with respect to both the Hecke- and $G_{\Qp}$-actions, and such a splitting is unique.

\end{enumerate}
\end{theo}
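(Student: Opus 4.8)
The plan is to prove each of the three statements in turn, reducing everything to local statements over suitable affinoid opens $U \subset \mc{C}^{\sm}$ obtained from the covers of Lemma~\ref{lemm:qs} and Remark~\ref{rema:qsc}, and then to points in the Zariski-dense set $\mc{C}^{\nc}$ of non-critical classical crystalline points.

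\medskip

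\noindent \textbf{Step 1: local projectivity of $\mc{K}$ and $\mc{I}$.} First I would recall that by Theorem~\ref{thm:coker} the cokernel $\coker(\mc{ES})$ is supported on a $0$-dimensional Zariski-closed subset disjoint from $\mc{C}^{\nc}$. Restricting to $\mc{C}^{\sm}$, which is smooth of pure dimension $1$, the image $\mc{I}$ is a coherent subsheaf of the locally free sheaf $\mc{M}^{\dg}_{\Cp}(-1)$ (local freeness of $\mc{M}^{\dg}$ over $\mc{C}^{\sm}$ being part of the preceding theorem), hence torsion-free, and the kernel $\mc{K}$ is a coherent subsheaf of the locally free $\mbf{V}_{\Cp}$, hence torsion-free. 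Over a smooth curve a coherent torsion-free sheaf is locally free, so both $\mc{K}$ and $\mc{I}$ are locally free over $\mc{C}^{\sm}_{\Cp}$; since $\oo_{\mc{C}^{\sm}}\ctens_{\Qp}\Cp$ sits inside $\oo_{\mc{C}^{\sm}_{\Cp}}$, ``locally projective over $\oo_{\mc{C}^{\sm}}\ctens_{\Qp}\Cp$'' follows by the same standard argument used throughout \S\ref{sec:5} to pass between the two points of view (as in Theorem~\ref{theo:rankone} and the discussion of $\mbf{V}_{\Cp}$, $\mc{M}^\dg_{\Cp}$). One should also note here that $\mc{I}$ has constant rank on each irreducible component: this follows from the computation of fibres in \S\ref{sec:sheaves} together with Proposition~\ref{prop:image}.

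\medskip

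\noindent \textbf{Step 2: the Galois action on $\mc{K}$.} The claim is that $G_{\Qp}$ acts on $\mc{K}(\epsilon_{\mc{C}^{\sm}}^{-1})$ trivially. Working locally over a slope-adapted $\mc{C}_{\V,h}$ with $\V$ small open affinoid, $\mc{K}$ is cut out inside $\mbf{V}^s_{\V,\Cp,\leq h}$ as the kernel of $ES_{\V,\leq h}$, which by construction is the slope-$\leq h$ part of the sheaf-level map $\delta_{\U}$ of Proposition~\ref{prop:delta} composed with the Faltings-type computations of \S\ref{sec: factor}. The key input is Remark~\ref{factorglobal}: at integral weights $k$, the kernel of $ES_k$ contains $H^1_{\et}(\X_{\Cp},\omega^{\otimes 2-k})(k-2)$, on which $G_{\Qp}$ acts through $\epsilon^{k-2}=\epsilon_{\mc{C}^{\sm}}$ twisted away. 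More precisely, I would argue that the $\chi_{\U}$-twisted description of $\oo\mbf{D}_{\U}^s$ (Lemma~\ref{lem-explicitD}) together with the explicit formula $\beta_{\U}(\mu \otimes f) = \mu(\chi_{\U}(1+\mf{z}x))f$ identifies $\mc{K}$, after the cyclotomic twist, with a subsheaf of $H^1_{\et}$ of a sheaf of weight-$(2-\kappa)$ differentials twisted by $\chi_{\mc{W}}$, on which the residual Galois action becomes trivial after untwisting by $\epsilon_{\mc{C}^{\sm}}^{-1}$. The clean way to phrase this is: $\mc{K}(\epsilon_{\mc{C}^{\sm}}^{-1})$ is the image of (a slope part of) $H^1_{\et}(\X_{w,\Cp}, \text{(étale coefficient sheaf defined over }\Qp))\ctens \Cp$, on which $G_{\Qp}$ acts only through its action on $\Cp$, i.e.\ semilinearly-trivially; one then checks $G_{\Qp}$-equivariance of all the identifications, which is routine since the relevant maps all descend to $\Qp$-structures up to the explicit Galois twist recorded in the definition of $ES_{\U}$. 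It suffices to verify triviality of the action after evaluating at points of $\mc{C}^{\nc}$, by Zariski density of $\mc{C}^{\nc}$ and the fact that a semilinear $G_{\Qp}$-action trivial on a Zariski-dense set of fibres of a locally free sheaf is trivial (here one uses continuity and a matrix-coefficient argument as in the proof of the previous theorem); at a non-critical crystalline point this is exactly the classical fact that the ``Hodge--Tate'' piece of $H^1_{\et}(\X_{\Cp},\mc{V}_k)$ killed by the projection to modular forms is $H^0(\omega^{\otimes k-2}\otimes\Omega^1)(k-1)$ up to normalization, as in Proposition~\ref{Faltings ES} and Faltings.

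\medskip

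\noindent \textbf{Step 3: local splitting, and generic Hecke- and Galois-equivariant splitting.} Local splittability of $0 \to \mc{K} \to \mbf{V}_{\Cp} \to \mc{I} \to 0$ is immediate from Step 1: $\mc{I}$ is locally free, hence locally projective, so the sequence splits locally over $\mc{C}^{\sm}_{\Cp}$. For the generic equivariant splitting, I would work over a Zariski-open $U \subset \mc{C}^{\sm}$ on which $\mbf{V}_{\Cp}$, $\mc{K}$, $\mc{I}$ are all free, and argue: by Step 2, $G_{\Qp}$ acts on $\mc{K}$ via $\epsilon_{\mc{C}^{\sm}}$, whereas on $\mc{I} \subset \mc{M}^\dg_{\Cp}(-1)$ the Hodge--Tate--Sen weights are generically different (the ``weight $0$'' piece versus the ``weight $\kappa-1$'' piece, non-integrality of $\kappa$ on a dense set forcing the two Sen weights to differ); hence $\Hom_{G_{\Qp}}(\mc{I},\mc{K})$ vanishes generically and $\Ext^1_{G_{\Qp}}$ in the relevant category vanishes too, giving existence and uniqueness of a $G_{\Qp}$-equivariant splitting over a dense open. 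That this splitting can simultaneously be taken Hecke-equivariant follows because the Hecke action commutes with $G_{\Qp}$ and with $\mc{ES}$, so the (unique) Galois splitting is automatically Hecke-stable; uniqueness then upgrades to uniqueness as a bi-equivariant splitting. I would make the Sen-weight argument precise by passing to fibres at the dense set of weights $\kappa$ with $\kappa(1+p)$ not a root of unity (so that the relevant Sen operator has no repeated eigenvalue), exactly as in \cite{oes}, \S6.

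\medskip

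\noindent The main obstacle I anticipate is Step 2: pinning down, with full $G_{\Qp}$-equivariance and in families over $\mc{C}^{\sm}$ rather than over a single small disc, the identification of $\mc{K}(\epsilon_{\mc{C}^{\sm}}^{-1})$ with something on which Galois acts trivially. This requires carefully tracking the Tate twist conventions built into the definition of $ES_{\U}$, the twist in Remark~\ref{factorglobal}, and the comparison isomorphisms of Proposition~\ref{prop:omega} and Corollary~\ref{cor:leray}, and then checking that the resulting $\Qp$-structure is genuinely Galois-stable and not merely so after a twist — the explicit formulas of \S\ref{sec:4} are what make this tractable, but the bookkeeping is the delicate part. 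Everything else reduces either to standard commutative algebra over the smooth curve $\mc{C}^{\sm}$ or to density arguments of the kind already used repeatedly in this section.
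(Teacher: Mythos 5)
Your Step 2 contains the genuine gap, and it is precisely the point you flag as your ``main obstacle''. Neither of the two routes you sketch works as stated. The direct route --- exhibiting $\mc{K}(\epsilon_{\mc{C}^{\sm}}^{-1})$ as the $\Cp$-base change of a $\Qp$-rational object by unwinding the twists in \S\ref{sec:4} --- is not carried out, and there is no evident candidate for such a $\Qp$-structure in families; producing one is essentially equivalent to the statement being proved. The fallback route --- ``a semilinear $G_{\Qp}$-action trivial on a Zariski-dense set of fibres of a locally free sheaf is trivial, by continuity and a matrix-coefficient argument as in the previous theorem'' --- is not valid: the matrix-coefficient/Zariski-density argument used there concerns a \emph{linear} action (unramifiedness at $\ell$, where traces of Frobenius are analytic functions on the base), whereas here $G_{\Qp}$ acts $\Cp$-\emph{semilinearly} on an $\oo_{\mc{C}}(U)\ctens_{\Qp}\Cp$-module, and triviality of such an action (i.e.\ that the module is generated by its $G_{\Qp}$-invariants) is not the vanishing of any finite collection of analytic functions built from the action. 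The missing ingredient, and the one the paper uses, is the family version of Sen theory (\cite{se1}, \cite{se2}, as in \cite[Theorem 6.1(c)]{oes}): the Sen operator $\phi$ of $\mc{K}(\epsilon_{\mc{C}^{\sm}}^{-1})$ is an $\oo_{\mc{C}}(U)\ctens_{\Qp}\Cp$-linear endomorphism, so its vanishing at the Zariski-dense set $U\cap\mc{C}^{\nc}$ (where the fibres are trivial, by the computation you indicate via Proposition \ref{Faltings ES}) propagates to all of $U$ by analyticity, and Sen's theorem then converts $\phi=0$ into triviality of the semilinear action. Without introducing the Sen operator, Step 2 does not close, and the input ``by Step 2, $G_{\Qp}$ acts on $\mc{K}$ via $\epsilon_{\mc{C}^{\sm}}$'' that your Step 3 relies on is likewise unsupported.

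Your Steps 1 and 3 are otherwise essentially the paper's argument. Part (1) is the Dedekind-domain/smooth-curve observation. For part (3), be aware that the precise mechanism in the paper (following \cite[\S 6]{oes}) is not a literal vanishing of an $\Ext^1$: one views the extension as a class in $H^1(G_{\Qp},\mc{H})$ with $\mc{H}=\Hom_{\oo_{\mc{C}}(U)\ctens_{\Qp}\Cp}(\mc{I},\mc{K})$, notes that $\det(\phi_{\mc{H}})$ (Sen operator again) kills this $H^1$ and is generically nonvanishing by specialization at $\mc{C}^{\nc}$, and localizes at $\det(\phi_{\mc{H}})$ to split the sequence; uniqueness and Hecke-stability then follow, as you say, from the absence of nonzero $G_{\Qp}$-equivariant maps once the fibrewise Hodge--Tate weights of $\mc{K}$ and $\mc{I}$ are distinct (the paper removes the finitely many points of weight $-1$ rather than restricting to non-integral weights, which is an inessential difference). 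So your Step 3 is repairable verbatim once Step 2 is fixed by the Sen-theoretic argument.
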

\begin{proof}
Part (1) is clear since $\mbf{V}$ and $\mc{M}^{\dg}$ are locally free on $\mc{C}^{\sm}$ and $\oo_{\mc{C}^{\sm}}\ctens_{\Qp}\Cp$ is locally a Dedekind domain. The second statement follows similarly.

\medskip

\noindent For part (2) we use Lemma \ref{lemm:qs} and Remark \ref{rema:qsc} to reduce to the affinoid case with a very Zariski dense set of points in $\mc{C}^{\nc}$. Taking a further cover we may also assume that $\mc{K}$ is free (perhaps after making a finite extension, but this will not effect the rest of the argument). The statement now follows from the family version of Sen theory (\cite{se1}, \cite{se2}) , using the fact that the Sen operator $\phi$ of $\mc{K}(\epsilon_{\mc{C}^{\sm}}^{-1})$ vanishes on points of $\mc{C}^{\nc}$ and hence on $\mc{C}^{\sm}$ by Zariski density of $\mc{C}^{\nc}$ and analyticity of $\phi$. The argument proceeds exactly as the proof of \cite[Theorem 6.1(c)]{oes}, to which we refer for more details.

\medskip

Finally we prove part (3), arguing more or less exactly as in the proof of \cite[Theorem 6.1(d)]{oes}, to which we refer for more details. Once again we may work over an affinoid $U\sub \mc{C}^{\sm}$ with a Zariski dense set of points $U \cap \mc{C}^{\nc}$. We may without loss of generality assume that $\mc{K}$, $\mbf{V}_{\Cp}$ and $\mc{I}$ are free (once again, one might need to make a finite extension, but this does not effect the rest of the argument and we will ignore it). The short exact sequence is an extension which defines an element in $H^{1}(G_{\Qp},\mc{H})$ where
$$\mc{H}:=\Hom_{\oo_{\mc{C}}(U)\ctens_{\Qp}\Cp}(\mc{I},\mc{K}).$$
Let $\phi$ be the Sen operator of $\mc{H}$. By work of Sen it is well-known that $\det(\phi)$ kills $H^{1}(G_{\Qp},\mc{H})$. Specializing at points in $U \cap \mc{C}^{\nc}$ one sees that $\det(\phi)$ does not vanish identically on any component of $U$. Thus, localizing with respect to $\det(\phi)$, we find a Zariski open subset of $U$ over which the extension is split as semilinear $G_{\Qp}$-representations. Moreover, if we further remove the finite set of points whose weight is $-1\in \mb{Z}$, then $\mc{I}$ and $\mc{K}$ have distinct Hodge-Tate weights fibre-wise (by using part (2) and that $\mc{I}$ has constant Hodge-Tate weight $-1$). Thus there can be no non-zero Galois equivariant homomorphisms between $\mc{K}$ and $\mc{I}$. Since the Hecke action commutes with the $G_{\Qp}$-action, this implies that the $G_{\Qp}$-splitting must be Hecke stable as well. 
\end{proof}

\section{Appendix}\label{sec:6}

In this appendix we define a "mixed" completed tensor product that we will use in the main text, and prove some basic properties as well as a few technical results that we need. 

\subsection{Mixed completed tensor products}\label{sec:6.1} Let $K$ be a finite extension of $\Qp$ and let $\oo$ be its ring of integers, with uniformizer $\vp$. Our "mixed completed tensor products" will be denoted by an unadorned $\ctens$. The base field is assumed to be implicit in the notation. In the main text we will always take $K=\Qp$ so this should not cause any problem.  We starting by making the following definition:

\begin{defi} Let $M$ be a topological $\oo$-module.
\begin{enumerate}
\item $M$ is called \emph{linear-topological} if there exists a basis of neighbourhoods of $0$ consisting of $\oo$-submodules.

\item We will say that $M$ is a \emph{profinite flat} $\oo$-module if $M$ is flat over $\oo$ (i.e torsion-free), linear-topological and compact.
\end{enumerate}
\end{defi}

Let us remark that if $M$ is a profinite flat $\oo$-module then the topology on $M$ is profinite, which justifies this terminology. We also remark that such $M$ are exactly the $\oo$-modules which are projective and pseudocompact in the language of \cite[Expos\'e ${\rm VII}_{B}$, \S 0]{SGA3}. Let us recall the following structure theorem for profinite flat $\oo$-modules, specialized from \cite[Expos\'e ${\rm VII}_{B}$, 0.3.8]{SGA3}:

\begin{prop}
M is a profinite flat $\oo$-module if and only if it is isomorphic to $\prod_{i\in I}\oo $ equipped with the product topology, for some set $I$.
\end{prop}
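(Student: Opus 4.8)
The plan is to prove this structure theorem directly, since it is elementary enough that the citation to \cite{SGA3} can be supplemented with a self-contained argument. The ``if'' direction is immediate: $\prod_{i \in I}\oo$ with the product topology is visibly linear-topological (the submodules $\prod_{i \in S}\vp^{n}\oo \times \prod_{i \notin S}\oo$ for $S \subseteq I$ finite and $n \geq 0$ form a neighborhood basis of $0$ consisting of $\oo$-submodules), it is torsion-free since $\oo$ is a domain and products of torsion-free modules are torsion-free, and it is compact by Tychonoff (each factor $\oo \cong \varprojlim_n \oo/\vp^n$ is profinite, hence compact, so the product is compact).

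For the ``only if'' direction, let $M$ be profinite flat. First I would observe that since $M$ is compact and linear-topological, the open $\oo$-submodules $U$ of $M$ have $M/U$ finite (compactness gives quasi-compactness of the discrete quotient, hence finiteness), and $M \cong \varprojlim_U M/U$ over the open submodules $U$, with each $M/U$ a finite $\oo$-module. Now choose a cofinal descending sequence is not available in general (the index set need not be countable), so one works with the full cofiltered system. The key point is to produce a ``pseudobasis'': a family $(e_i)_{i \in I}$ of elements of $M$ such that the induced map $\prod_{i \in I}\oo \to M$, $(a_i) \mapsto \sum a_i e_i$ (the sum converging in the profinite topology) is a topological isomorphism. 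To build this, I would use the flatness of $M$ together with the fact that each finite quotient $M/U$ is a finite $\oo$-module: reducing mod $\vp$, the topological $\oo/\vp$-vector space $M/\vp M$ (with its quotient topology) is profinite, hence by duality a discrete $\oo/\vp$-vector space's dual, so it admits a topological basis $(\bar e_i)_{i \in I}$; lift these to elements $e_i \in M$. Then flatness (torsion-freeness) and the topological Nakayama lemma for profinite modules show that $(e_i)$ generate $M$ topologically and are ``independent'' in the appropriate completed sense, giving the desired isomorphism $\prod_I \oo \xrightarrow{\sim} M$.

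The main obstacle I anticipate is making the pseudobasis argument rigorous without a countability hypothesis on the index set, i.e.\ correctly handling the interaction between the cofiltered limit $M = \varprojlim_U M/U$ and the choice of lifts $e_i$. Concretely: one must check that the formal sums $\sum_{i} a_i e_i$ converge (this is automatic since for each open $U$ only finitely many $e_i$ are nonzero mod $U$, because $M/U$ is finite), and that the resulting map is both surjective (topological generation, via Nakayama over the profinite ring, using that $M/\vp M$ is spanned by the $\bar e_i$ and $M$ is $\vp$-adically separated as it is torsion-free and complete) and injective (if $\sum a_i e_i = 0$ with not all $a_i = 0$, divide through by the highest power of $\vp$ dividing all $a_i$ — possible since $\oo$ is a DVR and only finitely many $a_i$ are relevant modulo any given $\vp^n$ — to get a nontrivial $\oo/\vp$-linear relation among the $\bar e_i$, contradicting their independence). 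Since the paper already cites \cite[Expos\'e ${\rm VII}_{B}$, 0.3.8]{SGA3} for exactly this statement, an acceptable alternative is simply to invoke that reference and only spell out the (trivial) ``if'' direction together with the remark that the product topology on $\prod_I \oo$ is indeed profinite; I would present the proof in that streamlined form, recording the ``if'' direction in full and referring to \emph{loc.\ cit.}\ for the converse.

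\begin{proof}
If $M \cong \prod_{i \in I}\oo$ with the product topology, then $M$ is torsion-free (a product of torsion-free $\oo$-modules), linear-topological (the submodules $\prod_{i \in S}\vp^{n}\oo \times \prod_{i \notin S}\oo$, for $S \subseteq I$ finite and $n \geq 0$, form a basis of neighbourhoods of $0$), and compact by Tychonoff's theorem, since each factor $\oo \cong \varprojlim_{n}\oo/\vp^{n}\oo$ is profinite. Hence $M$ is profinite flat. The converse is \cite[Expos\'e ${\rm VII}_{B}$, 0.3.8]{SGA3}: a torsion-free, linear-topological, compact $\oo$-module is projective and pseudocompact, hence free in the pseudocompact sense, i.e.\ isomorphic to a product of copies of $\oo$ with the product topology.
\end{proof}
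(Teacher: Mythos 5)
Your proof is correct and follows essentially the same route as the paper: the paper gives no argument at all, simply recording the statement as a specialization of \cite[Expos\'e ${\rm VII}_{B}$, 0.3.8]{SGA3}, which is exactly what your final proof does for the substantive direction, supplemented by the routine verification (Tychonoff, torsion-freeness, the product-topology submodule basis) of the easy implication. Nothing further is needed.
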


 We note that a set of elements $(e_{i})_{i\in I}$ in $M$ such that $M\cong \prod_{i\in I}\oo e_{i}$ is called a \emph{pseudobasis}. In what follows `$\vp$-adically complete" always means complete and separated.

\begin{defi}\label{defi: ctens}
Let $M$ be a profinite flat $\oo$-module and let $X$ be any $\oo$-module. We define
$$ X \ctens M := \varprojlim _{i} (X \otimes_{\oo} M/I_{i}) $$
where $(I_{i})$ runs through any cofinal set of neighbourhoods of $0$ consisting of $\Zp$-submodules.
\end{defi}

By abstract nonsense this is independent of the choice of system of neighbourhoods of $0$. Note that if $A$ is an $\oo$-algebra and $R$ is a small $\oo$-algebra, then $A \ctens R$ comes with a natural structure of a ring, as is seen by choosing the neighbourhoods to be ideals. To compute $X \ctens M$ in a useful form we fix a pseudobasis $(e_{i})_{i\in I}$ and define, for $J\sub I$ finite and $n\geq 1 $ an integer, $M_{J,n}$ to be the submodule corresponding to
$$ M_{J,n}:= \prod_{i\in J}\vp^{n} \oo e_{i} \times \prod_{i\notin J} \oo e_{i}. $$
This forms a system of neighbourhoods of $0$. Then we have:

\begin{prop}\label{prop: appflat}
The functor $X \mapsto X \ctens M$ is isomorphic to the functor $X \mapsto \prod_{i\in I} \wh{X}$, where $\wh{X}$ is the $\vp$-adic completion of $X$. 
\end{prop}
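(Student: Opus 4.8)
The plan is to unwind the definition of $X \ctens M$ using the particular cofinal system $(M_{J,n})_{J,n}$ of submodules coming from the fixed pseudobasis $(e_i)_{i\in I}$, which is legitimate since the construction in Definition \ref{defi: ctens} is independent of the chosen cofinal system. First I would observe that for fixed finite $J \sub I$ and integer $n \geq 1$ we have a direct sum decomposition
$$ M/M_{J,n} \cong \bigoplus_{i\in J} \oo/\vp^{n}\oo\, \bar{e}_{i}, $$
a finite $\oo$-module, so that
$$ X \otimes_{\oo} M/M_{J,n} \cong \bigoplus_{i\in J} X/\vp^{n}X $$
functorially in $X$ (tensor product commutes with finite direct sums, and $X \otimes_\oo \oo/\vp^n\oo = X/\vp^n X$).

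Next I would pass to the inverse limit. The index set of pairs $(J,n)$ is directed, with $(J,n) \leq (J',n')$ iff $J \sub J'$ and $n \leq n'$; and one checks that this poset is cofinal in the product-type poset obtained by letting $J$ and $n$ vary independently, so the limit may be computed as an iterated limit:
$$ X \ctens M = \varprojlim_{J,n} \bigoplus_{i \in J} X/\vp^n X \cong \varprojlim_{J} \varprojlim_{n} \bigoplus_{i\in J} X/\vp^n X. $$
For fixed finite $J$, the transition maps in $n$ are the obvious surjections $X/\vp^{n'}X \to X/\vp^n X$ on each summand, so $\varprojlim_n \bigoplus_{i\in J} X/\vp^n X = \bigoplus_{i\in J} \wh{X} = \prod_{i\in J}\wh{X}$ (the direct sum over the \emph{finite} set $J$ is a finite product). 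Then the transition maps in $J$ are the projections $\prod_{i\in J'}\wh{X} \to \prod_{i\in J}\wh{X}$ for $J \sub J'$, whose inverse limit over all finite $J \sub I$ is by definition $\prod_{i\in I}\wh{X}$. Assembling these identifications gives a natural isomorphism $X \ctens M \cong \prod_{i\in I}\wh{X}$, and naturality in $X$ is inherited at each stage from the naturality of $X \otimes_\oo -/\vp^n$ and of $\vp$-adic completion.

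The one point requiring a little care — and the closest thing to an obstacle — is justifying the interchange of the two inverse limits and, more precisely, checking that the poset of pairs $(J,n)$ ordered by componentwise domination is cofinal inside the a priori larger diagram implicit in ``$(I_i)$ runs through any cofinal set of neighbourhoods of $0$''; this is a purely formal cofinality argument (every neighbourhood from Definition \ref{defi: ctens} contains some $M_{J,n}$, and conversely), and once it is in place the computation of $\varprojlim_n$ on a fixed finite direct sum is just the elementary fact that a finite product of $\vp$-adic completions is the completion of the finite product. No finiteness or Noetherianity hypotheses on $X$ are needed, since at every finite stage we only tensor with a finite $\oo$-module. I would write the argument in the two-step form above, making explicit the natural transformation on each bounded piece so that naturality of the final isomorphism is manifest.
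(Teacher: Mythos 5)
Your argument is correct and is essentially the paper's own proof: the paper computes $X \ctens M$ as the iterated limit $\varprojlim_J \varprojlim_n X\otimes_\oo M/M_{J,n}$, identifies $M/M_{J,n}$ with $\prod_{i\in J}\oo/\vp^n\oo$, takes the limit in $n$ to get $\prod_{i\in J}\wh{X}$, and then the limit over finite $J$ to get $\prod_{i\in I}\wh{X}$, exactly as you do. Your extra remarks on cofinality of the $M_{J,n}$ and naturality are fine elaborations of what the paper dispatches as "abstract nonsense."
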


\begin{proof}
We compute
\begin{eqnarray*}
X \ctens M & = & \varprojlim _{J,n} X \otimes_{\oo} M/M_{J,n}\\
 & = &  \varprojlim _J (\varprojlim _n X \otimes_{\oo} M/M_{J,n}) \\
 & \cong & \varprojlim _J (\varprojlim _n X \otimes_{\oo} \prod_{i\in J} \oo/ \vp^{n}) \\
 & = & \varprojlim _J \left( \prod_{i\in J} (\varprojlim _n X \otimes_{\oo} \oo/\vp^{n}) \right) \\
 & = & \varprojlim _J \left( \prod_{i\in J} \wh{X} \right) \\
 & = & \prod_{i\in I} \wh{X}.\end{eqnarray*} 
\end{proof}

\begin{coro}\label{cor:complete}
We have
\begin{enumerate}
\item $X \mapsto X \ctens M$ is exact on $\vp$-adically complete $X$.

\item If $0 \ra X \ra Y \ra Q \ra 0$ is an exact sequence of $\oo$-modules where $X$ is $\vp$-adically complete and $Q$ is killed by $\vp^{N}$ for some integer $N\geq 0$, then $Y$ is $\vp$-adically complete and the natural map $(X \ctens M)[1/\vp] \ra (Y \ctens M)[1/\vp]$ is an isomorphism.
\end{enumerate}
\end{coro}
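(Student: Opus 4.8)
\textbf{Proof plan for Corollary \ref{cor:complete}.} The plan is to derive both statements directly from Proposition \ref{prop: appflat}, which identifies $X \ctens M$ with $\prod_{i\in I}\wh{X}$, where $\wh{X}$ denotes the $\vp$-adic completion of $X$. For part (1), suppose $0 \to X' \to X \to X'' \to 0$ is an exact sequence of $\vp$-adically complete $\oo$-modules; I want to show $0 \to X' \ctens M \to X \ctens M \to X'' \ctens M \to 0$ is exact. Since an arbitrary product of exact sequences of abelian groups is exact, by Proposition \ref{prop: appflat} it suffices to show that $0 \to \wh{X'} \to \wh{X} \to \wh{X''} \to 0$ is exact. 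As all three modules are already $\vp$-adically complete, this reduces to the statement that $\vp$-adic completion is exact on the category of $\vp$-adically complete modules — equivalently, that the sequence $0 \to X' \to X \to X'' \to 0$ remains exact after $\vp$-adic completion. The only non-formal point is left-exactness: one needs $\varprojlim_n X'/(X' \cap \vp^n X) \cong \varprojlim_n X'/\vp^n X'$, i.e. that the $\vp$-adic topology on $X'$ agrees with the subspace topology from $X$; since $X'$ is already complete and $X$ is $\vp$-torsion-free-friendly in the relevant sense this will follow from an Artin--Rees-type or elementary snake-lemma argument, noting $X'' $ being separated forces $\bigcap_n \vp^n X'' = 0$. (I would actually phrase it: the completion functor is exact because $X'$ and $X''$ are complete, using the Mittag-Leffler vanishing of $\varprojlim^1$ for surjective transition maps.)

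For part (2), consider the exact sequence $0 \to X \to Y \to Q \to 0$ with $X$ $\vp$-adically complete and $\vp^N Q = 0$. First I would show $Y$ is $\vp$-adically complete. Applying the snake lemma to multiplication by $\vp^n$ on the sequence and passing to the inverse limit over $n$, one gets that $\varprojlim_n Y/\vp^n Y$ sits in an exact sequence bounded by $\varprojlim_n X/\vp^n X = X$ on one side and $\varprojlim_n Q/\vp^n Q$ together with $\varprojlim^1$-terms on the other; since $Q$ is killed by $\vp^N$, the system $(Q/\vp^n Q)_n$ is eventually constant equal to $Q$ and its $\varprojlim^1$ vanishes, and likewise $\varprojlim^1 X/\vp^n X = 0$ as $X$ is complete. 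A short diagram chase then gives $Y \xrightarrow{\sim} \wh{Y}$, so $Y$ is $\vp$-adically complete (and separated because $\bigcap_n \vp^n Y \subseteq X$ — indeed $\vp^N(\bigcap \vp^n Y) \subseteq \bigcap \vp^n X = 0$ and $Y$ is torsion-free over the relevant base, or more simply because $\wh{Y} = Y$). Now applying part (1) to $0 \to X \to Y \to Q \to 0$ (all three are $\vp$-adically complete since $Q$ is $\vp^N$-torsion hence trivially complete) yields an exact sequence
$$ 0 \to X \ctens M \to Y \ctens M \to Q \ctens M \to 0. $$
By Proposition \ref{prop: appflat}, $Q \ctens M \cong \prod_I \wh{Q} = \prod_I Q$ is killed by $\vp^N$, so inverting $\vp$ kills it and gives $(X \ctens M)[1/\vp] \xrightarrow{\sim} (Y \ctens M)[1/\vp]$, as claimed.

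The main obstacle I anticipate is the bookkeeping around left-exactness of $\vp$-adic completion in part (1) and, relatedly, the separatedness and completeness of $Y$ in part (2): these are the points where one cannot simply cite "products are exact" and must genuinely use that the relevant modules are already complete so that all $\varprojlim^1$-terms vanish and subspace topologies coincide with $\vp$-adic topologies. Everything downstream of that — the identification with products via Proposition \ref{prop: appflat}, and inverting $\vp$ to kill the bounded-torsion piece — is formal. I would write these completion-exactness facts as a short preliminary lemma (or inline them), since they are standard but do require a sentence of justification.
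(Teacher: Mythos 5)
Your overall architecture is the same as the paper's: prove (1) from Proposition \ref{prop: appflat} plus exactness of products, then get (2) by showing $Y$ is complete, applying (1) to $0\ra X\ra Y\ra Q\ra 0$ (all terms complete since $Q$ is killed by $\vp^N$), and inverting $\vp$ to kill $Q\ctens M$. But your justification of the key step in (1) is misdirected. Once $X'$, $X$, $X''$ are $\vp$-adically complete --- which in this paper means complete \emph{and separated} --- the natural maps $X\ra\wh{X}$ are isomorphisms, and by naturality of these maps the sequence of completions is isomorphic, as a complex, to the original exact sequence; there is nothing to prove about ``exactness of completion'', no Mittag--Leffler input, and in particular no need for the subspace topology on $X'$ to agree with its $\vp$-adic topology. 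That agreement is in fact false in this generality (take $X'=\prod_{n}\vp^{n}\oo\sub X=\prod_{n}\oo$ with quotient $\prod_{n}\oo/\vp^{n}\oo$: all three are complete and separated, yet $X'\cap\vp^{k}X\not\sub\vp\,X'$ for any $k$), and there is no Artin--Rees available for these non-Noetherian modules. So as written that step of your plan leans on a claim that is both unnecessary and wrong; the fix is simply to use completeness-and-separatedness to identify $\wh{X'}\ra\wh{X}\ra\wh{X''}$ with the original maps.

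In (2), your sketch of the completeness of $Y$ invokes facts that hold for arbitrary modules --- $\varprojlim^{1}$ of $(X/\vp^{n}X)_n$ and of $(Q/\vp^{n}Q)_n$ vanishes for any $X,Q$ because the transition maps are surjective --- so they cannot be where the hypothesis on $Q$ enters, and completeness of an extension of complete modules is not automatic. The genuine issue is the failure of left-exactness of $0\ra X/\vp^{n}X\ra Y/\vp^{n}Y\ra Q/\vp^{n}Q\ra 0$: the kernel terms $(X\cap\vp^{n}Y)/\vp^{n}X$ have to be shown pro-zero, and this is precisely where $\vp^{N}Y\sub X$ is used (the composite of $N$ successive transition maps is zero). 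Your aside that separatedness of $Y$ follows because ``$Y$ is torsion-free'' is also unjustified, since no such hypothesis is available. The paper sidesteps all correction terms: from $\vp^{N+k}Y\sub\vp^{k}X\sub\vp^{k}Y$ the filtration $(\vp^{k}X)_k$ defines the $\vp$-adic topology on $Y$, so $\wh{Y}=\varprojlim_{k}Y/\vp^{k}X$; the sequences $0\ra X/\vp^{k}X\ra Y/\vp^{k}X\ra Q\ra 0$ are exact on the nose, left-exactness of $\varprojlim$ gives $0\ra X\ra\wh{Y}\ra Q$, and the snake lemma against the original sequence yields $Y\overset{\sim}{\ra}\wh{Y}$. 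The rest of your part (2) --- applying (1) and inverting $\vp$, noting $Q\ctens M\cong\prod_{I}Q$ is killed by $\vp^{N}$ --- is exactly the paper's argument and is fine.
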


\begin{proof}
(1) follows from exactness of products in the category of $\oo$-modules. For (2) note that $Q$ is $\vp$-adically complete and $Q \ctens M$ is killed by $\vp^{N}$, so the second statement will follow from the first and (1). To see that $Y$ is $\vp$-adically complete first note that $\vp^{N}Y \sub X$ (i.e. $Q$ killed by $\vp^{N}$) so 
$$ \vp^{N+k}Y \sub \vp^{k}X \sub \vp^{k}Y $$
for all $k\geq 0$ and hence we can compute $\wh{Y}$ as $\varprojlim_{k} Y/\vp^{k}X$. Thus we get exact sequences
$$ 0 \ra X/\vp^{k}X \ra Y/\vp^{k}X \ra Q \ra 0$$
for all $k$ and hence an exact sequence
$$ 0 \ra X=\wh{X} \ra \wh{Y} \ra Q $$
by left exactness of inverse limits. This sits in a natural diagram
\[
\xymatrix{0 \ar[r] & X \ar[d]\ar[r] & Y\ar[d]\ar[r] & Q\ar[d] \ar[r]& 0\\
0 \ar[r] & X \ar[r] & \wh{Y} \ar[r] & Q}
\] 
with exact rows and the snake lemma now implies that the map $Y\ra \wh{Y}$ is an isomorphism as desired.
\end{proof}

In particular we can now define $- \ctens M$ on $K$-Banach spaces (this is why we are calling it a "mixed" completed tensor product). It is canonically independent of the choice of unit ball, but we also make a more general definition.

\begin{defi}\label{mixtens}
Let $V$ be a $K$-vector space and let $V^{\circ} \sub V$ be an $\oo_{K}$-submodule such that $V^{\circ}[1/\vp]=V$. Then we define 
$$ V \ctens M := (V^{\circ} \ctens M)[1/\vp] $$
where the choice of $V^{\circ}$ is implicit (the choice will in general affect the result).
\end{defi}
If $V$ is naturally a Banach space we will always take $V^{\circ}$ to be an open and bounded $\oo$-submodule, and Corollary \ref{cor:complete}(2) implies that $V \ctens M$ is independent of the choice.

We finish with a lemma needed in the main text.

\begin{lemm}\label{lemm: commute}
Let $S$ be a $K$-Banach algebra and let $V$ be a finite projective $S$-module and $U$ a Banach $S$-module. Let $M$ be a profinite flat $\oo_{K}$-module. Then we have a natural isomorphism $V\otimes_{S}(U \ctens M) \cong (V\otimes_{S}U) \ctens M$. If $V$ and $U$ in addition are Banach $S$-algebras and $M$ is a small $\oo_{K}$-algebra, then this is an isomorphism of rings.
\end{lemm}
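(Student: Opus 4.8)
The final statement to prove is Lemma \ref{lemm: commute}: for a $K$-Banach algebra $S$, a finite projective $S$-module $V$, a Banach $S$-module $U$, and a profinite flat $\oo_K$-module $M$, there is a natural isomorphism $V\otimes_{S}(U \ctens M) \cong (V\otimes_{S}U) \ctens M$, which is a ring isomorphism when $V,U$ are Banach $S$-algebras and $M$ is a small $\oo_K$-algebra.

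The plan is to reduce to the free case using the characterization of $-\ctens M$ from Proposition \ref{prop: appflat}. First I would unwind the definitions: writing $U^{\circ}$ for a bounded open unit ball in $U$, by Definition \ref{mixtens} and Proposition \ref{prop: appflat} we have $U\ctens M \cong \left(\prod_{i\in I}\wh{U^{\circ}}\right)[1/\vp]$ for a pseudobasis $(e_i)_{i\in I}$ of $M$; since $U^{\circ}$ is $\vp$-adically complete this is just $\prod_{i\in I}U^{\circ}[1/\vp]$ suitably interpreted, i.e. $U\ctens M$ is the space of $M$-indexed ``bounded'' families in $U$ (those landing in $U^{\circ}$ up to scaling). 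The key step is then: since $V$ is finite projective over $S$, it is a direct summand of a finite free module $S^{n}$, so tensoring with $V$ commutes with arbitrary products — this is the one place finiteness and projectivity are used. Concretely, for $V = S^n$ both sides are visibly $\prod_{i\in I}(S^{n}\otimes_S U)^{\wedge}[1/\vp]$, and the isomorphism is the obvious ``rearrangement'' map; for a general direct summand $V\oplus V' \cong S^n$ one applies the free case and splits off the idempotent, checking the rearrangement map is compatible with the projector. I would also verify naturality in $V$ and $U$ is automatic from the construction of the rearrangement map.

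For the ring structure claim: when $M$ is a small $\oo_K$-algebra one chooses the pseudobasis-defining submodules $M_{J,n}$ in Definition \ref{defi: ctens} to be \emph{ideals} (as remarked after that definition), so $U\ctens M = \varprojlim_{i}(U^{\circ}\otimes_{\oo_K}M/I_i)[1/\vp]$ is a ring (inverse limit of rings, localized), and similarly $(V\otimes_S U)\ctens M$; when moreover $V,U$ are Banach $S$-algebras, $V\otimes_S U$ carries its natural (projective tensor) algebra structure and the rearrangement isomorphism is built from the $S$-bilinear multiplication maps at each finite level $M/I_i$, hence is multiplicative. I would phrase this by noting the isomorphism $V\otimes_S(U\ctens M)\cong (V\otimes_S U)\ctens M$ is already defined at the level of the pro-system $\{V\otimes_S(U^{\circ}\otimes_{\oo_K}M/I_i)\}$ versus $\{(V\otimes_S U)^{\circ}\otimes_{\oo_K}M/I_i\}$, where at each stage it is the standard associativity/commutativity isomorphism of tensor products of algebras, and then pass to the inverse limit and invert $\vp$.

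The main obstacle I expect is bookkeeping rather than anything deep: one must be careful that $V\otimes_S U$, a priori only an algebraic tensor product, is being given its correct completed/Banach structure and that the chosen unit ball $(V\otimes_S U)^{\circ}$ is compatible (up to the harmless ambiguity controlled by Corollary \ref{cor:complete}(2)) with $V^{\circ}\otimes_{S^{\circ}} U^{\circ}$ or similar; and that commuting $V\otimes_S -$ past the inverse limit $\varprojlim_i$ is legitimate — this is exactly where finite projectivity of $V$ is needed, since $V\otimes_S-$ is then exact and commutes with the (countable, or arbitrary) products that appear after choosing a pseudobasis, by Proposition \ref{prop: appflat}. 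Everything else — naturality, the ring structure — follows formally once the rearrangement isomorphism is set up at finite level and one checks it is a filtered/cofiltered isomorphism of pro-objects.
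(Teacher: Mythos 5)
Your proposal is correct and follows essentially the same route as the paper: the paper likewise constructs the comparison map at the level of integral models and finite quotients $M/I_i$, proves it is an isomorphism by reducing to $V$ free (choosing a free integral model, where commuting with the limit/product is clear) and then passing to a direct summand of $S^n$, and obtains multiplicativity by choosing the $I_i$ to be ideals and the bounded models to be subrings. Your use of the pseudobasis/product description from Proposition \ref{prop: appflat} is just a cosmetic variant of the paper's inverse-limit formulation.
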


\begin{proof}
Pick an open bounded $\oo_{K}$-subalgebra $S_{0}$ of $S$ and pick open bounded $S_{0}$-submodules $V_{0}\sub V$ and $U_{0} \sub U$. Moreover, pick $\oo_{K}$-submodules $(I_{i})$ of $M$ that form a basis of neighbourhoods of $0$. Then we have natural maps
$$ V_{0} \otimes_{S_{0}}( \varprojlim _i (U_{0} \otimes_{\oo_{K}} M/I_{i})) \ra V_{0} \otimes_{S_{0}} U_{0} \otimes_{\oo_{K}} M/I_{i} $$
for all $i$ which give us a natural map
$$ V_{0} \otimes_{S_{0}}(U_{0} \ctens M) \ra (V_{0} \otimes_{S_{0}} U_{0})\ctens M. $$
Now invert $\vp$ to get the desired map. If $V$ and $U$ are Banach $S$-algebras we may choose everything so that $V_{0}$ and $U_{0}$ are subrings, and if $M$ is a small $\oo_{K}$-algebra we may choose the $I_{i}$ to be ideals, hence we see that this is a ring homomorphism. To see that it is an isomorphism note that if $V$ is in addition free we may choose $V_{0}$ to be a finite free $S_{0}$-module. The assertion is then clear. In general, write $V$ as a direct summand of a finite free $S$-module.
\end{proof}

\subsection{Mixed completed tensor products on rigid-analytic varieties}\label{sec:6.2}

Let $X$ be a quasicompact and separated smooth rigid analytic variety over an algebraically closed and complete extension $C$ of $K$, of pure dimension $n$. Here $K$ is a finite extension of $\Qp$ as in the previous subsection and we use the same notation $\oo=\oo_{K},\vp$ et cetera. In this subsection we aim to prove:

\begin{prop}\label{prop:comp-r}
We have a canonical isomorphism
$$R^{i}\nu_{\ast}(\ohat_{X} \ctens M) \cong (R^{i}\nu_{\ast}\ohat_{X})\ctens M$$
of sheaves on $X_{\et}$.
\end{prop}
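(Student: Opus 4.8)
The plan is to reduce the statement to the known behaviour of $R^i\nu_{\ast}\ohat_X$ (from \cite[Proposition 3.23, Corollary 3.24]{sch1}, which computes these as $\Omega^i_X(-i)$) together with the concrete description of $-\ctens M$ provided by Proposition \ref{prop: appflat}. First I would fix a pseudobasis $(e_j)_{j\in I}$ of $M$, so that $M\cong \prod_{j\in I}\oo e_j$, and recall from Definition \ref{defi: ctens} and Proposition \ref{prop: appflat} that for any $\vp$-adically complete $\oo$-module $X$ one has $X\ctens M \cong \prod_{j\in I}X$ (and for a $C$-Banach algebra one inverts $\vp$). Applying this to the completed structure sheaf $\ohat_X$ (whose sections over affinoid perfectoids are $\vp$-adically complete), one gets a description of $\ohat_X\ctens M$ as (the sheafification of) $V\mapsto \big(\prod_{j\in I}\ohat_X^+(V)\big)[1/\vp]$ on a basis of affinoid perfectoids in $X_{\proet}$.

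The key step is then to commute $R^i\nu_{\ast}$ past the completed product $\prod_{j\in I}(-)$ and past inverting $\vp$. For inverting $\vp$ this is standard since $\nu_{\ast}$ and its derived functors commute with filtered colimits of abelian sheaves on a coherent site, and $X$ being quasicompact and quasiseparated makes the relevant cohomology commute with $[1/\vp]$. The genuinely substantive point is the infinite product: one needs $R^i\nu_{\ast}\big(\prod_{j\in I}\ohat_X^+\big)\cong \prod_{j\in I}R^i\nu_{\ast}\ohat_X^+$, i.e. that higher pushforward along $\nu$ commutes with arbitrary products of copies of $\ohat_X^+$. I would prove this by working with a basis of the pro-\'etale site given by affinoid perfectoids $V=\varprojlim V_m$ which are pro-\'etale Galois covers (as in Lemma \ref{handy} and the proof of Lemma \ref{newinv}), computing $R^i\nu_{\ast}$ via the \v{C}ech-to-derived-functor (``Cartan--Leray'') spectral sequence attached to such a cover, and using the almost vanishing/finiteness results of \cite[Theorem 5.1, Lemma 5.5]{sch1} (or \cite[Lemma 3.18]{sch2}) degree by degree. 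Over such a cover, the relevant $R^p\varprojlim$ and continuous cohomology groups $H^p_{\mathrm{cts}}(G,-)$ applied to the product $\prod_{j\in I}\ohat_X^+(V_\infty)$ agree (almost) with the product of the same applied to each factor, because the transition maps satisfy a uniform Mittag--Leffler / almost-vanishing condition independent of $j\in I$; this is exactly the place where one must be careful that the error terms (killed by a fixed power of $\vp$, coming from the ``almost'' in \cite[Theorem 5.1]{sch1}) are uniform in $j$, which they are since they come from a single copy of $\ohat_X^+$.

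Granting this commutation, the proof assembles as follows: $R^i\nu_{\ast}(\ohat_X\ctens M) \cong R^i\nu_{\ast}\big(\prod_j \ohat_X^+\big)[1/\vp] \cong \big(\prod_j R^i\nu_{\ast}\ohat_X^+\big)[1/\vp] \cong \prod_j R^i\nu_{\ast}\ohat_X \cong (R^i\nu_{\ast}\ohat_X)\ctens M$, using Proposition \ref{prop: appflat} in the first and last isomorphisms (noting $R^i\nu_{\ast}\ohat_X\cong\Omega^i_X(-i)$ is a coherent, hence locally $\vp$-adically complete after choosing an integral model, sheaf). One then checks the isomorphism is canonical, i.e. independent of the chosen pseudobasis and of the defining system of submodules $(I_i)$ of $M$, which follows from the naturality already built into Definition \ref{defi: ctens} (the ``abstract nonsense'' independence there) together with functoriality of $R^i\nu_{\ast}$.

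\textbf{Main obstacle.} I expect the hard part to be precisely the interchange of $R^i\nu_{\ast}$ with the infinite product $\prod_{j\in I}$: pushforward does not commute with arbitrary products in general, and here it works only because all factors are copies of the \emph{same} sheaf $\ohat_X^+$, so that the cohomological ``tails'' (the failure of Mittag--Leffler, the almost-zero error terms in \cite[Theorem 5.1]{sch1}, the higher $\varprojlim$'s in Definition \ref{defi: ctens}) are controlled uniformly in $j$. Making this uniformity precise — rather than invoking a black-box that $R\nu_{\ast}$ commutes with products — is the technical heart of the argument, and is the reason the hypothesis that $X$ is quasicompact, separated and smooth (so that the relevant cohomology is finitely generated in each degree over the base, and bounded) is used.
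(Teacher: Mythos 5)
Your proposal follows essentially the same route as the paper's proof: localize on $X_{\et}$ to charts admitting a toric map, compute pro-\'etale cohomology of $\ohatp_{X}$ via the Cartan--Leray/continuous-cohomology description over an affinoid perfectoid Galois cover, and identify the technical heart as commuting this cohomology with the product $\prod_{j\in I}$ coming from a pseudobasis of $M$ --- which the paper does exactly as you suggest, writing $\prod_{j\in I}=\varprojlim_{J\subset I \text{ finite}}\prod_{j\in J}$ and using Mittag--Leffler together with \cite[Lemma 3.18]{sch2} and the almost vanishing over the perfectoid cover (their Lemma \ref{lem:cts}(3)). The only point where your write-up is genuinely imprecise is the final assembly: the identifications $\bigl(\prod_{j}R^{i}\nu_{\ast}\ohatp_{X}\bigr)[1/\vp]\cong\prod_{j}R^{i}\nu_{\ast}\ohat_{X}\cong(R^{i}\nu_{\ast}\ohat_{X})\ctens M$ are both false as literally stated, since after Proposition \ref{prop: appflat} and Definition \ref{mixtens} the completed tensor product produces \emph{bounded} $I$-indexed families in $R^{i}\nu_{\ast}\ohat_{X}$, whereas $\prod_{j}R^{i}\nu_{\ast}\ohat_{X}$ consists of arbitrary families; the two errors cancel, but the correct argument must stay integral one step longer. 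Concretely, one should compare $\prod_{j}R^{i}\nu_{\ast}\ohatp_{X}$ with $\prod_{j}\wh{L}$ for a specified $\vp$-adically complete lattice $L\subset R^{i}\nu_{\ast}\ohat_{X}$ and only then invert $\vp$; this also forces you to say which lattice defines the right-hand side of the statement (the paper explicitly flags this as non-obvious, takes $L$ to be the image of $R^{i}\nu_{\ast}\ohatp_{X}$, and shows via Lemmas \ref{lem:sheaf}--\ref{lem:push} that locally $L\cong H^{i}_{cts}(\Zpn,\oo_{X}^{+}(U))\cong\bigwedge^{i}\oo_{X}^{+}(U)^{n}$, which is complete, is a sheaf, and differs from $R^{i}\nu_{\ast}\ohatp_{X}$ only by torsion killed by $p$). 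Your alternative of taking an integral model of $\Omega^{i}_{X}(-i)$ would serve the same purpose, by Corollary \ref{cor:complete}(2), but you would still need the small sheaf-theoretic check that the lattice-level presheaf $U\mapsto L(U)\ctens M$ is a sheaf, which is where the paper's explicit identification with continuous cochains earns its keep.
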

Here $U \mapsto (\ohat_{X} \ctens M)(U) = \ohat_{X}(U) \ctens M$ for $U\in X_{\proet}$ qcqs defines a sheaf on $X_{\proet}$ and the completed tensor product is computed using $\ohatp_{X}(U) \sub \ohat_{X}(U)$. The former is $\vp$-adically complete by \cite[Lemma 4.2(iii)]{sch2}, so we have exactness of $-\ctens M$ and hence a sheaf. We use ${\rm Im}((R^{i}\nu_{\ast}\ohatp_{X})(U) \ra (R^{i}\nu_{\ast}\ohat_{X})(U))$ to compute the completed tensor product on the right hand side and claim that this is a sheaf, but these statements are not obvious and we will prove them below.

\medskip

We note the following important corollary to the above proposition.

\begin{coro}\label{coro:comp-r}
We have $$R^{i}\nu_{\ast}(\ohat_{X} \ctens M) \cong \Omega^i_{X}(-i)\otimes_{\oo_X}(\oo_{X}\ctens M)$$ as sheaves on $X_{\et}$. \end{coro}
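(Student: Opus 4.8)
The corollary is immediate once we combine the preceding Proposition \ref{prop:comp-r} with the well-known computation of the sheaves $R^i\nu_\ast\widehat{\oo}_X$ on a smooth rigid space. The plan is as follows. First, recall from \cite[Proposition 3.23]{sch1} (or the local computation in \cite[\S 3]{sch1}) that on the smooth rigid space $X$ there is a canonical isomorphism of $\oo_X$-modules $R^i\nu_\ast\widehat{\oo}_X \cong \Omega^i_X(-i)$, where the Tate twist is made explicit via our fixed compatible system of $p^n$-th roots of unity. In particular $R^i\nu_\ast\widehat{\oo}_X$ is a locally free $\oo_X$-module of finite rank $\binom{n}{i}$; it vanishes for $i > n$, and equals $\oo_X$ for $i = 0$.

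Next, I would feed this into Proposition \ref{prop:comp-r}, which gives $R^i\nu_\ast(\widehat{\oo}_X\ctens M) \cong (R^i\nu_\ast\widehat{\oo}_X)\ctens M$. Since $R^i\nu_\ast\widehat{\oo}_X\cong\Omega^i_X(-i)$ is a locally free $\oo_X$-module of finite rank, it suffices to identify $(\Omega^i_X(-i))\ctens M$ with $\Omega^i_X(-i)\otimes_{\oo_X}(\oo_X\ctens M)$. Working on a suitable basis of opens over which $\Omega^i_X(-i)$ is free, this is a purely formal statement: for a finite free module $\oo(U)^{\oplus r}$ one has $\oo(U)^{\oplus r}\ctens M = (\oo(U)\ctens M)^{\oplus r}$ by exactness of finite direct sums and of $-\ctens M$ on $\vp$-adically complete modules (Corollary \ref{cor:complete}(1)), and this is compatible with restriction and hence sheafifies; the Tate twist is just a choice of basis and is harmless. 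Alternatively one may invoke Lemma \ref{lemm: commute} with $V = \Omega^i_X(-i)(U)$, $U$-term $= \oo_X(U)$, over the Banach algebra $S = \oo_X(U)$, to get the projection-formula isomorphism directly. Combining these two isomorphisms yields the claimed formula.

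\textbf{Main obstacle.} There is essentially no obstacle here beyond bookkeeping, since the two substantial ingredients (Proposition \ref{prop:comp-r} and the computation $R^i\nu_\ast\widehat{\oo}_X\cong\Omega^i_X(-i)$, the latter borrowed from \cite{sch1}) are already available. The only point requiring a little care is to check that all the identifications are \emph{canonical} and compatible with restriction maps, so that the local isomorphisms glue to an isomorphism of sheaves on $X_{\et}$ — but this follows from the naturality of the projection formula in Lemma \ref{lemm: commute} and the canonicity of the comparison in Proposition \ref{prop:comp-r}. One should also note that $\oo_X\ctens M$ here means the sheaf $U\mapsto\oo_X(U)\ctens M$ on $X_{\et}$, with the completed tensor product computed using $\oo_X^+(U)$ (which is $\vp$-adically complete, being the ring of integers of an affinoid), exactly as in \S\ref{sec:6.2}; so the right-hand side is the sheaf of modules referred to throughout the main text.
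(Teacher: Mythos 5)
Your proof is correct and follows essentially the same route as the paper: it combines Proposition \ref{prop:comp-r}, the isomorphism $R^{i}\nu_{\ast}\ohat_{X}\cong\Omega^{i}_{X}(-i)$ from \cite{sch1}, and the projection-formula identification via Lemma \ref{lemm: commute} using the finite projectivity of $\Omega^{i}_{X}(-i)$. The extra remarks on canonicity and on which unit balls are used are fine but add nothing beyond the paper's own (very brief) argument.
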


\begin{proof} This follows from Proposition \ref{prop:comp-r}, Lemma \ref{lemm: commute} and the canonical isomorphism $R^{i}\nu_{\ast}(\ohat_{X}) \cong \Omega^{i}_{X}(-i)$ (Proposition 3.23 of \cite{sch1}) together with the finite projectivity of the latter sheaf. \end{proof}

\medskip

We will need several lemmas before we can prove Proposition \ref{prop:comp-r}.

\begin{lemm}\label{lem:sheaf}
The presheaf $U \mapsto H^{i}_{cts}(\Zpn,\oo_{X}^{+}(U))$ on $X_{\et}$ is a sheaf, where $\oo_{X}^{+}(U)$ has its $\vp$-adic topology and the trivial $\Zpn$-action.
\end{lemm}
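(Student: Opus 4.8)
The claim is that for fixed $i$, the assignment $U \mapsto H^{i}_{cts}(\Zpn, \oo_{X}^{+}(U))$ defines a sheaf on $X_{\et}$. Since continuous cohomology of $\Zpn$ with coefficients in a $\vp$-adically complete module (with trivial action) can be computed by the Koszul complex on $n$ generators, the first step is to reduce the statement to showing that the Koszul complex is exact after sheafification, or more precisely that the cohomology presheaves of a complex of sheaves are themselves sheaves. Concretely, I would write $H^{i}_{cts}(\Zpn, \oo_{X}^{+}(U))$ as the $i$-th cohomology of the complex $K^{\bullet}(U) := \big(\oo_{X}^{+}(U) \to \oo_{X}^{+}(U)^{\binom{n}{1}} \to \cdots \to \oo_{X}^{+}(U)^{\binom{n}{n}}\big)$ with all differentials zero (the trivial action makes every Koszul differential vanish!). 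So in fact $H^{i}_{cts}(\Zpn, \oo_{X}^{+}(U)) \cong \oo_{X}^{+}(U)^{\binom{n}{i}}$ naturally in $U$, since all the $\vp^{\ast}$-type differentials are multiplication by $(\gamma_j - 1) = 0$.

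\textbf{Main step.} Given that identification, the presheaf in question is just a finite direct sum of copies of $U \mapsto \oo_{X}^{+}(U)$ on $X_{\et}$, which is a sheaf (it is the structure sheaf $\oo_{X}^{+}$ on the \'etale site, whose sheaf property is standard — e.g. it follows from the analogous statement on the analytic site together with \'etale descent, or one cites \cite{sch2}). Therefore the only genuine content is: (a) checking that the continuity in the definition of $H^{i}_{cts}$ does not interfere — i.e. that with the $\vp$-adic topology on $\oo_{X}^{+}(U)$ and the \emph{trivial} $\Zpn$-action, continuous cochains are the same as all cochains, so $H^{i}_{cts}(\Zpn, \oo_{X}^{+}(U)) = H^{i}(\Zpn, \oo_{X}^{+}(U)) = \oo_{X}^{+}(U)^{\binom{n}{i}}$; and (b) verifying that this identification is functorial in $U$ so that the sheaf axioms for $\oo_{X}^{+}$ transport directly. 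Point (a) is immediate: for the trivial action a cocycle condition on the Koszul/homogeneous cochains imposes nothing and the coboundaries are zero, and continuity is automatic since $\Zpn$ acts through the identity; alternatively one invokes that for a profinite group acting trivially on a profinite (or $\vp$-adically complete, torsion-free) module, $H^{i}_{cts}(G, M) = \Hom_{cts}(\wedge^{i}(G^{\ab}), M)$ type formulas apply, but for $\Zpn$ this is just $M^{\binom{n}{i}}$.

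\textbf{Obstacle.} Honestly there is no serious obstacle here; the statement is essentially bookkeeping. The one point requiring a word of care is that $\oo_{X}^{+}$ \emph{is} a sheaf on $X_{\et}$ (not merely a separated presheaf) — this is where one uses that $X$ is a (nice) rigid space, quoting the sheaf property of $\oo_{X}^{+}$ on the \'etale site from \cite{sch2} (e.g. the fact that $\oo_{X}^{+}$ arises from $\oo_{X_{\et}}$ and the integral structure is detected analytically). Thus the plan is: (1) observe the $\Zpn$-action is trivial so the continuous cohomology is computed by a Koszul complex with zero differentials, giving $H^{i}_{cts}(\Zpn,\oo_{X}^{+}(U))\cong \oo_{X}^{+}(U)^{\oplus\binom{n}{i}}$ functorially; (2) note this identification is natural in $U$; (3) conclude using that $\oo_{X}^{+}$ is a sheaf on $X_{\et}$, so any finite direct sum of it is too. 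I would write this up in a few lines.
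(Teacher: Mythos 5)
Your proposal is correct and is essentially the paper's argument: the paper simply cites \cite[Lemma 5.5]{sch2} for the natural identification $H^{i}_{cts}(\Zpn,\oo_{X}^{+}(U))\cong\bigwedge^{i}\oo_{X}^{+}(U)^{n}\cong\oo_{X}^{+}(U)^{\binom{n}{i}}$ (your Koszul complex with vanishing differentials), and then the sheaf property is that of a finite direct sum of copies of $\oo_{X}^{+}$ on $X_{\et}$. One caveat: your aside that continuity of cochains is automatic because the action is trivial (i.e.\ that continuous and abstract group cohomology agree) is false in general, since continuity is a condition on the cochains $\Zpn^{\,i}\to\oo_{X}^{+}(U)$ and not on the action; discard that remark and rely, as you do in your main line, on the standard fact that continuous $\Zpn$-cohomology of a $\vp$-adically complete module is computed by the Koszul complex on the topological generators.
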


\begin{proof}
This follows from the computation in \cite[Lemma 5.5]{sch2}; $H^{i}_{cts}(\Zpn,\oo_{X}^{+}(U))$ is isomorphic to $\bigwedge^{i}\oo_{X}^{+}(U)^{n}$ in a way compatible with completed tensor products.
\end{proof}

Put $T^{n}=\Spa(C \langle X_{1}^{\pm 1},...,X_{n}^{\pm 1} \rangle, \OC \langle X_{1}^{\pm 1},...,X_{n}^{\pm 1} \rangle )$. We let $\wt{T}^{n}$ be the inverse limit over $n \geq 0$ of $\Spa (C\langle X^{\pm 1/p^n} _1,..., X^{\pm 1/p^n} _{n} \rangle, \OC \langle X^{\pm 1/p^n} _1,..., X^{\pm 1/p^n} _{n} \rangle)$. This is an affinoid perfectoid (see \cite[Example 4.4]{sch2}).

\begin{lemm}\label{lem:cts}
Let $U\in X_{\et}$ and and assume that there is a map $U \ra T^{n}$ which is a composition of rational embeddings and finite \'etale maps. Pull back the affinoid perfectoid $\Zpn$-cover $\wt{T}^{n}$ to obtain an affinoid perfectoid $\Zpn$-cover $\wt{U}$ of $U$. Then:
\begin{enumerate}

\item There is a canonical injection
$$H^{i}_{cts}(\Zpn, \oo_{X}^{+}(U)) \ra H^{i}(U_{\proet},\ohatp_{X})$$ 
with cokernel killed by $p$.

\smallskip

\item $H^{i}(U_{\proet},\ohat_{X}) \cong H^{i}_{cts}(\Zpn, \oo_{X}(U))$.

\item $H^{i}(U_{\proet},\ohatp_{X} \ctens M)^{a} \cong (H^{i}(U_{\proet},\ohatp_{X}) \ctens M)^{a}$ (where $-^{a}$ denotes the associated almost $\OC$-module).

\item $H^{i}(U_{\proet},\ohat_{X} \ctens M) \cong H^{i}(U_{\proet},\ohat_{X}) \ctens M$.

\end{enumerate}
\end{lemm}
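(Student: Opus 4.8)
\textbf{Proof strategy for Lemma \ref{lem:cts}.} The four statements are organized so that (1)--(2) are the substantive local computations and (3)--(4) are formal consequences. The plan is as follows. For (1) and (2), I would follow the computation in the proof of \cite[Lemma 5.5]{sch2} essentially verbatim. The space $\wt{U}\to U$ is an affinoid perfectoid $\Zpn$-cover obtained by pullback from the standard tower $\wt{T}^{n}\to T^{n}$, so the Cartan--Leray spectral sequence (Remark \ref{cartanleray}, applied with $G=\Zpn$) reduces the computation of $H^{i}(U_{\proet},\ohatp_{X})$ (resp. with $\ohat_{X}$) to the continuous group cohomology $H^{i}_{cts}(\Zpn,\ohatp_{X}(\wt{U}))$ (resp. $H^{i}_{cts}(\Zpn,\ohat_{X}(\wt{U}))$), together with the vanishing of $H^{j}(\wt{U},\ohatp_{X})$ for $j>0$ in the almost category (which holds since $\wt{U}$ is affinoid perfectoid, by \cite[Theorem 5.1]{sch2}) and the Mittag-Leffler/completeness bookkeeping for passing from $\ohatp_{X}/p^{m}$ to $\ohatp_{X}$. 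Then one invokes the explicit description of $\ohatp_{X}(\wt{U})$ as an almost-free completed module over $\oo_{X}^{+}(U)$ with basis indexed by $(1/p^{\infty})\Zpn / \Zpn$, and the key homological computation from \cite[Lemma 5.5]{sch2} shows that the "integral part" indexed by $\Zpn$ (on which $\Zpn$ acts trivially) contributes $H^{i}_{cts}(\Zpn,\oo_{X}^{+}(U))$, while the complementary part contributes something killed by $p$ after taking cohomology. Inverting $p$ and using $\ohat_{X}(\wt{U})=\ohatp_{X}(\wt{U})[1/p]$ gives (2). The sheaf property needed implicitly for these statements is Lemma \ref{lem:sheaf}.

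For part (3): since $\ohatp_{X}(\wt U)$ is $p$-adically complete (\cite[Lemma 4.2(iii)]{sch2}), Corollary \ref{cor:complete}(1) gives exactness of $-\ctens M$ on it, so one may compute $H^{i}(U_{\proet},\ohatp_{X}\ctens M)$ via the same Cartan--Leray spectral sequence, now with coefficients $\ohatp_{X}\ctens M$; because $M\cong\prod_{j\in J}\oo$ as a topological module (Proposition \ref{prop: appflat}), the relevant complexes are just products indexed by $J$ of the complexes computing $H^{i}(U_{\proet},\ohatp_{X})$, after reducing mod $p^{m}$ and passing to the limit. Taking cohomology commutes with the product $\prod_{j\in J}$ and, up to the almost-category error terms already present in (1), with the $p$-adic completion defining $\ctens M$; chasing this through yields the almost-isomorphism $H^{i}(U_{\proet},\ohatp_{X}\ctens M)^{a}\cong(H^{i}(U_{\proet},\ohatp_{X})\ctens M)^{a}$. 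Here one must be a little careful that $H^{i}(U_{\proet},\ohatp_{X})$ is $p$-adically complete (so that $\ctens M$ is exact on it and the notation makes sense), but this follows from part (1) together with the $p$-adic completeness of $H^{i}_{cts}(\Zpn,\oo_{X}^{+}(U))$, the cokernel being killed by $p$.

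For part (4): invert $p$ in (3). The almost-isomorphism in (3) becomes an honest isomorphism after $[1/p]$, and $H^{i}(U_{\proet},\ohatp_{X}\ctens M)[1/p]=H^{i}(U_{\proet},\ohat_{X}\ctens M)$ using $\ohat_{X}\ctens M=(\ohatp_{X}\ctens M)[1/p]$ and exactness of $[1/p]$ (here one uses that $U$ is quasicompact so that cohomology commutes with filtered colimits, or more directly that the relevant complexes are computed by finite-dimensional pieces); likewise $(H^{i}(U_{\proet},\ohatp_{X})\ctens M)[1/p]=H^{i}(U_{\proet},\ohat_{X})\ctens M$ by Definition \ref{mixtens} together with (2). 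Combining gives the stated isomorphism.

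I expect the main obstacle to be part (3): keeping track of the interaction between three limiting operations --- the $p$-adic completion implicit in $-\ctens M$, the inverse limit over $k$ (reduction mod $p^{m}$) in the definition of $\ohatp_{X}$, and the product $\prod_{j\in J}$ coming from the pseudobasis of $M$ --- while only having control in the almost category, and making sure no additional higher-$\varprojlim$ or $\lim^{1}$ terms appear. The cleanest way to organize this is probably to first establish everything at finite level (i.e. for $\ohatp_{X}/p^{m}$, where $M/I_{i}$ is a finite module and all the operations are manifestly exact and commute), record the resulting isomorphism of complexes, and only then pass to the limit using the Mittag-Leffler conditions already verified in the proof of (1) and in \cite[Lemma 5.5]{sch2}.
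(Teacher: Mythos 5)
Your proposal is correct and follows essentially the same route as the paper: Cartan--Leray for the $\Zpn$-cover $\wt{U}$ combined with almost acyclicity of $\ohatp_{X}$ on affinoid perfectoids and the computation of \cite[Lemma 5.5]{sch2} for (1), inverting $p$ for (2) and (4), and for (3) the pseudobasis/product description of $-\ctens M$ together with almost $p$-adic completeness of $H^{i}(U_{\proet},\ohatp_{X})$ and commuting cohomology with products, controlled by Mittag--Leffler and \cite[Lemma 3.18]{sch2}. The limit/product bookkeeping you single out as the main obstacle is exactly the point the paper handles by writing the product as an inverse limit of finite products, so no changes are needed.
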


\begin{proof}
We start with (1). The Cartan--Leray spectral sequence (cf. Remark \ref{cartanleray}) gives us
$$ H^{p}_{cts}(\Zpn, H^{q}(\wt{U}_{\proet}, \oo_{X}^{+}/\vp^{m}))\, \implies \, H^{p+q}(U_{\proet}, \oo_{X}^{+}/\vp^{m}) $$
for all $m$. Since the $H^{q}(\wt{U}_{\proet}, \oo^{+}_{X}/\vp^{m})$ are almost zero for all $q\geq 1$ (\cite[Lemma 4.10]{sch2}, see the proof of (i) and (v) ) we conclude that
$$ H^{p}(U_{\proet}, \oo_{X}^{+}/\vp^{m})^{a} = H^{p}(\Zpn, (\oo_{X}^{+}/\vp^{m})(\wt{U}))^{a}. $$
Next, note that the inverse system
$$ (H^{p}(\Zpn, (\oo_{X}^{+}/\vp^{m})(\wt{U}))_{m} $$
has surjective transition maps by the proof of \cite[Lemma 5.5]{sch2}. Since the inverse system $(\oo_{X}^{+}/\vp^{m})_{m}$ has almost vanishing higher inverse limits on the pro-\'etale site (by an easy application of the almost version of \cite[Lemma 3.18]{sch2}) we conclude that there is an almost isomorphism
$$ \varprojlim H^{p}(\Zpn, (\oo_{X}^{+}/\vp^{m})(\wt{U})^{a} \cong H^{p}(U_{\proet},\wh{\oo}_{X}^{+})^{a}. $$
By \cite[Lemma 5.5]{sch2} we have
$$  \bigwedge^{p}(\oo_{X}^{+}/\vp^{m})(U)^{n} \cong H^{p}(\Zpn, (\oo_{X}^{+}/\vp^{m})(U)) \ra H^{p}(\Zpn, (\oo_{X}^{+}/\vp^{m})(\wt{U})) $$
with cokernel killed by $p$, compatibly in $m$; in fact the proof shows that the injection is split. It remains to verify that $\varprojlim H^{q}(\Zpn, (\oo_{X}^{+}/\vp^{m})(U)) \cong H_{cts}^{q}(\Zpn, \oo_{X}^{+}(U))$. This is standard; one may e.g. argue as in the proof of \cite[Theorem 2.5.7]{nsw} (note that the relevant $\varprojlim^{1}$ vanishes by the isomorphism above). This finishes the proof of (1).
\medskip

For (2), use (1) and invert $\vp$ - this commutes with taking cohomology by quasicompactness.

\medskip

For (3) we view $- \ctens M$ as the functor of taking self products over some index set $I$. We remark that $H^{i}(U_{\proet},\ohatp_{X})^{a}$ is $\vp$-adically complete by part (1), \cite[Lemma 5.5]{sch2} and the almost version of Corollary \ref{cor:complete}(3). Thus we are left with verifying that cohomology for $\ohatp_{X}$ almost commutes with arbitrary products. Writing $\prod_{i\in I}$ as $\varprojlim _J \prod_{i\in J}$ for all $J\sub I$ finite we are left with showing that the relevant higher inverse limits vanish, which is easy using the Mittag-Leffler condition and \cite[Lemma 3.18]{sch2}.

\medskip

Finally we invert $\vp$ in (3) to deduce (4).
\end{proof}

\begin{lemm}\label{lem:push}
The sheaf $R^{i}\nu_{\ast}\ohat_{X}$ is equal to the sheaf $U \mapsto H^{i}_{cts}(\Zpn, \oo_{X}(U))$, and $(R^{i}\nu_{\ast}\ohat_{X}) \ctens M$ may be computed using $H^{i}_{cts}(\Zpn, \oo_{X}^{+}(U)) \sub R^{i}\nu_{\ast}\ohat_{X}(U)$.
\end{lemm}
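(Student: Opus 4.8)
The plan is to reduce the statement to the local computations already assembled in Lemmas \ref{lem:sheaf}, \ref{lem:cts}, and to the well-known description of $R^i\nu_{\ast}\wh{\oo}_X$. First I would recall that $R^i\nu_{\ast}\wh{\oo}_X$ is by definition the sheafification of $U \mapsto H^i(U_{\proet},\wh{\oo}_X)$ on $X_{\et}$. By Lemma \ref{lem:cts}(2), for any $U \in X_{\et}$ admitting an \'etale map to a torus $T^n$ that factors as a composition of rational embeddings and finite \'etale maps (such $U$ form a basis for $X_{\et}$, by standard results on smooth rigid spaces and the local structure theory used in \cite{sch1}), one has $H^i(U_{\proet},\wh{\oo}_X) \cong H^i_{cts}(\Zpn,\oo_X(U))$. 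By Lemma \ref{lem:sheaf}, the presheaf $U \mapsto H^i_{cts}(\Zpn,\oo_X(U))$ is already a sheaf on $X_{\et}$ (using the integral version; inverting $\vp$ preserves the sheaf property since $X$ is quasicompact and the $U$ in the basis are quasicompact, so finite intersections are respected). Hence $R^i\nu_{\ast}\wh{\oo}_X$ is canonically identified with the sheaf $U \mapsto H^i_{cts}(\Zpn,\oo_X(U))$, which is the first assertion.

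Next I would address the second assertion, that $(R^i\nu_{\ast}\wh{\oo}_X)\ctens M$ is computed by the image of $H^i_{cts}(\Zpn,\oo_X^+(U))$ inside $R^i\nu_{\ast}\wh{\oo}_X(U) = H^i_{cts}(\Zpn,\oo_X(U))$. The point is that, by the explicit description in \cite[Lemma 5.5]{sch2} recalled in the proof of Lemma \ref{lem:sheaf}, $H^i_{cts}(\Zpn,\oo_X^+(U)) \cong \bigwedge^i \oo_X^+(U)^n$ and $H^i_{cts}(\Zpn,\oo_X(U)) \cong \bigwedge^i\oo_X(U)^n$, so the natural map is injective with the first a $\vp$-adically open, $\vp$-adically complete, bounded $\oo$-submodule whose inversion of $\vp$ gives the second. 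This is exactly the setup of Definition \ref{mixtens}: the completed tensor product $(R^i\nu_{\ast}\wh{\oo}_X)(U)\ctens M$ is \emph{defined} as $\big(\operatorname{Im}(H^i_{cts}(\Zpn,\oo_X^+(U))\to H^i_{cts}(\Zpn,\oo_X(U)))\ctens M\big)[\tfrac 1\vp]$, and Corollary \ref{cor:complete}(2) together with the fact that the cokernel of $H^i_{cts}(\Zpn,\oo_X^+(U)) \to H^i_{cts}(\Zpn,\oo_X(U))$ is... wait, it is not torsion; rather the submodule is already bounded and open, so the result is independent of this choice. I would then note that $U \mapsto H^i_{cts}(\Zpn,\oo_X^+(U))\ctens M$ is a sheaf (apply Lemma \ref{lem:sheaf} and the exactness of $-\ctens M$ on $\vp$-adically complete modules, Corollary \ref{cor:complete}(1), noting $\bigwedge^i\oo_X^+(U)^n$ is $\vp$-adically complete), so its sheafification is itself.

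The one genuinely content-bearing step, and the main obstacle, is verifying that the basis of opens $U$ admitting a suitable \'etale map to $T^n$ is cofinal enough: one needs that these $U$ form a basis of $X_{\et}$, which is a standard fact about smooth rigid spaces over $C$ (every point has an \'etale neighbourhood which is a composition of rational localizations and finite \'etale maps over a torus), and one needs that sheafification on $X_{\et}$ can be detected on this basis --- which is automatic --- and that the sheaf property of a presheaf can be checked on such a basis, which also requires that finite intersections of basis opens are handled, guaranteed since the presheaves in question are genuinely sheaves on all of $X_{\et}$ by Lemma \ref{lem:sheaf}. I would therefore organize the proof as: (i) identify $R^i\nu_{\ast}\wh{\oo}_X$ with $U \mapsto H^i_{cts}(\Zpn,\oo_X(U))$ using Lemmas \ref{lem:cts}(2) and \ref{lem:sheaf}; (ii) observe that $U \mapsto H^i_{cts}(\Zpn,\oo_X^+(U))$ is a $\vp$-adically-complete-valued subsheaf with the same generic fibre; (iii) conclude from Definition \ref{mixtens} and Corollary \ref{cor:complete} that $(R^i\nu_{\ast}\wh{\oo}_X)\ctens M$ is the sheaf $U \mapsto H^i_{cts}(\Zpn,\oo_X^+(U))\ctens M$, which by construction is computed by the indicated image. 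No serious new estimates are needed; the work is entirely bookkeeping with the definitions already in place.
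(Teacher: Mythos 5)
Your argument is essentially the paper's: work \'etale-locally so that a chart $X\to T^{n}$ (a composition of rational embeddings and finite \'etale maps) exists, feed the Cartan--Leray computations of Lemma \ref{lem:cts} into the sheaf property of Lemma \ref{lem:sheaf}, and handle the integral structure via Corollary \ref{cor:complete}. The only structural difference is the order of operations: you invert $\vp$ first (using Lemma \ref{lem:cts}(2)) and identify the rational presheaf $U\mapsto H^{i}_{cts}(\Zpn,\oo_{X}(U))$ with the sheafification directly, whereas the paper stays integral, sheafifies the exact sequence $0\to\mc{G}\to R^{i}\nu_{\ast}\ohatp_{X}\to\mc{Q}^{sh}\to 0$ coming from Lemma \ref{lem:cts}(1) (with $\mc{Q}^{sh}$ killed by $p$), and inverts $\vp$ at the end. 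Both work; do make explicit, as the paper does in its first sentence, that the identification is chart-dependent and the statement is to be read \'etale-locally on $X$ (globally $R^{i}\nu_{\ast}\ohat_{X}\cong\Omega^{i}_{X}(-i)$ is not a trivialized sheaf, so the asserted ``equality of sheaves'' only makes sense after fixing $X\to T^{n}$ and using the induced charts on all $U$, which also gives the functoriality in $U$ you need).

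Where your proposal is thinner than the paper is the second assertion, and your own hesitation (``wait, it is not torsion'') marks the spot. In the setup of Proposition \ref{prop:comp-r} the lattice declared to compute $(R^{i}\nu_{\ast}\ohat_{X})\ctens M$ is $\mathrm{Im}\bigl((R^{i}\nu_{\ast}\ohatp_{X})(U)\to(R^{i}\nu_{\ast}\ohat_{X})(U)\bigr)$, so the content of the assertion is that this image agrees with $H^{i}_{cts}(\Zpn,\oo_{X}^{+}(U))$ (whence the sheaf property); the paper gets this for free from the sheafified integral exact sequence, since the quotient is killed by $p$. Your route never mentions the image lattice: you instead argue that $H^{i}_{cts}(\Zpn,\oo_{X}^{+}(U))$ is an open, bounded, $\vp$-adically complete lattice and that the choice does not matter. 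That is salvageable, but as first written your appeal to Corollary \ref{cor:complete}(2) (with the cokernel of $\oo_{X}^{+}(U)\subset\oo_{X}(U)$) is incorrect, and the correct comparison you need is between the two \emph{integral} lattices: either note that the image lattice contains $H^{i}_{cts}(\Zpn,\oo_{X}^{+}(U))$ with quotient killed by $p$ (Lemma \ref{lem:cts}(1) after sheafification) and apply Corollary \ref{cor:complete}(2), or verify that the image lattice is itself open and bounded in the Banach module $\bigwedge^{i}\oo_{X}(U)^{n}$ so that the convention of Definition \ref{mixtens} applies. Either way the missing step is short, but it is the one genuinely content-bearing point of the second assertion and should be spelled out rather than waved at.
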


\begin{proof}
We may work \'etale locally on $X$, so assume without loss of generality that there is a map $X \ra T^{n}$ which is a composition of rational embeddings and finite \'etale maps. By abstract nonsense $R^{i}\nu_{\ast}\ohat_{X}$ is the sheafification of the presheaf $U \mapsto H^{i}(U_{\proet},\ohat_{X})$. Let us write $\mc{F}$ for this presheaf.  By part (1) of the previous Lemma there is an exact sequence 
$$ 0 \ra  H^{i}_{cts}(\Zpn, \oo_{X}^{+}(U)) \ra H^{i}(U_{\proet}, \ohatp_{X}) \ra  Q^{i}(U) \ra 0 $$
where $Q^{i}(U)$ is simply defined as the quotient. This is  an exact sequence of presheaves on $\Xetw$. Moreover $Q^{i}(U)$ is killed by $p$ for all $U$. Write $\mc{G}$ for the presheaf $U \mapsto H^{i}_{cts}(\Zpn, \oo_{X}^{+}(U))$, $\mc{F}^{+}$ for $U \mapsto H^{i}(U_{\proet}, \ohatp_{X})$ and $\mc{Q}$ for $U \mapsto Q^{i}(U)$. Then the exact sequence above sheafifies to
$$ 0 \ra \mc{G} \ra (\mc{F}^{+})^{sh}=R^{i}\nu_{\ast}\ohatp_{X} \ra \mc{Q}^{sh} \ra 0$$
( $-^{sh}$ for sheafification) since $\mc{G}$ is already a sheaf by Lemma \ref{lem:sheaf}, and $\mc{Q}^{sh}$ is killed by $p$ since $\mc{Q}$ is. Inverting $\vp$ we get that $H^{i}_{cts}(\Zpn, \oo_{X}^{+}(U))[1/\vp] \cong R^{i}\nu_{\ast}\ohat_{X}(U)$. For the other assertion of the lemma, note that ${\rm Im}(R^{i}\nu_{\ast}\ohatp_{X}(U) \ra R^{i}\nu_{\ast}\ohat_{X}(U))$ coincides with $H^{i}_{cts}(\Zpn, \oo_{X}^{+}(U))$ under the identification $H^{i}_{cts}(\Zpn, \oo_{X}^{+}(U))[1/\vp] \cong R^{i}\nu_{\ast}\ohat_{X}(U)$, and is therefore a sheaf (as was asserted in Proposition \ref{prop:comp-r}).
\end{proof}

\medskip

\noindent \textbf{Proof of Proposition \ref{prop:comp-r}}: Lemma \ref{lem:push} gives us that
$$ ((R^{i}\nu_{\ast}\ohat_{X}) \ctens M)(U)   =   (H^{i}_{cts}(\Zpn, \oo_{X}^{+}(U))\ctens M)[1/\vp]. $$
On the other hand $R^{i}\nu_{\ast}(\ohat_{X} \ctens M)$ is the sheaf associated with the presheaf 
$$U \mapsto H^{i}(U_{\proet},\ohat_{X} \ctens M)$$ 
by abstract nonsense. Working locally again, assume that there is a map $U \ra T^{n}$ as above. We may then compute
\begin{eqnarray*}
H^{i}(U_{\proet},\ohat_{X} \ctens M) & = & H^{i}(U_{\proet},\ohat_{X}) \ctens M \\
 & = &  ({\rm Im}(H^{i}(U_{\proet},\ohatp_{X}) \ra H^{i}(U_{\proet},\ohat_{X})) \ctens M)[1/\vp] \\
 & = &  (H^{i}_{cts}(\Zpn, \oo_{X}^{+}(U))\ctens M)[1/\vp] \end{eqnarray*}
using Lemma \ref{lem:cts} (4) and the proof of Lemma \ref{lem:push}. Therefore $U \mapsto H^{i}(U_{\proet},\ohat_{X} \ctens M)$ is actually defines sheaf when restricting to the basis of such $U$ (since $U \mapsto H^{i}_{cts}(\Zpn, \oo_{X}^{+}(U)) $ is a sheaf), so it must equal $R^{i}\nu_{\ast}(\ohat_{X} \ctens M)$ on these $U$. This gives the desired isomorphism. \qed

\subsection{Analogues of the theorems of Tate and Kiehl}\label{sec:6.3}

Let $X$ be a quasicompact and quasiseparated rigid analytic variety over some complete non-archimedean field $K$ of \emph{characteristic zero}. For simplicity let us work with the site given by the basis of the topology consisting of quasicompact open subsets together with finite covers (we will specialize this further eventually). Given a small $\Zp$-algebra $R$, we may form the sheaf $\mc{R}(U)=\oo_{X}(U) \ctens R$ on $X$ ($U$ is qc open). In this section we are going to prove analogues of the theorems of Tate and Kiehl for $\oo_{X}$ for $\mc{R}$ under the assumption that $X$ is affinoid and that $K$ is discretely valued. The assumption that $K$ is discretely valued makes it easy to prove various flatness assertions. We suspect that this assumption can be dropped with some more work but we will not need the extra generality. We will however prove a few statements that we need without this assumption on $K$. For simplicity we work with the topology on $X$ given by rational subsets and finite covers. We start with some basic properties.

\begin{lemm}\label{lemm:flat} Let $X$ be affinoid and assume that $K$ is discretely valued.
\begin{enumerate}

\item For every rational $U\sub X$, $\mc{R}(U)$ is Noetherian.

\item If $V\sub U \sub X$ are rational, then the map $\mc{R}(U) \ra \mc{R}(V)$ is flat.

\item If $U$ is rational and $(U_{i})_{i}$ is a finite rational cover of $U$, then the natural map $\mc{R}(U) \ra \prod_{i}\mc{R}(U_{i})$ is faithfully flat.
\end{enumerate}
\end{lemm}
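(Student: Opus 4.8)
\textbf{Proof plan for Lemma \ref{lemm:flat}.}

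The plan is to reduce everything to statements about the rings $\oo_X(U)$, which are well-understood affinoid algebras, via the key structural fact that $\mc{R}(U) = \oo_X(U)\ctens R$ and that, after choosing a presentation $R = \varprojlim_i R/I_i$, one has a concrete description of this completed tensor product. First I would record the basic input: since $R$ is a small $\Zp$-algebra, it is finite over $\Zp[[X_1,\dots,X_d]]$, so I can (and would) reduce the structural questions to the case $R = \Zp[[X_1,\dots,X_d]]$ first, where $\mc{R}(U) = \oo_X(U)\ctens \Zp[[X_1,\dots,X_d]]$, and then handle general $R$ by finiteness. For $R = \Zp[[X_1,\dots,X_d]]$, using Proposition \ref{prop: appflat} (or a direct computation with the ideals $\mf{a}^k$) one identifies $\oo_X(U)^{\circ}\ctens \Zp[[X_1,\dots,X_d]]$ with the ring of power series over $\oo_X(U)^{\circ}$ in $X_1,\dots,X_d$ that are ``$\mf{a}$-adically'' controlled; inverting $p$ gives $\mc{R}(U)$ as (a mild variant of) the Tate algebra $\oo_X(U)\langle X_1,\dots,X_d\rangle$-style ring, but with the $X_i$ topologically nilpotent rather than of norm $\leq 1$. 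The cleanest route is to observe that $\mc{R}(U)$ is the ring of global functions on an open polydisc of suitable radius over $\Spa(\oo_X(U),\oo_X(U)^{\circ})$; more precisely $\Spf(\oo_X(U)^{\circ}\ctens R)$ has rigid generic fibre a quasi-Stein space over $\mc{X}:=\Spa(\oo_X(U),\oo_X(U)^{\circ})$, but since we only need the ring of global sections and Noetherianity, I would instead argue directly.

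For part (1), Noetherianity: with the reduction above it suffices to show $\oo_X(U)\ctens \Zp[[X_1,\dots,X_d]]$ is Noetherian. Here is where discrete valuation of $K$ helps — $\oo_X(U)^{\circ}$ is then a Noetherian ring (it is a quotient of a restricted power series ring over $\oo_K$, and $\oo_K$ is a DVR, so this is a standard fact), and $\oo_X(U)^{\circ}\ctens\Zp[[X_1,\dots,X_d]]$ is a quotient of $\oo_X(U)^{\circ}[[X_1,\dots,X_d]]$ (formal power series in $d$ variables over a Noetherian ring), hence Noetherian by the Hilbert basis theorem for power series rings; inverting $p$ preserves Noetherianity, and passing to $R$ finite over $\Zp[[X_1,\dots,X_d]]$ preserves it as well since $\mc{R}(U)$ is then module-finite over the subring built from $\Zp[[X_1,\dots,X_d]]$.

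For parts (2) and (3), I would bootstrap from the known flatness statements for affinoid algebras themselves: $\oo_X(U)\to\oo_X(V)$ is flat for $V\subset U$ rational (this is classical — a rational localization of an affinoid algebra is flat), and $\oo_X(U)\to\prod_i\oo_X(U_i)$ is faithfully flat for a finite rational cover (Tate's acyclicity plus the fact that the cover is jointly surjective on the adic spectrum). The strategy is: apply $-\ctens R$ and use Lemma \ref{lemm: commute} (or rather its proof) together with the flatness of $A_{\U}\ctens\oo_{\mc{X}_{K(p^n)}}(U_n)$ over the base established in the proof of Lemma \ref{lemm:fibre} — the relevant assertion is that $\oo_X(U)\ctens R$ is flat over $\oo_X(U)$, which follows because $\oo_X(U)^{\circ}[[X_1,\dots,X_d]]$ is flat (indeed faithfully flat after inverting $p$ in the right places) over $\oo_X(U)^{\circ}$, again using that $K$ is discretely valued so that these rings are Noetherian and flatness can be checked by the local criterion / completion arguments. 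Then flatness of $\mc{R}(U)\to\mc{R}(V)$ follows by writing it as a base change: $\mc{R}(V) = \oo_X(V)\ctens R \cong \oo_X(V)\otimes_{\oo_X(U)}(\oo_X(U)\ctens R)$ — this last isomorphism is exactly the content (for affinoid $\oo_X(V)$, which is finite-type but not finite-projective over $\oo_X(U)$, so one needs a version of Lemma \ref{lemm: commute} that allows this; the proof of Lemma \ref{lemm: commute} actually extends since rational localizations are flat and the completed tensor product commutes with the relevant colimits, or one argues directly with the presentation) — composed with the flatness of $\oo_X(U)\ctens R$ over $\oo_X(U)$ and the flatness of $\oo_X(U)\to\oo_X(V)$. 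For (3) one argues identically, using that $\oo_X(U)\to\prod_i\oo_X(U_i)$ is faithfully flat and that base change along it, together with the flatness of $\oo_X(U)\ctens R$ over $\oo_X(U)$, yields faithful flatness of $\mc{R}(U)\to\prod_i\mc{R}(U_i)$ (faithful flatness descends through such compositions).

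The main obstacle I anticipate is the identification $\mc{R}(V)\cong\oo_X(V)\otimes_{\oo_X(U)}\mc{R}(U)$ (and similarly for the cover) — i.e., showing that the completed tensor product $-\ctens R$ commutes with the ordinary tensor product $\oo_X(V)\otimes_{\oo_X(U)}-$ when $\oo_X(V)$ is merely topologically finitely generated (not finite projective) over $\oo_X(U)$. Lemma \ref{lemm: commute} as stated requires finite projectivity. The fix is either to prove the needed variant directly using the presentation $R=\varprojlim R/I_i$ and the fact that $\oo_X(V) = \oo_X(U)\langle f_1,\dots,f_m/g\rangle$ is a quotient of a Tate algebra over $\oo_X(U)$ (so one can write down the completed tensor product very explicitly and check the comparison map is an isomorphism by reducing mod $p^n$ and mod $\mf{a}^k$), or to cite the quasi-Stein structure of $\Spf(\oo_X(U)^{\circ}\ctens R)^{\rig}$ and invoke the coherence theory there. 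I would take the explicit route, as it keeps the argument self-contained and only uses the flatness facts about affinoid algebras over a discretely valued field that are already standard.
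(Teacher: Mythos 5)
Part (1) of your plan is essentially the paper's argument and is fine: one reduces to an explicit model ring, namely $(\oo_{K}\langle X_{1},\dots,X_{m}\rangle[[T_{1},\dots,T_{n}]])[1/p]$, whose Noetherianity uses that $K$ is discretely valued. (One small caution: compute $\ctens$ using the image of $\oo_{K}\langle X_{1},\dots,X_{m}\rangle$ as the open bounded subring, as the paper does, rather than asserting Noetherianity of $\oo_{X}(U)^{\circ}$ itself; for reduced affinoids this is harmless, but your claim that $\oo_{X}(U)^{\circ}$ is a quotient of a restricted power series ring over $\oo_{K}$ is not quite right as stated.)

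The genuine gap is in (2) and (3), and it is exactly the point you flag as the ``main obstacle'': the identity $\mc{R}(V)\cong\oo_{X}(V)\otimes_{\oo_{X}(U)}\mc{R}(U)$ is false in general, not merely unproved, so no amount of checking modulo $p^{n}$ and $\mf{a}^{k}$ can fix it. Writing $\mc{R}(U)$ via a pseudobasis of $R$ as bounded $I$-indexed sequences in $\oo_{X}(U)$, any element of the uncompleted base change has all of its coordinates lying in a single finitely generated $\oo_{X}(U)$-submodule of $\oo_{X}(V)$; since finitely generated submodules of Banach modules over an affinoid algebra are closed, a bounded sequence that is dense in $\oo_{X}(V)^{\circ}$ gives an element of $\mc{R}(V)$ outside the image whenever $\oo_{X}(V)$ is not module-finite over $\oo_{X}(U)$ and $R$ is infinite over $\Zp$ (the typical situation). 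Reduction modulo $p^{n}$ and $\mf{a}^{k}$ only shows the map becomes an isomorphism \emph{after completion}, i.e. that $\oo_{X}^{\circ}(V)\ctens R$ is an adic completion of a ring built from $\mc{R}(U)$ --- and that is precisely the mechanism the paper uses instead: it factors $\mc{R}(U)\ra\mc{R}(V)$ through $(\oo_{X}(U)\ctens R)[1/g]$ (flat, being a localization) and observes that, integrally, the second map is the completion of the Noetherian ring $\oo_{X}^{\circ}(U)[f_{1}/g,\dots,f_{n}/g]\otimes_{\oo_{X}^{\circ}(U)}(\oo_{X}^{\circ}(U)\ctens R)$ with respect to an ideal of definition coming from $R$, so flatness follows from ``adic completion of a Noetherian ring is flat''; this is where part (1) is really used, and it is the idea missing from your plan. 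Relatedly, once the base-change identity is unavailable, faithfulness in (3) does not come for free from faithful flatness of $\oo_{X}(U)\ra\prod_{i}\oo_{X}(U_{i})$: the paper proves flatness by (2) and then separately establishes surjectivity on maximal spectra, via Berthelot's generic-fibre functor (surjectivity of $\bigsqcup_{i}\Spf(S_{i}^{\circ}\ctens R)^{\rig}\ra\Spf(S^{\circ}\ctens R)^{\rig}$) and de Jong's comparison of maximal ideals with rigid points; your proposal contains no substitute for this step.
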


\begin{proof}
To prove (1), choose surjections $\oo_{K} \langle X_{1},...,X_{m} \rangle \surj \oo_{X}(U)$, $\Zp[[T_{1},...,T_{n}]] \surj  R$ and use the image of $\Zp \langle X_{1},...,X_{m} \rangle$ to compute $\oo_{X}(U) \ctens R$ (this is valid by the open mapping theorem). Then we get a surjection
$$ \oo_{K}\langle X_{1},...,X_{m} \rangle\ctens \Zp[[T_{1},...,T_{n}]] \surj \oo_{X}(U) \ctens R $$
and by direct computation the left hand side is $(\oo_{K} \langle X_{1},...,X_{m} \rangle [[T_{1},...,T_{n}]])[1/p]$, which is Noetherian (since $K$ is discretely valued), hence the right hand side is Noetherian.

\medskip

Next we prove (2). Pick functions $f_{1},...,f_{n},g$ such that $V=\{ |f_{1}|,...,|f_{n}|\leq |g| \neq 0 \}$. We factor $\mc{R}(U) \ra \mc{R}(V)$ as
$$ \oo_{X}(U) \ctens R \ra \oo_{X}(U)[1/g] \otimes_{\oo_{X}(U)} (\oo_{X}(U) \ctens R) \ra \oo_{X}(V) \ctens R. $$
The first morphism is flat since $\oo_{X}(U) \ra \oo_{X}(U)[1/g]$ is, so it suffices to show that the second morphism is flat. It is obtained from
$$ \oo_{X}^{\circ}(U)[f_{1}/g,...,f_{n}/g] \otimes_{\oo_{X}^{\circ}(U)} (\oo_{X}^{\circ}(U) \ctens R) \ra \oo_{X}^{\circ}(V) \ctens R $$
by inverting $p$, so it suffices to show that this map is flat. But the right hand side is the completion of the left hand side with respect to the ideal generated by the images of the generators of the maximal ideal of $R$. Since the ring on the left hand side is Noetherian (it is finitely generated over $\oo_{X}^{\circ}(U) \ctens R$, which is Noetherian by the same proof as in (1)), we conclude that the map is flat.

\medskip

For (3) we use a more geometric argument. By part (2) the map
$$ \oo_{X}(U) \ctens R \ra \prod_{i} \oo_{X}(U_{i}) \ctens R $$
is flat. To see that it is faithfully flat, it suffices to show that it induces a surjection on spectra of \emph{maximal} ideals, by \cite[\S\S I.3.5, Proposition 9]{bou}. To simplify notation, put $S=\oo_{X}(U)$ and $S_{i}=\oo_{X}(U_{i})$. Since we are only interested in maximal ideals, we will momentarily work with classical rigid spaces rather than adic spaces in this proof (only). $R$, $S^{\circ}$ and the $S^{\circ}_{i}$ are formally of finite type over $\oo_{K}$, and the unadorned completed tensor products $S^{\circ} \ctens  R$ etc. are completed tensor products over $\oo_{K}$ in the category of topological $\oo_{K}$-algebras formally of finite type. Recall Berthelot's generic fibre functor $(-)^{rig}$ (see e.g. \cite[\S 7]{dj}). By \cite[Proposition 7.2.4(g)]{dj} the generic fibre functor commutes with fibre products, so the morphism
$$ \bigsqcup_{i} U_{i} \times_{\Sp K} \Spf(R)^{rig} =\bigsqcup_{i}\Spf (S_{i}^{\circ} \ctens R)^{rig} \ra \Spf (S^{\circ} \ctens R)^{rig} = U \times_{\Sp K} \Spf(R)^{rig} $$
is surjective (it is a finite open cover). By \cite[Lemma 7.1.9]{dj} we deduce that we have a surjection
$$ \bigsqcup_{i}\MaxSpec (S_{i} \ctens R) = \MaxSpec \left( \prod_{i} S_{i} \ctens R \right) \ra \MaxSpec (S\ctens R), $$
which is what we wanted to prove.
\end{proof}

Having established this Lemma we now view $\mc{R}$ as a sheaf of Banach algebras on $X$ by picking the unit ball of $\mc{R}(U)$ to be $\mc{R}^{\circ}(U)=\oo_{X}^{\circ}(U) \ctens R$. Then all restriction maps are norm-decreasing, hence continuous, and if we view $R$ as a profinite flat $\Zp$-module and choose a pseudobasis with index set $I$, the Banach space structure on $\mc{R}(U)$ is the same as the natural Banach space structure on the set of bounded sequences in $\oo_{X}(U)$ indexed by $I$ (of course the multiplication is very different from the pointwise multiplication on the latter). We will follow \cite[\S 5]{aw} (which in turn follows \cite[\S 4.5]{fvp}) closely in the proof of our analogue of Kiehl's theorem.

\begin{defi}
Let $U\sub X$ be rational and let $M$ be a finitely generated $\mc{R}(U)$-module. For every $V\sub U$ rational, we define
$$ \Loc(M)(V) := \mc{R}(V) \otimes_{\mc{R}(U)} M. $$ 
This is a presheaf of $\mc{R}$-modules of $U$.
\end{defi}

\begin{prop}[Tate's theorem] \label{prop:thmB}
 For every rational $U\sub X$, $\check{H}^{i}(\mf{U},\mc{R})=0$ for all $i\geq 1$ and all finite rational covers $\mf{U}$ of $U$, i.e. the augmented \v{C}ech complex for $\mf{U}$ is exact. As a consequence, $H^{i}(U,\mc{R})=0$ for all $i\geq 1$.
\end{prop}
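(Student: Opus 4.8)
The strategy is to reduce the statement for $\mc{R}$ to the already-known vanishing theorem for $\oo_{X}$ (Tate acyclicity), by exploiting the explicit description of $\mc{R}(V)$ in terms of bounded sequences. First I would fix a rational cover $\mf{U}=(U_{i})_{i=1}^{m}$ of a rational subset $U\subseteq X$ and write down the augmented \v{C}ech complex $C^{\bullet}(\mf{U},\mc{R})$ computing $\check{H}^{i}(\mf{U},\mc{R})$. Choosing a pseudobasis $(e_{j})_{j\in I}$ of $R$ as a profinite flat $\Zp$-module, Proposition \ref{prop: appflat} identifies $\mc{R}^{\circ}(V)=\oo_{X}^{\circ}(V)\ctens R^{\circ}$ with $\prod_{j\in I}\widehat{\oo_{X}^{\circ}(V)}=\prod_{j\in I}\oo_{X}^{\circ}(V)$ (the ring is already $p$-adically complete for an affinoid), so that $\mc{R}(V)$ is the space of bounded $I$-indexed sequences in $\oo_{X}(V)$. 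The point is that all restriction maps in $\mc{R}$ are induced coordinate-wise by the restriction maps of $\oo_{X}$.

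The key step is then: the \v{C}ech complex $C^{\bullet}(\mf{U},\mc{R})$ is, term by term and with its differentials, the "bounded $I$-indexed product" of the \v{C}ech complex $C^{\bullet}(\mf{U},\oo_{X})$. More precisely, since each term $\mc{R}(U_{i_{0}\cdots i_{p}})$ is the space of bounded sequences in $\oo_{X}(U_{i_{0}\cdots i_{p}})$ and the differentials act coordinate-wise, $C^{\bullet}(\mf{U},\mc{R})$ can be obtained from $C^{\bullet}(\mf{U},\oo_{X})$ by applying the functor $(-)\ctens R$ of Definition \ref{defi: ctens} (or equivalently Corollary \ref{cor:complete}(1), working integrally and inverting $p$). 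By the classical Tate acyclicity theorem, $C^{\bullet}(\mf{U},\oo_{X})$ is exact (in positive degrees, i.e. the augmented complex is exact). Since $X$ is affinoid over a discretely valued $K$, all the rings involved are Noetherian Banach algebras and the boundary maps are strict (their images are closed), so the complex $C^{\bullet}(\mf{U},\oo_{X}^{\circ})$ of integral models has cohomology killed by a fixed power of $p$ in each degree; applying the exact (on $p$-adically complete modules) functor $-\ctens R$ from Corollary \ref{cor:complete}(1) and (2), then inverting $p$, preserves exactness. Hence $\check{H}^{i}(\mf{U},\mc{R})=0$ for $i\geq 1$.

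For the final sentence, deducing $H^{i}(U,\mc{R})=0$ from \v{C}ech acyclicity on all finite rational covers is the standard argument: one checks $\mc{R}$ is a sheaf (which follows from the $i=0$ part of the \v{C}ech statement together with the sheaf property of $\oo_{X}$ applied coordinate-wise), and then the vanishing of all $\check{H}^{i}$ on all covers, together with the fact that rational subsets form a basis stable under finite intersection, implies via the Cartan--Leray / \v{C}ech-to-derived-functor spectral sequence (or Cartan's criterion for acyclic bases) that $\check{H}^{i}(\mf{U},-)\cong H^{i}(U,-)$ and both vanish for $i\geq 1$.

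The main obstacle I anticipate is the strictness/closedness issue: to move the exactness of the \v{C}ech complex through the completed tensor product one needs uniform control on the "denominators", i.e. that the integral \v{C}ech complex $C^{\bullet}(\mf{U},\oo_{X}^{\circ})$ has cohomology annihilated by a single power of $p$ independent of the term. This is exactly where the hypotheses that $X$ is affinoid and $K$ is discretely valued enter (guaranteeing Noetherianity and hence closed images via the open mapping theorem, as in Lemma \ref{lemm:flat}(1)), and it is the step that requires the most care; everything else is a formal manipulation of the functor $-\ctens R$ using Proposition \ref{prop: appflat} and Corollary \ref{cor:complete}.
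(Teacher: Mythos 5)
Your proposal is correct and follows essentially the same route as the paper: identify the augmented \v{C}ech complex of $\mc{R}^{\circ}$ via a pseudobasis with a product of copies of $C^{\bullet}_{\mathrm{aug}}(\mf{U},\oo_{X}^{\circ})$, observe that the latter has bounded $p$-torsion cohomology in positive degrees by Tate's theorem and the open mapping theorem, use exactness of products, invert $p$, and deduce the derived-functor vanishing from Cartan's theorem. One minor remark: the bounded-torsion step needs only the open mapping theorem for Banach spaces over a complete non-archimedean field, not Noetherianity or discreteness of the valuation, which is why this particular proposition holds (and is used in the paper) without the discretely-valued hypothesis.
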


\begin{proof}
The second statement is a direct consequence of the first by a well known theorem of Cartan. For the first, pick a pseudobasis $(e_{i})_{i\in I}$ of $R$, considered as a profinite flat $\Zp$-module. For a cover $\mf{U}$ as above, write $C^{\bullet}_{aug}(\mf{U},-)$ for the augmented \v{C}ech complex functor. Then, using the pseudobasis, we have 
$$C^{\bullet}_{aug}(\mf{U},\mc{R})=C^{\bullet}_{aug}(\mf{U},\mc{R}^{\circ})[1/p] \cong \left( \prod_{i\in I} C^{\bullet}_{aug}(\mf{U},\oo_{X}^{\circ}) \right)[1/p]. $$  
Note that the cohomology groups $\check{H}^{i}(\mf{U},\oo_{X}^{\circ})$ are bounded $p$-torsion for $i\geq 1$ by Tate's theorem and the open mapping theorem, hence by exactness of products and the above displayed equation so are the cohomology groups $\check{H}^{i}(\mf{U}, \mc{R}^{\circ})$ for $i\geq 1$. Thus $\check{H}^{i}(\mf{U},\mc{R})$ vanish as desired.
\end{proof}

\begin{prop} \label{prop:loc}
Assume that $K$ is discretely valued. Then functor $\Loc$ defines a full exact embedding of abelian categories from the category of finitely generated $\mc{R}(U)$-modules into the category of $\mc{R}$-modules on $U$. Moreover, $\check{H}^{i}(\mf{U},\Loc(M))=0$ and $H^{i}(U,\Loc(M))=0$ for all $i\geq 1$ and all finite rational covers $\mf{U}$ of $U$ for all finitely generated $\mc{R}(U)$-modules $M$. 
If $K$ is not discretely valued, $\Loc(M)$ is still a sheaf with vanishing higher cohomology groups if $M$ is a finitely generated projective module. 
\end{prop}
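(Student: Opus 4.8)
The plan is to mimic the standard proof of Kiehl's theorem (following \cite[\S 5]{aw}, which in turn follows \cite[\S 4.5]{fvp}), replacing $\oo_X$ by $\mc{R}$ throughout, and using the ingredients we have already assembled: the analogue of Tate's theorem (Proposition \ref{prop:thmB}), the Noetherianity and flatness properties of $\mc{R}$ (Lemma \ref{lemm:flat}), and the Banach-module structure on $\mc{R}$ recorded above (so that $\mc{R}^{\circ}(U) = \oo_X^{\circ}(U)\ctens R$ and the restriction maps are norm-decreasing). The first reduction is that it suffices to show $\Loc(M)$ is a sheaf with vanishing higher \v{C}ech cohomology on all finite rational covers; fullness and exactness of the embedding then follow formally by evaluating on $U$ itself and using flatness of the restriction maps $\mc{R}(U)\to \mc{R}(V)$ together with faithful flatness of $\mc{R}(U)\to \prod_i \mc{R}(U_i)$ (Lemma \ref{lemm:flat}(2),(3)), exactly as in the classical case. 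The second reduction, again classical, is to the case of a cover by two rational subsets and to a \emph{finite free} module, because $M$ is finitely presented over the Noetherian ring $\mc{R}(U)$ and $\Loc$ is right exact, so one writes a finite presentation $\mc{R}(U)^a\to \mc{R}(U)^b \to M\to 0$, notes $\Loc$ carries it to an exact sequence of presheaves, and deduces the sheaf/cohomology-vanishing statements for $\Loc(M)$ from those for $\Loc(\mc{R}(U)^b)=\mc{R}^{\oplus b}$, which is Proposition \ref{prop:thmB}. (For the general-$K$, finite projective case one argues similarly, writing $M$ as a direct summand of a finite free module and using Proposition \ref{prop:thmB}, which holds without the discreteness hypothesis.)

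The heart of the matter, and the step I expect to be the main obstacle, is therefore the ``$M=$ free, cover by two pieces'' case, which is precisely where Kiehl's argument uses a completeness/estimate argument rather than pure algebra: one must show that for a rational cover $U = U_1\cup U_2$ the Čech complex
$$ 0 \to M\ctens_{\mc{R}(U)}\mc{R}(U) \to \big(M\ctens\mc{R}(U_1)\big)\oplus\big(M\ctens\mc{R}(U_2)\big) \to M\ctens\mc{R}(U_1\cap U_2)\to 0 $$
is exact (for $M$ free this reduces to Tate, but the sheaf \emph{gluing} in the Kiehl induction for general finitely generated $M$ requires approximating a cocycle by a coboundary up to a small error and then iterating). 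The key technical input is an open-mapping/strictness statement: the map $\mc{R}(U_1)\oplus\mc{R}(U_2)\to \mc{R}(U_1\cap U_2)$, $(f_1,f_2)\mapsto f_1|-f_2|$, is surjective \emph{and strict}, so that one can bound the size of a preimage. This follows by reducing to $\oo_X$ via the pseudobasis decomposition: writing $\mc{R}^{\circ}(V)\cong \prod_{i\in I}\oo_X^{\circ}(V)$, the relevant maps are products over $I$ of the corresponding maps for $\oo_X$, whose strictness is the classical Tate/Kiehl estimate together with the open mapping theorem; strictness is preserved by forming products with a uniform bound, and then inverting $p$. With this estimate in hand the classical induction (on the ``depth'' of a cover, or via Kiehl's ``gluing of finitely generated modules'' lemma) goes through verbatim, producing both the sheaf property and the vanishing $\check H^i(\mf U,\Loc(M))=0$ for $i\geq 1$; the statement $H^i(U,\Loc(M))=0$ then follows from Cartan's theorem comparing Čech and sheaf cohomology, exactly as in Proposition \ref{prop:thmB}.

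A few remarks on where the hypotheses enter. Discreteness of $K$ is used (via Lemma \ref{lemm:flat}) to know $\mc{R}(U)$ is Noetherian, which is what lets us pass from ``finitely generated'' to ``finitely presented'' and run the right-exactness reduction; it is also implicitly convenient for the flatness statements underpinning fullness and exactness of $\Loc$. For the final sentence (general $K$, $M$ finite projective) none of this is needed: $\Loc(M)$ is a direct summand of $\mc{R}^{\oplus b}=\Loc(\mc{R}(U)^b)$ as a presheaf, being a direct summand commutes with sheafification and with taking cohomology, and $\mc{R}^{\oplus b}$ is a sheaf with vanishing higher cohomology by Proposition \ref{prop:thmB}; hence so is $\Loc(M)$. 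I would organize the write-up as: (i) the two formal reductions; (ii) the strictness estimate for the two-term cover, proved by pseudobasis reduction to $\oo_X$; (iii) the Kiehl gluing induction; (iv) the finite projective case over general $K$ as an immediate corollary of Proposition \ref{prop:thmB}.
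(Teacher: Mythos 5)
Your proposal is off-target in a way that matters: the Kiehl-style machinery (strictness of $\mc{R}(U_1)\oplus\mc{R}(U_2)\to\mc{R}(U_1\cap U_2)$, approximation of cocycles by coboundaries, the gluing induction) is what is needed for Theorem \ref{theo: thmA}, i.e. for showing that an abstractly given coherent $\mc{R}$-module comes from a global module; it is not needed for Proposition \ref{prop:loc}, where the module $M$ over $\mc{R}(U)$ is already given. Moreover, the one step you do propose for the general finitely generated case contains a genuine gap: from a presentation $\mc{R}(U)^a\to\mc{R}(U)^b\to M\to 0$ and right-exactness of $\Loc$ alone you cannot deduce that $\Loc(M)$ is a sheaf with vanishing \v{C}ech cohomology. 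The presheaf cokernel of a map of acyclic sheaves need not be a sheaf, and in the long exact sequence coming from $0\to\mathrm{im}\to C^{\bullet}(\mf{U},\mc{R}^b)\to C^{\bullet}(\mf{U},\Loc(M))\to 0$ you have no control over the image (equivalently kernel) term without knowing that $\Loc$ is exact, i.e. without the flatness of $\mc{R}(U)\to\mc{R}(V)$ from Lemma \ref{lemm:flat}(2). Once you invoke that flatness, however, the whole reduction-plus-estimates scheme is unnecessary.

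The paper's proof is purely algebraic and short: by Proposition \ref{prop:thmB} the augmented \v{C}ech complex $C^{\bullet}_{aug}(\mf{U},\mc{R})$ is exact, and (when $K$ is discretely valued) each of its terms is a flat $\mc{R}(U)$-module by Lemma \ref{lemm:flat}(2). A bounded exact complex of flat modules remains exact after applying $-\otimes_{\mc{R}(U)}M$ for an arbitrary $M$, and the tensored complex is precisely $C^{\bullet}_{aug}(\mf{U},\Loc(M))$; this simultaneously gives the sheaf property and the vanishing of higher \v{C}ech cohomology, with derived-functor vanishing then following from Cartan's theorem. When $M$ is projective (general $K$) one instead uses flatness of $M$ itself, which is essentially your direct-summand remark and is fine. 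The full exact embedding statement is then handled as you indicate, via flatness/generation by global sections. So: keep your formal reductions and the projective case, discard the strictness estimate and gluing induction for this proposition (they belong to the proof of Theorem \ref{theo: thmA}), and replace the presentation argument by the tensoring of the exact flat \v{C}ech complex.
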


\begin{proof}
We start with the sheaf/vanishing assertions. Again, the statement about derived functor cohomology follows from that of \v{C}ech cohomology by Cartan's theorem, and the assertion about \v{C}ech cohomology implies that $\Loc(M)$ is  a sheaf. Pick a cover $\mf{U}$. Then by Proposition \ref{prop:thmB} the complex $C^{\bullet}_{aug}(\mf{U},\mc{R})$ is exact. If $M$ is projective, then it is flat and hence $C^{\bullet}_{aug}(\mf{U},\mc{R})\otimes_{\mc{R}(U)}M = C^{\bullet}_{aug}(\mf{U},\Loc(M))$ is exact. If $K$ is discretely valued, then by the flatness properties of $\mc{R}$ all terms are flat $\mc{R}(U)$-modules and we may deduce that $C^{\bullet}_{aug}(\mf{U},\mc{R})\otimes_{\mc{R}(U)}M = C^{\bullet}_{aug}(\mf{U},\Loc(M))$ is exact for arbitrary finitely generated $M$. In either case, it follows that $\Loc(M)$ is a sheaf with vanishing higher \v{C}ech cohomology. 

\medskip

Now let $K$ be discretely valued. To see that the functor is fully faithful, note that $\Loc(M)$ is generated by global sections as an $\mc{R}$-module. For exactness, one checks that if $f\, :\, M \ra N$ is a morphism of finitely generated $\mc{R}(U)$-modules, then $\ker(\Loc(f))=\Loc(\ker(f))$ and similarly for images and cokernels (in particular, images and cokernels in this sub-abelian category turn out to be equal to the presheaf images and cokernels).
\end{proof}

\begin{rema}
 Propositions \ref{prop:thmB} and \ref{prop:loc} also hold for the \'etale site of $X$, with the same proofs with obvious modifications.
\end{rema}

We may now introduce coherent $\mc{R}$-modules. 
\begin{defi}
Assume that $K$ is discretely valued. For a cover $\mf{U}$ of $U$ as above a sheaf $\mc{F}$ of $\mc{R}$-modules on $U$ is called \emph{$\mf{U}$-coherent} if for every $U_{i}\in \mf{U}$ there exists an $\mc{R}(U_{i})$-module $M_{i}$ such that $\mc{F}|_{U_{i}}\cong \Loc(M_{i})$. 

\medskip

$\mc{F}$ is said to be \emph{coherent} if it is $\mf{U}$-coherent for some $\mf{U}$.
\end{defi}
The equivalence of this definition and the usual definition is standard, cf. e.g. \cite[\S 5.2]{aw}. 

\medskip

Before proving Kiehl's theorem we need a simple lemma. Pick $f\in \oo_{X}(X)$  and consider the standard cover of $X$ consisting of $X(f)=\{ |f|\leq 1 \}$ and $X(1/f)=\{ |f|\geq 1 \}$. We denote the intersection by $X(f,1/f)$. We let $s$ denote the restriction map $\oo_{X}(X(1/f)) \ra \oo_{X}(X(f,1/f))$.

\begin{lemm}\label{lem:dense}
With notation as above:
\begin{enumerate}
\item  The image of $s$ is dense.

\item The image of the restriction map $\mc{R}(X(1/f)) \ra \mc{R}(X(f,1/f))$ is dense for the natural Banach algebra structure on the target.
\end{enumerate}

\end{lemm}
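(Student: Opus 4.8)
The first assertion is a classical fact about affinoid algebras: the standard Laurent covering $X = X(f) \cup X(1/f)$ of an affinoid $X$ has the property that the restriction map from any of these Laurent domains to the intersection has dense image. I would prove (1) by recalling the explicit descriptions $\oo_X(X(1/f)) = \oo_X(X)\langle g \rangle / (fg-1)$ and $\oo_X(X(f,1/f)) = \oo_X(X)\langle g, h \rangle/(fg-1, fh - 1)$ — or more directly, $\oo_X(X(f,1/f))$ is obtained from $\oo_X(X(1/f))$ by further adjoining $f$ and completing, so that the image of $s$ contains the subalgebra generated by $\oo_X(X)$ and $g = 1/f$, and this subalgebra is dense by the definition of the Tate algebra norm (polynomials in $g$ with coefficients in $\oo_X(X)$, restricted appropriately, are dense). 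Concretely, any element of $\oo_X(X(f,1/f))$ is a limit of elements of $\oo_X(X)[f, 1/f]$, and the latter all lie in the image of $s$ since $f$ is already invertible in $\oo_X(X(1/f))$.

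For (2), I would bootstrap from (1) using the description of $\mc{R}$ via a pseudobasis. Fix a pseudobasis $(e_i)_{i \in I}$ of $R$ as a profinite flat $\Zp$-module. Then, as noted just before Lemma \ref{lemm:flat} in the discussion of the Banach structure, $\mc{R}(U)$ is identified with the space of bounded $I$-indexed sequences in $\oo_X(U)$ (with a twisted multiplication, but the underlying Banach module structure is the evident one), and the restriction maps act coordinatewise. So the restriction map $\mc{R}(X(1/f)) \to \mc{R}(X(f,1/f))$ is, on underlying Banach spaces, the product over $i \in I$ of the map $s$, restricted to bounded sequences. Given a target element $(x_i)_{i \in I}$ with $\sup_i |x_i| < \infty$ and a target accuracy $\varepsilon > 0$, I would use (1) to choose, for each $i$, a lift $y_i \in \oo_X(X(1/f))$ with $|s(y_i) - x_i| < \varepsilon$; the subtlety is that one must do this while keeping $(y_i)$ bounded. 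This is arranged by first rescaling: since $s$ is norm-nonincreasing and the $x_i$ are bounded by some $C$, one can choose the $y_i$ to satisfy $|y_i| \le C + 1$ (e.g. by truncating a lift, or invoking the open mapping theorem to get a bounded section on a dense subspace), so $(y_i)_{i \in I} \in \mc{R}(X(1/f))$ and it maps to within $\varepsilon$ of $(x_i)$ in the sup-norm, hence in the Banach norm of $\mc{R}(X(f,1/f))$.

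The main obstacle is the uniform boundedness in the lifting step of (2): one needs the approximating lifts $y_i$ to be chosen with a bound independent of $i$, since otherwise the resulting sequence would not define an element of the Banach algebra $\mc{R}(X(1/f))$. The clean way to handle this is to combine density from (1) with the open mapping theorem, which guarantees that $s$ is an open map onto its image (the image being dense), so one gets a constant $\lambda > 0$ such that every $x \in \oo_X(X(f,1/f))$ admits, for any $\varepsilon > 0$, a lift within $\varepsilon$ of norm at most $\lambda(|x| + \varepsilon)$; applying this coordinatewise gives the bounded lift. I would record this boundedness explicitly, since it is exactly the point of the lemma that is used downstream in the Kiehl-type gluing argument. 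Everything else is a routine unwinding of the pseudobasis description of $\mc{R}$.
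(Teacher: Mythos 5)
Part (1) of your proposal is correct and is essentially the paper's argument: both come down to the fact that $\oo_{X}(X)[1/f]=\oo_{X}(X)[f,1/f]$ is dense in $\oo_{X}(X(f,1/f))$ and already lies in the image of $s$ because $1/f$ exists in $\oo_{X}(X(1/f))$. For (2) you also take the paper's route: fix a pseudobasis of $R$, identify $\mc{R}(U)$ with bounded $I$-families in $\oo_{X}(U)$ with the sup norm, and observe that the restriction map is $s$ applied coordinatewise. You are right that the entire content of (2) is then the uniformity, over the index set $I$, of the norms of the approximating lifts; the paper's own proof of (2) is a one-liner that passes from density of $s$ to density on bounded $I$-families without comment on this point.

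The gap is in your proposed repair of that uniformity. The open mapping theorem applies to a continuous \emph{surjection} of Banach spaces; $s$ is not surjective, it only has dense image, and it cannot be ``open onto its image'': a continuous map that is open onto its image induces a topological isomorphism of the (complete) source modulo its kernel with the image, so the image would be closed, and closed plus dense forces surjectivity, which fails for restriction from $X(1/f)$ to $X(f,1/f)$. So no constant $\lambda$ is produced by this argument, and the auxiliary suggestions (truncating a lift, or using that $s$ is norm-nonincreasing -- which bounds $|s(y)|$ by $|y|$, not the reverse) do not supply it either. Worse, the uniform lifting statement you are aiming at is simply not available in general: take $X$ the disc of radius $|\vp|^{-1}$ with coordinate $T$ and $f=T$, so $X(1/f)$ is the annulus $1\le|T|\le|\vp|^{-1}$ and $X(f,1/f)$ is the circle $|T|=1$. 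The element $f^{N}$ has spectral norm $1$ on $X(f,1/f)$, but any $y\in\oo_{X}(X(1/f))$ with $\|s(y)-f^{N}\|\le|\vp|$ has $N$-th Laurent coefficient of absolute value $1$ and hence $\|y\|_{X(1/f)}\ge|\vp|^{-N}$; thus elements of the unit ball of $\oo_{X}(X(f,1/f))$ do not admit $\varepsilon$-approximants of uniformly bounded norm, and a coordinatewise lifting scheme with a uniform constant cannot work. You have correctly isolated the one genuine subtlety in (2) -- exactly the point the paper's proof treats as immediate -- but the open-mapping-theorem step does not close it, and any complete argument must confront this failure of uniformity directly rather than deduce it from density of $s$ alone.
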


\begin{proof}
Statement (1) is well known, but since there seems to be a little confusion in the literature let us give the one-sentence proof: $s$ is the completion of the identity map on $\oo_{X}(X)[1/f]$ with respect to the group topology generated by $(\vp^{n}\oo_{X}^{\circ}(X)[1/f])_{n\geq 0}$ on the source and the group topology generated by $(\vp^{n}\oo_{X}^{\circ}(X)[f,1/f])_{n\geq 0}$ on the target.

\medskip

To prove (2), pick a pseudobasis $(e_{i})_{i\in I}$ for $R$. Then the map is the natural map from bounded $I$-sequences in $\oo_{X}(X(1/f))$ to bounded $I$-sequences in $\oo_{X}(X(f,1/f))$, which has dense image since $s$ does.
\end{proof}

We may now follow the proof of Kiehl's theorem from \cite[\S 5.3-5.5]{aw}. One firstly establishes an auxiliary lemma (\cite[Lemma 5.3]{aw}), which only uses the Banach algebra structure and vanishing of cohomology (in our setting this is Proposition \ref{prop:thmB}). Using this auxiliary lemma one proves \cite[Theorem 5.4]{aw} on surjections of certain maps (upon noting that, in their notation, it is erroneously asserted in \S 5.3 that $s_{1}$ rather than $s_{2}$ has dense image, see Lemma \ref{lem:dense}). Then Ardakov and Wadsley establish a weak form of Kiehl's theorem for the covering $\{X(f),X(1/f)\}$ (\cite[Corollary 5.4]{aw}), using \cite[Theorem 5.4]{aw} and properties of $\Loc$ functor, which is Proposition \ref{prop:loc} in our setting. From this, \cite[Theorem 5.5]{aw} follows as well. Following their arguments, we get

\begin{theo}[Kiehl's Theorem]\label{theo: thmA} Assume that $K$ is discretely valued and let $\mc{F}$ be a coherent $\mc{R}$-module on $X$. Then there exists a finitely generated $\mc{R}(X)$-module $M$ such that $\mc{F}\cong \Loc(M)$. Moreover, if $\mc{F}$ is locally projective (i.e. there is a cover $\mf{U}$ such that $\mc{F}(U)$ is projective for all $U\in \mf{U}$), then $M$ is projective.
\end{theo}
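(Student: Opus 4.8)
The plan is to follow the treatment of Kiehl's theorem in \cite[\S 5.3--5.5]{aw} essentially verbatim, substituting the sheaf $\mc{R}$ for $\oo_X$ throughout and checking that all the inputs their argument requires have already been established in our setting. Concretely, Ardakov--Wadsley reduce the general statement to the case of the two-element standard cover $\{X(f), X(1/f)\}$ for $f\in \oo_X(X)$, and then to a general finite rational cover by an induction on the number of generating functions. For the two-element case, the key technical input is a convergence/approximation argument allowing one to patch a pair of finitely generated modules $M_1$ over $\mc{R}(X(f))$ and $M_2$ over $\mc{R}(X(1/f))$ together with a gluing isomorphism over $\mc{R}(X(f,1/f))$ into a single module over $\mc{R}(X)$; this requires (a) a Banach algebra structure on $\mc{R}$ with norm-decreasing restriction maps, (b) vanishing of higher \v{C}ech cohomology of $\mc{R}$ on rational covers, (c) the good properties of the $\Loc$ functor (full exactness, sheafiness, cohomology vanishing), and (d) the density of the restriction map $\mc{R}(X(1/f)) \to \mc{R}(X(f,1/f))$.

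First I would record that all four inputs are available: (a) is the Banach structure $\mc{R}^{\circ}(U)=\oo_X^{\circ}(U)\ctens R$ discussed after Lemma \ref{lemm:flat}, with restriction maps norm-decreasing because they are so for $\oo_X$ and $\ctens$ is functorial; (b) is Proposition \ref{prop:thmB}; (c) is Proposition \ref{prop:loc} (here is where discrete valuedness of $K$ enters, via the flatness assertions of Lemma \ref{lemm:flat}, so that $\Loc$ is exact on all finitely generated modules and not just projective ones); (d) is Lemma \ref{lem:dense}(2). With these in hand, I would transcribe \cite[Lemma 5.3]{aw} (an auxiliary estimate in a Banach algebra with trivial higher cohomology), then \cite[Theorem 5.4]{aw} (surjectivity of a certain map built from the gluing data), being careful about the correction noted in the excerpt that it is $s_2$, not $s_1$, that has dense image. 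From these, \cite[Corollary 5.4]{aw} gives Kiehl's theorem for $\{X(f),X(1/f)\}$-coherent sheaves, and \cite[Theorem 5.5]{aw} (a d\'evissage over the number of functions cutting out the cover, using refinements of rational covers and the two-element case repeatedly) yields the statement for an arbitrary coherent $\mc{R}$-module: there is a finitely generated $\mc{R}(X)$-module $M$ with $\mc{F}\cong \Loc(M)$.

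For the projectivity addendum: if $\mc{F}$ is locally projective, then over each $U_i$ in some cover we have $\mc{F}(U_i)$ projective, hence (since $\mc{R}(U_i)$ is Noetherian by Lemma \ref{lemm:flat}(1)) finitely generated projective. By Lemma \ref{lemm:flat}(3), $\mc{R}(X)\to \prod_i \mc{R}(U_i)$ is faithfully flat, and $M\otimes_{\mc{R}(X)}\mc{R}(U_i)\cong \mc{F}(U_i)$ is finitely generated projective over $\mc{R}(U_i)$ for each $i$, since $\Loc(M)|_{U_i}\cong \mc{F}|_{U_i}$. Faithfully flat descent of the property ``finitely generated projective'' for a finitely generated module over a Noetherian ring then forces $M$ to be finitely generated projective over $\mc{R}(X)$. (Concretely one can check projectivity by verifying that $M$ is finitely presented — automatic since $\mc{R}(X)$ is Noetherian — and flat, the latter descending along the faithfully flat map.)

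The main obstacle I anticipate is not conceptual but bookkeeping: one must verify that the Banach-space norms appearing in the Ardakov--Wadsley estimates behave correctly for $\mc{R}$, i.e.\ that the isometric identification of $\mc{R}(U)$ with bounded $I$-indexed sequences in $\oo_X(U)$ (for $I$ the index set of a chosen pseudobasis of $R$) is compatible with all the restriction maps and with the gluing maps in the relevant degree of the \v{C}ech complex. This is exactly the style of argument already used in the proofs of Proposition \ref{prop:thmB} and Lemma \ref{lem:dense}, so it should go through, but it requires care to ensure that constants in the approximation arguments can be chosen uniformly in $I$ — which they can, precisely because the pseudobasis decomposition turns everything into a coordinatewise statement over $\oo_X$, where the classical estimates apply.
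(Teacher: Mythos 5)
Your proposal is correct and follows essentially the same route as the paper: both reduce to transcribing Ardakov--Wadsley \S 5.3--5.5 with the inputs Proposition \ref{prop:thmB}, Proposition \ref{prop:loc}, Lemma \ref{lem:dense}(2) and the Banach structure on $\mc{R}$ (including the $s_2$-versus-$s_1$ correction), and both handle the projectivity addendum via faithful flatness of $\mc{R}(X)\to\prod_{U\in\mf{U}}\mc{R}(U)$ together with descent of projectivity for finitely presented modules.
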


\begin{proof}
For the last part, use that $\mc{R}(X) \ra \prod_{U\in \mf{U}}\mc{R}(U)$ is faithfully flat and that projectivity for finitely presented modules may be checked after a faithfully flat base extension.
\end{proof}

We record a simple base change lemma.

\begin{lemm}\label{lemm: bc}
Assume that $K$ is discretely valued and that $C$ is the completion of an algebraic closure $\overline{K}$ of $K$. Let $X/K$ be an affinoid rigid space and let $M$ be a finitely generated projective $\mc{R}(X)$ module. Let $X_{C}$ be the base change of $X$ to $C$ and let $f\, :\, X_{C} \ra X$ be the natural map, and let $\mc{R}_{C}$ be the sheaf $U \mapsto \oo_{X_{C}}\ctens R$ on $X_{C}$. Then we have natural isomorphisms $M\otimes_{\mc{R}(X)}\mc{R}_{C}(X)\cong M\otimes_{K} C$ and
$$ f^{-1}(\Loc(M)) \otimes_{f^{-1}\mc{R}} \mc{R}_{C} \cong \Loc(M\otimes_{K}C). $$
\end{lemm}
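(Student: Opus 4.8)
The plan is to reduce both isomorphisms to the case where $M$ is finite free, where they become essentially formal, and then descend the general case by the usual idempotent trick. Throughout write $\mc{R}$ for the sheaf $\oo_X\ctens R$ on $X$ and $\mc{R}_C$ for $\oo_{X_C}\ctens R$ on $X_C$; these are sheaves of rings by Proposition \ref{prop:thmB} and its \'etale analogue.

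First I would establish the ring-level base change. Fix a pseudobasis $(e_i)_{i\in I}$ of $R$ as a profinite flat $\Zp$-module. By Proposition \ref{prop: appflat}, for any rational $V\subset X$ we have $\mc{R}^{\circ}(V)=\oo_X^+(V)\ctens R\cong\prod_{i\in I}\oo_X^+(V)e_i$, and likewise $\mc{R}_C^+(W)\cong\prod_{i\in I}\oo_{X_C}^+(W)e_i$ for $W\subset X_C$ rational. Feeding the standard affinoid base change isomorphism $\oo_{X_C}^+(X_C)\cong\oo_X^+(X)\widehat{\otimes}_{\Zp}\OC$ into the first identification pseudobasis-wise (as in the conventions of \S\ref{sec:6.1}) and inverting $p$ gives a natural isomorphism $\mc{R}_C(X)\cong\mc{R}(X)\otimes_K C$; in particular $\mc{R}_C(X)$ is flat over $\mc{R}(X)$ (alternatively by the argument of Lemma \ref{lemm:flat}(2) carried out over $C$). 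For $M$ finite free of rank $n$ the first isomorphism of the lemma is then immediate: $M\otimes_{\mc{R}(X)}\mc{R}_C(X)\cong\mc{R}_C(X)^n\cong(\mc{R}(X)\otimes_K C)^n\cong M\otimes_K C$. For an arbitrary finitely generated projective $M$, write $M=e\mc{R}(X)^n$ for an idempotent $e\in M_n(\mc{R}(X))$ and apply both base change functors to the split exact sequence $0\to(1-e)\mc{R}(X)^n\to\mc{R}(X)^n\to M\to 0$; functoriality together with the free case yields the first isomorphism.

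For the sheaf statement I would first note that $\Loc(M\otimes_K C)$ is a legitimate sheaf with vanishing higher cohomology: this is not covered by Theorem \ref{theo: thmA} because $C$ is not discretely valued, but $M\otimes_K C$ is finitely generated projective over $\mc{R}_C(X)$ by the previous paragraph, so the last sentence of Proposition \ref{prop:loc} applies. Next, the $\mc{R}(X)$-linear map $M=\Loc(M)(X)\to(M\otimes_K C)=\Loc(M\otimes_K C)(X_C)$ corresponds under the $(f^{-1},f_{\ast})$-adjunction to an $f^{-1}\mc{R}$-linear morphism $f^{-1}\Loc(M)\to\Loc(M\otimes_K C)$ of sheaves on $X_C$, and extending scalars along $f^{-1}\mc{R}\to\mc{R}_C$ produces the candidate map $f^{-1}\Loc(M)\otimes_{f^{-1}\mc{R}}\mc{R}_C\to\Loc(M\otimes_K C)$. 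One checks it is an isomorphism stalkwise — equivalently, on rational subsets of $X_C$ small enough to lie inside some $f^{-1}(V)$ with $V\subset X$ rational, using the first isomorphism with $V$ in place of $X$: for $M$ finite free both sides are $\mc{R}_C^n$ and the map is the identity, and the general case follows once more by the idempotent trick.

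The genuinely delicate points are purely organizational: using the pseudobasis-wise reading of $-\otimes_K C$ consistently, so that $-\ctens R$ literally commutes with base change along $C/K$, and keeping track of the fact that the $\Loc$-formalism over the non-discretely-valued field $C$ is available only for finite projective modules, which is exactly the hypothesis we are working under. Once these conventions are pinned down, the proof is a mechanical reduction to the finite free case.
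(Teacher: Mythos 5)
Your proposal is correct, and for the first isomorphism it is essentially the paper's own argument: the paper simply says the proof is ``similar to the proof of Lemma \ref{lemm: commute}'', i.e.\ one builds the natural map using unit balls and a pseudobasis and then reduces to the finite free case by realizing $M$ as a direct summand of $\mc{R}(X)^n$ --- exactly your idempotent trick. For the sheaf-level statement the paper takes a slightly different route: it notes that $f$ is an open map, so that $f^{-1}\Loc(M)$ can be computed directly from the definitions (sections over an open $W\subseteq X_C$ are a direct limit of $\Loc(M)(V)$ over opens $V\supseteq f(W)$), and the isomorphism then falls out of the first part; you instead produce the comparison map by adjunction and verify it on stalks, reducing once more to the free case. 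That is a perfectly good variant, as long as one works with stalks (or acknowledges the sheafification implicit in the tensor product of sheaves) rather than naive sections over arbitrary opens, which you do. Two further remarks. Your explicit appeal to the projectivity clause of Proposition \ref{prop:loc} over $C$ --- which is not discretely valued, so Theorem \ref{theo: thmA} and the general finitely generated case are unavailable --- makes explicit a point the paper leaves implicit, and it is precisely where the hypothesis that $M$ be projective enters. Also, your ``pseudobasis-wise'' reading of $-\otimes_K C$ is more than organizational: the genuine completed tensor product of Banach spaces does not commute with the infinite product underlying $-\ctens R$, so the identity $\mc{R}_C(X_C)\cong\mc{R}(X)\otimes_K C$, and indeed the lemma itself, is only correct under the coordinatewise interpretation you adopt, which is also how the statement in the paper must be read; a minor slip is that the integral base change should be taken over $\oo_K$ rather than $\Zp$, and it need only hold up to bounded discrepancy, which is harmless after inverting $p$.
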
 

\begin{proof}
The proof of the first assertion is similar to the proof of Lemma \ref{lemm: commute}. Using this, the second assertion follows straight from the definitions upon noting that $f$ is open, which makes it easy to compute $f^{-1}$.
\end{proof}

\begin{rema}
When working with affinoid weights in the main part of the paper we will often need analogues of the results of this section (as well as Lemma \ref{lemm: commute}) when one replaces $-\ctens R$ by $-\wh{\otimes}_{\Qp}S$ in the definition of $\mc{R}$, where $S$ is a reduced $\Qp$-Banach algebra of topologically of finite type. In this case these results are classical results in rigid geometry, or straightforward consequences of such results.
\end{rema}

\subsection{Quotients of rigid spaces by finite groups.}\label{sec:6.4} In this section we show that if $X$ is a rigid space and $G$ is a finite group acting on $X$ from the right such that $X$ has a cover by $G$-stable affinoid open subsets, then the quotient $X/G$ exists in the category of rigid spaces. If $X=\Spa(A,A^{\circ})$ is affinoid, then the quotient $X/G:=\Spa(A^{G},(A^{G})^{\circ})$ exists in the category of \emph{affinoid} rigid spaces by \cite[\S 6.3.3 Proposition 2]{bgr}. The purpose of this section is to show that this construction glues. In fact, we show that the quotient is a quotient in Huber's ambient category $\ms{V}$ (\cite[\S 2]{hub}), hence in the category of adic spaces. This is presumably well known, but we have not been able to find a reference.

\medskip

We start with some general discussion on quotients. Let $\underline{X}=(X,\oo_{X},(v_{x})_{x\in X})\in \ms{V}$ and let $G$ be a finite group acting on $\underline{X}$ from the right. Consider the quotient topological space $Y=X/G$ and let $\pi\, :\, X \ra Y$ be the quotient map. We may consider the sheaf of rings $\oo_{Y}:=(\pi_{\ast}\oo_{X})^{G}$ on $Y$. For $y\in Y$ we pick $x\in X$ such that $\pi(x)=y$ and let $v_{y}$ denote the valuation on $\oo_{Y,y}$ given by composing $v_{x}$ with the natural map $\oo_{Y,y} \ra \oo_{X,x}$. This is independent of the choice of $x$ (and representatives for the valuations $v_{x}$). Then we have:

\begin{lemm}\label{lemm: abstq}
With notation as above, the sheaf $\oo_{Y}:=(\pi_{\ast}\oo_{X})^{G}$ is a sheaf of complete topological rings on $Y$ and $(Y, \oo_{Y}, (v_{y})_{y\in Y})$ is in $\ms{V}$ and is the categorical quotient of $X$ by $G$ in $\ms{V}$.
\end{lemm}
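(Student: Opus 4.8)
The plan is to verify the three assertions in turn: that $\oo_{Y}$ is a sheaf of complete topological rings, that the triple $(Y,\oo_{Y},(v_{y}))$ lies in Huber's category $\ms{V}$, and finally that it satisfies the universal property of a quotient. The key geometric input will be that the problem is local on $Y$, which allows me to reduce to the affinoid case treated in \cite[\S 6.3.3 Proposition 2]{bgr}. First I would note that since $X$ admits a cover by $G$-stable affinoid opens $\{U_i\}$, the images $\pi(U_i)$ form an open cover of $Y$ (as $\pi$ is open, being a quotient by a finite group of homeomorphisms), and the preimage $\pi^{-1}(\pi(U_i))$ is exactly $U_i$ (by $G$-stability), so $\pi|_{U_i}\colon U_i \to \pi(U_i)$ is again a quotient map by $G$. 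On each such $\pi(U_i)$, writing $U_i=\Spa(A_i,A_i^\circ)$, we have $\oo_Y|_{\pi(U_i)} = (\pi_{i,\ast}\oo_{U_i})^G$, and by the affinoid case this is the structure sheaf of $\Spa(A_i^G,(A_i^G)^\circ)$; in particular the space $\pi(U_i)$ with this sheaf and the valuations $v_y$ is an honest affinoid adic space, hence an object of $\ms{V}$, and $\oo_Y|_{\pi(U_i)}$ is a sheaf of complete topological rings. Gluing these local descriptions gives that $(Y,\oo_Y,(v_y))\in \ms{V}$: one needs to check that on overlaps $\pi(U_i)\cap\pi(U_j)$ the two affinoid structures agree, which follows because both restrict to $(\pi_{\ast}\oo_X)^G$ with its subspace topology from $\pi_{\ast}\oo_X$, and the valuations are defined intrinsically via any lift to $X$.

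Next I would establish the universal property. Given an object $\underline{Z}=(Z,\oo_Z,(w_z))\in\ms{V}$ and a $G$-invariant morphism $f\colon \underline{X}\to\underline{Z}$ (meaning $f\circ g = f$ for all $g\in G$, where $g$ denotes the action on $X$), I must produce a unique $\bar f\colon \underline{Y}\to\underline{Z}$ with $f = \bar f\circ\pi$. On the level of topological spaces this is the ordinary universal property of the quotient $X/G$ in topological spaces, so $\bar f$ exists and is unique as a continuous map. For the sheaf-theoretic part: $f$ induces $f^\sharp\colon f^{-1}\oo_Z\to\oo_X$, and $G$-invariance of $f$ means this lands in the $G$-invariants after pushing forward appropriately; concretely, for $V\subseteq Z$ open, the map $\oo_Z(V)\to\oo_X(f^{-1}V)$ has image in $\oo_X(f^{-1}V)^G = \oo_Y(\pi(f^{-1}V)) = \oo_Y(\bar f^{-1}V)$, giving $\bar f^\sharp$. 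Continuity and compatibility with the valuations are inherited from those of $f^\sharp$ via the defining relations $v_y = v_x\circ(\text{restriction})$. Uniqueness of $\bar f^\sharp$ follows from surjectivity of $\pi^\sharp$ in the relevant sense (or rather injectivity of $\oo_Y\to\pi_\ast\oo_X$, which is immediate since $\oo_Y$ is by definition a subsheaf).

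The main obstacle I anticipate is not any single deep point but rather the bookkeeping in the gluing step: one must check carefully that the affinoid quotients $\Spa(A_i^G,(A_i^G)^\circ)$ for varying $i$ are compatible on overlaps, i.e. that for a $G$-stable rational subset (or more generally $G$-stable affinoid open) $W\subseteq U_i$ one has $\oo_X(W)^G$ computing the correct structure, and that rational subsets of $\pi(U_i)$ pull back to $G$-stable opens of $U_i$ whose invariants are computed correctly. This is where one uses that $A_i \to A_i^G$ is finite (so that $A_i^G$ is again affinoid and $\Spa(A_i^G,\cdot)$ behaves well), a fact guaranteed by $|G|<\infty$ together with the noetherianity one has in this setting; the reference \cite[\S 6.3.3 Proposition 2]{bgr} packages exactly this. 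Once the local pictures are known to glue, everything else is formal manipulation with sheaves and the universal property in $\ms{V}$, so I would keep that part brief. I would also remark, following the paragraph after the lemma statement, that $\pi$ is finite and that the cover-by-$G$-stable-affinoids hypothesis is automatic when $X$ is separated (since then any finite set of points has a $G$-stable affinoid neighborhood), but this is a side comment rather than part of the proof.
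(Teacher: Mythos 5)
There is a genuine mismatch between what you prove and what the lemma asserts. The lemma is stated for an \emph{arbitrary} object $\underline{X}=(X,\oo_X,(v_x))$ of Huber's category $\ms{V}$ equipped with a finite group action: there is no hypothesis that $X$ is a rigid space, let alone that it admits a cover by $G$-stable affinoids, and membership in $\ms{V}$ does not require any local affinoid (adic) structure --- an object of $\ms{V}$ is just a topological space with a sheaf of complete topological rings and valuations on the stalks. The actual content of the lemma is therefore purely formal: one checks that $(\pi_{\ast}\oo_X)^G$ is a sheaf (left exactness of invariants), that its sections are complete topological rings (invariants of a complete ring are complete, and taking invariants preserves topological embeddings), that the $v_y$ are well defined, and then verifies the universal property directly. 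Your universal-property paragraph is fine and is essentially the unwinding of what the paper leaves as ``immediate from the definitions''; but your route to ``$(Y,\oo_Y,(v_y))\in\ms{V}$'' --- reducing to $G$-stable affinoids, identifying each local quotient with $\Spa(A_i^G,(A_i^G)^\circ)$, and gluing --- imports hypotheses the lemma does not have and proves a different statement, namely (essentially) Corollary \ref{coro: quotients}.

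Moreover, the local identification you take for granted is itself the nontrivial part of this section of the paper: that $\Spa(A^G,(A^G)^\circ)$ has underlying topological space $X/G$ (as adic spaces, i.e.\ including all valuations, not just maximal ideals) and that its structure sheaf is $(\pi_{\ast}\oo_X)^G$ on every rational subset are exactly Proposition \ref{prop: qspace} and Corollary \ref{coro: affinoidq}, proved afterwards by a separate argument (going-up and the valuation-extension theorem for the points; a finiteness/flatness computation for the sheaf identity). The citation of \cite[\S 6.3.3 Proposition 2]{bgr} only gives that $A^G$ is topologically of finite type with $A$ finite over it and the quotient property in the classical affinoid category; it does not give the adic-space point set nor the identity of the full structure sheaf with the invariants. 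So as written your argument both overshoots the hypotheses and rests on facts that still need proof; the intended proof of the lemma itself is the short formal one sketched above, after which the affinoid and gluing statements are deduced from it, not the other way around.
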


\begin{proof}
We prove that $\oo_{Y}$ is a sheaf of complete topological rings, the rest is then immediate from the definitions. First, note that $\oo_{Y}$ is a sheaf since invariants are left exact. Second, note that taking invariants preserve completeness (for example, argue with nets). Finally, taking invariants preserves topological embeddings, so $\oo_{Y}$ is a sheaf of complete topological rings.
\end{proof}

Now let $K$ be a complete non-archimedean field with ring of integers $\oo_{K}$, and fix an element $\vp \in \oo_K$ with $0<|\vp|<1$. Let $B$ be a $K$-algebra of topologically finite type, and assume that $B$ carries a left action over $K$ by a finite group $G$. Then $A:=B^{G}$ is a $K$-algebra of topologically finite type and the inclusion $A \ra B$ is a finite homomorphism by \cite[\S 6.3.3 Proposition 2]{bgr}. We wish to show that $Y:=\Spa(A,A^{\circ})$ is the quotient of $X:=\Spa(B,B^{\circ})$ in $\ms{V}$ by verifying the conditions of Lemma \ref{lemm: abstq}.

\begin{prop} \label{prop: qspace}
$Y=X/G$ as topological spaces.
\end{prop}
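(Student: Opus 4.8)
\textbf{Plan for the proof of Proposition \ref{prop: qspace}.} The goal is purely topological: show that the continuous surjection $\pi\colon X \to X/G$ (the topological quotient) factors through a homeomorphism $X/G \xrightarrow{\sim} Y=\Spa(A,A^\circ)$, where $A=B^G$. There is an obvious continuous map $\phi\colon X \to Y$ induced by the inclusion $A\hookrightarrow B$, and since $Y$ is $G$-invariantly receiving this map it descends to a continuous bijection-or-surjection $\bar\phi\colon X/G \to Y$; the content is that $\bar\phi$ is a \emph{bijection} and a \emph{homeomorphism}. First I would recall that $A\to B$ is finite (by \cite[\S 6.3.3 Proposition 2]{bgr}, as already noted), so $\Spa(B,B^\circ)\to\Spa(A,A^\circ)$ is a finite morphism of adic spaces; in particular $\phi$ is a closed, quasi-compact, spectral map, and its fibres are finite. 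This immediately gives that $\bar\phi$ is a closed continuous surjection, so it only remains to check injectivity of $\bar\phi$, i.e. that $G$ acts transitively on the fibres of $\phi$.

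For the fibre-transitivity, the cleanest route is to work ring-theoretically with the valuations. A point of $X$ is a valuation $v$ on $B$ (continuous, with $v(B^\circ)\le 1$), and $\phi(v)=v|_A$. Suppose $v,w\in X$ have $v|_A=w|_A$; I want $g\in G$ with $w = v\circ g$. The standard argument runs as follows: the finitely many $G$-translates $\{v\circ g : g\in G\}$ all restrict to the same valuation on $A$, and one shows that \emph{every} point of $X$ above $v|_A$ must be $G$-conjugate to $v$. Concretely, pass to the support primes $\mathfrak p_v = \{b : v(b)=0\}$ and $\mathfrak p_w$ of $B$; these lie over the same prime of $A$, and by the lying-over/conjugacy theory for the integral (indeed finite) extension $A\to B$ with $A = B^G$ (the classical result that $G$ acts transitively on primes of $B$ over a given prime of $A$ — e.g. Bourbaki, \emph{Commutative Algebra} V.2.2 Theorem 2), there is $g$ with $g(\mathfrak p_w)=\mathfrak p_v$. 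Replacing $w$ by $w\circ g^{-1}$ we reduce to the case $\mathfrak p_v=\mathfrak p_w=:\mathfrak p$; now $v,w$ are genuine valuations on the domain $B/\mathfrak p$ agreeing on the image of $A$, and the stabilizer $G_{\mathfrak p}$ acts on $B/\mathfrak p$ with invariant subring containing the image of $A$ and with $\mathrm{Frac}(B/\mathfrak p)/\mathrm{Frac}(A/\mathfrak p\cap A)$ a finite (normal up to purely inseparable issues) extension. One then invokes the analogous conjugacy statement for valuations/places of a field extension under a finite group of automorphisms: two valuations of $L=\mathrm{Frac}(B/\mathfrak p)$ restricting to the same valuation of $L^{G_{\mathfrak p}}$ are conjugate under $G_{\mathfrak p}$ (the extension-of-valuations conjugacy theorem, Bourbaki \emph{Commutative Algebra} VI.8.6, together with the fact that $\mathrm{Frac}(A/\mathfrak p\cap A)$ is the fixed field up to a purely inseparable extension, over which valuations extend uniquely). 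This gives the desired $g'\in G_{\mathfrak p}$ with $w = v\circ g'$, proving injectivity of $\bar\phi$.

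Having $\bar\phi$ a continuous closed bijection, it is a homeomorphism, so $X/G = Y$ as topological spaces. I would then remark that this is exactly the first hypothesis needed to apply Lemma \ref{lemm: abstq}; the remaining verification — that $(\pi_*\oo_X)^G$ equals the structure (pre)sheaf of $\Spa(A,A^\circ)$ and that the valuations match — will be handled in the next step of the paper, using that $A(U) = B(\pi^{-1}U)^G$ for rational $U$ (which in turn follows from the affinoid case $A=B^G$ together with the explicit description of rational localizations and the finiteness/flatness of $A\to B$, so that $G$-invariants commute with the relevant completions and localizations).

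\textbf{Main obstacle.} The subtle point is the fibre-transitivity of the $G$-action, i.e. injectivity of $\bar\phi$: the map $X\to Y$ is at the level of \emph{valuations}, not just prime ideals, so after reducing to a common support prime one is genuinely comparing two valuations on a field and must control the possible failure of separability in $\mathrm{Frac}(B/\mathfrak p)/\mathrm{Frac}(B/\mathfrak p)^{G_{\mathfrak p}}$. This is where one needs the valuation-theoretic conjugacy theorem (rather than just the going-up/lying-over statement for primes), and care that the residue fields and value groups behave correctly; the rest of the argument (spectrality, closedness, finiteness of $A\to B$) is standard and essentially immediate from \cite{bgr} and Huber's formalism.
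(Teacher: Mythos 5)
Your proposal is correct and follows essentially the same route as the paper: reduce to showing the fibres are $G$-orbits, conjugate the supports of the two valuations using Bourbaki V.2.2 Theorem 2, and then apply the conjugacy theorem for extensions of a valuation along the finite \emph{normal} extension $\mathrm{Frac}(B/\mathfrak{q})/\mathrm{Frac}(A/\mathfrak{p})$ (the paper quotes Zariski--Samuel where you quote Bourbaki VI.8, which covers the inseparability issue you rightly flag, since $G_{\mathfrak{q}}$ surjects onto the automorphism group of that normal extension). The only step you treat as ``immediate'' is surjectivity of $X\to Y$; the paper makes this precise by observing that the image is closed (finiteness of $A\to B$), contains all classical points (going-up on $\MaxSpec$), and that classical points are dense.
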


\begin{proof}
On topological spaces $\pi$ clearly factors through $X/G$. $\pi$ is closed since it is finite. We know from commutative algebra (the going-up theorem) that ${\rm MaxSpec}(B) \surj {\rm MaxSpec}(A)$ as sets, hence $\pi$ is surjective on classical points and is therefore surjective since it is closed and classical points are dense. Since $\pi$ is closed and surjective it then suffices to show that the fibers of $\pi$ are $G$-orbits, i.e. that $Y=X/G$ as sets. 

\medskip
To show this, we first note that $\pi$ fits into a $G$-equivariant commutative diagram
\[
\xymatrix{X\ar[r]^{\pi}\ar[d] & Y\ar[d]\\
\mathrm{Spv}(B)\ar[r]^{\pi'} & \mathrm{Spv}(A)
}
\]
with injective vertical arrows; here, following Huber, $\mathrm{Spv}(R)$ denotes the set of all equivalence classes of valuations on a given ring $R$.  It now suffices to show that the fibers of $\pi'$ are $G$-orbits.  To see this, choose any valuation $y \in \mathrm{Spv}(A)$, with support $\mathfrak{p}=\mathfrak{p}_y=\Ker y \in \Spec(A)$. By \cite[\S\S V.2.2 Theorem 2(i)]{bou}, $G$ acts transitively on the set of supports $\mathfrak{q}_x \in \Spec(B)$ of valuations $x\in \pi'^{-1}(y)$.  Therefore, choosing any $\mathfrak{q}=\mathfrak{q}_x$ of this form, we need to show that the stabilizer $G_{\mathfrak{q}}$ of $\mathfrak{q} \in \Spec(B)$ acts transitively on the set of valuations $x \in \mathrm{Spv}(\mathrm{Frac}(B/\mathfrak{q}))$ extending the given valuation $y \in \mathrm{Spv}(\mathrm{Frac}(A/\mathfrak{p}))$. By \cite[\S\S V.2.2 Theorem 2(ii)]{bou}, ${\rm Frac}(B/\mf{q})$ is a finite normal extension of ${\rm Frac}(A/\mf{p})$ and $G_{\mf{q}}$ surjects onto ${\rm Aut}({\rm Frac}(B/\mf{q})/{\rm Frac}(A/\mf{p}))$, so this follows from \cite[\S VI.7 Corollary 3]{zs}.

\end{proof}
\begin{coro} \label{coro: affinoidq}
$Y=(X/G, (\pi_{\ast}\oo_{X})^{G}, (v_{xG}))$ and is therefore the quotient of $\Spa(B,B^{\circ})$ by $G$ in $\ms{V}$.
\end{coro}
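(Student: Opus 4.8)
\textbf{Proof proposal for Corollary \ref{coro: affinoidq}.}

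The plan is to verify the two hypotheses of Lemma \ref{lemm: abstq} for the affinoid datum $\underline{X} = \Spa(B,B^{\circ})$ with its right $G$-action, and then quote the lemma. By Proposition \ref{prop: qspace} we already know that the underlying topological space of $Y := \Spa(A,A^{\circ})$ (with $A = B^{G}$) is the quotient $X/G$, with $\pi\colon X \to Y$ the quotient map. So the only thing left is to identify the structure sheaf and the valuations. First I would check that $\oo_{Y} = (\pi_{\ast}\oo_{X})^{G}$ as sheaves of topological rings on $Y$. Since both sides are sheaves (invariants are left exact, and $\oo_{Y}$ is a sheaf on the affinoid adic space $Y$), it suffices to check this on a basis of rational subsets of $Y$. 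For a rational subset $V \subseteq Y$, its preimage $U = \pi^{-1}(V) \subseteq X$ is a $G$-stable rational subset, and $\oo_{X}(U) = B\langle \tfrac{f_{1}}{g},\dots\rangle$ is obtained from $B$ by a rational localization whose defining functions can be taken $G$-invariant (replace $g$ by $\prod_{\sigma}\sigma(g)$ and the $f_{i}$ accordingly, a standard trick); then one shows $\oo_{X}(U)^{G} = \oo_{Y}(V)$ by comparing the two rational localizations of $A = B^{G}$ and of $B$ along the same invariant functions, using that $A \to B$ is finite so that these localizations are compatible, together with \cite[\S 6.3.3 Proposition 2]{bgr} applied to the affinoid localizations. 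The topological ring structure matches because both carry the natural topology as affinoid rings / as invariants of an affinoid ring, and taking $G$-invariants preserves topological embeddings and completeness as noted in the proof of Lemma \ref{lemm: abstq}.

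Next I would check the valuations agree: for $y \in Y$ with chosen preimage $x \in X$, the valuation $v_{y}$ attached to $Y = \Spa(A,A^{\circ})$ restricts, via the natural map $\oo_{Y,y} \to \oo_{X,x}$, to (the equivalence class of) $v_{x}$; this is immediate from the commutative diagram $X \to \mathrm{Spv}(B)$, $Y \to \mathrm{Spv}(A)$ with $\pi$ compatible with $\pi'$ that already appears in the proof of Proposition \ref{prop: qspace}, since the adic point $y$ is by definition a valuation on $\oo_{Y,y}$ whose pullback to $B$ represents $x$. Hence the family $(v_{y})_{y}$ on $Y$ coincides with the family $(v_{xG})$ constructed in Lemma \ref{lemm: abstq} from $\underline{X}$ and $G$.

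With these two identifications in hand, Lemma \ref{lemm: abstq} applies verbatim to $\underline{X}$: the triple $(X/G, (\pi_{\ast}\oo_{X})^{G}, (v_{xG}))$ lies in $\ms{V}$ and is the categorical quotient of $X$ by $G$ in $\ms{V}$; but by the above this triple is exactly $Y = \Spa(A,A^{\circ})$. Since $\ms{V}$ contains the category of adic spaces as a full subcategory and $Y$ is an (affinoid) adic space, it follows that $Y$ is also the quotient in the category of adic spaces. The main obstacle I anticipate is the first identification $\oo_{X}(U)^{G} = \oo_{Y}(\pi(U))$ on rational subsets: one must be careful that an arbitrary rational subset of $Y$ pulls back to a rational subset of $X$ that can be presented by $G$-invariant data, and that the finiteness of $A \to B$ is genuinely used to commute invariants past rational localization (so that Tate's acyclicity and the affinoid quotient construction of \cite{bgr} can be invoked locally). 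Everything else is formal bookkeeping with the functor $\ms{V}$.
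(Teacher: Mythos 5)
Your overall architecture is the right one and matches the paper's: use Proposition \ref{prop: qspace} for the topological space, then identify $(\pi_{\ast}\oo_{X})^{G}$ with $\oo_{Y}$ on rational subsets of $Y$, note the valuations then match, and conclude via Lemma \ref{lemm: abstq}. But the central step — the equality $B\langle f_{1}/h,\dots,f_{n}/h\rangle^{G}=B^{G}\langle f_{1}/h,\dots,f_{n}/h\rangle$ for a rational subset $\{|f_{1}|,\dots,|f_{n}|\leq|h|\neq 0\}$ of $Y$ — is exactly where your proposal has a genuine gap. You assert it follows "by comparing the two rational localizations, using that $A\to B$ is finite, together with \cite[\S 6.3.3 Proposition 2]{bgr} applied to the affinoid localizations," but that citation only tells you that $B\langle f_{i}/h\rangle^{G}$ is again affinoid with $B\langle f_{i}/h\rangle$ finite over it; it says nothing about comparing it with $B^{G}\langle f_{i}/h\rangle$. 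Injectivity of the natural map $B^{G}\langle f_{i}/h\rangle\to B\langle f_{i}/h\rangle^{G}$ is easy, but surjectivity is the real content and is not addressed by anything you wrote — "finiteness makes the localizations compatible" is a restatement of the claim, not an argument. Indeed you flag this yourself as "the main obstacle," which is an accurate self-assessment: as written, the proof of the corollary is missing its only nontrivial computation.

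For comparison, the paper closes this gap by a flat base change argument: start from the exact sequence $0\to B^{G}\to B\to\prod_{g\in G}B$ (with $a\mapsto(ga-a)_{g}$) of finite $B^{G}$-modules, use finiteness of $B$ over $B^{G}$ to get $B\langle f_{i}/h\rangle=B\otimes_{B^{G}}B^{G}\langle f_{i}/h\rangle$ (completed tensor equals tensor), and use flatness of $B^{G}\to B^{G}\langle f_{i}/h\rangle$ to see that tensoring preserves the kernel, which after base change is precisely $B\langle f_{i}/h\rangle^{G}$. (In the characteristic-zero setting one could instead average with the idempotent $\tfrac{1}{|G|}\sum_{g\in G}g$, which is $B^{G}$-linear and so commutes with the base change; either mechanism would complete your step, but some such mechanism must appear.) A minor further point: your "standard trick" of replacing the defining functions by $G$-invariant ones is unnecessary here — since you are checking the sheaf identity on rational subsets of $Y$, the defining functions $f_{1},\dots,f_{n},h$ already lie in $A=B^{G}$, and $\pi^{-1}(V)$ is the rational subset of $X$ cut out by their images in $B$.
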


\begin{proof}
We know that $Y=X/G$ on topological spaces and it suffices to verify that $\oo_{Y}=(\pi_{\ast}\oo_{X})^{G}$ since then the equality of the valuations follows directly. There is a natural map $\oo_{Y} \ra (\pi_{\ast}\oo_{X})^{G}$ and hence it suffices to prove that this is an isomorphism on rational subsets $\{ |f_{1}|,...,|f_{n}| \leq |h|\neq 0 \}$ on $Y$, i.e. that 
$$ B \langle f_{1}/h,...,f_{n}/h \rangle^{G} = B^{G} \langle f_{1}/h,...,f_{n}/h \rangle .  $$

To prove this, note that we have an exact sequence\[
0\to B^{G}\to B\overset{a\mapsto(ga-a)_{g\in G}}{\longrightarrow}\prod_{g\in G}B_{(g)}\]
of finite $B^{G}$-modules. Note also that $B^{G}\langle \tfrac{f_{1}}{h},\dots,\tfrac{f_{n}}{h}\rangle $
is flat over $B^{G}$ and that \[
B\langle \tfrac{f_{1}}{h},\dots,\tfrac{f_{n}}{h}\rangle =B\widehat{\otimes}_{B^{G}}B^{G}\langle \tfrac{f_{1}}{h},\dots,\tfrac{f_{n}}{h}\rangle =B\otimes_{B^{G}}B^{G}\langle \tfrac{f_{1}}{h},\dots,\tfrac{f_{n}}{h}\rangle ,\]
since $B$ is module-finite over $B^{G}$. Then applying $-\otimes_{B^{G}}B^{G}\langle \tfrac{f_{1}}{h},\dots,\tfrac{f_{n}}{h}\rangle $
to the exact sequence above gives\[
0\to B^{G}\langle \tfrac{f_{1}}{h},\dots,\tfrac{f_{n}}{h}\rangle \to B\langle \tfrac{f_{1}}{h},\dots,\tfrac{f_{n}}{h}\rangle \overset{a\mapsto(ga-a)_{g\in G}}{\longrightarrow}\prod_{g\in G}B\langle \tfrac{f_{1}}{h},\dots,\tfrac{f_{n}}{h}\rangle _{(g)}.\]
Since the kernel of the third arrow here is $B\langle \tfrac{f_{1}}{h},\dots,\tfrac{f_{n}}{h}\rangle ^{G}$,
we get $$B^{G}\langle \tfrac{f_{1}}{h},\dots,\tfrac{f_{n}}{h}\rangle =B\langle \tfrac{f_{1}}{h},\dots,\tfrac{f_{n}}{h}\rangle ^{G}$$
as desired.
\end{proof}
 
\begin{coro} \label{coro: quotients}
Let $X$ be a rigid space over $K$ with a right action of a finite group $G$ and assume that there is a cover of $X$ by $G$-stable affinoids. Then the quotient $X/G$ of $X$ by $G$ exists in $\ms{V}$ and is a rigid space. Moreover, the natural map $X \ra X/G$ is finite and if $X$ is affinoid then $X/G$ is affinoid. 
\end{coro}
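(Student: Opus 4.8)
The plan is to deduce Corollary \ref{coro: quotients} from the affinoid case (Corollary \ref{coro: affinoidq}) by a gluing argument in Huber's category $\ms{V}$. First I would set up notation: let $(U_i)_{i\in J}$ be a cover of $X$ by $G$-stable affinoid open subsets, say $U_i=\Spa(B_i,B_i^\circ)$, and put $A_i=B_i^G$ so that $V_i:=\Spa(A_i,A_i^\circ)=U_i/G$ exists as an affinoid and equals the categorical quotient in $\ms{V}$ by Corollary \ref{coro: affinoidq}. The underlying topological space of $V_i$ is $|U_i|/G$, and since $G$ is finite the natural map $|U_i|/G\to |X|/G$ is an open embedding (the quotient map $|X|\to|X|/G$ is open, as $G$ acts by homeomorphisms; and $G$-stability of $U_i$ ensures its image is exactly $|U_i|/G$). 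So the $V_i$ glue along their intersections to give a topological space $Y$ with $|Y|=|X|/G$, equipped with a map $\pi\colon X\to Y$.

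Next I would check that the structure sheaves glue. By Lemma \ref{lemm: abstq} the candidate quotient in $\ms{V}$, if it exists, must be $(|X|/G,(\pi_*\oo_X)^G,(v_{xG}))$; so it suffices to show this is a locally ringed-and-valued space lying in $\ms{V}$, i.e. that it is locally (on the $V_i$) an affinoid adic space. But $(\pi_*\oo_X)^G|_{V_i}=(\pi_{i,*}\oo_{U_i})^G$ where $\pi_i\colon U_i\to V_i$, and by Corollary \ref{coro: affinoidq} this is exactly $\oo_{V_i}$, with the valuations matching up on the nose. Hence $(|X|/G,(\pi_*\oo_X)^G,(v_{xG}))$ is covered by the affinoid adic spaces $V_i$, so it is an adic space; since it is moreover locally of finite type over $K$ it is a rigid space. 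Finiteness of $\pi$ is local on the target: over each $V_i$, $\pi$ is $\Spa$ of the finite ring map $A_i\to B_i$. And if $X$ itself is affinoid the statement is literally Corollary \ref{coro: affinoidq}.

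The one genuine point requiring care, and the place I would slow down, is verifying that the gluing of the $V_i$ is well-defined — concretely, that for $i,j\in J$ the quotient $V_i$ and $V_j$ restrict to the same thing over the image of $U_i\cap U_j$, so that the cocycle condition holds. Here I would argue as follows: $U_i\cap U_j$ is a $G$-stable open of $X$, and $G$-stable affinoid opens of $U_i\cap U_j$ cover it; for any such $G$-stable affinoid $W\subseteq U_i\cap U_j$, the quotient $W/G$ sits compatibly inside $V_i$ and inside $V_j$ because forming the quotient commutes with passing to $G$-stable affinoid subdomains — which is precisely the content of the rational-subset computation $B^G\langle f/h\rangle = B\langle f/h\rangle^G$ inside the proof of Corollary \ref{coro: affinoidq}, applied to both $B_i$ and $B_j$ (note a $G$-stable affinoid subdomain of $U_i$ need not be a single rational subset, but it is covered by $G$-stable rational subsets, or one reduces to rational subsets after refining, and the sheaf property of $\oo_{V_i}$ does the rest). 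Granting this, the identifications $V_i\supseteq (U_i\cap U_j)/G\subseteq V_j$ are canonical, hence automatically satisfy the triple-overlap cocycle condition, and the gluing goes through. Finally the universal property in $\ms{V}$ is checked locally: a $G$-invariant morphism $\underline{T}\to X$ from $\underline{T}\in\ms{V}$ can be checked to factor through $Y$ after restricting $\underline{T}$ to the preimages of the $V_i$, where it is the universal property of Corollary \ref{coro: affinoidq}, and the local factorizations glue by their uniqueness.
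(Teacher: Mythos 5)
Your proposal is correct and follows essentially the same route as the paper: by Lemma \ref{lemm: abstq} the triple $(X/G,(\pi_{\ast}\oo_{X})^{G},(v_{xG}))$ already exists as the categorical quotient in $\ms{V}$, and since $G$-stability gives $\pi^{-1}(\pi(U_{i}))=U_{i}$, its restriction to each open $\pi(U_{i})$ is exactly the affinoid quotient of Corollary \ref{coro: affinoidq}, so the quotient is a rigid space, with finiteness and the affinoid case following immediately. The cocycle verification in your last paragraph is superfluous in this formulation: you are not gluing abstract spaces $V_{i}$ along overlaps but restricting the single globally defined sheaf $(\pi_{\ast}\oo_{X})^{G}$ on the fixed topological space $X/G$, so the identifications over intersections are automatic and no compatibility of quotients with $G$-stable subdomains needs to be checked.
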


\begin{proof}
Consider $(X/G, (\pi_{\ast}\oo_{X})^{G}, (v_{xG}))$ where $\pi\, :\, X \ra X/G$ is the quotient map on topological spaces. If this is a rigid space then the remaining statements hold by Lemma \ref{lemm: abstq} and Corollary \ref{coro: affinoidq}. To see that it is a rigid space let $(U_{i})_{i\in I}$ be a cover of $G$-stable affinoids. By $G$-stability $\pi^{-1}(\pi(U_{i}))=U_{i}$ for all $i$ and hence $\pi(U_{i})=U_{i}/G$ is open for all $i$. By Corollary \ref{coro: affinoidq} $(U_{i}/G, (\pi_{\ast}\oo_{U_{i}})^{G}, (v_{xG}))=(X/G, (\pi_{\ast}\oo_{X})^{G}, (v_{xG}))|_{U_{i}}$ is an affinoid rigid space for all $i$, and we are done.
\end{proof}

\end{document}